\documentclass[a4paper,11pt, twoside,reqno]{amsart}

\usepackage{fullpage}
\usepackage[T1]{fontenc}
\usepackage{multicol}

\usepackage[dvipsnames]{xcolor}
\usepackage[
    colorlinks=true,
    linkcolor=MidnightBlue,
    citecolor=MidnightBlue,
    urlcolor=MidnightBlue,
    pagebackref=true
]{hyperref}

\usepackage{enumerate}
\usepackage{bm}
\usepackage{tikz-cd}
\usepackage{amsmath,amssymb,amsfonts,amsthm}

\theoremstyle{plain}
\newtheorem{thm}{Theorem}[section]
\newtheorem{cor}[thm]{Corollary}
\newtheorem{conjecture}[thm]{Conjecture}
\newtheorem{lemma}[thm]{Lemma}
\newtheorem{prop}[thm]{Proposition}

\theoremstyle{definition}
\newtheorem{defin}[thm]{Definition}

\theoremstyle{remark}
\newtheorem{rem}[thm]{Remark}

\newcommand{\F}{\mathbb{F}}
\newcommand{\Z}{\mathbb{Z}}

\newcommand{\Gal}{\mathrm{Gal}}
\newcommand{\bfh}{\bm{h}}

\newcommand{\bfa}{\bm{a}}
\newcommand{\bfb}{\bm{b}}
\newcommand{\bu}{\bm{u}}
\newcommand{\bfe}{\bm{e}}

\newcommand{\bfv}{\bm{v}}
\newcommand{\bfx}{\bm{x}}
\newcommand{\bfy}{\bm{y}}
\newcommand{\bfz}{\bm{z}}

\newcommand{\bfc}{\bm{c}}
\newcommand{\bfd}{\bm{d}}

\newcommand{\ZZ}{\mathbb{Z}}
\newcommand{\RR}{\mathbb{R}}
\newcommand{\QQ}{\mathbb{Q}}

\newcommand{\bfbe}{\boldsymbol{\beta}}

\DeclareMathOperator{\sing}{Sing}

\DeclareMathOperator{\meas}{Vol}

\DeclareMathOperator{\codim}{codim}

\DeclareMathOperator{\Fr}{Fr}
\DeclareMathOperator{\G}{Gal}
\DeclareMathOperator{\Br}{Br}
\DeclareMathOperator{\GL}{GL}

\DeclareMathOperator{\rank}{rank}
\DeclareMathOperator{\im}{Im}
\DeclareMathOperator{\Tr}{Tr}

\DeclareMathOperator{\GG}{\mathbb{G}}
\DeclareMathOperator{\A}{\mathbb{A}}
\DeclareMathOperator{\PP}{\mathbb{P}}
\DeclareMathOperator{\Sing}{Sing}
\DeclareMathOperator{\Mat}{Mat}
\DeclareMathOperator{\Spec}{Spec}

\renewcommand{\bar}{\overline}

\usepackage{etoolbox}

\makeatletter
\patchcmd{\@starttoc}
  {\@startsection{}\@M\z@{\linespacing\@plus\linespacing}%
    {.5\linespacing}{\centering\contentsnamefont}{#2}}
  {}
  {}
  {\PackageError{toc}{Could not remove the contents heading}{}}
\makeatother
\makeatletter
\patchcmd{\@starttoc}
  {\addvspace{32\p@\@plus14\p@}}
  {}
  {}
  {\PackageError{toc}{Could not remove trailing TOC space}{}}
\makeatother

\newcommand{\x}{\bm{x}}
\newcommand{\dd}{\mathrm{d}}
\newcommand{\y}{\bm{y}}

\newcommand{\ve}{\varepsilon}
\renewcommand{\le}{\leqslant}
\renewcommand{\leq}{\leqslant}
\renewcommand{\ge}{\geqslant}
\renewcommand{\geq}{\geqslant}
\newcommand{\FF}{\mathbb{F}}

\newcommand{\rad}{\operatorname{rad}}
\newcommand{\diag}{\operatorname{diag}}

\newcommand{\lcm}{\operatorname{lcm}}
\newcommand{\bfw}{\bm{w}}
\newcommand{\Qbar}{\overline{\mathbb{Q}}}
\newcommand{\CC}{\mathbb{C}}
\newcommand{\SL}{\operatorname{SL}}
\newcommand{\cone}{\widehat}

\numberwithin{equation}{section}

\subjclass{11D25 (11G35, 11P55, 11T23, 14D10)}

\begin{document}

\title{The Artin--Davenport conjecture on cubic forms}

\begin{abstract}
We show that
any integral cubic form in ten or more variables
has a nontrivial integer zero.
Like earlier authors on this topic,
we use geometric methods to produce a zero in some cases,
and the circle method in all other cases.
New ingredients are required on both fronts:
in the first case,
bounds on the dimension of singular loci and incidence varieties,
and in the second case,
a stratified geometric sieve, 
together with stratification results for point counts over finite fields and a 
cube-root Hensel lifting analysis.
\end{abstract}

\author{Dante Bonolis}
\address{TU Graz\\ Steyrergasse  30\\ 8010 Graz\\ Austria}
\email{dante.bonolis@tugraz.at}

\author{Tim Browning}
\address{ISTA\\
Am Campus 1\\
3400 Klosterneuburg\\
Austria}
\email{tdb@ist.ac.at}
\address{Department of Mathematics\\ Imperial College London\\
SW7 2AZ\\ 
United Kingdom}
\email{tim.browning@imperial.ac.uk}

\author{Jakob Glas}
\address{Leibniz Universit\"at Hannover\\ Welfengarten 1\\ 30167 Hannover\\ Germany}
\email{glas@math.uni-hannover.de}
\address{Laboratoire de math\'ematiques d’Orsay\\ Universit\'e Paris Saclay\\ Orsay\\ France}
\email{jakob.glas@universite-paris-saclay.fr}

\author{Victor Y. Wang}

\address{Institute of Mathematics\\
Academia Sinica\\
Taipei 106319\\
Taiwan}
\email{vywang@as.edu.tw}

\maketitle

\thispagestyle{empty}
\setcounter{tocdepth}{1}

\begin{center}
  \contentsnamefont Contents
\end{center}
{\small
\begin{multicols}{2}
\tableofcontents

\end{multicols}
}

\section{Introduction}
\label{sec:intro}

In his  
presidential address to the London Mathematical Society 
in November 1959, 
Davenport~\cite{Dav60} conjectures that any integral cubic form in $n\geq 10$ variables always possesses a nontrivial integer zero. 
The exact same conjecture was attributed to Artin by Lang~\cite{lang}, in his address to the
American Mathematical Society in February 1960.  
For $n\leq 9$,  there exist cubic forms which fail to have 
nontrivial integer zeros. For example, 
if $K/\QQ$ is a cubic number field with 
$\ZZ$-basis $\omega_1,\omega_2,\omega_3$ for the ring of integers $\mathfrak{o}_K$, and if 
$p$ is a rational prime that is inert in $\mathfrak{o}_K$, then the nonary cubic form 
$$
\mathsf{N}_{K/\QQ}(x_1,x_2,x_3)+p\mathsf{N}_{K/\QQ}(x_{4},x_5,x_6)+p^{2}\mathsf{N}_{K/\QQ}(x_{7},x_8,x_9)
$$
has only the trivial zero over $\QQ_p$, where
$\mathsf N_{K/\QQ}(x,y,z)=N_{K/\QQ}(x\omega_1+y\omega_2+z\omega_3)$
is the associated norm form.
Thanks to work of Demyanov \cite{dem} (when $p\neq 3$) and Lewis \cite{lewis} (in general), it has been known since the 1950s that cubic forms in $n\geq 10$ variables have a nontrivial $p$-adic zero for every prime $p$.

In 1959, Davenport \cite{Dav59} adapted the circle method to prove that $n\geq 32$ suffices for 
any integral cubic form in $n$ variables to possess a nontrivial integer zero.
This was first improved to $n\geq 29$ in 1962 and then to $n\geq 16$ in 1963 by Davenport \cite{Dav62, Dav63}, before being extended to $n\geq 14$ by Heath-Brown \cite{HB07} in 2007, using an averaged van der Corput variant of the circle method. The following is our main result.

\begin{thm}
\label{thm:main}
The Artin--Davenport conjecture is true. 
\end{thm}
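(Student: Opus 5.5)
We reduce to $n=10$: if $C$ is an integral cubic form in $n\ge 10$ variables, setting all but ten variables to zero gives a cubic form in ten variables, and a nontrivial zero of that form gives one of $C$. So let $C\in\ZZ[x_1,\dots,x_{10}]$ be a cubic form. We split into two cases according to the geometry of the hypersurface $V=\{C=0\}\subset\PP^9$. In the \emph{degenerate} case $C$ is either equivalent over $\QQ$ to a form in fewer variables, or it has a rational linear subspace or a singular locus large enough to produce a rational point directly. For instance, suppose $\Sing V$ contains a rational point, or $C$ splits off a linear factor over $\QQ$, or $V$ contains a rational linear space of suitable dimension. Then we find a rational point by elementary arguments (lines through singular points, Chevalley--Warning-type arguments on rational subspaces) or by induction on the number of essential variables. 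Here we use the classical result (Davenport, Heath-Brown) that nonsingular cubics in $n\ge 10$ variables have rational points, together with the Demyanov--Lewis local solubility.

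In the \emph{generic} case, where $\dim\Sing V$ is small and $V$ is far from being a cone or containing large rational linear spaces, we run the circle method, counting $N(P)=\#\{\bfx\in\ZZ^{10}\cap P\mathcal{B}: C(\bfx)=0\}$ with a smooth weight centred at a nonsingular real point. We would use Kloosterman refinement, or a delta-method version, together with Poisson summation over $\bfx$ modulo $q$. This produces complete exponential sums $S_q(\bfc)$ and oscillatory integrals. For the major contribution $\bfc=\bm 0$ we get the singular series times the singular integral. Positivity of the singular series follows from local solubility via Demyanov--Lewis, with a Hensel lifting argument that needs care when $C$ is singular mod $p$. For $\bfc\ne \bm 0$ we need square-root cancellation in $S_p(\bfc)$ for $\bfc$ off the dual variety, and controlled losses on its strata. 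This requires a stratification of the point counts $\#\{C=0,\ \bfc\cdot\bfx=0\}$ over $\F_p$ in terms of the singular locus of hyperplane sections, and a geometric sieve bounding how many $\bfc$ lie in bad strata to prescribed depth. For prime powers we would use a Hensel analysis showing $S_{p^k}(\bfc)$ has roughly cube-root savings beyond the generic range.

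The main obstacle is that ten variables sit exactly at the boundary: the Poisson-dual sum over $\bfc$ with square-root cancellation just barely converges. Any loss, from bad primes, singular hyperplane sections, or highly divisible moduli, must therefore be compensated exactly. We would treat this with a stratified geometric sieve. For each $\bfc$ we record the set of primes where it is bad, and we bound the density of such $\bfc$ (via Ekedahl-type sieves, uniform in the degree of badness) against the corresponding loss in $|S_q(\bfc)|$. The degenerate-case dimension bounds on singular loci and incidence varieties feed in to guarantee that the bad loci have large enough codimension for the sum to converge. This gives $N(P)\gg P^{7}$, and hence a nontrivial zero.
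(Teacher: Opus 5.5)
\textbf{There is a genuine gap: neither half of your case split, as described, would go through.}

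Your case split does not do the work you need from it. The arguments you list for the ``degenerate'' case apply only when a rational point is already visible: a rational singular point, a rational linear factor or rational linear subspace, or a cone (whose maximal vertex is defined over $\QQ$). Induction on essential variables is unnecessary there and unavailable in general, since cubic forms in nine variables can lack zeros. Chevalley--Warning and Heath-Brown's nonsingular theorem say nothing here either. So your first case never touches forms whose singular locus is large but has no rational points. These forms must go through the circle method, using geometry you assert (``the degenerate-case dimension bounds on singular loci and incidence varieties'') but never derive.

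In the paper this is the heart of the geometric half, and the logic runs the other way. One assumes $X(\QQ)=\emptyset$ throughout and extracts consequences:
\begin{itemize}
\item goodness (Davenport);
\item $r(F)\ge 2\lceil n/3\rceil$ (via Zak);
\item above all, the semistability statement of Lemma~\ref{LEM:no-slop}, a rational Hilbert--Mumford criterion proved by Bruhat decomposition and Galois descent.
\end{itemize}
Combined with Davenport's tangent-space identity (Lemma~\ref{LEM:dav-bad-tangent-space}), these give $\dim\Sing X\le n-6$, $\dim I\le (4n-3)/3$, the Hessian rank bounds of Corollary~\ref{cor:hesrank}, and the control of exceptional hyperplanes in Corollary~\ref{COR:general-n-exceptional-hyperplanes}. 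Even so, $\dim\Sing X$ may equal $4$ when $n=10$.

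With a singular locus of that size, your analytic steps fail in two specific places.

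First, every hyperplane section is singular along $\Sing X\cap H_{\bfc}$. Hence the loci $B_t$ of \eqref{EQ:general-bad-hyperplanes} are all of $\PP^{n-1}$ for $t<\sigma:=\dim\Sing X$. If you stratify by singular loci of hyperplane sections and bound the two point counts in \eqref{eq:Sp-projective-v-nonzero} separately, you get only $S_p(\bfc)\ll p^{(n+1+\sigma)/2}$ even for generic $\bfc$. That is a fatal loss of $p^{\sigma/2}$. Square-root cancellation for generic $\bfc$ comes from cancellation between $X$ and $X\cap H_{\bfc}$, which the paper obtains from Gysin maps and singular support (Theorem~\ref{n10:ss}). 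The resulting exceptional loci are unions of duals of all the subvarieties whose conormals make up the singular support, not just the dual variety $X^*$.

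Second, suppose you have strata $Z_j$ of codimension $j$ with losses $p^{(n+1+j)/2}$. A geometric sieve only controls vectors that are bad at many primes without lying on a stratum. Integer vectors on $Z_j$ itself are bad at every prime. In the model range $q=p\sim B^{3/2}$, $|\bfc|\ll B^{1/2}$, with absolute values as in your sketch, they contribute about $B^{(9-n+j)/4}=B^{(j-1)/4}$, which is not $o(1)$ for $j\ge1$. No geometric codimension bound can repair this. The paper needs two arithmetic gains:
\begin{itemize}
\item projective duality (Lemma~\ref{app:ss:lem:dual-rational}) shows that $Z_j(\QQ)$ lies in codimension $j+1$ when $X(\QQ)=\emptyset$;
\item the dimension-growth bounds of Pila and Vermeulen save almost one more dimension. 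This requires first removing low-degree components, which is done by the second-moment argument of Proposition~\ref{n10:moment-trace}.
\end{itemize}

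Two lesser omissions: you do not address absolute convergence of the singular series for singular forms in ten variables (Bernert's theorem). Nor do you provide the localisation at small primes, which is forced by the $e^{O(\Omega(q))}$ loss in the cube-root Hensel analysis.
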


Suppose that $X\subset \PP^{n-1}$ is a cubic hypersurface defined by 
a cubic form $F\in \ZZ[x_1,\dots,x_n]$.  The arithmetic of $X$ has been the subject of intense focus over the years, with many results that 
work towards the 
Artin--Davenport conjecture, or that apply in smaller dimensions. 
Compelling evidence is provided by  Heath-Brown \cite{hb10},
who establishes the conjecture for the special class of smooth cubic hypersurfaces $X$. It follows from Hooley \cite{hooley9,hooley9.2, Hoo13} that we have the same conclusion if $X$ has at worst ordinary double points. 

Assuming that $X$ is
geometrically integral, not equal to  a
cone, and defined over $\QQ$,
and that $n\ge 4$,
the existence of a single rational point on $X$ is
enough to demonstrate the $\QQ$-unirationality of $X$, 
and so the Zariski density of $X(\QQ)$ in $X$. 
Variants of this result have been known for many years, and can be found in the work of Colliot-Th\'el\`ene and Salberger
\cite[Proposition 1.3]{c-t-s} and Koll\'ar \cite{kollar}.

For a large class of possibly singular cubic hypersurfaces $X\subset \PP^{n-1}$, 
Colliot-Th\'el\`ene \cite[Appendix]{Bro10}
 has calculated the Brauer group
$\Br(Y)$ associated to a smooth  model $Y$ of
$X$. (This calculation is based on earlier unpublished work of Salberger.)
As a consequence of this and the conjecture of 
Colliot-Th\'el\`ene \cite{bud} that the Brauer--Manin obstruction should control the Hasse principle for 
rationally connected varieties, one is led to the following conjecture.

\begin{conjecture}
Let $X\subset\PP^{n-1}$ be a cubic
hypersurface defined over $\QQ$ which is not a cone, with $n\geq 5$ and 
singular locus which is empty or of codimension at least $4$ in $X$.
Then the Hasse principle holds for $X$.
\end{conjecture}

This conjecture is formulated in \cite[\S1]{Bro10}.
As a crucial  auxiliary result, we shall prove the following result in
Lemma \ref{LEM:433-incidence-bound}, which may prove to be of independent interest. 

\begin{thm}\label{thm:sing}
Let $n\geq 2$ and let  $X\subset\PP^{n-1}$ be a cubic
hypersurface defined over $\QQ$,  
with  singular locus of 
 codimension at most $\lfloor (n-1)/3 \rfloor$ in $X$.
Then $X(\QQ) \ne \emptyset$.
\end{thm}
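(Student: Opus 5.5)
The plan is to reduce to a borderline dimension by slicing with rational hyperplanes, and then to exploit the fact that a line meeting the singular locus $S=\Sing(X)$ in two points, or meeting it to order $\ge 2$ at one point and in one further point of $X$, must lie in $X$. Write $F$ for the cubic form, so $S$ is cut out by the quadrics $\partial F/\partial x_i$; by Euler's identity, $F$ itself lies in the ideal they generate. Set $c=\codim_X S\le\lfloor (n-1)/3\rfloor$, so that $\dim S= n-2-c$. If $S(\QQ)\neq\emptyset$ we are done, so the aim is to produce rational points on $X$ from non-rational points of $S$.

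\emph{Step 1 (reduction).} Intersect $X$ with a generic hyperplane $H$ defined over $\QQ$. By Bertini, $\Sing(X\cap H)\supseteq S\cap H$ and $\dim(S\cap H)=\dim S-1$, so the codimension of the singular locus in the section does not go up. Meanwhile $\lfloor (n-2)/3\rfloor$ drops by at most one at each step. Iterating as long as the hypothesis survives, I would reduce to the tight case $n-1=3c$. (Small $n$, where $c=0$ and $X$ is singular everywhere, i.e.\ non-reduced, is handled directly: there $F$ is a cube or has a repeated linear factor defined over $\QQ$, which gives a rational point.) In the tight case $X\subset\PP^{3c}$ and $\dim S=2c-1$.

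\emph{Step 2 (secant geometry).} For $P,Q\in S$ the line $PQ$ meets $X$ with multiplicity $\ge 2+2>3$, so $PQ\subset X$. Hence the secant variety $\mathrm{Sec}(S)$ is contained in $X$. The expected dimension of $\mathrm{Sec}(S)$ is $\min(2\dim S+1,3c)=3c$, which exceeds $\dim X=3c-1$. So $S$ must be secant-defective, and I would use this, via Terracini's lemma applied to a general pair of points of $S$, to show that the quadrics $\partial F/\partial x_i$ have a large common linear subspace of zeros. Concretely, the aim is that $X$ is a cone whose vertex $\Lambda$ is a linear space of positive dimension, or else that $S$ contains a linear space $\Lambda$ of dimension comparable to $c$.

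\emph{Step 3 (descent to $\QQ$).} The variety $\mathrm{Sec}(S)$, or the vertex $\Lambda$, is Galois-stable. Take a point $P\in S(\bar{\QQ})$ of minimal degree $d$ together with its conjugates $P_1,\dots,P_d$. If $d\le 2$, the line $P_1P_2$ is defined over $\QQ$ and lies in $X$, so $X(\QQ)\neq\emptyset$. For larger $d$, the span $\langle P_1,\dots,P_d\rangle$ is a rational linear space, and it is contained in the iterated join of $S$. I would slice this span with a rational linear space chosen so that its intersection with $S$ consists of at most two Galois-conjugate points, using the dimension surplus $\dim S=2c-1$ in $\PP^{3c}$. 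This would reduce to the case $d\le 2$.

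The main obstacle is Step 3: controlling the field of definition of points of $S$. The joins $\langle P_1,P_2,P_3\rangle\cap X$ need not be planes contained in $X$; a triangle of conjugate lines has no rational point. So the argument must genuinely use that $\dim S$ is about $2n/3$, for instance through an incidence-variety dimension count on pairs $(P,Q)\in S\times S$ with $PQ$ passing through a prescribed rational point. This is where I expect the sharp bound $\lfloor (n-1)/3\rfloor$ to enter.
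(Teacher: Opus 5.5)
\textbf{Genuine gap: Steps~2 and~3 contain no argument that produces a rational point.} Step~1 is fine. So is your observation that a point of $S$ of degree $d\le 2$ gives a rational point: either a rational singular point, or the rational line $P_1P_2\subset X$. But the same observation shows that if $X(\QQ)=\emptyset$, then \emph{every} point of $S$ has degree at least $3$. So ``reducing to $d\le 2$'' in Step~3 is a restatement of the theorem, not a step towards it, and nothing in the proposal achieves it.

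As you note, spans of three or more conjugate points lie only in higher secant varieties, which are not contained in $X$. Nor can you simply \emph{choose} a rational linear space meeting $S$ in one or two conjugate points. The Galois structure of a rational linear section of $S$ is not at your disposal, so such a choice would have to be forced by the dimension hypothesis, and you give no mechanism for this.

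Step~2 is likewise only an aim. Terracini yields merely $\dim(T_PS\cap T_QS)\ge c-1$ for general $P,Q\in S$. This forces neither a cone nor linear spaces in $S$. Moreover, linear spaces in $S$, unlike the maximal vertex of a cone, need not be defined over $\QQ$.

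A good test case is the reduced norm form of a central division algebra of degree $3$ over $\QQ$:
\begin{itemize}
\item $n=9$ and $X_{\Qbar}=\{\det=0\}$;
\item $S_{\Qbar}\cong\PP^2\times\PP^2$ (the matrices of rank $\le 1$), with $\mathrm{Sec}(S)=X$;
\item $S$ is swept out by planes;
\item yet $X(\QQ)=\emptyset$.
\end{itemize}
This example has every geometric feature your Steps~2--3 try to exploit. It falls outside the theorem only numerically, since its codimension is $3>\lfloor 8/3\rfloor$. So a proof must, at some step, turn the exact numerology into arithmetic information, and your proposal has no step that does so.

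\textbf{What the paper does.} The paper proves the contrapositive \eqref{EQ:clean-slop-sing-bound}: if $X(\QQ)=\emptyset$, then $1+\dim\sing X\le (2n-3)/3$. Equivalently, $\sing X$ has codimension at least $n/3>\lfloor(n-1)/3\rfloor$ in $X$.

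The Galois descent is carried out on weighted flags rather than on points. Lemma~\ref{LEM:no-slop} is a rational form of the Hilbert--Mumford criterion. It says that if $X(\QQ)=\emptyset$, there is no $A\in\GL_n(\Qbar)$ with real weights $\bfw$ satisfying $\sum_i w_i<0$ such that every monomial of $F(A\bfx)$ has nonnegative weight. The proof runs as follows.
\begin{itemize}
\item Among such pairs with a fixed value of $\sum_i w_i$, take one with $\|\bfw\|_2$ minimal.
\item The Bruhat decomposition and the parallelogram law then make the associated filtration Galois-stable, hence defined over $\QQ$.
\item So one may take $A\in\GL_n(\QQ)$. Any $w_i<0$ then forces $F(A\bfe_i)=0$ with $A\bfe_i\in\QQ^n\setminus\{\bm0\}$.
\end{itemize}

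The geometry enters at a general point $\bfx$ of a component $\cone Z$ of the singular cone. Set $T=T_{\bfx}\cone Z$ and $L=\ker H(\bfx)$, so that $T\subset L$. Consider the flag $\Qbar\bfx\subset T\subset L\subset\Qbar^n$, with weights $-4,-1,0,2$ on the successive graded pieces. Let $\Phi(\bfy,\bfv,\bfz)=\bfy^tH(\bfv)\bfz$ be the trilinear form of $F$. The vanishings $\Phi(\bfx,L,\Qbar^n)=0$ and $\Phi(L,T,L)=0$ (Davenport's identity, Lemma~\ref{LEM:dav-bad-tangent-space}) make every monomial nonnegative. The criterion then gives $\dim\cone Z\le 2r-3$, where $r=\rank H(\bfx)$. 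Separately, $T\subset L$ gives $\dim\cone Z\le n-r$. Together these yield $\dim\cone Z\le(2n-3)/3$.

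This descent of a destabilising flag is exactly the ingredient your Step~3 lacks.
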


We note here that $\lfloor (n-1)/3 \rfloor = 3$ for $10\le n\le 12$ and $\lfloor (n-1)/3 \rfloor= 4$ for $13\le n\le 15$.

\subsection*{Overview of the paper}
\label{sec:overview}

Our paper may be read linearly as usual,
but the following non-linear outline of the main ideas
may be useful.  Associated to 
a cubic form $F\in \ZZ[x_1,\dots,x_n]$
is the $n\times n$ Hessian matrix  $H(\bm{x})$, which is formed from the second partial derivatives of $F$.
Throughout this paper, let $|\bfx| := \max\{|x_1|,\dots,|x_n|\}$
for $\bfx\in \RR^n$.
We will say that the cubic form $F$ is {\em good} if 
for any $H\geq 1$ and any $\ve>0$ we have the upper bound
\[
\#\{\x\in\ZZ^{n}: |\x|\leq H,\,\rank H(\x)=r\}\ll H^{r+\ve},
\]
for each integer $0\leq r\leq n$.  A crucial step in Davenport's \cite{Dav63}
treatment of general cubic forms is a proof of the fact that forms
that fail to be good automatically possess nontrivial integer
zeros. In the alternative case when $F$ is good and $n\ge 16$, he shows that 
\begin{equation}\label{Eq:LowerBound.N(P)}
N(P) =\#\{ \bfx\in \ZZ^n\colon |\bm{x}-P\bm{x}_0|\leq P,\, F(\bm{x})=0\}\gg P^{n-3}
\end{equation}
via the circle method, where  $\bm{x}_0\in \RR^n$ is a suitable nonsingular zero of $F$. Our basic strategy follows the same line of attack. However, we go further in that we assume throughout that $X(\QQ)=\emptyset$. From this we are able to draw various consequences about the geometry of $X$, such as Theorem~\ref{thm:sing},  which are then used to control the exponential sums involved. More precisely, we show that if $X(\QQ)=\emptyset$, then  (a smoothed version of) $N(P)$ also satisfies \eqref{Eq:LowerBound.N(P)}.
This is clearly a contradiction to $X(\QQ)=\emptyset$, and hence a rational point must have existed from the outset. 

The circle method is introduced in \S\ref{sec:intro-circle}.
We shall adopt (a smoothed version of) the Kloosterman circle method approach that forms the backbone of the works by Heath-Brown \cite{hb10} and Hooley \cite{hooley9}, and which relies on an application 
of Poisson summation to bring into play 
the complete exponential sums
\begin{equation}\label{eq:defSq}
S_q(\bm{v})
:= \sum_{a\in (\ZZ/q\ZZ)^\ast} \sum_{\bfx\in (\ZZ/q\ZZ)^n}
e_q(aF(\bm{x})+\bm{v}\cdot \bm{x}),
\end{equation}
for integers $q\ge 1$ and vectors $\bfv\in \ZZ^n$. 
When $q$ is prime the estimation of these sums in \cite{hb10,hooley9} uses the full force of Deligne's resolution of the Weil conjectures \cite{Del}. 
This is one of several places where tight control over the singular locus of the cubic hypersurface $X$ enters the argument.
Our main idea can be summarised as using
the stratification theory of exponential sums
over finite fields
as a proxy for the breakdown in control over 
$S_q(\bm{v})$ whenever $X$ has a large-dimensional singular locus.

Works of Fouvry, Katz and Laumon \cite{KL85,FK01}
produce a stratification that is nearly sufficient for our purposes. Their construction is based on a general induction procedure and produces strata
\[\A_\ZZ^n = Z_0 \supset \cdots \supset Z_{n}=\{\mathbf 0\} ,
\]
with each $Z_j\subset\A_\ZZ^n$ of codimension at least $j$, with the property that  $S_p(\bm{v})\ll p^{\frac{n+j}{2}}$ if 
$\bm{v} \bmod{p}$ belongs to $\A^n(\FF_p)\setminus Z_{j}(\FF_p)$.
In our projective setting,
a different construction,
based on the theory of singular support
developed by Beilinson \cite{Bei},
Hu--Yang \cite{HY}
and Raskin--Smith~\cite{RS},
gives sharper control on the strata $Z_j$,
enabling us to use classical projective duality
to show that if $X(\QQ)$ were empty,
then the dimension of the Zariski closure of $Z_j(\QQ)$ in $Z_j$
would be strictly less than the standard bound for $\dim Z_j$ itself.
A similar idea with smooth $\QQ$-points on dual varieties
was previously used in the circle method for smooth cubic hypersurfaces in $9$ variables
by Hooley \cite[Lemma~2]{hooley9},
before he found a more robust argument avoiding any assumption on the dual variety
\cite{hooley9.3}.
The perspective of Raskin--Smith \cite{RS},
connecting the singular support to Gysin maps of singular varieties,
is convenient for us.
However, one could alternatively adopt a more Fourier-theoretic approach
based on Laumon~\cite{Laumon}, Sawin~\cite{Sawin} and Zhou~\cite{Zhou}.

It is well known that homogeneity of $F$ gives
$S_q(u\bfv)=S_q(\bfv)$ for all $u\in(\ZZ/q\ZZ)^\ast$,
whence the Chinese remainder theorem gives
\begin{equation}
 S_{q_1q_2}(\bfv)=S_{q_1}(\bfv)S_{q_2}(\bfv),
 \label{eq:q-exact-CRT}
\end{equation}
whenever $\gcd(q_1,q_2)=1$.
The circle method will lead us to estimate sums
of the shape
\begin{equation}\label{Eq: EnterTheDelta}
\sum_{\bfv \in \ZZ^n} \sum_q S_q(\bfv) I_q(\bfv)
= \sum_{\bfv \in \ZZ^n} \sum_{\textnormal{cubefull }r} S_r(\bfv)
\sum_{\substack{\textnormal{cube-free }d \\ \gcd(d,r)=1}} S_d(\bfv) I_{dr}(\bfv),
\end{equation}
where we temporarily write $I_q(\bfv)$
for a certain oscillatory integral.
See \eqref{eq:defERROR2} for the specific ranges involved.
Our overall strategy is then to combine precise stratifications for $|S_d(\bfv)|$,
on the one hand,
with weaker but more robust estimates for $|S_r(\bfv)|$ and $|I_{dr}(\bfv)|$.

The stratifications are given
in Lemma~\ref{lem:prime-square-stratification} for prime squares,
and in Appendix~\ref{sec:n10-singular-support} for primes.
A hybrid level-set decomposition for $|S_d(\bfv)|$,
based on the global stratum of $\bfv$
and the local strata-based factorisation type of $d$,
is given in \S\ref{sec:MarkV-construction}.
Moreover, in \S\ref{sec:MarkV-construction},
the level-set estimates are uniform over adelic boxes,
which turns out to be crucial for our approach.

Following ideas of Heath-Brown~\cite{hb10}, in \S\ref{sec:squarefull-average} we give
estimates for $|S_r(\bfv)|$ on average over $\bfv$ when $r$ is cubefull that depend roughly on $\bfv\bmod{r^{1/2}}$. However, for most of the paper, it turns out to be useful for us to use weaker estimates that depend on less information but are more sensitive to the stratification we use.
More precisely, they depend only roughly on $\bfv\bmod{r^{1/3}}$,
an insight that we formalise in \S\ref{sec:smith-work}
away from a finite set of primes $p$.
The remaining primes are handled separately in \S\ref{sec:fixed-primes}
by a localisation trick.

Moreover, the integral estimates in \S\ref{sec:intro-circle} tend to control
a large shifted box of vectors $\bfv$ at once,
not just a single real point $\bfv$.
An adelic box $\mathcal{A}$ of vectors $\bfv$,
on which $|S_r(\bfv)|$ and $|I_{dr}(\bfv)|$ are controlled,
is then fertile ground for stratification methods to take place:
bounds via point counts,
sieves,
and traces.
Most of our cubefull and integral estimates
may be fruitfully restricted to various natural subsets $\mathcal{S} \subset \mathcal{A}$,
giving bounds that are sensitive to the density of $\mathcal{S}$ in $\mathcal{A}$.
A notable exception is the Poisson summation based squarefull bound (Lemma~\ref{lem:onion}), which is reasonably strong in short boxes
but often weak in small subvarieties thereof.
Our work will require combining different estimates in different ranges.

In Lemma \ref{LEM:433-incidence-bound} we prove Theorem
\ref{thm:sing} and obtain further  information about the singular loci of
our cubic hypersurface $X$
and its hyperplane sections,
assuming that $X(\QQ) = \emptyset$.
This leads to useful exponential sum estimates via work of Hooley, Katz, et al.,
as we recall in \S\ref{sec:pointwise}.

In \S\ref{sec:MarkV-construction} we prove the required
cube-free estimates in ten variables,
using a stratified geometric sieve, Proposition~\ref{lem:stratified-composite-sieve}.
In \S\ref{sec:n10-mixed-moduli},
this is combined with the corresponding
ten variable cubefull estimates of \S\ref{sec:smith-work}.
The proof of Theorem~\ref{thm:main} is then completed in \S\ref{sec:intro-circle}
by incorporating the integral factor,
as well as a main term calculation based on work of Bernert \cite{Ber25}.

\subsection*{Final remarks}
The role of stratifications can be illustrated already
for prime moduli.
Generically, we have $q\sim B^{3/2}$ and $|\bm{v}|\ll B^{1/2}$  in \eqref{Eq: EnterTheDelta}. Restricting to primes, our goal is then to show that
\[
\mathcal{T}(B) := \sum_{B^{3/2}<p\leq 2B^{3/2}}
\sum_{0<|\bm{v}|\ll B^{1/2}}
|p^{-n} S_p(\bfv)|
= o(1).
\]
If we had square-root cancellation $S_p(\bfv) \ll p^{(n+1)/2}$ everywhere,
then we would immediately get
$$\mathcal{T}(B) \ll B^{(3+n)/2} (B^{3/2})^{(1-n)/2}
= B^{(9-n)/4} = o(1)$$
for $n\ge 10$.
However, square-root cancellation does not hold pointwise.
The stratification
controls the extent to which failures of square-root cancellation can hold.
We can use the Schwartz--Zippel lemma to estimate the contribution from vectors 
$\bm{v}\in Z_j(\ZZ)$ in an expanding region, but this turns out not to be  enough. Rather, we shall  apply results towards the dimension growth conjecture, in the form given by Pila \cite{Pila} and Vermeulen \cite{Vermeulen}. For this we require more information on the possible degrees of the components of $Z_j$,
which is achieved
in \S\ref{sec:n10-second-moments}
via a second moment argument  that is inspired by work of
Bonolis--Pierce--Woo \cite{woo}.
Additionally,  by using projective duality and the  theory of singular support, we are able to show 
in Appendix~\ref{sec:n10-singular-support}
that rational points on $Z_j$ actually belong to a 
subvariety of codimension at least $j+1$,
under the assumption $X(\QQ)=\emptyset$. 

Many questions remain open, even for cubic forms in $n\ge 10$ variables over $\QQ$.
We have been able to show that for any such cubic form,
either geometric methods produce a $\QQ$-point,
or else the circle method produces a localised asymptotic formula.
It would be interesting to determine the weakest conditions
under which an asymptotic formula can be proven;
does goodness suffice?
A related question is that of obtaining lower bounds for point counts and a question of Wooley,
for which we refer to recent work of Kumaraswamy and Rome \cite{nick}.

\subsection*{Acknowledgments}

We are very grateful to
Roger Heath-Brown and 
Simon Rydin Myerson 
for helpful conversations about 
Hessians and 
tangent spaces,
and to Julian Demeio,
Matthew Hase-Liu and
Adeel Khan
for helpful discussions about
the technical apparatus of Appendix~\ref{sec:n10-singular-support}.
Parts of this work were undertaken during research visits 
to the {\em Institute for Science and Technology Austria} in Klosterneuburg (June 2024), 
 the {\em Centro di ricerca Matematica Ennio De Giorgi} in Pisa (January 2026), and 
the {\em Institute of Mathematics Academia Sinica}
in Taipei (June 2026). 
We are very grateful to these institutions for providing excellent working conditions. 
VYW was supported by
the European Union's Horizon~2020 research and innovation programme under the Marie Sk\l{}odowska-Curie Grant Agreement No.~101034413,
and by the National Science and Technology Council Project Grant 114-2115-M-001-010-MY2.

\subsection*{Supplementary Lean code}

The interested reader may consult
\begin{center}
\url{https://github.com/wangyangvictor/cubic-lean-mid}
\end{center}
for a Codex Lean auto-formalisation of Theorem~\ref{thm:main} and its proof,
modulo five inputs from the literature
 in ``elementary'' form.
This builds on the vast existing Lean infrastructure,
including the strong foundations of Mathlib in areas such as commutative algebra.

\subsection*{AI statement}

The structure of the proof and most of the new ideas in this paper are due to the authors and were developed from  June 2024 until September 2026.  However, a key ingredient in our work is a suitable upper bound for the dimension of the incidence variety
\[
I = \{(\bm{x},\bm{y})\in \A^n\times \A^n\colon H(\bm{x})\bm{y}=0\},
\]
with the inequalities $n\leq \dim I\leq 2n$ being essentially trivial.  When the cubic hypersurface $X$ is smooth it is straightforward to prove that 
$\dim I=n$, a fact that underpins  work of Heath-Brown \cite[Lemma~1]{hb10}.
In general, 
assuming only that $X(\QQ)=\emptyset$, we were able to prove the upper bound 
$\dim I\leq \lfloor(3n-2)/2\rfloor$. This was improved to 
$\dim I\leq \lfloor(3n-3)/2\rfloor$ in a private communication from Heath-Brown, which we believe would be enough to handle cubic forms in at least eleven variables, once combined with the other ingredients in our paper.  After prompting 
ChatGPT 6.0, we were led to the improved  bound 
$\dim I \le \lfloor (4n-3)/3\rfloor$ in
Lemma~\ref{LEM:433-incidence-bound}, under the same assumption 
$X(\QQ)=\emptyset$.
ChatGPT 6.0  suggested an approach via the rational Hilbert--Mumford criterion of Kempf~\cite[Corollary~4.3]{Kem78}, which we discuss below in Remark~\ref{RMK:GIT-view}.
Our work involves combining many different estimates that are effective in different ranges of parameters. 
ChatGPT 5.5, 5.6, 6.0 and 6.1 have been helpful with this process, 
as well as in 
proofreading our work, and conducting the Lean auto-formalisation of this work.  
We are very grateful to Mark Sellke at OpenAI for giving us access to the Pro models in 
May 2026.

\section{Technical lemmas}
\label{sec:technical-lemmas}

\subsection{Counting points over finite fields}

The following result provides point counts for cubic hypersurfaces over finite fields, under various assumptions.

\begin{lemma}\label{lem:Fq}
Let $\FF_q$ be a finite field of order $q$ and
let $X\subset \PP^{N}$ be a geometrically integral cubic hypersurface defined over $\FF_q$,
where $N\ge 2$.
If $\sigma=\dim \sing(X)\in \{-1,\dots,N-2\}$, where $\sing(X)$ is the singular locus of $X$, then
\begin{equation*}
\#X(\FF_q)-\frac{q^N-1}{q-1}
\ll_N q^{(N+\sigma)/2}.
\end{equation*}
Moreover,
$$
\#X(\FF_q)-\frac{q^N-1}{q-1}\ll_N
\begin{cases}
q^{N-2} & \text{ if $N\geq 3$ and $X$ is not a cone,}\\
q^{N-3/2} &\text{ otherwise.}
\end{cases}
$$
\end{lemma}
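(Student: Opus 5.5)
The first bound is Hooley's estimate for the point count of a projective variety with singular locus of dimension $\sigma$, which rests on Deligne's work. For a hypersurface $X\subset\PP^N$ of degree $d$ whose singular locus has dimension $\sigma$, Hooley (building on Deligne and Katz) shows
\[
\#X(\FF_q)=\#\PP^{N-1}(\FF_q)+O_{N,d}(q^{(N+\sigma)/2}),
\]
where $\#\PP^{N-1}(\FF_q)=(q^N-1)/(q-1)$. I would simply quote this. For completeness, here is the reduction to a smooth case. Take $\sigma+1$ generic hyperplane sections; by Bertini these are smooth complete intersections of dimension $N-\sigma-2$, and their point counts are controlled by Deligne's theorem. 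One then recovers $\#X(\FF_q)$ by Katz's inductive argument: compare the cohomology of $X$ with that of a generic section via the weak Lefschetz theorem. Geometric integrality guarantees $\sigma\le N-2$, so the bound is meaningful. The case $\sigma=-1$ is Deligne's bound for smooth hypersurfaces.

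For the second part, I split into cases.

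\textbf{Cones.} If $X$ is a cone, I would use the trivial bound $q^{N-3/2}$, obtained as follows. Suppose $X$ is a cone with vertex containing a point $P$. Projecting from $P$ shows that $X$ is the join of $P$ with a cubic hypersurface $X'\subset\PP^{N-1}$, and $X'$ is again geometrically integral. Then
\[
\#X(\FF_q)=1+q\,\#X'(\FF_q).
\]
By Lang--Weil, $\#X'(\FF_q)=\#\PP^{N-2}(\FF_q)+O(q^{N-2-1/2})$, which gives error $O(q^{N-3/2})$. If instead $N=2$, $X$ is a plane cubic curve and Lang--Weil/Hasse--Weil directly gives error $O(q^{1/2})=O(q^{N-3/2})$. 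So the "otherwise" branch is just Lang--Weil for geometrically irreducible varieties.

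\textbf{Non-cones, $N\ge3$.} Here the claim $q^{N-2}$ is equivalent to square-root cancellation up to dimension $\sigma\le N-4$ in the first bound. This step is the main obstacle: if $\dim\sing(X)=N-2$ or $N-3$, the first bound is too weak. I would argue geometrically instead.

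1. A geometrically integral cubic hypersurface that is not a cone has singular locus of dimension at most $N-3$. Indeed, if $\sing(X)$ contains a hypersurface section of dimension $N-2$, then $X$ is singular along a divisor. A cubic singular along a linear space of codimension $1$ in $X$ is a cone, and a non-linear codimension-$1$ component forces the cubic to be reducible or a cone, as in the classical classification (e.g.\ via the Hessian, since each partial derivative, a quadric, vanishes on it). This gives error $O(q^{N-3/2})$, still short of what we want.

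2. To get $q^{N-2}$, I would use the cohomological formulation: the error equals a sum of Frobenius eigenvalues on $H^{i}_c(X)$ for $i\le 2N-3$, beyond the classes of $\PP^{N-1}$. The contribution of degree $2N-3$ is what gives $q^{N-3/2}$, and it vanishes unless $H^{2N-3}$ is nonzero. By weak Lefschetz/purity arguments for hypersurfaces with isolated-ish singularities in codimension $\ge2$, this group vanishes once the singular locus has codimension $\ge2$ in $X$, i.e.\ $\sigma\le N-3$. The remaining terms have weight $\le 2N-4$, giving $q^{N-2}$.

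Alternatively, and more elementarily: fibre $X$ over lines through a general point, or count via a generic codimension-$(N-2)$ linear section, which is a geometrically integral plane cubic curve. Then average using Lang--Weil applied to the family, with the non-cone hypothesis ensuring the generic section is integral with controlled exceptional sets.
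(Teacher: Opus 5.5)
Your treatment of the first bound (quoting Hooley) and of the $q^{N-3/2}$ branch agrees with the paper, which simply cites \cite{Hoo91} and \cite{LW}. The cone reduction is harmless but unnecessary, since Lang--Weil applies to $X$ directly. The gap is the non-cone bound $q^{N-2}$, which the paper takes from \cite{Bro13}. Both of your steps there rest on false claims.

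\textbf{Step 1 is wrong.} $F=x_0^2x_2-x_1^2x_3$ on $\PP^3$ (likewise $x_0^2x_2+x_0x_1x_3+x_1^2x_4$ on $\PP^4$) is geometrically integral and is not a cone: no $\bfz\ne\bm0$ satisfies $F(\bfx+t\bfz)=F(\bfx)$. Yet it is singular along the linear space $x_0=x_1=0$, of dimension $N-2$. So non-cones with $\sigma=N-2$ exist and cannot be excluded.

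\textbf{Step 2 is also wrong.} Vanishing of the weight-$(2N-3)$ part of $H^{2N-3}$ does not follow from $\sigma\le N-3$. The cone with vertex $\PP^{N-3}$ over a smooth plane cubic $E$ has $\sigma=N-3$, yet $H^{2N-3}\cong H^1(E)(2-N)$ is pure of weight $2N-3$. Indeed $\#X(\FF_q)-\#\PP^{N-1}(\FF_q)=q^{N-2}(\#E(\FF_q)-q-1)$, which can be of size $\asymp q^{N-3/2}$. The comparison with $\PP^{N-1}$ that depends only on $\sigma$ (the input behind Hooley's bound) controls degrees $i>N+\sigma$. That range contains $2N-3$ only when $\sigma\le N-4$, so it merely reproduces the first bound. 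For $\sigma\in\{N-3,N-2\}$ the non-cone hypothesis must do real work, and your argument never uses it. Your ``alternative'' of averaging over linear sections just moves the problem to cancellation among the Frobenius traces of the (possibly genus-one) cubic curve sections, which is the same cohomological vanishing in disguise.

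Part of this can be repaired elementarily. Suppose $X$ has an $\FF_q$-rational singular point $P$. Triple points of a cubic are vertex points, so $P$ is a double point. In coordinates with $P=[1:0:\dots:0]$ this gives $F=x_0Q+C$ with $Q\ne0$, and $\gcd(Q,C)=1$ by geometric integrality. Counting along the lines through $P$ then gives $\#X(\FF_q)-\#\PP^{N-1}(\FF_q)=1-\#\{Q=0\}(\FF_q)+q\,\#\{Q=C=0\}(\FF_q)\ll q^{N-2}$, where both loci lie in $\PP^{N-1}$. Combined with Hooley's bound for $\sigma\le N-4$, what remains is $\sigma\ge N-3$ with $\sing(X)(\FF_q)=\emptyset$, which still needs a further idea. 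The paper sidesteps all of this by citing \cite{Bro13}.
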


\begin{proof}
The first result can be extracted from
Theorem~1 of \cite{Hoo91}.
The second is the main result of 
\cite{Bro13} and the last follows from the  Lang--Weil estimate \cite{LW}.
\end{proof}

\begin{rem}
Although Lemma~\ref{lem:Fq} will suffice for the present paper, 
it might be possible to obtain sharper results for $N\ge 4$
by using \cite[Theorem~5.5]{Wan23}. 
\end{rem}

\subsection{Basic geometric facts}

Let us record some useful facts about geometric integrality,
cones, and singular loci.
The first part of \cite[Lemma~1]{Bro10} states the following.

\begin{prop}
\label{Bro-integral-cone}
Let $N\ge 3$.
If $X \subset \PP^N_\QQ$ is a cubic hypersurface
with $X(\QQ) = \emptyset$,
then $X$ is geometrically integral and is not a cone.
\end{prop}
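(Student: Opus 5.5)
We argue by contraposition and use Galois theory of the components of $X_{\bar{\QQ}}$. Let $X=V(F)\subset\PP^N_\QQ$ with $N\ge 3$ and $X(\QQ)=\emptyset$. Throughout we use the elementary fact that the Galois orbit of a $\bar\QQ$-point (or of a linear subspace) defined over a field of degree $d$ has size $d$.

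For geometric integrality, first note that $F$ is irreducible over $\QQ$. Otherwise $F=LQ$ with $L$ a rational linear form, and $V(L)\cong\PP^{N-1}$ already contains rational points lying on $X$. Now suppose $F$ factors over $\bar\QQ$. Since $F$ is irreducible over $\QQ$, the Galois group permutes the $\bar\QQ$-irreducible factors transitively, and the only possibility for a cubic is $F=c\,L_1L_2L_3$. Here the $L_i$ are linear forms defined over a cubic field $K$ and are permuted transitively by Galois. A product of the same linear form repeated, say $L_1^3$, is excluded: Galois invariance would force $L_1$ to be rational up to scalar, contradicting irreducibility over $\QQ$. So the $L_i$ are pairwise non-proportional. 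Consider the linear subspace $\Lambda=V(L_1,L_2,L_3)$, which is Galois-stable and hence defined over $\QQ$. It has codimension at most $3$, so $\dim\Lambda\ge N-3\ge 0$. Thus $\Lambda$ is a nonempty linear subspace defined over $\QQ$, and therefore has a $\QQ$-point, which lies on $X$. This contradiction shows $X$ is geometrically integral.

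For the cone statement, suppose $X$ is a cone, so that after a linear change of variables over $\bar\QQ$ the form $F$ involves fewer variables. Then the vertex $W=\{\bfx: \nabla F\text{-independence}\}$, i.e.\ the linear space $\bigcap_{\bfy}\ker(\text{the bilinear/trilinear form})$ consisting of points $\bfv$ with $F(\bfx+t\bfv)=F(\bfx)$ identically, is nonempty. Concretely, $W$ is the common zero set of all second partial derivatives $\partial_i\partial_jF$, which are linear forms, so $W$ is a linear subspace. It is intrinsic to $F$, so it is Galois-stable and hence defined over $\QQ$. Being a nonempty projective linear space over $\QQ$, it has a rational point $\bfv$. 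Moreover $W\subset X$, since $F(\bfv)=\tfrac16\sum v_iv_j\,\partial_i\partial_jF(\bfv)\cdot$ (Euler) vanishes. This gives a rational point on $X$, a contradiction.

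The main point requiring care is the identification of the vertex as the $\QQ$-rational linear space cut out by the second derivatives, using Euler's identity in characteristic $0$. The only other subtle step is the dimension count $N-3\ge0$, which is exactly where the hypothesis $N\ge3$ enters.
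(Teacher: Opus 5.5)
Your proof is correct and is essentially the standard argument behind the cited \cite[Lemma~1]{Bro10}, which is all the paper gives for this statement; your identification of the vertex with the linear space $\{\bfv: H(\bfv)=\bm0\}$ is exactly the description in Remark~\ref{field-of-maximal-vertex}. Two small tidy-ups: the garbled definition of $W$ in your third paragraph should simply read $W=\{\bfv: H(\bfv)=\bm0\}$, which is cut out by linear forms with rational coefficients so that no Galois descent is needed, and the case $F=cL_1^2L_2$ is excluded by your transitivity remark, since the Galois action preserves multiplicities.
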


\begin{rem}
\label{field-of-maximal-vertex}
It will be useful to know how to compute the
maximal vertex of a conical cubic hypersurface.
If $C(\bfy)$ is a cubic form in $N$ variables
over a perfect field $K$,
then its maximal vertex over $\bar{K}$
coincides with the base change of its maximal vertex over $K$.
This follows from Galois descent,
since the maximal vertex over a field $L/K$ corresponds to the $L$-vector space
\[
M(L) = \{\bfz\in L^N: C(\bfy+t\bfz) - C(\bfy) = 0\in L[\bfy,t]\},
\]
which satisfies $M(\bar{K})^{\Gal(\bar{K}/K)} = M(K)$.
This is particularly clean if $K$ has characteristic $\neq 2,3$,
in which case by Taylor expansion the condition in $M(L)$
is equivalent to $H_C(\bfz)=\bm{0}$.
\end{rem}

Thanks in part to Lemma~\ref{lem:Fq},
singular loci of hyperplane sections of a cubic hypersurface will be relevant.
The following result is best interpreted scheme-theoretically.

\begin{thm}[Zak]
\label{Thm: ZaksBoundSingLocus}
Let $Y\subset \PP^N$ be a geometrically integral hypersurface of degree $\ge 2$ over a field.
Let $H\subset \PP^N$ be a hyperplane.
Then $\dim \Sing(Y\cap H)\leq \dim\Sing(Y)+1$.
In particular, if $\dim Y - \dim\Sing(Y) \ge 4$,
then $Y\cap H$ is geometrically integral.
\end{thm}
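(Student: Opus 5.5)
The plan is to prove the dimension bound by an elementary Jacobian comparison, and then deduce geometric integrality from the size of the singular locus. We may pass to an algebraically closed field throughout: singular loci and their dimensions are compatible with base change, as discussed scheme-theoretically via Jacobian ideals.

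\textbf{Setting up the Jacobian comparison.} Choose coordinates $x_0,\dots,x_N$ with $H=\{x_0=0\}$, and let $Y=\{f=0\}$ with $\deg f=d\ge 2$. Since $Y$ is integral of degree $\ge 2$, we have $H\not\subset Y$. Hence $g(x_1,\dots,x_N):=f(0,x_1,\dots,x_N)$ is a nonzero form of degree $d$, and $Y\cap H=\{g=0\}\subset H\cong\PP^{N-1}$. Scheme-theoretically $\Sing(Y\cap H)$ is cut out in $H$ by $g,\partial_1 g,\dots,\partial_N g$, and $\partial_i g=(\partial_i f)|_{x_0=0}$ for $i\ge 1$. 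Likewise $\Sing(Y)\cap H$ is cut out in $H$ by $g,\partial_1 g,\dots,\partial_N g$ together with the single additional form $h:=(\partial_0 f)|_{x_0=0}$. Note that $h$ is either $0$ or a form of degree $d-1\ge 1$.

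\textbf{Deducing the dimension bound.} It follows that
\[
\Sing(Y)\cap H=\Sing(Y\cap H)\cap\{h=0\}.
\]
Let $W$ be an irreducible component of $\Sing(Y\cap H)$ with $\dim W\ge 1$. If $h$ vanishes identically on $W$, then $W\subset\Sing(Y)$ and so $\dim W\le\dim\Sing(Y)$. Otherwise $\{h=0\}$ is a hypersurface of positive degree in $\PP^{N-1}$. By the projective dimension theorem it meets $W$ in a set of dimension exactly $\dim W-1$. Therefore $\dim\Sing(Y)\ge\dim W-1$. Components of dimension $\le 0$ satisfy the bound trivially, since $\dim\Sing(Y)\ge -1$. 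Together these cases give $\dim\Sing(Y\cap H)\le\dim\Sing(Y)+1$. This is the main step; the only subtlety is working with Jacobian ideals rather than Euler's relation, so that nothing depends on the characteristic.

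\textbf{Geometric integrality.} Write $\sigma=\dim\Sing(Y)$ and suppose $(N-1)-\sigma\ge 4$. Then $Y\cap H$ is a hypersurface in $\PP^{N-1}$ of dimension $N-2\ge 3$, and by the first part
\[
\dim(Y\cap H)-\dim\Sing(Y\cap H)\ge (N-2)-(\sigma+1)\ge 2.
\]
A reducible hypersurface $\{g_1g_2=0\}$ is singular along $\{g_1=g_2=0\}$, which has codimension $1$ in the hypersurface. A non-reduced one is singular everywhere. Both are excluded by the codimension-$2$ bound, so $Y\cap H$ is geometrically integral. Alternatively, one can argue that it is normal by Serre's criterion, since hypersurfaces are Cohen--Macaulay, and connected, hence integral.
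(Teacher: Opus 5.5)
Your proof is correct. It differs from the paper mainly in that the paper gives no argument: it quotes Zak's bound in the generality recorded in Theorem~2 of Katz's appendix to \cite{Hoo91}, and leaves the ``in particular'' clause implicit.

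You instead prove the hypersurface case directly. You use the identity $\Sing(Y)\cap H=\Sing(Y\cap H)\cap\{h=0\}$, where $h=(\partial_0 f)|_{x_0=0}$ is either $0$ or a form of degree $d-1\ge 1$. You then apply the projective dimension theorem to each positive-dimensional component of $\Sing(Y\cap H)$. Because you keep $g$ among the generators of the Jacobian ideal, the argument works over an arbitrary field and is compatible with passing to $\bar k$, as the scheme-theoretic statement requires. You also derive geometric integrality explicitly, either from the reducible/non-reduced dichotomy or from Serre's criterion plus connectedness.

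The citation buys generality beyond hypersurfaces, which the paper never needs. Your route buys a short, self-contained proof of exactly the case used.

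There is one harmless slip. From $(N-1)-\sigma\ge 4$ and $\sigma\ge -1$ you only get $N\ge 4$, so $\dim(Y\cap H)=N-2\ge 2$, not $\ge 3$. This does not affect the argument. Your reducibility step only needs $\{g_1=g_2=0\}\neq\emptyset$ in $\PP^{N-1}$, which holds once $N-1\ge 2$, and the Serre-criterion alternative does not use it at all.
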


\begin{proof}
This follows from Zak's bound
recorded in Theorem~2 of Katz's appendix to \cite{Hoo91}.
\end{proof}

For further results in this direction,
we refer the reader to Zak \cite[Theorem~1.7]{Zak93}.

\subsection{A geometric sieve in adelic boxes}

We need to count points uniformly in adelic boxes.
It will be convenient to have the following version of Noether normalisation at hand.

\begin{lemma}
\label{Le: NoetherNormalisation}
Let $Y\subset\A_\ZZ^N$ be a closed subscheme whose
generic fibre $Y_{\QQ}\subset\A_\QQ^N$
has dimension $0\le d\le N-1$.
Let $\pi\colon \A^N \to \A^d$ be the projection onto the first $d$ coordinates.
There exist
a matrix $A \in \Mat_{N\times N}(\ZZ) \cap \GL_N(\QQ)$,
an integer $C\ge 1$,
and polynomials $F_j\in \ZZ[x_1,\dots,x_d,x_{d+j}]$ for $1\le j\le N-d$,
such that the following hold:
\begin{enumerate}
\item The composite map
$Y_{\QQ} \xrightarrow{A} \A_\QQ^N \xrightarrow{\pi} \A_\QQ^d$
is finite.

\item For each $1\le j\le N-d$,
we have $F_j = Cx_{d+j}^C + O_{x_1,\dots,x_d}(x_{d+j}^{C-1})$,
and the polynomial $F_j(A\bfx)\in \ZZ[\bfx]=\ZZ[x_1,\dots,x_N]$
vanishes on $Y$.
\end{enumerate}
\end{lemma}

\begin{proof}
Since the field $\QQ$ is infinite,
it is a well-known consequence of
Noether normalisation
that there exists a dense open subvariety $U \subset \GL_N$ over $\QQ$
such that (1) holds for all $A\in U(\QQ)$,
as remarked in \cite[p.~284]{Eis95}.
Since $\Mat_{N\times N}(\ZZ) \cap \GL_N(\QQ)$ is Zariski dense in $\GL_N(\Qbar)$,
we can satisfy (1) with a matrix $A\in \Mat_{N\times N}(\ZZ) \cap \GL_N(\QQ)$.
It follows from (1) that each $x_{d+j}$ satisfies a monic non-constant polynomial relation in $\QQ[x_1,\dots,x_d][x_{d+j}]$ on the variety $A(Y_{\QQ})$.
After multiplying each relation by a suitable monomial in $x_{d+j}$, 
we are led to (2),
since every element of the ideal $I(Y_{\QQ}) = I(Y) \otimes \QQ$
is a nonzero rational scalar times an element of $I(Y)$.
\end{proof}

Lemma~\ref{Le: NoetherNormalisation} immediately gives
the following variant of the Schwartz--Zippel bound.

\begin{lemma}
\label{LEM:basically-zippel}
Let $Y\subset\A_\ZZ^N$ be a closed subscheme whose
generic fibre
has dimension $0\le d\le N$.
Let $m\ge1$ be an integer.
Let $\bfb\in\ZZ^N$,
$L\ge1$ and $\bfv_0\in\RR^N$.
Then
\begin{equation*}
\#\{\bfv\in Y(\ZZ):|\bfv-\bfv_0|\le L,\ 
\bfv\equiv\bfb\bmod m\}
\ll_Y (1+L/m)^d.
\end{equation*}
\end{lemma}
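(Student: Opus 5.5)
The idea is to apply Lemma~\ref{Le: NoetherNormalisation} and reduce to counting the first $d$ coordinates after a linear change of variables. If $d=N$ the claim is the trivial lattice-point count: the number of $\bfv\in\ZZ^N$ in a box of side $2L$ with $\bfv\equiv\bfb\bmod m$ is $\ll (1+L/m)^N$. So assume $d\le N-1$. Take $A$, $C$ and $F_1,\dots,F_{N-d}$ as in Lemma~\ref{Le: NoetherNormalisation}, all depending only on $Y$. Map each $\bfv$ in the set to $\bfw=A\bfv\in\ZZ^N$.

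First, the projection $\pi(\bfw)=(w_1,\dots,w_d)$ takes only few values. Since $|\bfv-\bfv_0|\le L$, the vector $\bfw$ lies in a box centred at $A\bfv_0$ of side $\ll_Y L$. Since $\bfv\equiv\bfb\bmod m$, we have $\bfw\equiv A\bfb\bmod m$. Hence $(w_1,\dots,w_d)$ ranges over integer points of a box of side $\ll_Y L$ lying in a fixed residue class mod $m$. There are $\ll_Y(1+L/m)^d$ such points.

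Second, the fibres of $\pi$ are uniformly bounded. Fix $(w_1,\dots,w_d)$. For each $j$, the polynomial $F_j(w_1,\dots,w_d,t)$ in $t$ has leading coefficient $C\ne0$ and degree $C$, and $F_j(A\bfx)$ vanishes on $Y$. So $w_{d+j}$ is a root of this nonzero polynomial, and there are at most $C$ choices for it. Thus each fibre contains at most $C^{N-d}$ points $\bfw$. Since $A\in\GL_N(\QQ)$ is injective, each $\bfw$ comes from at most one $\bfv$.

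Multiplying the two counts gives $\ll_Y C^{N-d}(1+L/m)^d\ll_Y(1+L/m)^d$. The only care needed is that all constants depend only on $Y$ (through $A$, $C$, $N$, $d$), and not on $\bfb$, $\bfv_0$, $L$ or $m$. The one slightly delicate point is the first count: $A\bfv_0$ is not an integer vector, but counting integers in a fixed class mod $m$ in an interval of length $\ll L$ gives $\ll 1+L/m$ in each coordinate regardless of the centre.
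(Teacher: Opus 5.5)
Your proof is correct and follows the paper's route: the paper handles $d=N$ as trivial and otherwise simply invokes Lemma~\ref{Le: NoetherNormalisation}. Your argument is exactly the unpacking of that one-line proof. You count the first $d$ coordinates of $A\bfv$ in a fixed residue class modulo $m$. You then bound each fibre by $C^{N-d}$, using that each $F_j(w_1,\dots,w_d,t)$ has leading coefficient $C\neq 0$ in $t$.
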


\begin{proof}
This is trivial if $d=N$.
If $d\le N-1$, use Lemma~\ref{Le: NoetherNormalisation}.
\end{proof}

The following version of the Ekedahl sieve
for large square-free moduli
also becomes available.
To ease notation we shall henceforth adopt the common notation $A\sim B$ to mean that 
$B<A\leq 2B$ for any $A,B>0$.

\begin{lemma}
\label{Le: StratGeoSieveStep1}
Let $Y\subset\A_\ZZ^N$ be a closed subscheme whose
generic fibre
has dimension at most $d$,
where $0\le d\le N$.
Let $m\ge1$ be an integer.
Let $\bfb\in\ZZ^N$,
$L\ge1$ and $\bfv_0\in\RR^N$.
If $S \gg 1 + L/m$, then
\begin{equation*}
\sum_{\substack{q\sim S \\
\gcd(q,m)=1}}
\sum_{\substack{\bfv\in \ZZ^N \setminus Y(\ZZ) \\
|\bfv-\bfv_0|\le L \\
\bfv\equiv\bfb\bmod m \\
\bfv\bmod q\in Y(\ZZ/q\ZZ)}}
\mu^2(q)
\ll_{Y,\ve} (LS+|\bfv_0|)^\ve (1 + L/m)^{d+1}.
\end{equation*}
\end{lemma}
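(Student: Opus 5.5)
The plan is to use Lemma~\ref{Le: NoetherNormalisation} to trap $Y$ inside a "finite over $\A^d$" shape, and then count by fixing the first $d$ coordinates and bounding how often the remaining ones can be bad.

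We may assume $d\le N-1$. If $d=N$, the count on the left is at most $\tau$-type divisor sums times the number of $\bfv$, which is $\ll (1+L/m)^N$. Even this crude bound needs care, so we simply reduce to the case where $Y$ is a proper subscheme. When $d=N$ the claimed bound $(1+L/m)^{N+1}$ follows by dropping the condition $\bfv \bmod q\in Y$ altogether. We then count pairs $(q,\bfv)$ with $q\sim S$ squarefree and $q\mid P(\bfv)$ for some nonzero polynomial $P$. Since $S\gg 1+L/m$, the extra factor $1+L/m$ absorbs a count of $q$ in residue classes. I will treat the generic case $d\le N-1$ in detail, and the case $d=N$ follows by a similar but easier argument.

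\textbf{Step 1 (normalise).} Apply Lemma~\ref{Le: NoetherNormalisation} with $Y$, obtaining $A$, $C$ and the $F_j$. Replace $\bfv$ by $\bfw=A\bfv$. This changes $L$, $m$ and $\bfv_0$ only by constants depending on $Y$: the congruence $\bfv\equiv\bfb\bmod m$ becomes a congruence for $\bfw$ modulo $m$ in a lattice of bounded index. Primes dividing $C\det A$ are handled by noting $\mu^2(q)$ allows splitting $q=q_0q_1$ with $q_0\mid C\det A$, which costs $O_Y(1)$. For $\bfv\notin Y(\ZZ)$, some polynomial $G$ in the ideal of $Y$ satisfies $G(\bfv)\neq0$. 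Fix generators $G_1,\dots,G_k\in\ZZ[\bfx]$ of $I(Y)$; then the condition $\bfv\bmod q\in Y(\ZZ/q\ZZ)$ implies $q\mid G_i(\bfv)$ for all $i$.

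\textbf{Step 2 (fibre over the first $d$ coordinates).} Write $\bfw=(\bu,\bfz)$ with $\bu\in\ZZ^d$, and fix $\bu$; there are $\ll(1+L/m)^d$ choices. For each $j$, the condition $F_j(\bu,z_j)\equiv 0\bmod p$ for all $p\mid q$ with $p\nmid C$ holds, and $F_j(\bu,\cdot)$ is nonzero mod $p$ of degree $C$. By the CRT, $z_j$ therefore lies in at most $C^{\omega(q)}\ll q^\ve$ residue classes mod $q$. Now split into two cases.

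In case (a), $q$ is large compared to the box length $1+L/m$ in the $\bfz$ directions. Then each $\bfz$ is determined up to $O(1)$ choices per class, but we must still sum over $q\sim S$. Here I use that $\bfv\notin Y(\ZZ)$: choose $i$ with $G_i(\bfv)\ne0$. Since $q\mid G_i(\bfv)$ and $|G_i(\bfv)|\ll(L+|\bfv_0|)^{O(1)}$, the number of $q$ for a given $\bfv$ is at most the divisor function, $\ll (LS+|\bfv_0|)^\ve$. So it suffices to count the $\bfv$ that admit some $q\sim S$ at all.

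The main obstacle is bounding the number of such $\bfv$ by $(1+L/m)^{d+1}$ rather than the trivial $(1+L/m)^N$. I would first try the following device. Reorder so that we sum over $(\bu,z_1)$, that is, over $d+1$ free coordinates, giving $(1+L/m)^{d+1}$ choices, and consider the remaining $z_2,\dots,z_{N-d}$. Given $(\bu,z_1)$, the fibre $Y\cap\{\text{first } d+1 \text{ coordinates fixed}\}$ over $\QQ$ is generically empty, since $Y$ is finite over $\A^d$. Hence for a generic $\bfw_1=(\bu,z_1)$ there is a resultant-type polynomial $R(\bu,z_1,\cdot)$ that does not vanish. More cleanly: for each $j\ge2$, the pair $(z_1,z_j)$ satisfies, modulo $q$, the equations $F_1$, $F_j$ and the elimination of the ideal of $Y$ to the variables $(\bu,z_1,z_j)$. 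The image of $Y$ there has dimension $\le d$, so it is a hypersurface $\{R_j(\bu,z_1,z_j)=0\}$ in $\A^{d+2}$. If $R_j(\bfw)\ne0$ then $q\mid R_j(\bfw)\neq 0$ and $q\gg 1+L/m$. Using the monicity in $z_j$, the number of $z_j$ in an interval of length $\ll L/m\ll S$ with $q\mid R_j$ for some $q\sim S$ is $O(q^\ve)$ per $q$-class. With the divisor bound on $q$, this yields $O((LS)^\ve)$ choices of $z_j$ per $(\bu,z_1)$.

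If instead every $R_j(\bfw)=0$, then $\bfw$ lies on the $\QQ$-variety $\bigcap_j\{R_j=0\}$, which is finite over $\A^{d+1}$. Lemma~\ref{LEM:basically-zippel} then gives $\ll(1+L/m)^{d+1}$ points, and the $q$-count is again a divisor bound via some $G_i(\bfv)\ne0$. Combining the two cases gives the stated bound. I expect the bookkeeping in the case where some but not all $R_j$ vanish to require an induction on $N-d$, peeling off one coordinate at a time.
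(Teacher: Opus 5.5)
There is a genuine gap where you fix $(\bu,z_1)$ and assert that only $O((LS)^\ve)$ values of $z_j$ remain ``with the divisor bound on $q$''. The nonzero integer that $q$ must divide is $R_j(\bu,z_1,z_j)$, which depends on the very coordinate $z_j$ you are counting. So the divisor bound limits the number of $q$ attached to a given $z_j$, not the number of $z_j$ attached to a given $(\bu,z_1)$.

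For a fixed $q$ you do get $O(q^\ve)$ admissible $z_j$. But $q$ has not been pinned down, so you must sum over all $q\sim S$. Since $S$ is only bounded below, this gives nothing better than the trivial $O(1+L/m)$ choices for each $z_j$, i.e.\ the trivial bound $(1+L/m)^N$.

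Two smaller points. An image of dimension at most $d$ in $\A^{d+2}$ is contained in a hypersurface rather than being one. And for $d=N$, dropping the condition $\bfv\bmod q\in Y(\ZZ/q\ZZ)$ altogether gives about $S(1+L/m)^N$, so there too you need the divisor bound on some $G_i(\bfv)\ne0$.

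The missing idea is to make $q$ divide a nonzero integer depending on only $d+1$ coordinates, and to fix those coordinates \emph{before} choosing $q$ and the remaining coordinates. Your first instinct, a polynomial in $(\bu,z_1)$ alone such as $F_1$, points the right way. It fails on the locus $F_1(\bu,z_1)=0$, which is exactly where your anticipated induction would be needed.

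The paper avoids any induction, as follows.
\begin{enumerate}
\item The count of admissible $\bfv$ is monotone in $Y$, so enlarge $Y$ to $Y'=\{F_j(A\bfx)=0\text{ for all }j\}$. This is finite over $\A^d$, so points of $Y'(\ZZ)$ contribute $\ll(1+L/m)^d$ by Lemma~\ref{LEM:basically-zippel}.
\item For $\bfw=A\bfv\notin Y'(\ZZ)$, let $j_*$ be the least index with $F_{j_*}(\bu,w_{d+j_*})\ne0$. There are $\ll(1+L/m)^{d+1}$ choices of $(\bu,w_{d+j_*})$.
\item The divisibility $q\mid F_{j_*}(\bu,w_{d+j_*})$ then leaves $\ll(L+|\bfv_0|)^\ve$ choices of $q$.
\item For every $j\ne j_*$, the congruence $F_j(\bu,y)\equiv0\bmod q$ has $O(q^\ve)$ roots. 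Each root has $O(1)$ admissible lifts, because $\gcd(q,m)=1$ and $q\sim S\gg L/m$.
\end{enumerate}
Summing over the $N-d$ possible values of $j_*$ gives the lemma.
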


\begin{proof}
Since the ideal $I(Y)$ is finitely generated,
the divisor bound on a suitable polynomial in $I(Y)$
for each vector $\bfv\in \ZZ^N \setminus Y(\ZZ)$
implies that
\begin{equation*}
\sum_{\substack{q\sim S \\
\gcd(q,m)=1}}
\sum_{\substack{\bfv\in \ZZ^N \setminus Y(\ZZ) \\
|\bfv-\bfv_0|\le L \\
\bfv\equiv\bfb\bmod m \\
\bfv\bmod q\in Y(\ZZ/q\ZZ)}}
\mu^2(q)
\ll_{Y,\ve}
\sum_{\substack{\textnormal{admissible }\bfv\in \ZZ^N \setminus Y(\ZZ) \\
|\bfv-\bfv_0|\le L \\
\bfv\equiv\bfb\bmod m}} (L+|\bfv_0|)^\ve,
\end{equation*}
where we call a vector $\bfv\in \ZZ^N$
admissible if there exists a square-free integer $q\sim S$
with $\gcd(q,m)=1$
such that $\bfv\bmod q\in Y(\ZZ/q\ZZ)$.
Lemma~\ref{Le: StratGeoSieveStep1} will follow if we can prove that
\begin{equation}
\label{n10:translated-composite-sieve}
f(Y) := \sum_{\substack{\textnormal{admissible }\bfv\in \ZZ^N \\
|\bfv-\bfv_0|\le L \\
\bfv\equiv\bfb\bmod m}} 1
\ll_{Y,\ve} (LS+|\bfv_0|)^\ve (1 + L/m)^{d+1}.
\end{equation}
We include vectors $\bfv\in Y(\ZZ)$ in \eqref{n10:translated-composite-sieve}
so that $f(Y)$ is monotone, in the sense that if $Y \subset Y'$ then $f(Y) \le f(Y')$.
The inequality \eqref{n10:translated-composite-sieve}
is trivial if $d=N$, so suppose that $d\le N-1$.
We may assume $\dim Y_\QQ = d$,
by adding a linear space of generic dimension $d$
to $Y$ if necessary.
After changing variables as in Lemma~\ref{Le: NoetherNormalisation},
and further enlarging $Y$ if necessary,
we may assume in \eqref{n10:translated-composite-sieve}
that $Y$ is the simultaneous zero locus
of integer polynomials of the form
$$F_j(x_1,\dots,x_d,x_{d+j}) = Cx_{d+j}^C + O_{x_1,\dots,x_d}(x_{d+j}^{C-1}),$$
for $1\le j\le N-d$,
where $C\ge 1$.
The contribution of $\bfv\in Y(\ZZ)$ to the left-hand side of
\eqref{n10:translated-composite-sieve}
is negligible by Lemma~\ref{LEM:basically-zippel}.
If $\bfv\notin Y(\ZZ)$, then let $j_*$ be the minimal index $j$
for which $F_j(v_1,\dots,v_d,v_{d+j}) \ne 0$.
Once $v_1,\dots,v_d,v_{d+j_*}$ are chosen,
there are
$O_{Y,\ve}((L+|\bfv_0|)^\ve)$ choices for $q\mid F_{j_*}(v_1,\dots,v_d,v_{d+j_*})$,
by the divisor bound.
For each $j\ne j_*$, the congruence $F_j(v_1,\dots,v_d,y_{d+j}) \equiv 0\bmod{q}$
has at most $\gcd(C,q) C^{\omega(q)}=O(q^\ve)$ solutions $y_{d+j}\bmod{q}$,
by the Chinese remainder theorem.
Each residue $y_{d+j}\bmod{q}$ has $O(1 + L/(qm)) = O(1)$
integral lifts $v_{d+j}$ satisfying
$v_{d+j}\equiv y_{d+j}\bmod q$ and
$v_{d+j}\equiv b_{d+j}\bmod m$,
since $\gcd(q,m)=1$ and $q\sim S \gg L/m$.
Thus the contribution to \eqref{n10:translated-composite-sieve}
from the index $j_*$
is $$\ll_{Y,\ve} \sum_{v_1,\dots,v_d,v_{d+j_*}} (L+|\bfv_0|)^\ve S^\ve
\ll_\ve (LS+|\bfv_0|)^{2\ve} (1+L/m)^{d+1}.$$
Summing over $1\le j_*\le N-d$ completes the proof of \eqref{n10:translated-composite-sieve}.
\end{proof}

\subsection{A geometric sieve for stratifications}

We now give a stratified geometric sieve,
concerning an arbitrary finite collection of closed subschemes.

\begin{prop}\label{lem:stratified-composite-sieve}
Let $Y_0,Y_1,\ldots,Y_s\subset\A_\ZZ^N$ be closed subschemes
whose generic fibres have dimensions at most $d_0,d_1,\dots,d_s$,
respectively.
Suppose that
\[
 N\ge d_0\ge d_1\ge\cdots\ge d_s\ge0.
\]
Let $\mathcal U \subset Y_0(\ZZ)$ and $0 \le \alpha \le d_0$.
Suppose that
uniformly over $\bfv_0\in \RR^N$,
$L\ge 1$,
$\bfa\in\ZZ^N$
and integers $q\ge 1$,
\begin{equation}
\label{eq:stratified-progression-hypothesis}
\#\{\bfv\in \mathcal U:
|\bfv-\bfv_0|\le L,\;
\bfv\equiv\bfa\bmod q\}
\ll_{\mathcal U,\ve}
(L+|\bfv_0|)^\ve (1+L/q)^\alpha.
\end{equation}
Let $m\ge 1$ be an integer.
Let $\bfb\in\ZZ^N$,
$L\ge1$ and $\bfv_0\in\RR^N$.
Let
$${\textstyle \mathcal C
:= \left\{\bfv\in \mathcal U \setminus \bigcup_{i=1}^s Y_i(\ZZ):
|\bfv-\bfv_0|\le L,\;
\bfv\equiv\bfb\bmod m\right\}.}$$
For $R_1,\ldots,R_s \ge 1/2$, let
\[
\mathcal N(\bm R)
:= \sum_{\substack{q_i\sim R_i \\
\gcd(q_i,q_j)=1\ (i\ne j) \\ \gcd(q_i,m)=1}}
\sum_{\bfv\in\mathcal C}
\prod_{i=1}^s \mu^2(q_i)
\bm1_{\bfv\bmod q_i\in Y_i(\ZZ/q_i\ZZ)}.
\]
Then
\begin{equation}\label{eq:stratified-composite-sieve}
\mathcal N(\bm R)
\ll_{\mathcal U,Y_0,\dots,Y_s,\alpha,\ve}
(LR_1\cdots R_s + |\bfv_0|)^\ve
\left(1
+ \frac{(L/m)^\alpha}{\prod_{i=1}^s R_i^{\alpha-d_i-1}}
+ \sum_{j=1}^s \frac{(L/m)^{d_j+1}}{\prod_{i=j+1}^s R_i^{d_j-d_i}}
\right),
\end{equation}
where empty products are interpreted as $1$.
\end{prop}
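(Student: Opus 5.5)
We would prove \eqref{eq:stratified-composite-sieve} by splitting the $\bm R$-range according to which moduli are large compared with the box, and treating each regime by an earlier lemma.

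\emph{Preliminary divisor reduction.} Fix $\bfv\in\mathcal C$. For every $i$ we have $\bfv\notin Y_i(\ZZ)$, so some fixed generator $G_i\in I(Y_i)$ satisfies $G_i(\bfv)\neq 0$. The condition $\bfv\bmod q_i\in Y_i(\ZZ/q_i\ZZ)$ then forces $q_i\mid G_i(\bfv)$. By the divisor bound, once $\bfv$ is fixed there are only $O_\ve((L+|\bfv_0|)^\ve)$ admissible $q_i$, exactly as in the proof of Lemma~\ref{Le: StratGeoSieveStep1}. Hence we may always sum trivially over the moduli with small index, at the cost of an $\ve$-power, and the real task is to control the large moduli.

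\emph{Choice of the cut-off index.} Let $c$ be a large constant. Define $j\in\{0,\dots,s\}$ to be the largest index for which
\[
R_j\ \ge\ c\Bigl(1+\frac{L}{mR_{j+1}\cdots R_s}\Bigr),
\]
with $j=0$ if no such index exists. Maximality of $j$ gives, for every $i>j$,
\[
R_i< c\Bigl(1+\frac{L}{mR_{i+1}\cdots R_s}\Bigr).
\]
Inducting downwards from $i=s$, this yields $R_{j+1}\cdots R_s\ll 1+L/m$.

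\emph{Bounding the contribution with $j\ge1$.}
\begin{itemize}
\item Fix $q_{j+1},\dots,q_s$ and set $m'=mq_{j+1}\cdots q_s$.
\item For squarefree $q_i$, the Chinese remainder theorem together with a Lang--Weil (or Schwartz--Zippel) bound over each $\FF_p$ gives $\#Y_i(\ZZ/q_i\ZZ)\ll q_i^{d_i+\ve}$.
\item Fix one residue class of $\bfv$ modulo $m'$ compatible with $Y_i$ modulo $q_i$ for all $i>j$, and drop the condition $\bfv\in\mathcal U$.
\item The pairs $(q_j,\bfv)$ with $\bfv\notin Y_j(\ZZ)$, $\bfv\equiv$ the fixed class mod $m'$, $|\bfv-\bfv_0|\le L$ and $\bfv\bmod q_j\in Y_j(\ZZ/q_j\ZZ)$ are counted by Lemma~\ref{Le: StratGeoSieveStep1}. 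Its hypothesis $R_j\gg 1+L/m'$ holds by the choice of $j$.
\item This gives $\ll(\cdots)^\ve(1+L/m')^{d_j+1}$ per class. Since $m'\ll m(1+L/m)$ by the product bound above, this is $\ll(\cdots)^\ve(L/m')^{d_j+1}$ up to the harmless ``$1$'' term.
\item The moduli $q_1,\dots,q_{j-1}$ are absorbed by the divisor reduction.
\item Summing over the $\ll\prod_{i>j}R_i^{d_i+1+\ve}$ choices of $(q_i,\text{class})$ gives
\[
\frac{(L/m)^{d_j+1}\prod_{i>j}R_i^{d_i+1}}{\prod_{i>j}R_i^{d_j+1}}
=\frac{(L/m)^{d_j+1}}{\prod_{i>j}R_i^{d_j-d_i}},
\]
which is the $j$-th term of \eqref{eq:stratified-composite-sieve}.
\end{itemize}

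\emph{Bounding the contribution with $j=0$.} Here all moduli are small, so we keep $\bfv\in\mathcal U$ and apply hypothesis \eqref{eq:stratified-progression-hypothesis} with modulus $mq_1\cdots q_s$ to each admissible class. This gives $(\cdots)^\ve(1+L/(mq_1\cdots q_s))^\alpha$ per class. Summing over $\prod_iR_i^{d_i+1+\ve}$ classes and moduli gives
\[
\frac{(L/m)^\alpha}{\prod_iR_i^{\alpha-d_i-1}}
\]
when $L\ge m\prod_iR_i$. The complementary range is again absorbed by the product bound $\prod_iR_i\ll 1+L/m$, together with $\alpha\le d_0$; this is what produces the constant term $1$.

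\emph{Main obstacle.} The delicate point is the bookkeeping at the boundary of the dichotomy. When $L/m'<1$, the bounds $(1+L/m')^{\beta}$ do not reduce to $(L/m')^{\beta}$. One must check that the cut-off $R_{j+1}\cdots R_s\ll 1+L/m$ always absorbs this loss, and does so simultaneously for the progression condition modulo $m$, the coprimality of the $q_i$, and the $\ve$-losses from the divisor and Chinese remainder bounds. If the greedy choice of $j$ proves too crude at this boundary, we would instead first split dyadically into $2^s$ cases, according to whether each $R_i$ exceeds $1+L/(mR_{i+1}\cdots R_s)$, and treat each case separately.
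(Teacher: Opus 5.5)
Your proposal is correct and is essentially the paper's argument. You use the divisor bound for the moduli below a cut-off index $j$, apply Lemma~\ref{Le: StratGeoSieveStep1} to $Y_j$ with the enlarged modulus $m\prod_{i>j}q_i$ while counting residue classes modulo $\prod_{i>j}q_i$ via CRT and Lang--Weil, and use the hypothesis \eqref{eq:stratified-progression-hypothesis} with modulus $mq_1\cdots q_s$ when all moduli are small. The paper streamlines the boundary bookkeeping you worry about by first disposing of $L\le m$ trivially and then choosing $j$ with $R_{j+1}\cdots R_s\ll L/m\ll R_j\cdots R_s$, which plays the same role as your greedy choice; your check via $R_{j+1}\cdots R_s\ll 1+L/m$ does go through, and the hypothesis $\alpha\le d_0$ is not actually needed for it.
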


\begin{proof}
The contribution to $\mathcal N(\bm R)$
from any individual vector $\bfv\in\mathcal C$
is $\ll (L+|\bfv_0|)^\ve$,
by the divisor bound.
Thus \eqref{eq:stratified-composite-sieve} is trivial if $L \le m$.
We may therefore assume $L>m$.

By the Lang--Weil bound,
we have $\#Y_i(\FF_p) \ll p^{d_i}$ for all $p$.
If $R_1\cdots R_s\le L/m$,
then there are $\ll \prod_i q_i^{d_i+\ve}$
available residue classes $\bfv\bmod{q_1\cdots q_s}$.
Applying \eqref{eq:stratified-progression-hypothesis}
with $q = mq_1\cdots q_s \ll L$ gives
\begin{equation*}
\frac{\mathcal N(\bm R)}{(LR_1\cdots R_s + |\bfv_0|)^\ve}
\ll \sum_{q_i\sim R_i}
(L/q)^\alpha
\prod_i q_i^{d_i}
\ll \frac{(L/m)^\alpha}{\prod_{i=1}^s R_i^{\alpha-d_i-1}},
\end{equation*}
which is satisfactory.
If $R_1\cdots R_s > L/m$, choose $1\le j\le s$ such that
\[
R_{j+1}\cdots R_s \ll L/m \ll R_jR_{j+1}\cdots R_s.
\]
This is possible because $R_1,\dots,R_s \gg 1$ and $1 \ll L/m \ll R_1\cdots R_s$.
Given $\bfv$, the number of available moduli $q_i$
is $\ll (L+|\bfv_0|)^\ve$ for each $i<j$,
by the divisor bound.
On the other hand, there are $\ll \prod_{i>j} q_i^{d_i+\ve}$
available residue classes $\bfv\bmod{\prod_{i>j} q_i}$.
Lemma~\ref{Le: StratGeoSieveStep1},
with $M := m\prod_{i>j} q_i \gg L/R_j$ in place of $m$,
with $Y := Y_j$ and with $S := R_j \gg 1 + L/M$, gives
\begin{equation*}
\frac{\mathcal N(\bm R)}{(LR_1\cdots R_s + |\bfv_0|)^\ve}
\ll \sum_{q_i\sim R_i\ (i>j)} (1+L/M)^{d_j+1}
\prod_{i>j} q_i^{d_i}
\ll \sum_{q_i\sim R_i\ (i>j)} (L/M)^{d_j+1}
\prod_{i>j} q_i^{d_i}
\end{equation*}
since $L/M \gg 1$.
This is $\ll (L/m)^{d_j+1} \prod_{i>j} R_i^{d_i-d_j}$, which is satisfactory.
\end{proof}

\subsection{Oscillatory integrals}

We proceed by  looking at cubic  weighted exponential integrals. 
Let $w: \RR^n \rightarrow \RR_{\geq 0}$ be a smooth weight function
 of compact support.
Let $P\geq 1$ and 
consider the  exponential integral
\begin{equation}\label{eq:defIq}
 I(\theta;\bfbe)=\int w(\bfx/P)e(\theta F(\bfx)-\bfbe\cdot \bfx)\mathrm{d}\bfx,
\end{equation}
for arbitrary $\theta \in \RR$ and $\bfbe\in \RR^n.$ We extract the following result from \cite{41}.

\begin{lemma}\label{lem:Iq'}
Let $w(\bfx)=\omega((\bm{x}_0-\bfx)/\ell)$, where
$\omega\in C_c^\infty(\RR^n)$ is supported on $[-1,1]^n$,
and $\ell>0$ and $\bm{x}_0\in\RR^n$ are fixed.
Let $\ve>0$ and let $N\geq 1$.
Assume $1\le q\le P^2$ and $|\theta| \le q^{-1}P^{-1}$.
Then we have
$$
I(\theta;q^{-1}\bfv)
\ll_{F,\ve,N} |\bfv|^{-N},
$$
if $|\bfv|\gg P^3$. Alternatively, when
$|\bfv|\ll P^3$, we have
$$
I(\theta;q^{-1}\bfv)
\ll_{F,\ve,N}
P^{-N}+\meas\left\{\bfx\in\RR^n:
|\bfx/P-\bm{x}_0|\le 2\ell,\;
|q\theta\nabla F(\bfx)-\bfv|\le P^\ve V_0
\right\},
$$
where $V_0=qP^{-1}\max\{1,\sqrt{|\theta|P^3}\}$.
Finally, we have
$$
I(\theta;q^{-1}\bfv)
\ll_{F,\ve,N} P^{-N},
$$
if $|\bfv|\ge P^{2\ve}qP^{-1}\max\{1,|\theta|P^3\}$.
The implied constants above may depend on $\omega$, $\ell$ and $\bm{x}_0$
in addition to $F,\ve,N$.
\end{lemma}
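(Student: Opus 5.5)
The plan is to analyse $I(\theta;q^{-1}\bfv)$ by repeated integration by parts, treating the phase
\[
\phi(\bfx)=\theta F(\bfx)-q^{-1}\bfv\cdot\bfx,\qquad \nabla\phi(\bfx)=\theta\nabla F(\bfx)-q^{-1}\bfv .
\]
First substitute $\bfx=P\bfy$. Then
\[
I=P^n\int \omega((\bm{x}_0-\bfy)/\ell)\,e(\Phi(\bfy))\,\dd\bfy,
\qquad \Phi(\bfy)=\theta P^3F(\bfy)-q^{-1}P\,\bfv\cdot\bfy,
\]
with $\bfy$ restricted to the fixed compact box $|\bfy-\bm{x}_0|\le \ell$. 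On this box, $|\nabla(\theta P^3 F)|\ll |\theta|P^3$ and all higher derivatives of $\theta P^3F$ are $\ll |\theta| P^3$ as well.

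For the first claim, $|\bfv|\gg P^3$ together with $|\theta|\le q^{-1}P^{-1}$ gives
\[
|q^{-1}P\bfv|\gg q^{-1}P^4\ge |\theta|P^3\cdot P^2\gg |\theta|P^3 ,
\]
so $|\nabla\Phi|\gg q^{-1}P|\bfv|$ throughout the box. Since the Hessian of $\Phi$ has size $\ll|\theta|P^3\ll |\nabla\Phi|$, the standard non-stationary phase lemma applies: each integration by parts gains a factor $\ll |\nabla \Phi|^{-1}$. Here $q\le P^2$ gives $q^{-1}P|\bfv|\ge |\bfv|/P\gg|\bfv|^{2/3}$. Iterating enough times yields $I\ll P^n|\bfv|^{-2N/3}$, and $P^n\le |\bfv|^{n/3}$ is then absorbed by taking more integrations.

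The third claim follows from the same argument. The hypothesis gives $|q^{-1}P\bfv|\ge P^{2\ve}\max\{1,|\theta|P^3\}$, which dominates the gradient of $\theta P^3F$ by a factor $P^{2\ve}$. Hence $|\nabla\Phi|\gg P^{2\ve}\max\{1,|\theta|P^3\}$, and each integration by parts gains $P^{-\ve}$ even after accounting for derivatives of $\theta P^3F$ of size $|\theta|P^3$. The gain per step is $|\Phi''|/|\nabla\Phi|^2+1/|\nabla\Phi|\ll P^{-2\ve}$. After $O(N/\ve)$ steps we obtain $P^{-N}$.

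The middle claim requires localisation, and is where I expect the main obstacle. Take a smooth partition of unity on the $\bfx$-box at scale $\rho=P/\max\{1,\sqrt{|\theta|P^3}\}$, the natural stationary-phase scale. On a piece where $|q\theta\nabla F(\bfx)-\bfv|>P^\ve V_0$ at its centre, I would show non-stationary phase gives $P^{-N}$. There the gradient satisfies
\[
|\nabla\phi|\ge P^\ve V_0/q\,-\,O(|\theta| P\rho)=P^{\ve}\max\{1,\sqrt{|\theta|P^3}\}/P - O(\cdot),
\]
and the variation of $\theta\nabla F$ across the piece is $\ll|\theta|P\rho\le \max\{1,\sqrt{|\theta| P^3}\}/P$, which is smaller by $P^{-\ve}$. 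Each integration by parts on such a piece gains
\[
(\rho|\nabla\phi|)^{-1}+|\theta|P/|\nabla\phi|^2\ll P^{-\ve}.
\]
The remaining pieces are those whose centre satisfies the inequality. Bounding them trivially by volume, and enlarging to $|q\theta\nabla F(\bfx)-\bfv|\le 2P^\ve V_0$ on a box of radius $2\ell$, recovers the stated measure after adjusting $\ve$. The delicate bookkeeping is matching $V_0$ to the partition scale in both regimes $|\theta|P^3\lessgtr1$; this is essentially \cite{41}, from which the lemma is extracted.
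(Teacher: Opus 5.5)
Your proof is correct, and for the first two parts it matches what the paper relies on. The paper gives no argument of its own there beyond citing \cite[Lemma~9 and its proof]{41} (with $g=F$, $H=O(1)$). It remarks only that the restriction to the box $|\bfx/P-\bm{x}_0|\le 2\ell$ can be kept because $w(\bfx/P)$ is supported in the smaller box.

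Your global non-stationary phase for $|\bfv|\gg P^3$, and your localisation at scale $\rho\asymp P/\max\{1,\sqrt{|\theta|P^3}\}$, are the standard argument behind that citation. The localisation produces the $2\ell$-box automatically, with two small adjustments:
\begin{enumerate}
\item Take $\rho$ to be a small constant multiple of $P/\max\{1,\sqrt{|\theta|P^3}\}$. For $|\theta|P^3\le 1$ your $\rho=P$ is larger than the box radius $\ell P$.
\item Run the argument with $\ve/2$ in place of $\ve$, to absorb the factor $2$ in $2P^{\ve}V_0$.
\end{enumerate}

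You genuinely differ from the paper in the third part. Instead of integrating by parts again, the paper deduces it from the second bound by showing that the measure there is over the empty set. On the box one has $|q\theta\nabla F(\bfx)|\ll q|\theta|P^2$.
\begin{itemize}
\item If $|\theta|P^3<1$, then $q|\theta|P^2<qP^{-1}=V_0$.
\item If $|\theta|P^3\ge 1$, then $V_0\le q|\theta|P^2$.
\end{itemize}
In either case, a point of the set would force $|\bfv|\le |q\theta\nabla F(\bfx)|+P^\ve V_0\ll P^{\ve}qP^{-1}\max\{1,|\theta|P^3\}$. This contradicts $|\bfv|\ge P^{2\ve}qP^{-1}\max\{1,|\theta|P^3\}$ once $P$ is large.

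The paper's route is a two-line corollary once the second part is available. The first part covers $|\bfv|\gg P^3$, where the second part's hypothesis does not apply. Your direct non-stationary phase estimate, with gain $|\Phi''|/|\nabla\Phi|^2+|\nabla\Phi|^{-1}\ll P^{-2\ve}$ per step, needs no case split in $|\bfv|$. It is self-contained and does not depend on the cited localisation lemma.
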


\begin{proof}
We may assume that $P$ is sufficiently large, the bounded $P$ case
following from the trivial bound and integration by parts.
The first two parts directly follow from  taking $g=F$ and $H=O(1)$ in
\cite[Lemma~9 and its proof]{41}.
(Although \cite[Lemma~9]{41} is written without the condition $|\bfx/P-\bm{x}_0|\le 2\ell$,
it is clear from the proof that we may include it, since $w(\bfx/P)$
is supported on the strictly smaller box $|\bfx/P-\bm{x}_0|\le \ell$.)
For the final part, it suffices to show that the volume is over the empty  set in the second estimate 
for $I(\theta;q^{-1}\bfv)$,  
if $|\bfv|\geq P^{2\ve} qP^{-1} \max\left\{1,|\theta |P^3\right\}$. 
Note that $q\theta \nabla F(\bfx)\ll q|\theta|P^2$ if $|\bfx/P-\bm{x}_0|\le 2\ell$.  If 
$|\theta |P^3<1$, then the claim is obvious.  
If $|\theta |P^3\geq 1$, then the claim is also clear, since  then 
$|\bfv|\geq P^{2\ve} qP^{-1}\cdot |\theta |P^3=q |\theta |P^{2+2\ve}\gg q |\theta |P^{2}$, which would imply the contradictory upper bound
$|\bfv|\ll P^\ve V_0 = P^\ve  qP^{-1} \sqrt{|\theta |P^3}$.
\end{proof}

\section{Geometry of cubic forms}
\label{sec : geom_cubic_hyper}

In this section we collect some facts about the geometry of cubic forms over $\QQ$.
In later sections, we will be able to spread out much information from $\QQ$
to finite fields $\FF_p$,
for all but finitely many primes $p$.
Let us now introduce the main objects of interest.

Let $F\in \QQ[x_1,\dots,x_n]$ be a nonzero cubic form in $n\ge 2$ variables.
We make no further assumptions about $F$,
except where specified otherwise.
This level of generality will be useful for
a slicing argument in \S\ref{sec:dual-tangential-geometry}.
Let $$H(\bm{x})\in \Mat_{n\times n}(\QQ[x_1,\dots, x_n])$$
be the Hessian matrix of $F$,
as in \S\ref{sec:intro},
formed from the second partial derivatives of $F$.
This gives a vector-valued bilinear form
$H(\bm{x})\bm{y}=H(\bm{y})\bm{x}$ on $\QQ^n\times \QQ^n$.

Let $X\subset \PP^{n-1}$ be the closed subscheme $F=0$ over $\Qbar$.
In this section, we work with schemes
in order to avoid edge cases concerning the definition of $\sing X$,
and we work over $\Qbar$
in order to avoid cumbersome base-change notation.
Thus $X$ is a $\Gal(\Qbar/\QQ)$-stable scheme over $\Qbar$,
and $X(\QQ)$ consists of the $\Gal(\Qbar/\QQ)$-invariant $\Qbar$-points of $X$.
Let
\begin{equation}
\label{Eq: Def.hessian.Incidence}
I := \{(\bm{x},\bm{y})\in \A^n\times \A^n\colon H(\bm{x})\bm{y}=0\}
\end{equation}
be the Hessian incidence correspondence;
its scheme structure is of no matter to us.
On the other hand, given an integer $0\leq r \leq n$, define
\begin{equation}
\label{EQ:Hessian-Vr-locus}
V_r := \{\bm{x}\in \A^n\colon \rank H(\bm{x})\leq r\},
\end{equation}
with the \emph{reduced} scheme structure.
The tangent space $T_{\bfx}V_r$ at a point $\bfx\in V_r$ is thus defined
in terms of the \emph{radical} of the determinantal ideal associated to $V_r$.
Explicitly, $T_{\bfx}V_r$ may be identified with the set of $\Qbar[\ve]/(\ve^2)$-valued
points of $V_r$
whose reduction modulo $\ve$ is $\bfx$.

We define $r(F)$ to be the maximum of the rank of $H(\bm{x})$ as $\bm{x}$ ranges over $X(\Qbar)$. 
Building on work of Zak \cite{myguyzak},
we will show in Lemma~\ref{LEM:my-guy-slop} that $r(F) \ge 2\lceil n/3\rceil$
if $X(\QQ)=\emptyset$.
Indeed, some results in this section will only be proven
under the assumption $X(\QQ)=\emptyset$,
in which case this assumption will always be clearly stated.

The next lemma is a geometric reformulation of a key identity
\cite[\S3, equation~(3.3)]{BDE12}
from Davenport's bad-form analysis \cite[proof of Lemma~3]{Dav63}.
This formulation has been observed before by
Rydin Myerson \cite[\S1.4]{RM19}.
For completeness, we supply a proof.

\begin{lemma}
\label{LEM:dav-bad-tangent-space}
Let $0\le r\le n$.
Put $V_{-1}:=\emptyset$.
If $\bfx\in V_r \setminus V_{r-1}$,
$\bfv\in T_{\bfx}V_r$,
and $\bfy,\bfz\in\ker H(\bfx)$,
then $$\bfy^tH(\bfv)\bfz=0.$$
\end{lemma}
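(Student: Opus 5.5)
We use a first-order deformation argument with dual numbers. A tangent vector $\bfv\in T_{\bfx}V_r$ is the same as a $\Qbar[\ve]/(\ve^2)$-point $\bfx+\ve\bfv$ of $V_r$. Since $V_r$ carries the reduced structure, we must first check that every $r+1$ minor of $H(\bfx+\ve\bfv)$ vanishes modulo $\ve^2$. The concern is that the radical ideal could be strictly larger than the determinantal ideal. However, the determinantal ideal is contained in its radical, so any dual-number point of the reduced scheme is in particular a point of the determinantal scheme. Hence all $(r+1)\times(r+1)$ minors of
\[
H(\bfx+\ve\bfv)=H(\bfx)+\ve H(\bfv)
\]
vanish in $\Qbar[\ve]/(\ve^2)$. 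This uses linearity of the Hessian of a cubic form.

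Next we normalise. Since $\bfx\notin V_{r-1}$, the matrix $H(\bfx)$ has rank exactly $r$. Choose invertible matrices $P,Q$ over $\Qbar$ with
\[
PH(\bfx)Q=\begin{pmatrix} I_r&0\\0&0\end{pmatrix}.
\]
Multiplying by invertible constant matrices preserves the ideal of $(r+1)$-minors. So the matrix $M:=PH(\bfx)Q+\ve PH(\bfv)Q$ still has all $(r+1)$-minors equal to zero mod $\ve^2$.

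Write $B:=PH(\bfv)Q$ in block form, with lower-right $(n-r)\times(n-r)$ block $B_{22}$. Consider the minor on rows $\{1,\dots,r,r+i\}$ and columns $\{1,\dots,r,r+j\}$. Modulo $\ve^2$ it equals $\ve(B_{22})_{ij}$: the identity block contributes $1$, and every other term carries a factor $\ve$ from the last row and another from the last column, except the one term that uses the $(r+i,r+j)$ entry, which is $\ve(B_{22})_{ij}$. Hence $B_{22}=0$.

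Finally we translate back. The kernel of $H(\bfx)$ is $Q\cdot\{0\}^r\times\Qbar^{n-r}$. The left kernel is spanned by the last $n-r$ rows of $P$, and by symmetry of $H(\bfx)$ it equals the right kernel transposed. So for $\bfy,\bfz\in\ker H(\bfx)$ we may write $\bfz=Q\bfz'$ and $\bfy^t=\bfy'^tP$, with $\bfy',\bfz'$ supported in the last $n-r$ coordinates. Then
\[
\bfy^tH(\bfv)\bfz=\bfy'^tB\bfz'=\bfy'^tB_{22}\bfz'=0.
\]

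The only point needing care is the first step: relating tangent vectors of the reduced scheme to the determinantal equations. Since reducedness only adds equations, this direction is harmless, and the remaining steps are linear algebra.
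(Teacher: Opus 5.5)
Your proof is correct and follows essentially the paper's argument. Both normalise $H(\bfx)$ so that a nonsingular $r\times r$ block sits in the corner, expand the bordered $(r+1)$-minors of $H(\bfx+\ve\bfv)$ to first order in $\ve$ to kill the complementary block of $H(\bfv)$, and conclude because kernel vectors live in the complementary coordinates. The only cosmetic difference is that the paper uses a linear change of variables, which keeps the Hessian symmetric, and treats $r=0$ and $r=n$ separately. You instead use a two-sided equivalence $PH(\bfx)Q$, invoke the symmetry of $H(\bfx)$ to identify the left and right kernels, and handle all $r$ uniformly.
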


\begin{proof}
If $r=n$, the claim is trivial. If $r=0$, then $V_0$ is a
linear space, so $\bfv\in V_0$ and $H(\bfv)=0$.
Thus assume $0<r<n$.
After a linear change of variables, we may suppose that
$$
H(\bfx)=
\begin{pmatrix}
M&0\\
0&0
\end{pmatrix},
\qquad \det M\ne0,
$$
where $M$ is an $r\times r$ matrix.
For $i,j>r$, let $\Delta_{ij}(\bfw)$ be the minor of $H(\bfw)$
formed from rows $1,\dots,r,i$ and columns $1,\dots,r,j$.
Since $\Delta_{ij}$ vanishes on $V_r$ and
$\bfv\in T_{\bfx}V_r$, its differential at $\bfx$ annihilates
$\bfv$. On the other hand, linearity of $H$ gives
$$
\Delta_{ij}(\bfx+\ve\bfv)
\equiv \ve\det(M)H(\bfv)_{ij}\pmod{\ve^2}.
$$
Hence $H(\bfv)_{ij}=0$ for all $i,j>r$.
Since $\bfy,\bfz\in\ker H(\bfx)$ have their first $r$
coordinates zero, this proves $\bfy^tH(\bfv)\bfz=0$.
\end{proof}

\begin{rem}
Let us briefly sketch a
more conceptual,
but less elementary,
proof of Lemma~\ref{LEM:dav-bad-tangent-space}.
By general machinery or explicit calculation,
the vector spaces $\ker H(\bfx)$
form a vector bundle over $\bfx\in V_r \setminus V_{r-1}$.
Let $\boldsymbol{\zeta}(\bfw)$ be a local section extending $\bm{z}$ for $\bfw$ in a sufficiently small neighbourhood of $\bfx$, with
$H(\bfw)\boldsymbol{\zeta}(\bfw)=\bm0$
and $\boldsymbol{\zeta}(\bfx)=\bfz$.
K\"{a}hler differentiation at $\bfx$ in the direction $\bfv$,
using the product rule, gives $H(\bfv)\bfz+H(\bfx)d\boldsymbol{\zeta}_{\bfx}(\bfv)=\bm0$.
Multiplying by $\bfy^t$ gives $\bfy^tH(\bfv)\bfz=0$,
since $\bfy^tH(\bfx)=0$.
\end{rem}

The following definition formalises a notion that we already met in the introduction. 

\begin{defin}
\label{defn : good}
We say that the cubic form $F$ is {\em good} if
for any $H \geq 1$ and any $\ve>0$ we have the upper bound   
\[
\#\{\bfz\in\mathbb{Z}^n: |\bfz|\leq H,~ \rank(H(\bfz))=r\}
\ll_{F,\varepsilon} H^{r+\varepsilon},
\]
for each $0\leq r\leq n$,
where the implied constant only depends on $F$ and $\ve$.
\end{defin}

It follows from work of Davenport \cite{Dav63} that if $X(\QQ) = \emptyset$,
then $F$ is good.
This implication is valid for any $n\ge 2$.  In particular, $\det H$ is not the zero polynomial. 
A similar idea to Davenport's leads to the following result.

\begin{lemma}
\label{LEM:my-guy-slop}
Assume $X(\QQ)=\emptyset$.
If $n\ge 2$, then $r(F) \ge 2 \lceil n/3 \rceil$.
\end{lemma}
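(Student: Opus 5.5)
We argue by contradiction. Suppose $X(\QQ)=\emptyset$ and $r:=r(F)<2\lceil n/3\rceil$. By Proposition~\ref{Bro-integral-cone}, $X$ is geometrically integral and not a cone, so the Hessian is not identically degenerate in an obvious way. The plan is a Galois-descent version of Davenport's bad-form argument: manufacture a nonzero $\QQ$-rational subspace on which $F$ splits as a direct sum, and then exhibit a rational zero.

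\textbf{Step 1 (conjugate points on a rational line).} Choose a generic rational line $\ell\subset\PP^{n-1}$. Then $F|_\ell$ is a binary cubic with no rational root, hence irreducible. So $X\cap\ell$ consists of three Galois-conjugate points $\bfx_1,\bfx_2,\bfx_3$ defined over a cubic field. By genericity we may also assume $\rank H(\bfx_i)=r$. Since $H(\bfx)$ is linear in $\bfx$ and $\bfx_3\in\mathrm{span}(\bfx_1,\bfx_2)$, the subspace
\[
W:=\ker H(\bfx_1)\cap\ker H(\bfx_2)
\]
lies in $\ker H(\bfx)$ for every $\bfx$ in the plane $\hat\ell$ over $\ell$. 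It is therefore Galois-stable, hence defined over $\QQ$, and $\dim W\ge n-2r$.

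\textbf{Step 2 (direct sum decomposition).} Take $\bfu\in\hat\ell$ and $\bfw\in W$, and Taylor expand $F(\bfu+\bfw)$. The terms linear in $\bfw$ are multiples of $\bfu^tH(\bfu)\bfw=0$. The terms quadratic in $\bfw$ are multiples of $\bfw^tH(\bfu)\bfw=0$. Hence
\[
F(\bfu+\bfw)=F(\bfu)+F(\bfw)\quad\text{on the rational space } \hat\ell\oplus W,
\]
provided $\hat\ell\cap W=0$, which holds generically since $X$ is not a cone. We also use Lemma~\ref{LEM:dav-bad-tangent-space} to refine this. Let $\bfx$ be a generic point of $V_r\cap \hat X$, where $\hat X$ is the affine cone over $X$ and $\bfx$ ranges over it as $\ell$ varies. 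Then $\bfx$ and its tangent directions $\bfv$ satisfy $\bfy^tH(\bfv)\bfz=0$ for $\bfy,\bfz\in\ker H(\bfx)$. This says the kernels are isotropic for $H(\bfv)$, i.e.\ $F$ restricted to $\ker H(\bfx)$ has vanishing Hessian in many directions. The aim is to boost the dimension bound on $W$, or the degeneracy of $F|_W$, beyond the naive $n-2r$.

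\textbf{Step 3 (producing a rational zero).} Write $F|_{\hat\ell\oplus W}=G(s,t)+F_W(\bfw)$, where $G$ is an irreducible binary cubic. If $F_W$ has a rational zero $\bfw\ne\bm 0$, we are done, since $F(\bfw)=0$ contradicts $X(\QQ)=\emptyset$. So we may assume $F_W$ is anisotropic. Then we apply an induction on $n$ to the form $F_W$: its Hessian rank on its zero locus is controlled by that of $F$ via the decomposition. We also exploit that a form $G\oplus F_W$ with $\dim W$ large enough represents zero. Combining these, the inequality $r<2\lceil n/3\rceil$, i.e.\ $n-2r\ge n-4\lceil n/3\rceil+4$, should force too large an anisotropic direct sum.

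\textbf{Main obstacle.} The naive count $\dim W\ge n-2r$ is too weak on its own when $n\equiv 0\pmod 3$ and $n$ is large. This is where the input of Zak~\cite{myguyzak} must enter. That input bounds the dimension of the locus where the Hessian (equivalently the Gauss map of $X$) has small rank, in terms of the dimension of linear spaces contained in $X$, or of the secant/dual variety. My first attempt would be as follows. Show that the fibres of the Gauss map through $\bfx_1$ are linear spaces of dimension $n-1-r$ contained in $X$. Then use Galois conjugation of these three linear spaces, together with Zak's bound on pairwise intersections, to obtain a rational linear subspace of $X$. A rational point on such a subspace contradicts $X(\QQ)=\emptyset$. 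The threshold $2\lceil n/3\rceil$ is exactly where three conjugate linear spaces of dimension $n-1-r$ in $\PP^{n-1}$ are forced to meet in a Galois-stable nonempty set.
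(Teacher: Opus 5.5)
Your proposal has a genuine gap. None of your three steps gets near $2n/3$, and the ingredient that actually forces this bound is missing.

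\textbf{Steps 1--2 are vacuous.} Since $X(\QQ)=\emptyset$, Davenport's argument \cite{Dav63} shows that $F$ is good, so $\det H\not\equiv 0$. A generic rational line $\ell$ therefore contains a rational point $\bfa\in\hat\ell$ with $\det H(\bfa)\neq 0$. Your $W$ lies in $\ker H(\bfx)$ for every $\bfx\in\hat\ell$, so $W\subset\ker H(\bfa)=0$. Consequently:
\begin{itemize}
\item the ``direct sum'' is just $F|_{\hat\ell}$;
\item no refinement of $\dim W$ via Lemma~\ref{LEM:dav-bad-tangent-space} is possible;
\item all you extract is $n-2r\le 0$, that is, $r(F)\ge n/2$.
\end{itemize}

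\textbf{Step 3 is not an argument.} It rests on the claim that $G\oplus F_W$ represents zero once $\dim W$ is large. This is unproved, and no such result is available short of the theorem itself. The induction on $F_W$ is also never set up: you never formulate any relation between the Hessian rank of $F_W$ on its zeros and $r(F)$.

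\textbf{The count in your last paragraph is off by a factor of two.} Three projective linear spaces of dimension $k$ in $\PP^{n-1}$ are forced to meet only when $3k\ge 2(n-1)$. Take $k=n-1-r$, or the correct general Gauss-fibre dimension $k=\dim X-\dim X^*=n-r$ (by Segre, $\dim X^*=r(F)-2$). Either way the requirement is $r\le (n+2)/3$. So this mechanism could at best give $r(F)>(n+2)/3$, not the stated threshold.

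\textbf{The missing idea} is Zak's input \cite{myguyzak} in a different form. Since $F$ is irreducible over $\QQ$, every $(r(F)+1)\times(r(F)+1)$ minor of $H$ is divisible by $F$. An elementary lemma on matrices over $\QQ[\bfx]$, applied with the principal prime $(F)$, upgrades this to $F^{\,n-r(F)}\mid\det H$. Combined with $\det H\not\equiv 0$, comparing degrees gives $n\ge 3(n-r(F))$, hence $r(F)\ge 2n/3$. This is the claim when $n\not\equiv 1\bmod 3$.

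When $n\equiv 1\bmod 3$ the ceiling demands one more unit, and the paper rules out $r(F)=(2n+1)/3$ separately:
\begin{itemize}
\item In that case $\det H=F^{(n-1)/3}L$ with $L$ linear. Put $M=\{L=0\}$.
\item Take a rational $\bfx\in M$ at which $H$ attains its generic rank $s<n$ on $M$, and a rational $\bfy\neq\bm0$ with $H(\bfx)\bfy=\bm0$.
\item Then $H(\bfy)\bfx=\bm0$, so $\det H(\bfy)=0$, while $F(\bfy)\neq0$. This forces $L(\bfy)=0$, so $\bfy\in M\subset T_{\bfx}V_s$.
\item Lemma~\ref{LEM:dav-bad-tangent-space} now gives $6F(\bfy)=\bfy^tH(\bfy)\bfy=0$, a contradiction.
\end{itemize}

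Your proposal uses neither $\det H\not\equiv0$ nor any step sensitive to $n\bmod 3$, and both are needed.
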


\begin{proof}
Since $X(\QQ)=\emptyset$, the polynomial $F$ is irreducible over $\QQ$.
It follows from Zak \cite[Proposition~4.2 and Corollary~4.4]{myguyzak} (with $N'=n-1$)
that
\begin{equation}
\label{EQ:my-guy-zak}
\det H \equiv 0\bmod{F^{n-r(F)}}.
\end{equation}
For completeness, let us unravel Zak's proof of \eqref{EQ:my-guy-zak}.
If $r(F)=n$, then \eqref{EQ:my-guy-zak} is trivial,
so assume $r(F)<n$.
By definition of $r(F)$,
every $(r(F)+1)\times(r(F)+1)$ minor of $H$
is divisible by $F$.
The divisibility \eqref{EQ:my-guy-zak} then
follows from \cite[Proposition~4.3]{myguyzak},
an elementary result on matrices,
applied to the matrix $H$
over the integral domain $A = \QQ[\bfx]$,
with principal prime ideal $\mathfrak p = (F)$.

Since $\det H \ne 0$ by goodness (Definition~\ref{defn : good}),
comparing degrees in \eqref{EQ:my-guy-zak}
gives $n \ge 3(n-r(F))$ and hence $r(F)\ge 2n/3$.
If $n\equiv 0\bmod{3}$ then $r(F) \ge 2n/3 = 2\lceil n/3\rceil$,
and if $n\equiv 2\bmod{3}$ then $r(F) \ge \lceil 2n/3 \rceil = (2n+2)/3 = 2\lceil n/3\rceil$.
It remains to prove Lemma~\ref{LEM:my-guy-slop} when $n\equiv 1\bmod{3}$.
In this case, $r(F) \ge \lceil 2n/3 \rceil = (2n+1)/3$,
whereas $2\lceil n/3\rceil = (2n+4)/3$.
Assume for contradiction that $r(F) = (2n+1)/3$.
Then $n-r(F) = (n-1)/3$, so \eqref{EQ:my-guy-zak} gives
\begin{equation*}
\det H = F^{(n-1)/3} L
\end{equation*}
for some nonzero linear form $L\in \QQ[\bfx]$,
since $\deg(\det H) = n$.
Let
\begin{equation*}
M := \{\bfx\in \A^n: L(\bfx)=0\}
\cong \A^{n-1}.
\end{equation*}
Then $\dim M = n-1 \ge 1$, so $M(\QQ) \setminus \{\bm0\}$ is Zariski dense in $M$.
If $M$ is contained in $V_s$ but not in $V_{s-1}$,
then it follows that $M(\QQ) \setminus \{\bm0\}$ is contained in $V_s$ but not in $V_{s-1}$.
Here $s<n$, since $M \subset \{\bfx\in \A^n: \det H(\bfx)=0\}$.
Choose a vector $\bfx\in M(\QQ) \setminus \{\bm0\}$ with $\bfx\in V_s - V_{s-1}$.
Since $s<n$, there exists
a vector $\bfy\in \QQ^n \setminus \{\bm0\}$ with $H(\bfx)\bfy=\bm0$.
Then $H(\bfy)\bfx = H(\bfx)\bfy = \bm0$, whence $\det H(\bfy) = 0$.
But $F(\bfy)\ne 0$, since $X(\QQ)=\emptyset$.
Hence $L(\bfy)=0$, so $\bfy\in M$.

Since $M$ is a linear space contained in $V_s$,
we have $M = T_{\bfx} M \subset T_{\bfx} V_s$.
Lemma~\ref{LEM:dav-bad-tangent-space},
with $(s,\bfx,\bfy,\bfy,\bfy)$ in place of $(r,\bfx,\bfv,\bfy,\bfz)$,
gives $0 = \bfy^t H(\bfy) \bfy = 6F(\bfy)$.
This contradicts $F(\bfy)\ne 0$.
It follows that $r(F) > (2n+1)/3$,
which implies Lemma~\ref{LEM:my-guy-slop}.
\end{proof}

\begin{rem}
As stated,
\cite[Proposition~4.3]{myguyzak}
is false for general integral domains $A$ and prime ideals $\mathfrak p$.
Its proof relies on the identity $v_{\mathfrak p}((\det M)^{n-1}) = (n-1)v_{\mathfrak p}(\det M)$, where $v_{\mathfrak p}(a) := \max\{k\in \ZZ_{\ge 0} \cup \{\infty\}: a\in \mathfrak p^k\}$ for $a\in A$.
This is true if $\mathfrak p$ is principal, for instance,
but not in general.
\end{rem}

If $X(\QQ)=\emptyset$, 
we claim that  $T_{\bfx}V_r \cap \ker H(\bfx) = 0$
for all $\bfx\in (V_r \setminus V_{r-1})(\QQ)$.
Since $\bfx$ is rational, the intersection $T_{\bfx}V_r\cap\ker H(\bfx)$ is a linear space defined over $\QQ$. If it were nonzero, it would contain a nonzero rational vector $\bfy$, and 
 Lemma~\ref{LEM:dav-bad-tangent-space}
would give \(6F(\bfy)=\bfy^tH(\bfy)\bfy=0\), contradicting \(X(\QQ)=\emptyset\).
In this way, Davenport obtained a strong restriction on $T_{\bfx}V_r$
at rational points $\bfx$. The next lemma allows us to combine
Lemma~\ref{LEM:dav-bad-tangent-space} with the assumption
$X(\QQ)=\emptyset$ even when the point under consideration
is not rational.

\begin{lemma}
\label{LEM:no-slop}
Assume that $X(\QQ)=\emptyset$.
Let $A\in\mathrm{GL}_n(\Qbar)$, and assign a real
weight $w_i$ to each variable $x_i$. The weight of a cubic monomial
$x_i x_j x_k$ is defined to be $w_i+w_j+w_k$. 
If every monomial occurring with a nonzero coefficient in
$F(A\bfx)$ has nonnegative weight, then
\begin{equation}
\label{EQ:mean-weight}
w_1+\cdots+w_n\geq 0.
\end{equation}
\end{lemma}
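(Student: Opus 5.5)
The plan is to recast the hypothesis as instability in the sense of geometric invariant theory for the action of $\SL_n$ on cubic forms. I would then use Kempf's rational Hilbert--Mumford criterion \cite[Corollary~4.3]{Kem78} to replace the $\Qbar$-rational destabilising data $(A,\bm w)$ by $\QQ$-rational data. Once the data are rational, a rational zero of $F$ can be read off directly, contradicting $X(\QQ)=\emptyset$.

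First I reduce to a strict, trace-zero, rational-weight situation. Suppose for contradiction that $\sum_i w_i<0$, and put $c:=(w_1+\cdots+w_n)/n<0$ and $w_i':=w_i-c$. Then $\sum_i w_i'=0$. Every monomial occurring in $F(A\bfx)$ has $w'$-weight at least $-3c>0$, so these weights are strictly positive. There are only finitely many monomials, so the positivity conditions are finitely many strict linear inequalities together with one linear equation with rational coefficients. Hence I may perturb $\bm w'$ to a rational vector, and after scaling to an integral vector, keeping $\sum_i w'_i=0$ and all occurring monomial weights strictly positive. Now $\lambda(t):=A\,\diag(t^{w_1'},\dots,t^{w_n'})A^{-1}$ is a one-parameter subgroup of $\SL_{n}$ over $\Qbar$. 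Moreover $\lambda(t)\cdot F\to 0$ as $t\to 0$, since $F(\lambda(t)\bfx)$ is $F(A\bfx)$ with each monomial scaled by $t^{\text{weight}}$, with the substitution $\bfx\mapsto A^{-1}\bfx$ undone at the end. Thus $F$, which is a $\QQ$-rational point of the representation $\mathrm{Sym}^3(\QQ^n)^\vee$ of $\SL_{n}$, lies in the null cone over $\Qbar$.

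Next I apply Kempf's theorem. Since $\QQ$ is perfect and $\SL_n$ is reductive, \cite[Corollary~4.3]{Kem78} produces a destabilising one-parameter subgroup defined over $\QQ$. Concretely, the optimal destabilising subgroup is unique up to conjugation by the associated parabolic, and is therefore Galois stable, which is why it descends. This gives a matrix $B\in\GL_n(\QQ)$ and integers $u_1\le\cdots\le u_n$ with $\sum_i u_i=0$ such that every monomial of $F(B\bfx)$ has strictly positive $\bm u$-weight. The $u_i$ cannot all be zero, because then every weight would be $0$; so $u_1<0$. The monomial $x_1^3$ has weight $3u_1<0$, so it does not occur in $F(B\bfx)$. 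Hence $F(B\bfe_1)=0$, and $B\bfe_1\in\QQ^n\setminus\{\bm0\}$ gives a point of $X(\QQ)$. This contradicts the hypothesis and proves \eqref{EQ:mean-weight}.

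The main obstacle is the precise invocation of Kempf's theorem. I need to check that the hypotheses of \cite[Corollary~4.3]{Kem78} match this setting: an affine $\SL_n$-representation, a $\QQ$-point that is unstable, and a perfect ground field. I also need to check that its conclusion yields a split rational torus, so that the one-parameter subgroup is diagonalisable by some $B\in\GL_n(\QQ)$. Over a non-closed field a rational torus in $\SL_n$ need not be split. However, a rational cocharacter $\mathbb{G}_m\to\SL_n$ always has a split image, and its weight spaces in $\QQ^n$ are defined over $\QQ$. So diagonalising it in a rational basis adapted to its weight decomposition gives the required $B$. If a direct appeal to Kempf turns out to be problematic, the fallback is the Galois-invariance argument for the canonical filtration $0\subsetneq V_{\le u_1}\subset\cdots$ attached to the optimal subgroup. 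Its first step $V_{\le u_1}$ is a $\QQ$-rational subspace on which $F$ vanishes identically, and any nonzero rational vector in it is the required zero.
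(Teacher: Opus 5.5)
Your argument is correct, but it is not the proof the paper writes out; it is the alternative that the paper only sketches in Remark~\ref{RMK:GIT-view}. Your reductions are sound: shifting the weights to trace zero, perturbing to integral weights with strictly positive monomial weights, and producing a $\Qbar$-cocharacter of $\SL_n$ that drives $F$ to $0$. So is your endgame: a $\QQ$-rational cocharacter has split image, so it is diagonalised by some $B\in\GL_n(\QQ)$, and a negative weight $u_1$ kills the coefficient $F(B\bfe_1)$ of $x_1^3$.

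The paper instead carries out the descent by hand, working directly with real weights and non-strict inequalities. It uses permutation matrices, the ``parabolic'' groups $\mathcal{B}(\bfv)$, the Bruhat decomposition, convexity of the sets $T_{I,W}$ and the parallelogram law. With these it shows that a weight vector of minimal Euclidean norm has a Galois-stable associated filtration $D_{\le r}$. It descends that filtration to $\QQ$ and re-splits it by some $B\in\GL_n(\QQ)$. It then finishes with the trivial rational case, where $F(B\bfe_i)\neq 0$ forces $3w_i\ge 0$ for every $i$.

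That norm minimisation is Kempf's optimal-destabiliser argument in miniature: uniqueness of the minimiser is what yields Galois invariance. It is essentially your stated fallback, made rigorous. So the paper's route is self-contained and elementary. Yours is shorter and more conceptual, but it rests the whole descent step on \cite[Corollary~4.3]{Kem78}.

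Your parenthetical justification of that corollary is a little quick. Galois stability of the optimal class gives a rational parabolic and a rational filtration, but picking a rational representative needs one more step, for instance choosing a maximal $\QQ$-torus of that parabolic. This is exactly what Kempf's corollary supplies, so nothing is missing once you cite it.
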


\begin{proof}
First suppose $A\in \GL_n(\QQ)$.
Then $A\bfe_i \in \QQ^n \setminus \{\bm0\}$ for each standard basis vector $\bfe_i \in \QQ^n$.
Thus $F(A\bfe_i) \ne 0$, since $X(\QQ) = \emptyset$.
But $F(A\bfe_i)$ is precisely the coefficient of $x_i^3$ in $F(A\bfx)$,
which has weight  $3w_i \ge 0$ for all $i$, by assumption. 
Summing over $i$ leads to \eqref{EQ:mean-weight}
in this case.
It remains to reduce the general case
to the case $A\in \GL_n(\QQ)$,
by a careful Galois descent procedure.

Let $I$ be a subset of
the set of all monomials $x_ix_jx_k$,
where $1\le i,j,k\le n$.
(After reordering $i,j,k$ we could assume $i\le j\le k$,
but that would cause confusion below.)
Let $W\in \RR$.
Given $I$, let $S_I$ be the set of matrices $A\in \GL_n(\Qbar)$
for which every monomial appearing in $F(A\bfx)$
lies in the set $I$.
Given $I$ and $W$, let $T_{I,W}$ be the set of vectors $\bfv \in \RR^n$ for which the following hold:
\begin{enumerate}
\item Every monomial $x_ix_jx_k\in I$
satisfies $v_i+v_j+v_k \ge 0$.
\item We have $\sum_{1\le i\le n} v_i = W$.
\end{enumerate}
Letting $I$ range over all possible subsets of monomials,
define the (not necessarily disjoint) union
$${\textstyle \mathcal{S}_W := \bigcup_I (S_I \times T_{I,W}).}$$
Every pair $(A,\bfw)$ in the statement of Lemma~\ref{LEM:no-slop}
lies in $\mathcal{S}_W$
for some choice of $W$.
We then wish to show that $W\ge 0$.
Note however that in the definition of $S_I$,
the monomial support of $F(A\bfx)$
is only required to be a subset of $I$,
not necessarily equal to $I$.

Two general symmetries will be useful.
First, if $(A,\bfv)\in S_I \times T_{I,W}$,
then $$(AP,P^{-1}\bfv)\in S_{I\circ P} \times T_{I\circ P,W}$$
for every permutation matrix $P$,
where $I\circ P := \{m\circ P: m\in I\}$.
Indeed, if the $(i,\pi(i))$ entry of $P$ is $1$,
where $\pi$ is a permutation,
then $(P\bfx)_i = x_{\pi(i)}$ and $(P^{-1}\bfv)_{\pi(i)} = v_i$,
and a monomial $m=x_ix_jx_k$ in $F(A\bfx)$
is transformed to $m\circ P = x_{\pi(i)}x_{\pi(j)}x_{\pi(k)}$ in $F(AP\bfx)$,
so that $$(P^{-1}\bfv)_{\pi(i)} + (P^{-1}\bfv)_{\pi(j)} + (P^{-1}\bfv)_{\pi(k)}
= v_i + v_j + v_k \ge 0.$$
Second, let $\mathcal{B}(\bfv)$ be the group of matrices $U = (u_{ij}) \in \GL_n(\Qbar)$
such that $u_{ij} \ne 0 \Rightarrow v_i \le v_j$.
For example, if $v_1\le v_2\le \cdots \le v_n$,
then any invertible upper triangular matrix lies in $\mathcal{B}(\bfv)$,
and if $v_1<v_2<\cdots<v_n$ then every matrix in $\mathcal{B}(\bfv)$ is upper triangular.
We claim that if $U \in \mathcal{B}(\bfv)$
and $J$ denotes the monomial support of $F(AU\bfx)$,
then $$(AU,\bfv)\in S_J \times T_{J,W}.$$
Indeed, if $x_ix_jx_k \in I$,
then every monomial appearing
in the transformed polynomial $$(x_ix_jx_k) \circ U
= (U\bfx)_i(U\bfx)_j(U\bfx)_k$$
may be written in the form $x_{i'}x_{j'}x_{k'}$,
where $v_i\le v_{i'}$, $v_j\le v_{j'}$ and $v_k\le v_{k'}$,
by the definition of $\mathcal{B}(\bfv)$.
It follows that $v_{i'} + v_{j'} + v_{k'}
\ge v_i + v_j + v_k \ge 0$, as desired.

With these two general symmetries established,
suppose now that $(A,\bfv),(A',\bfv')\in S_I \times T_{I,W}$.
Then there exist $U\in\mathcal{B}(\bfv)$,
$U'\in\mathcal{B}(\bfv')$
and a permutation matrix $P$ such that
$AUP = A'U'$.
Indeed, first order the coordinates for $\bfv$ and $\bfv'$
separately so that their entries are non-decreasing.
In these orderings, both $\mathcal B(\bfv)$ and
$\mathcal B(\bfv')$ contain the group of invertible upper
triangular matrices.
The usual {\em Bruhat decomposition} therefore gives
\[
A^{-1}A'=UPV,
\]
with $U\in\mathcal B(\bfv)$, $V\in\mathcal B(\bfv')$
and $P$ a permutation matrix.
This remains a decomposition of the same form after undoing
the coordinate permutations.
Taking $U'=V^{-1}$ gives $AUP=A'U'$.

Let $A'' := AUP = A'U'$.
By the symmetries in the penultimate paragraph,
$(A'',P^{-1}\bfv)=(AUP,P^{-1}\bfv)\in \mathcal{S}_W$
and $(A'',\bfv')=(A'U',\bfv')\in \mathcal{S}_W$.
If $I''$ denotes the monomial support of $F(A''\bfx)$,
then $$(A'',\tfrac12(P^{-1}\bfv+\bfv'))
\in S_{I''} \times T_{I'',W} \subset \mathcal{S}_W,$$
since $T_{I'',W}$ is convex.
Moreover, by the parallelogram law and the identity $\|P^{-1}\bfv\|_2 = \|\bfv\|_2$,
\begin{equation*}
\|\tfrac12(P^{-1}\bfv+\bfv')\|_2^2
= \tfrac12(\|P^{-1}\bfv\|_2^2 + \|\bfv'\|_2^2) - \|\tfrac12(P^{-1}\bfv-\bfv')\|_2^2
\le \tfrac12(\|\bfv\|_2^2 + \|\bfv'\|_2^2),
\end{equation*}
with equality if and only if $P^{-1}\bfv = \bfv'$.

Each set $T_{I,W}$ is closed in $\RR^n$,
and thus if $T_{I,W} \ne \emptyset$ then
$T_{I,W}$ contains a vector of minimal Euclidean length.
Since there are only finitely many possible sets $I$, it follows that
there exists $(A,\bfv)\in \mathcal{S}_W$ for which $\|\bfv\|_2$ is minimal.
Suppose $(A,\bfv)\in S_I \times T_{I,W}$.
Then for any $\sigma\in \Gal(\Qbar/\QQ)$,
we have $(\sigma(A),\bfv) \in S_I \times T_{I,W}$.
Taking $(\sigma(A),\bfv)$ for $(A',\bfv')$ in the previous paragraph,
we get $P^{-1}\bfv=\bfv$,
since $\|\tfrac12(P^{-1}\bfv+\bfv)\|_2 \ge \|\bfv\|_2$ by minimality of $\|\bfv\|_2$.

It follows from the definition of $\mathcal{B}(\bfv)$ that, for every
$U\in\mathcal{B}(\bfv)$ and every $r\in\mathbb{R}$,
\begin{equation}
U\text{span}_{\Qbar}\{e_j:v_j\leq r\}=\text{span}_{\Qbar}\{e_j:v_j\leq r\}.
\label{eq : pizzaefichi}
\end{equation}
Indeed, if $v_j\leq r$, then $Ue_j=\sum_i u_{i,j}e_i$. Since $U\in\mathcal{B}(\bfv)$, we have $u_{i,j}\neq 0$ only if $v_i\leq v_j$. Hence
\[
Ue_j\in\text{span}_{\Qbar}\{e_i:v_i\leq v_j\}
\subseteq\operatorname{span}_{\Qbar}\{e_i:v_i\leq r\}.
\]
This proves the inclusion from left to right in \eqref{eq : pizzaefichi}.
The equality follows since $U^{-1}\in \mathcal{B}(\bfv)\subset\GL_{n}(\Qbar)$.
Since $AUP=\sigma(A)U'$, with $U,U'\in\mathcal{B}(\bfv)$, it follows from \eqref{eq : pizzaefichi} and the identity $P\bfv=\bfv$ that the filtration
\[
D_{\le r}:=\operatorname{span}_{\Qbar}\{Ae_j:v_j\leq r\}\subseteq\Qbar^n
\]
is $\Gal(\Qbar/\QQ)$-stable, where $r$ ranges over all real numbers.
 By Galois descent, there exists a filtration of $\QQ$-vector spaces
$E_{\le r} \subset \QQ^n$
such that $E_{\le r} \otimes \Qbar = D_{\le r}$.
Let $B\in \GL_n(\QQ)$ be a matrix whose columns $B\bfe_j$ split this filtration,
so that
$$\mathrm{span}_{\QQ}\{B\bfe_j: v_j\le r\} = E_{\le r}.$$
By the definition of $\mathcal{B}(\bfv)$,
we may thus write $B = AM$ for some $M\in \mathcal{B}(\bfv)$.
Therefore,
$$(B,\bfv)=(AM,\bfv) \in \mathcal{S}_W,$$
by the symmetry properties observed earlier.
On replacing $A$ with $B$,
it follows that we may assume $A\in \GL_n(\QQ)$,
whence $W \ge 0$,
completing the proof of Lemma~\ref{LEM:no-slop}.
\end{proof}

\begin{rem}
\label{RMK:GIT-view}
The content of Lemma~\ref{LEM:no-slop} is best viewed through the lens of geometric invariant theory (GIT), as we shall now explain. Over $\QQ$, the group $\SL_n$ acts on the space $W_n$ of cubic forms in $n$ variables via
$(g\cdot F)(\bfx)=F(g^{-1}\bfx)$. A form $F$ is called \emph{semistable} if the zero form does not belong to the Zariski closure of the
$\SL_n(\Qbar)$-orbit of $F$.  Lemma~\ref{LEM:no-slop} is then equivalent to saying that a cubic form without a nontrivial integer zero is semistable and an alternative proof of Lemma~\ref{LEM:no-slop} can be obtained using Kempf's rational version of the geometric Hilbert--Mumford criterion ~\cite[Corollary~4.3]{Kem78}. Indeed, the elementary proof above is essentially a simplified version of the proof of this result. See \cite[\S4]{BMRT} for related optimality and rationality results.

Intuitively, semistability prevents the cubic form from having very degenerate singularities and this is reflected in the next set of results, where we draw various conclusions about the geometry of the singular locus and the Hessian loci associated to our cubic form $F$. This connection between GIT stability and singularities has been used before, for example in Laza's work~\cite{Laz09} on the GIT compactification of the moduli space of cubic fourfolds. Additionally, a slightly weaker version of Lemma~\ref{LEM:no-slop-singular-linear} was obtained by Park~\cite[Lemma~7.2]{Par25}, also using semistability.  
\end{rem}

Recall the affine incidence correspondence $I$
from \eqref{Eq: Def.hessian.Incidence}.
It will be convenient to have the following concentration result for $\dim I$.
We follow the standard convention that an irreducible set must be nonempty,
and that the affine cone of the empty set is $\{\bm0\}$.

\begin{lemma}
\label{LEM:hessian-concentration}
There exist an integer $0\le r\le n$
and an irreducible or empty closed subset $Z \subset X$,
whose affine cone $\cone Z$ lies in $V_r$ but not in $V_{r-1}$,
such that $\dim I = \dim \cone Z + n-r$.
\end{lemma}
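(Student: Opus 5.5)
The plan is to decompose $I$ according to the rank of the Hessian and read off $\dim I$ from the stratum that contributes the most. The fibre of the first projection $I\to\A^n$, $(\bfx,\bfy)\mapsto\bfx$, over $\bfx$ is the linear space $\ker H(\bfx)$, which has dimension $n-\rank H(\bfx)$. Hence, over each locally closed stratum $V_r\setminus V_{r-1}$, the preimage in $I$ has dimension $\dim(V_r\setminus V_{r-1})+n-r$, and
\[
\dim I=\max_{0\le r\le n}\bigl(\dim(V_r\setminus V_{r-1})+n-r\bigr).
\]
I would choose $r$ attaining the maximum, and an irreducible component $W$ of $V_r$ not contained in $V_{r-1}$ with $\dim W=\dim(V_r\setminus V_{r-1})$. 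Each $V_r$ is conical, being cut out by homogeneous minors, so $W$ is an affine cone. Hence $W$ is the affine cone over an irreducible closed subset of $\PP^{n-1}$ when $\dim W\ge1$, and $W=\{\bm0\}$ otherwise. The stratum $r=0$ always contributes at least $0+n$, since $\bm 0\in V_0$ and $H(\bm0)=0$. This is exactly the case $Z=\emptyset$, and it shows $\dim I\ge n$.

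The substance of the lemma is the requirement $Z\subset X$. I must show that a component $W$ on which $F\not\equiv0$ never contributes more than the empty-$Z$ case, i.e.\ that $\dim W+n-r\le n$, so $\dim W\le r$. If that holds, every maximiser with $\dim W+n-r>n$ lies in $\cone X$, and otherwise I take $Z=\emptyset$, $r=0$. To prove $\dim W\le r$, I would pass to an irreducible component $C$ of $I$ dominating $W$, of dimension $\dim W+n-r$, and take a generic point $(\bfx,\bfy)\in C$. I would exploit the symmetry $H(\bfx)\bfy=H(\bfy)\bfx$, which makes $I$ invariant under $(\bfx,\bfy)\mapsto(\bfy,\bfx)$.

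Near a generic point, the fibre of $C$ over $\bfy$ fills out an open part of $\ker H(\bfy)$ containing $\bfx$. Its image in the $\bfx$-coordinate lies in $W$, so $\ker H(\bfy)\subset T_{\bfx}W$. Lemma~\ref{LEM:dav-bad-tangent-space} then gives $\bfz^tH(\bfv)\bfz'=0$ for all $\bfv\in\ker H(\bfy)$ and $\bfz,\bfz'\in\ker H(\bfx)$. Combined with the Euler-type identities $\bfx^tH(\bfx)\bfx=6F(\bfx)$ and $\bfy^tH(\bfx)\bfx=2\bfy\cdot\nabla F(\bfx)$, I would aim to show the following: if $\dim W>r$, then $\dim T_{\bfx}W>r$ forces $T_{\bfx}W\cap\ker H(\bfx)\ne0$. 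Then, by the tangent-space lemma applied with $\bfv$ in this intersection and symmetrised via the swap, this would force either $F(\bfx)=0$ identically on $W$, or a degenerate configuration that can be absorbed into a smaller stratum.

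The main obstacle is precisely this last step: converting "large tangent space" into "$F$ vanishes on $W$". Lemma~\ref{LEM:dav-bad-tangent-space} directly yields only $F(\bfv)=0$ for $\bfv\in T_{\bfx}W\cap\ker H(\bfx)$, not $F(\bfx)=0$. My first attempt would be a dimension count in $T_{\bfx}W$, which contains both $\bfx$ and $\ker H(\bfy)$, using the trilinear form $T(\bfa,\bfb,\bfc)=\bfa^tH(\bfb)\bfc$. If that fails, the fallback is to weaken the goal to the maximum over all strata: redistribute the contribution of a component with $F\not\equiv0$ onto the stratum of $\ker H(\bfx)$, using the swap symmetry of $I$, until the maximising component lies in $\cone X$.
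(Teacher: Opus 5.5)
Your stratification of $I$ by the rank of $H(\bfx)$ is fine, but the step you call the substance of the lemma aims at a claim that is false. The lemma is stated for an arbitrary nonzero cubic form. Take $F=x_1^3+\cdots+x_m^3$ with $1\le m<n$. Then $H(\bfx)=\diag(6x_1,\dots,6x_m,0,\dots,0)$, so $W=\A^n$ is a component of $V_m$ not contained in $V_{m-1}$, $F\not\equiv 0$ on $W$, and $\dim W+n-m=2n-m=\dim I>n$. So a component off $\cone X$ can be a maximiser contributing more than $n$, and no proof of $\dim W\le r$ can exist. The lemma survives only because a different component, $\{x_1=\cdots=x_m=0\}\subset\cone X$ with $r=0$, ties with it. Even in the case $X(\QQ)=\emptyset$, where the lemma is applied, your sketch supplies no proof of $\dim W\le r$: as you note, Lemma~\ref{LEM:dav-bad-tangent-space} only gives $F=0$ on $T_{\bfx}W\cap\ker H(\bfx)$.

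The swap fallback does not close the gap either. For $F=x_1^3+\cdots+x_n^3$, the $n$-dimensional component $\{x_{m+1}=\cdots=x_n=0,\ y_1=\cdots=y_m=0\}$ of $I$ has both projections carrying $F\not\equiv0$. So swapping never lands in $\cone X$, and you would still need a reason why such a component cannot exceed the maximum over $\cone X$.

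What is missing is a global comparison $\dim I=\dim I\vert_{\cone X\times\A^n}$, not a componentwise bound. The paper gets it by substituting $\bfv=\bfy+\bfx$, $\bfw=\bfy-\bfx$. The identities $\nabla F(\bfv)-\nabla F(\bfw)=2H(\bfy)\bfx$ and $F(\bfv)-F(\bfw)=2F(\bfx)+\bfy^tH(\bfy)\bfx$ identify $I$ with $J=\{\nabla F(\bfv)=\nabla F(\bfw)\}$, and $I\vert_{\cone X\times\A^n}$ with $J_0=J\cap\{F(\bfv)=F(\bfw)\}$. On any irreducible component $C$ of a gradient fibre $\{\nabla F=\bfz\}$, Euler's identity $3F=\bfz\cdot\bfx$ differentiates to $3\,\dd F=\bfz\cdot\dd\bfx=\dd F$. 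Hence $F$ is constant on $C$, since we are in characteristic zero. Choosing $C$ of maximal dimension in each fibre, $C\times C\subset J_0$ has the same dimension as the corresponding fibre of $J$, which gives $\dim J_0=\dim J$. Stratifying $I\vert_{\cone X\times\A^n}$ by rank then produces the required $Z\subset X$ (or $Z=\emptyset$ if $\dim I=n$).
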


\begin{proof}
Since $\{\bm0\}\times\A^n\subset I$, we have $\dim I\ge n$.
If $\dim I=n$, then we may take $Z=\emptyset$ and $r=0$,
in the light of our convention $\cone \emptyset=\{\bm0\}$.
Thus, we henceforth assume that $\dim I>n$.

Put $\bfv=\bfy+\bfx$ and $\bfw=\bfy-\bfx$.
The identities
$$
\nabla F(\bfv)-\nabla F(\bfw)=2H(\bfy)\bfx,
\qquad
F(\bfv)-F(\bfw)=2F(\bfx)+\bfy^tH(\bfy)\bfx,
$$
together with $H(\bfy)\bfx=H(\bfx)\bfy$, identify $I$ with
$$
J:=\{(\bfv,\bfw)\in\A^{2n}:
\nabla F(\bfv)=\nabla F(\bfw)\},
$$
and $I\vert_{\cone X\times\A^n}$ with
$$
J_0:=\{(\bfv,\bfw)\in J:F(\bfv)=F(\bfw)\}.
$$
We claim that $\dim J_0=\dim J$.
For $\bfz\in\Qbar^n$, put
$$
Y_{\bfz}:=\{\bfx\in\A^n:\nabla F(\bfx)=\bfz\}.
$$
On any irreducible component $C$ of $Y_{\bfz}$,
Euler's identity gives $3F=\bfz\cdot\bfx$.
Differentiating in the function field of $C$ gives
$$
3\dd F=\bfz\cdot\dd\bfx=\dd F.
$$
Thus $\dd F=0$, so $F$ is constant on $C$, since the ground
field is algebraically closed of characteristic zero.
Now project $J$ and $J_0$ to their common gradient.
For each nonempty $Y_{\bfz}$, choose a component $C$ of
dimension $d=\dim Y_{\bfz}$.
Then
$
C\times C\subset (J_0)_{\bfz}
\subset J_{\bfz}=Y_{\bfz}\times Y_{\bfz},
$
so both fibres have dimension $2d$.
The constructible sets
$$
S_d:=\{\bfz\in\A^n:\dim Y_{\bfz}=d\},
\qquad 0\le d\le n,
$$
therefore give
$$
\dim J_0
=\max_{S_d\ne\emptyset}(\dim S_d+2d)
=\dim J.
$$
Consequently, $\dim I=\dim I\vert_{\cone X\times\A^n}$.
Stratifying the projective incidence correspondence on $X$ by rank,
and taking affine cones on a top-dimensional stratum,
completes the proof.
\end{proof}

\begin{lemma}
\label{LEM:433-incidence-bound}
Assume $X(\QQ)=\emptyset$.
We have
\begin{align}
1 + \dim \sing X \le \max\{0, (2n-3)/3\},
\label{EQ:clean-slop-sing-bound} \\
\dim I\vert_{\sing(\cone X) \times \A^n} \le \max\{n, (4n-6)/3\},
\label{EQ:clean-slop-sing-incidence-bound} \\
\dim I \le \max\{n, (4n-3)/3\}.
\label{EQ:clean-slop-incidence-bound}
\end{align}
Moreover, fix an integer $0\le r\le n$
and an irreducible closed subset $Z \subset X$,
whose affine cone $\cone Z$ lies in $V_r$ but not in $V_{r-1}$.
If $Z \subset \sing X$, then
\begin{equation}
\label{EQ:slop-sing-bound}
\dim \cone Z \le \min\{n-r,2r-3\}.
\end{equation}
If $Z \not\subset \sing X$, then
\begin{equation}
\label{EQ:slop-smooth-bound}
\dim \cone Z \le \min\{r+(n-3)/3,2r-2\}.
\end{equation}
\end{lemma}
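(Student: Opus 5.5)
The plan is to derive everything from the two pointwise estimates \eqref{EQ:slop-sing-bound} and \eqref{EQ:slop-smooth-bound}. These will come from feeding the vanishing pattern of Lemma~\ref{LEM:dav-bad-tangent-space} into the semistability statement Lemma~\ref{LEM:no-slop} with a well-chosen weight vector.

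\emph{Setting up the coordinates.} Fix $Z$ and $r$ as in the statement, and let $\bfx$ be a generic point of $\cone Z$; in particular $\bfx\in V_r\setminus V_{r-1}$ and $\bfx$ is a smooth point of $\cone Z$. Put $T:=T_{\bfx}\cone Z\subset T_{\bfx}V_r$, so $\dim T=\dim\cone Z=:d$, and put $K:=\ker H(\bfx)$, so $\dim K=n-r$. Choose $A\in\GL_n(\Qbar)$ whose columns are adapted to the flag formed by $T\cap K$, $T$ and $K$. By Lemma~\ref{LEM:dav-bad-tangent-space}, every monomial of $F(A\bfx)$ with one factor in $T$ and two factors in $K$ has coefficient zero.

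\emph{Singular case $Z\subset\sing X$.} Here $\nabla F$ vanishes on $\cone Z$. Differentiating along $T$ gives $H(\bfx)\bfv=0$ for $\bfv\in T$, so $T\subset K$; this already gives $d\le n-r$. In this case every monomial with one factor from $T$ and the other two from $K$ vanishes, and this includes $TTT$ (since $6F(\bfv)=\bfv^tH(\bfv)\bfv=0$). Assign weight $-1$ to the $d$ coordinates spanning $T$, weight $0$ to the $n-r-d$ coordinates completing $K$, and weight $2$ to the $r$ complementary coordinates. Every surviving monomial then has weight $\ge0$: $TTC$ has weight $0$, $KKK$ has weight $0$, and anything with two complementary factors is positive. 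Lemma~\ref{LEM:no-slop} then gives $-d+2r\ge0$.

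To sharpen this to $d\le 2r-3$, I would exploit extra vanishing coming from the facts that $\bfx\in T$ itself and that $F$ vanishes to order $2$ along $\cone Z$. Concretely, I would split off the line $\Qbar\bfx$ and the $TTC$-monomials killed by $\nabla F|_{\cone Z}=0$, and then perturb the weights (for example $T\mapsto -1-\delta$, with a small compensating change on $K$) to pick up the slack. This is the step I expect to be the main obstacle. Obtaining exactly the shift $-3$, rather than $0$, will require a careful choice of which further monomials vanish; these are determined by the second-order behaviour of $F$ along $\cone Z$.

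\emph{Smooth case $Z\not\subset\sing X$.} Let $T_0:=T\cap K$, so $\dim T_0\ge d-r$. Only the vanishing of the $(T,K,K)$-monomials is available, together with $F(\bfx)=0$ and $\nabla F(\bfx)\ne0$. I would run the same scheme with three or four blocks: $T_0$, $T/T_0$, $K/T_0$, and a complement, with weights such as $(-1,\ast,\tfrac12,1)$ tuned so that the $(T,K,K)$-monomials are exactly the potentially negative ones. One choice of weights should yield $d\le 2r-2$; another, using $\dim T_0\ge d-r$ and $\dim K/T_0\le n-r-(d-r)$, should yield $d-r\le (n-3)/3$.

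\emph{Deducing the global bounds.} By Lemma~\ref{LEM:hessian-concentration}, $\dim I=\dim\cone Z+n-r$ for some such pair $(Z,r)$. In the smooth case this is at most $n+\min\{(n-3)/3,\,r-2\}\le (4n-3)/3$. In the singular case it is at most $n+\min\{n-2r,\,r-3\}\le n+(n-6)/3$, since the minimum is largest at $r=(n+3)/3$. The case $Z=\emptyset$ contributes $n$. This gives \eqref{EQ:clean-slop-incidence-bound}. Applying the same reasoning to components of $\sing X$, whose cones lie in some $V_r\setminus V_{r-1}$, gives \eqref{EQ:clean-slop-sing-incidence-bound}. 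Finally, $\dim\cone Z\le\max_r\min\{n-r,2r-3\}\le(2n-3)/3$ gives \eqref{EQ:clean-slop-sing-bound}.
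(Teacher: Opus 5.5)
\textbf{There is a genuine gap.} Your framework is the paper's: the vanishing from Lemma~\ref{LEM:dav-bad-tangent-space} fed into Lemma~\ref{LEM:no-slop}. Your deduction of \eqref{EQ:clean-slop-sing-bound}--\eqref{EQ:clean-slop-incidence-bound} from the two pointwise bounds is also correct. But the pointwise bounds, which are the real content of the lemma, are not proved. In the singular case you stop at $d\le 2r$, and in the smooth case the weights are never fixed.

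The constants are not cosmetic. With $2r$ in place of $2r-3$, \eqref{EQ:clean-slop-sing-bound} only gives $1+\dim\sing X\le 2n/3$. For $n=10$ this means $\dim\sing X\le 5$, whereas the paper needs $\dim\sing X\le n-6=4$.

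Write $\Phi(\bfy,\bfv,\bfz)=\bfy^tH(\bfv)\bfz$. Blocks built only from $T$, $K$ and $T\cap K$, using only $\Phi(T,K,K)=0$, give an inequality that is homogeneous in the block dimensions. For example, in the smooth case take weights $-2,1,0,4$ on $T\cap K$, a complement of it in $T$, a complement of it in $K$, and a complement of $T+K$. This gives $a-3d+4r\ge0$ with $a=\dim(T\cap K)$, hence at best $d\le 2r-1/2$ (using $a\le d-1$) and $d\le r+n/3$.

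The extra vanishing you hint at is also misidentified. The condition $\nabla F|_{\cone Z}=0$ does not kill the $TTC$-monomials in general. Take a curve $\gamma\subset\cone Z$ with $\gamma(0)=\bfx$ and $\gamma'(0)=\bfv\in T$, and differentiate $\nabla F(\gamma(s))=\bm0$ twice. This gives only $H(\bfv)\bfv=-H(\bfx)\gamma''(0)\in\im H(\bfx)$, which kills $\Phi(T,T,K)$ but not $\Phi(T,T,C)$.

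\textbf{The missing idea} is to make the line $\Qbar\bfx$ a separate block of fixed dimension one, with a more negative weight. Note that $\bfx\in T$ by Euler's identity. This uses two vanishings you did not exploit:
\begin{itemize}
\item $\Phi(\Qbar^n,\bfx,K)=0$, because $K=\ker H(\bfx)$;
\item $\Phi(T,\bfx,\bfx)=0$, because $F|_{\cone Z}=0$ forces $\nabla F(\bfx)\perp T$, while $H(\bfx)\bfx=2\nabla F(\bfx)$.
\end{itemize}

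\emph{Singular case} ($\bfx\in T\subset K$). Split along the flag $\Qbar\bfx\subset T\subset K\subset\Qbar^n$, with weights $-4,-1,0,2$ on $\Qbar\bfx$ and successive complements.
\begin{itemize}
\item A nonzero monomial through $\bfx$ must have both other factors in the complement of $K$, so its weight is $-4+2+2=0$.
\item A nonzero monomial whose smallest factor lies in the weight-$(-1)$ part of $T$ must have a factor outside $K$, so its weight is at least $-1-1+2=0$.
\end{itemize}
Lemma~\ref{LEM:no-slop} then gives $-4-(d-1)+2r\ge 0$, i.e.\ $d\le 2r-3$.

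\emph{Smooth case} ($\bfx\in T\setminus K$, since $H(\bfx)\bfx\ne\bm0$). Take
\begin{itemize}
\item $W_{-2}=(T\cap K)\oplus\Qbar\bfx$;
\item $W_1$ a complement of $W_{-2}$ in $T$;
\item $W_0$ a complement of $T\cap K$ in $K$;
\item $W_4$ a complement of $T+K$.
\end{itemize}
The three vanishings together give $\Phi(W_{-2},W_{-2}\oplus W_0,W_{-2}\oplus W_0\oplus W_1)=0$. Taking the weights indicated by the subscripts, Lemma~\ref{LEM:no-slop} yields $-3+a-3d+4r\ge0$. Now $a\le d-1$ gives $d\le 2r-2$, and $a\le n-r$ gives $d\le r+(n-3)/3$.
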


\begin{proof}
The bound \eqref{EQ:clean-slop-sing-bound} is trivial if $\sing X = \emptyset$,
and otherwise it follows from \eqref{EQ:slop-sing-bound},
since $$\min\{n-r,2r-3\}
\le \tfrac23(n-r) + \tfrac13(2r-3) = \tfrac13(2n-3).$$
The bound \eqref{EQ:clean-slop-sing-incidence-bound} is trivial if $\dim I \le n$,
and otherwise it follows from \eqref{EQ:slop-sing-bound},
since $$\min\{n-r,2r-3\} + n-r \le \tfrac13(n-r) + \tfrac23(2r-3) + n-r
= \tfrac13(4n-6).$$
The bound \eqref{EQ:clean-slop-incidence-bound} is trivial if $\dim I \le n$,
and otherwise it follows from Lemma~\ref{LEM:hessian-concentration},
together with \eqref{EQ:clean-slop-sing-incidence-bound}
and the first bound in \eqref{EQ:slop-smooth-bound},
since $r+(n-3)/3 + n-r = (4n-3)/3$.

It remains to prove \eqref{EQ:slop-sing-bound} and \eqref{EQ:slop-smooth-bound}.
Write $\Phi(\bfy,\bfv,\bfz) := \bfy^tH(\bfv)\bfz$
for the symmetric trilinear form associated to $F$.
Choose a nonzero smooth point $\bfx$ of $\cone Z$ with $\rank H(\bfx)=r$,
and with $\nabla F(\bfx)\ne \bm0$ if $Z\not\subset\sing X$.
Set
\begin{equation*}
T:=T_{\bfx}(\cone Z),
\qquad L:=\ker H(\bfx).
\end{equation*}
Lemma~\ref{LEM:dav-bad-tangent-space} gives $\Phi(L,T,L)=0$.
Also, $\Phi(\Qbar^n,\bfx,L)=0$, because $H(\bfx)L=\bm0$.
Finally, since $F\vert_{\cone Z}=0$, we have $F(\bfx+\ve\bfv) \equiv 0\bmod{\ve^2}$
for all $\bfv\in T$, so $\bfv^t \nabla F(\bfx) = 0$.
Thus $\Phi(T,\bfx,\bfx)=0$,
in view of the identity $H(\bfx)\bfx=2\nabla F(\bfx)$.

Before proceeding, we collect some basic facts about $T$ and $L$.
Since $\cone Z$ is cut out by homogeneous equations,
Euler's identity gives $\bfx\in T$.
By smoothness of $\cone Z$ at $\bfx$, we have $\dim T = \dim \cone Z$.
Since $\rank H(\bfx)=r$, we have $\dim L = n-r$.

Suppose first that $Z\subset\sing X$.
Then $\nabla F(\bfx+\ve\bfv) \equiv 0\bmod{\ve^2}$
for all $\bfv\in T$,
so $H(\bfx)\bfv = \bm0$.
Thus $T\subset L$, so $\dim \cone Z\le n-r$.
It remains to prove that $\dim \cone Z \le 2r-3$.
Split the flag $\Qbar\bfx \subset T \subset L \subset \Qbar^n$,
writing
\[
W_{-4} := \Qbar\bfx,\qquad
T=W_{-4}\oplus W_{-1},\qquad
L=T\oplus W_0,\qquad
\Qbar^n=L\oplus W_2,
\]
so that $\Qbar^n = W_{-4} \oplus W_{-1} \oplus W_0 \oplus W_2$.
Let $A\in \GL_n(\Qbar)$ be a matrix such that
$$\{A\bfe_1,\dots,A\bfe_n\}
\subset W_{-4} \cup W_{-1} \cup W_0 \cup W_2.$$
Let $w_i := \nu$ if $A\bfe_i\in W_\nu$, where $1\le i\le n$ and $\nu\in \{-4,-1,0,2\}$.
Let $1\le i,j,k\le n$ with $w_i\le w_j\le w_k$
and suppose $\Phi(A\bfe_i,A\bfe_j,A\bfe_k) \ne 0$.
If $w_i=-4$,
then $w_j=w_k=2$, since $\Phi(\bfx,L,\Qbar^n)=0$ and $L = W_{-4} \oplus W_{-1} \oplus W_0$.
Thus $w_i+w_j+w_k = -4+2+2 = 0$ in this case.
If $w_i=-1$, then $w_k=2$, since $\Phi(T,L,L)=0$.
Thus $w_i+w_j+w_k \ge -1-1+2 = 0$ in this case.
If $w_i\ge 0$, then $w_i+w_j+w_k\ge 0$ trivially.
Lemma~\ref{LEM:no-slop} therefore gives
\[
-4-\dim W_{-1}+2\dim W_2 \ge 0.
\]
Since $\dim W_{-1} = \dim \cone Z - 1$ and $\dim W_2 = r$,
we get $\dim \cone Z \le 2r-3$.

Suppose now that $Z\not\subset\sing X$.
Since $H(\bfx)\bfx=2\nabla F(\bfx)\ne \bm0$, we have $\bfx\notin L$.
Split the flag $(T\cap L)\oplus\Qbar\bfx \subset T \subset T+L \subset \Qbar^n$,
writing
\[
W_{-2} := (T\cap L)\oplus\Qbar\bfx,\qquad
T = W_{-2}\oplus W_1,\qquad
L = (T\cap L)\oplus W_0,\qquad
\Qbar^n = (T+L) \oplus W_4,
\]
so that $T+L = T\oplus W_0$.
By trilinearity of $\Phi$,
we have $\Phi(W_{-2},L+\Qbar\bfx,T+L)=0$ since
\begin{enumerate}
\item $\Phi(T\cap L,L,T) \subset \Phi(L,L,T)=0$,
\item $\Phi(T\cap L,L,L) \subset \Phi(T,L,L)=0$,
\item $\Phi(T\cap L,\bfx,T+L) \subset \Phi(L,\bfx,\Qbar^n)=0$,
\item $\Phi(\bfx,L,T+L) \subset \Phi(\bfx,L,\Qbar^n)=0$,
\item $\Phi(\bfx,\bfx,T)=0$, and
\item $\Phi(\bfx,\bfx,L) \subset \Phi(\bfx,\Qbar^n,L)=0$.
\end{enumerate}
Since $L+\Qbar\bfx=W_{-2} \oplus W_0$ and $T+L=W_{-2} \oplus W_0 \oplus W_1$,
it follows that
\begin{equation}
\label{EQ:clever-slop}
\Phi(W_{-2},W_{-2} \oplus W_0,W_{-2} \oplus W_0 \oplus W_1)=0.
\end{equation}
Let $A\in \GL_n(\Qbar)$ be a matrix such that
$$\{A\bfe_1,\dots,A\bfe_n\}
\subset W_{-2} \cup W_0 \cup W_1 \cup W_4.$$
Let $w_i := \nu$ if $A\bfe_i\in W_\nu$, where $1\le i\le n$ and $\nu\in \{-2,0,1,4\}$.
Let $1\le i,j,k\le n$ with $w_i\le w_j\le w_k$
and suppose $\Phi(A\bfe_i,A\bfe_j,A\bfe_k) \ne 0$.
If $w_i=-2$ and $w_j\le 0$, then $w_k=4$ by \eqref{EQ:clever-slop}.
Thus $w_i+w_j+w_k \ge -2-2+4 = 0$ in this case.
If $w_i=-2$ and $w_j\ge 1$, then $w_i+w_j+w_k \ge -2+1+1 = 0$.
If $w_i\ge 0$, then $w_i+w_j+w_k\ge 0$ trivially.
Lemma~\ref{LEM:no-slop} therefore gives
\begin{equation*}
-2\dim W_{-2} + \dim W_1 + 4 \dim W_4 \ge 0.
\end{equation*}
Let $a := \dim(T\cap L)$ and $t := \dim T = \dim \cone Z$.
Then $\dim W_{-2} = a + 1$,
$\dim W_1 = t - a - 1$
and $\dim W_4 = n - \dim (T+L) = n - t - \dim L + a
= r - t + a$.
Thus
\begin{equation*}
0\le -2(a+1) + (t-a-1) + 4(r-t+a)
= -3+a-3t+4r.
\end{equation*}
Since $\bfx\in T$ but $\bfx\notin L$, we have $a\le t-1$,
so $0 \le -4-2t+4r$ and $t\le 2r-2$.
On the other hand, $a\le \dim L = n-r$,
so $0 \le -3+n-3t+3r$ and $t\le r + (n-3)/3$.
\end{proof}

\begin{cor}
\label{cor:hesrank}
Assume that $n\ge 3$
and $X(\QQ)=\emptyset$.
Let $1\leq r \leq n-1$.
Then
$$\dim V_r\leq \min \left\{\left\lfloor \frac{n}{3}\right\rfloor +r-1, n-1\right\}$$
and $V_0 = V_1\cap \cone X = \{\bm0\}$.
Moreover, $\dim(V_r\cap \cone X)\leq \min\{2r-2,n-2\}$ if $r<2\lceil n/3\rceil$.
\end{cor}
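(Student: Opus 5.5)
We will derive every assertion from Lemma~\ref{LEM:433-incidence-bound}, applied to irreducible components together with their generic Hessian rank. Throughout, fix an irreducible closed subset $C\subset \A^n$ and let $s$ be the generic rank of $H$ on $C$. Then $C\subset V_s$ and $C\not\subset V_{s-1}$. The fibre of $I$ over each $\bfx\in C\setminus V_{s-1}$ is $\ker H(\bfx)$, which has dimension $n-s$. Hence
\[
\dim I \ge \dim C + n - s.
\]

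\emph{Bound for $\dim V_r$.} Take $C$ to be a top-dimensional component of $V_r$, so that $s\le r$. By \eqref{EQ:clean-slop-incidence-bound}, either $\dim I\le n$, giving $\dim C\le s\le r$, or else
\[
\dim C\le (4n-3)/3-n+s\le r+(n-3)/3 .
\]
Since $\dim C$ is an integer, the second case gives $\dim C\le \lfloor n/3\rfloor + r-1$. The first case gives $\dim C\le r$, which is also at most $\lfloor n/3\rfloor + r-1$ because $n\ge 3$. For the bound $n-1$: goodness of $F$ (Davenport) gives $\det H\not\equiv 0$. Therefore $V_r\subset V_{n-1}=\{\det H=0\}$ is a hypersurface for $r\le n-1$.

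\emph{The small-rank loci.} $V_0$ is the common kernel of the linear map $\bfx\mapsto H(\bfx)$, so it is a linear space defined over $\QQ$. If $V_0\ne\{\bm0\}$, it would contain a nonzero rational $\bfx$ with $H(\bfx)=0$. Then $6F(\bfx)=\bfx^tH(\bfx)\bfx=0$, contradicting $X(\QQ)=\emptyset$. For $V_1\cap\cone X$, let $\cone Z$ be a component, with $Z\subset X$ irreducible and generic rank $s\le 1$. Suppose $Z\ne\emptyset$. If $Z\subset\sing X$, then \eqref{EQ:slop-sing-bound} gives $\dim\cone Z\le 2s-3<0$. If $Z\not\subset\sing X$, then \eqref{EQ:slop-smooth-bound} gives $\dim \cone Z\le 2s-2\le 0$. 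Both contradict $\dim\cone Z\ge 1$, so $V_1\cap\cone X=\{\bm0\}$.

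\emph{Bound for $\dim(V_r\cap \cone X)$.} Apply the same argument to each component $\cone Z$ of $V_r\cap\cone X$, again with generic rank $s\le r$. The two cases of Lemma~\ref{LEM:433-incidence-bound} give $\dim\cone Z\le \max\{2s-3,2s-2\}\le 2r-2$.

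For the bound $n-2$, use Lemma~\ref{LEM:my-guy-slop}: since $r<2\lceil n/3\rceil\le r(F)$, some point of $X$ has Hessian rank exceeding $r$. Thus $V_r\cap\cone X$ is a proper closed subset of $\cone X$. For $n\ge 4$, Proposition~\ref{Bro-integral-cone} shows $X$ is geometrically integral, so $\cone X$ is irreducible of dimension $n-1$, and a proper closed subset has dimension at most $n-2$. For $n=3$, the hypothesis $r<2$ forces $r=1$, and then $2r-2=0\le n-2$, so the bound already follows from the previous paragraph.

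The only delicate point I anticipate is the integer rounding in the first bound, including the case $\dim I\le n$, which needs $n\ge 3$. Everything else is bookkeeping on top of Lemma~\ref{LEM:433-incidence-bound}.
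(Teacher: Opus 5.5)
Your proof is correct and follows the paper's argument. The bound on $\dim V_r$ comes from goodness together with the incidence bound \eqref{EQ:clean-slop-incidence-bound}, the claims about $V_1\cap\cone X$ and the $2r-2$ bound come from \eqref{EQ:slop-sing-bound} and \eqref{EQ:slop-smooth-bound}, and the $n-2$ bound comes from Lemma~\ref{LEM:my-guy-slop}; the only deviations are minor, namely proving $V_0=\{\bm0\}$ directly via $\bfx^tH(\bfx)\bfx=6F(\bfx)$ instead of the $r=0$ case of goodness, and spelling out the irreducibility of $\cone X$ (with the separate case $n=3$) that the paper's one-line appeal to Lemma~\ref{LEM:my-guy-slop} leaves implicit.
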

\begin{proof}
Since $F$ is good (see Definition~\ref{defn : good}),
the polynomial $\det(H(\bm{x}))$ does not vanish identically.
In particular, if $\bm{x}\in V_r$, then $\det(H(\bm{x}))=0$ must hold and hence $\dim V_r\leq n-1$.
Furthermore, $V_0$ is a linear space defined over $\QQ$,
so $V_0 = \{\bm0\}$ by the $r=0$ case of Definition~\ref{defn : good}.

Next consider the projection $\pi\colon I\to \A^n$ defined by $(\bm{x},\bm{y})\mapsto \bm{x}$. Then clearly 
$$V_r=
\{\bm{x}\in \A^n\colon \dim \pi^{-1}(\bm{x})\geq n-r\}.
$$
Thus $\dim V_r \leq \dim I - (n-r)\leq \lfloor n/3 \rfloor +r-1$
by \eqref{Eq: Def.hessian.Incidence} and \eqref{EQ:clean-slop-incidence-bound},
since $n \le (4n-3)/3$.

The claims $\dim(V_r\cap \cone X) \le 2r-2$
and $V_1\cap \cone X = \{\bm0\}$
follow from \eqref{EQ:slop-sing-bound} and \eqref{EQ:slop-smooth-bound}.
The final claim follows directly from
Lemma~\ref{LEM:my-guy-slop}.
\end{proof}

It will also be useful to have some control on linear spaces in $\sing X$,
going beyond \eqref{EQ:clean-slop-sing-bound}.

\begin{lemma}
\label{LEM:no-slop-singular-linear}
Assume $X(\QQ)=\emptyset$.
If $\nabla F$ vanishes on
a $\Qbar$-vector space $L\subset\Qbar^n$,
then $$\dim L\le n/3.$$
\end{lemma}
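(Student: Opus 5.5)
We will apply Lemma~\ref{LEM:no-slop} with a weight assignment adapted to the single subspace $L$, in the same spirit as the proof of \eqref{EQ:slop-sing-bound}. The first step is to translate the hypothesis into a vanishing statement for the symmetric trilinear form $\Phi(\bfy,\bfv,\bfz)=\bfy^tH(\bfv)\bfz$. Write $\ell:=\dim L$. Since $\nabla F$ is a quadratic map vanishing identically on the linear space $L$, polarising gives $H(\bfx)\bfy=\bm0$ for all $\bfx,\bfy\in L$. Indeed, $\nabla F(\bfx+\bfy)-\nabla F(\bfx)-\nabla F(\bfy)$ is a nonzero constant multiple of $H(\bfx)\bfy$, by the identity $\nabla F(\bfv)-\nabla F(\bfw)=2H(\bfy)\bfx$ used in the proof of Lemma~\ref{LEM:hessian-concentration}. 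Hence
\[
\Phi(L,L,\Qbar^n)=0 .
\]

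Next we split $\Qbar^n=L\oplus W$, with $\dim W=n-\ell$. Choose $A\in\GL_n(\Qbar)$ whose columns consist of a basis of $L$ followed by a basis of $W$. We give each $L$-variable weight $-1$ and each $W$-variable weight $2$.

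We then check the hypothesis of Lemma~\ref{LEM:no-slop}. A monomial $x_ix_jx_k$ occurring in $F(A\bfx)$ corresponds to a nonzero value $\Phi(A\bfe_i,A\bfe_j,A\bfe_k)$. By the vanishing $\Phi(L,L,\Qbar^n)=0$, at most one of the three indices can correspond to $L$. The weight of such a monomial is therefore at least $-1+2+2=3\ge 0$, or $6$ when all three indices correspond to $W$. The hypothesis holds, so Lemma~\ref{LEM:no-slop} gives
\[
-\ell+2(n-\ell)\ge 0,
\]
that is, $\ell\le 2n/3$. This is too weak, so the weights must be tightened.

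The better choice is weight $-2$ on $L$ and weight $1$ on $W$. A monomial with one index in $L$ then has weight $-2+1+1=0$, and a monomial with no index in $L$ has weight $3$, so every monomial has nonnegative weight. Lemma~\ref{LEM:no-slop} now yields
\[
-2\ell+(n-\ell)\ge 0,
\]
hence $\ell\le n/3$, as required.

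The only real work is in the first step: making sure that vanishing of $\nabla F$ on $L$ really does force $\Phi(L,L,\cdot)=0$, and not just $\Phi(\bfx,\bfx,\cdot)=0$ for $\bfx\in L$. This is exactly where linearity of $L$ is used, through polarisation over a field of characteristic zero. The remaining steps are bookkeeping. The descent to rational points is already handled inside Lemma~\ref{LEM:no-slop}, so $L$ need not be defined over $\QQ$.
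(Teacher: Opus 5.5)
Your proof is correct and is essentially the paper's argument: you polarise to get $\Phi(L,L,\Qbar^n)=0$, split $\Qbar^n=L\oplus W$, assign weight $-2$ to $L$ and $1$ to $W$, and apply Lemma~\ref{LEM:no-slop} to obtain $-2\dim L+(n-\dim L)\ge 0$. The exploratory $(-1,2)$ weighting can simply be dropped, and the polarisation constant is exactly $1$: $\nabla F(\bfx+\bfy)-\nabla F(\bfx)-\nabla F(\bfy)=H(\bfx)\bfy$.
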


\begin{proof}
Let $\Phi$ be as in the proof of Lemma~\ref{LEM:433-incidence-bound}.
Then $\Phi(L,L,\Qbar^n) = 0$,
since $$2H(\bfx)\bfy = \nabla F(\bfx+\bfy) - \nabla F(\bfx-\bfy) = \bm0-\bm0 = \bm0$$
for all $\bfx,\bfy\in L$.
Write $\Qbar^n=L\oplus M$.
Choose $A\in \GL_n(\Qbar)$ with
$\{A\bfe_1,\dots,A\bfe_n\}
\subset L \cup M$.
Let $w_i := -2$ if $A\bfe_i\in L$, and $w_i := 1$ if $A\bfe_i\in M$.
Let $1\le i,j,k\le n$ with $w_i\le w_j\le w_k$
and suppose $\Phi(A\bfe_i,A\bfe_j,A\bfe_k) \ne 0$.
If $w_i = -2$, then $w_j=w_k=1$, since $\Phi(L,L,\Qbar^n) = 0$.
Thus $w_i+w_j+w_k=0$ in this case.
If $w_i=1$, then $w_i+w_j+w_k=3$.
Lemma~\ref{LEM:no-slop} therefore gives
$-2\dim L + \dim M \ge 0$.
Since $\dim M = n - \dim L$, it follows that $3\dim L \le n$.
\end{proof}

\section{Families of singular hyperplane sections}
\label{sec:dual-tangential-geometry}

Throughout this section, we adopt the notation of \S\ref{sec : geom_cubic_hyper}.
Assume $n\ge 4$ and $X(\QQ) = \emptyset$.
By Proposition~\ref{Bro-integral-cone},
the $\Qbar$-variety $X \subset \PP^{n-1}$ is integral.
Write $H_{\bfv} := \{[\bfx]\in \PP^{n-1}: \bfv\cdot \bfx=0\}$ for $\bfv\ne \bm0$.
For each integer $t\ge 0$, put
\begin{equation}
\label{EQ:general-bad-hyperplanes}
B_t:=
\{[\bfv]\in\PP^{n-1}:
\dim\sing(X\cap H_{\bfv})\ge t\},
\end{equation}
where $\sing(X\cap H_{\bfv})$ is interpreted scheme-theoretically
as in \S\ref{sec : geom_cubic_hyper}.
Consider the closed incidence correspondence
\begin{equation}
\label{EQ:universal-section-singular}
S:=\{([\bfx],[\bfv])\in X\times\PP^{n-1}:
 \bfv\cdot\bfx=0,\quad
 v_iF_j(\bfx)-v_jF_i(\bfx)=0\text{ for all }i,j\},
\end{equation}
where $F_i=\partial F/\partial x_i$.
Its fibre over $[\bfv]$ is $\sing(X\cap H_{\bfv})$.
Thus $B_t$ is closed and defined over $\QQ$,
by upper semicontinuity of fibre dimension for a proper morphism.
The scheme structures on $B_t$ and $S$
are unimportant; for instance, the reduced scheme structure would suffice.

\begin{prop}
\label{PROP:rational-exceptional-hyperplanes-general}
Suppose $t\ge\max\{0,\dim\sing X\}$,
and let $A_t$ be the Zariski closure of $B_t(\QQ)$ in $\PP^{n-1}$.
Then $A_t$ is defined over $\QQ$.
Moreover, $A_t(\QQ)=B_t(\QQ)$ and
\begin{equation}
\label{EQ:rational-terminal-bound}
1+\dim A_t\le
\max\left\{0,n-\left\lceil\frac{3(t+2)}2\right\rceil\right\}.
\end{equation}
\end{prop}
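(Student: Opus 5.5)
The plan is to prove the three claims in order, with the first two formal and the third the real content.

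\emph{$A_t$ is defined over $\QQ$, and $A_t(\QQ)=B_t(\QQ)$.} Since $B_t$ is closed and defined over $\QQ$, the set $B_t(\QQ)$ is $\Gal(\Qbar/\QQ)$-stable. Hence its Zariski closure $A_t$ is Galois-stable, and so it descends to $\QQ$. We have $B_t(\QQ)\subset A_t\subset B_t$ because $B_t$ is closed. Taking rational points gives $A_t(\QQ)=B_t(\QQ)$.

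\emph{Reduction for the dimension bound.} Fix an irreducible component $W$ of $A_t$ with $a:=\dim W$ maximal. Since $A_t$ is the closure of its rational points, rational points are Zariski dense in $W$, so we may work with a generic $\QQ$-point of $W$. Consider $S_W:=S\cap(X\times W)$, whose fibre over $[\bfv]\in W$ has dimension at least $t$. We then separate the singular points of $X$ from the smooth ones.
\begin{itemize}
\item \emph{Fibres inside $\sing X$.} A fibre contained in $\sing X$ must have dimension exactly $t=\dim\sing X$. This forces $H_{\bfv}$ to contain a top-dimensional component of $\sing X$, so $[\bfv]$ lies in a linear space whose dimension is controlled by the linear span of that component. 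This case would be bounded using \eqref{EQ:clean-slop-sing-bound} together with Lemma~\ref{LEM:no-slop-singular-linear}.
\item \emph{Smooth points.} Otherwise the fibre over a generic point of $W$ meets $X^{\mathrm{sm}}$ in dimension at least $t$. At a smooth point $\bfx$ of the fibre, $\nabla F(\bfx)$ is proportional to $\bfv$, so the projection $S_W\cap(X^{\mathrm{sm}}\times W)\to X$ is injective. Its image is an irreducible $\Sigma\subset X$ with $\dim\cone\Sigma\ge a+t+1$, on which the Gauss map has $a$-dimensional image and fibres of dimension at least $t$.
\end{itemize}

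\emph{Main step: a weight argument via Lemma~\ref{LEM:no-slop}.} At a generic smooth point $\bfx$ of a fibre, write:
\begin{itemize}
\item $K$ for the tangent space of the affine cone over the fibre, so that $\dim K\ge t+1$ and $\bfx\in K$;
\item $T=T_{\bfx}\cone\Sigma$;
\item $\bfv^\perp$ for the hyperplane through which everything passes.
\end{itemize}
Differentiating $\nabla F\equiv\lambda\bfv$ along the fibre gives $H(\bfx)K\subset\Qbar\bfv$, and hence $\Phi(K,\bfv^\perp,\bfx)=0$. Moreover, as in the proof of Lemma~\ref{LEM:433-incidence-bound}, we have $\Phi(T,\bfx,\bfx)=0$. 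Differentiating once more along the fibre, in the style of Lemma~\ref{LEM:dav-bad-tangent-space}, should give further vanishing of the form $\Phi(K,K,\bfv^\perp)=0$ modulo $\bfx$.

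I would split $\Qbar^n$ along the flag
\[
\Qbar\bfx\subset K\subset\bfv^\perp\subset\Qbar^n,
\]
and possibly refine it using $T$. The aim is to assign weights, roughly $-3$ on $\Qbar\bfx$, $-1$ on $K/\Qbar\bfx$, $+$something on the complement of $\bfv^\perp$, and $0$ elsewhere. These weights should be chosen so that every nonvanishing coefficient of $\Phi$ has nonnegative weight. Lemma~\ref{LEM:no-slop} then yields an inequality of the shape
\[
2(n-1-\dim(\text{stuff}))\ge 3(t+2),
\]
and inserting $\dim\cone\Sigma\ge a+t+1$ should produce $1+a\le n-\lceil 3(t+2)/2\rceil$, that is, \eqref{EQ:rational-terminal-bound}.

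\emph{Expected obstacle.} The main difficulty is finding the right flag and weights so that the inequality involves $a$ and not only $t$. The first attempt, using only the Hessian rank bound \eqref{EQ:slop-smooth-bound}, gives roughly $a\le 2n-3t-5$, which is too weak. If the weights still fail to see $a$, my fallback is a slicing argument, which is the reason the section allows general $n$. Using density of $\QQ$-points, pick a generic rational point $[\bfv]\in W$ and restrict $F$ to a rational subspace adapted to $W$. Then apply \eqref{EQ:clean-slop-sing-bound} to the resulting cubic in fewer variables, which still has no $\QQ$-points, and so bound $t$ in terms of $n-a$.
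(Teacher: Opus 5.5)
Your proposal has a genuine gap, at exactly the point you flag. The main route cannot work as described. Lemma~\ref{LEM:no-slop} is a statement about $F$ and arbitrary $A\in\GL_n(\Qbar)$. The local data you feed it at $(\bfx,\bfv)$, namely the flag $\Qbar\bfx\subset K\subset\bfv^\perp$ and the space $T$, look the same at a generic point of any component of $B_t$, so the rationality of $\bfv$ is never used. Such an argument would therefore bound $\dim B_t$ itself. That is far beyond what the paper can prove geometrically: for $n=10$, $t=4$ the proposition says $A_4$ is finite, while only $1+\dim B_4\le 4$ is known from \eqref{EQ:geometric-terminal-bound}. The hypothesis $X(\QQ)=\emptyset$ can only act on the family through a subspace $M$ \emph{defined over $\QQ$}, on which $F|_M$ still has no rational zero. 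Nothing in your flag has a dimension depending on $a$, which is why the weights never see $a$.

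The same objection applies to your treatment of fibres inside $\sing X$. The span argument of Proposition~\ref{PROP:geometric-exceptional-hyperplanes-general} gives at best $1+\dim\le n-t-2$. Even that requires $t+1>n/3$ to exclude linear components via Lemma~\ref{LEM:no-slop-singular-linear}, which is not assumed here. Both bounds are weaker than \eqref{EQ:rational-terminal-bound}.

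Your fallback is the right idea and is what the paper does, but as stated it omits the entire content. The missing ingredient is the choice $M_{\bfv}:=(T_{\bfv}\cone W)^\perp$, of dimension $n-1-a$. One must show that the affine cone over the bad fibre lies in $\{\bfx\in M_{\bfv}:\nabla F(\bfx)\in M_{\bfv}^\perp\}$, that is, in the singular locus of $F|_{M_{\bfv}}$.

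\begin{itemize}
\item \emph{Fibres meeting the smooth locus.} The paper uses generic smoothness of $\pi$ over a dense open $Z'\subset W$. Every $\bfw\in T_{\bfv}\cone Z'$ lifts to a tangent vector $(\bfy,\bfw)$. Differentiating $F(\bfx)=0$ and $\bfx\cdot\bfv=0$, with $\nabla F(\bfx)=\alpha\bfv$, gives $0=\bfy\cdot\nabla F(\bfx)=\alpha\,\bfy\cdot\bfv=-\alpha\,\bfx\cdot\bfw$. So $\bfx\in M_{\bfv}$, while $\nabla F(\bfx)=\alpha\bfv\in T_{\bfv}\cone Z'=M_{\bfv}^\perp$. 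In your notation the same follows from $\bfx^tH(\bfx)\bfy=2\nabla F(\bfx)\cdot\bfy=0$ for $\bfy\in T$, once you justify that $H(\bfx)T=T_{\bfv}\cone W$ at a generic point.
\item \emph{Fibres inside $\sing X$.} A dimension count shows that $X'\times Z'$ lies in the closure of the incidence, for some top-dimensional component $X'$ of $\sing X$. Differentiating $\bfx\cdot\bfv=0$ in $\bfv$ then gives $\cone X'\subset M_{\bfv}$, and $\nabla F$ vanishes on $\cone X'$.
\end{itemize}

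Finally, take $\bfv$ rational, which is possible by density. Then $M_{\bfv}$ is defined over $\QQ$, and $F|_{M_{\bfv}}\ne 0$ has no rational zero. Applying \eqref{EQ:clean-slop-sing-bound} to $F|_{M_{\bfv}}$ gives $t+1\le (2(n-1-a)-3)/3$, which is \eqref{EQ:rational-terminal-bound}.
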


\begin{proof}
The set $A_t$ is contained in the intersection of its Galois conjugates.
Thus $A_t$ is Galois-stable, and hence defined over $\QQ$.
Moreover, $A_t \subset B_t$, since $B_t$ is closed.
Thus $A_t(\QQ) \subset B_t(\QQ)$.
Since $B_t(\QQ) \subset A_t(\QQ)$ by definition,
it follows that $A_t(\QQ)=B_t(\QQ)$.
It remains to prove \eqref{EQ:rational-terminal-bound}.

The set $A_t(\QQ)=B_t(\QQ)$
is Zariski dense in $A_t$.
Thus every nonempty open subset of $A_t$
contains a $\QQ$-point.
This, together with \eqref{EQ:clean-slop-sing-bound},
will be the main arithmetic input used below.

Let $Z$ be an irreducible component of $(A_t)_{\rm sm}$.
As in \S\ref{sec : geom_cubic_hyper}, we work geometrically over $\Qbar$.
Let $\pi\colon S \to \PP^{n-1}$ be the projection $([\bfx],[\bfv]) \mapsto [\bfv]$.
For each $\bfv\in Z$, we have $\dim \pi^{-1}(\bfv) = \dim \sing(X\cap H_{\bfv}) \ge t \ge 0$.
Since $Z$ is irreducible,
generic smoothness of the map $\pi\vert_{\pi^{-1}(Z)}$,
for the reduced scheme structures on $Z$ and $\pi^{-1}(Z)$,
produces an irreducible open subset $U$ of $\pi^{-1}(Z)$
for which $\pi\vert_U$ is smooth of relative dimension $\ge t$
and the image $Z'$ of $\pi\vert_U$ is a dense open subset of $Z$.
In particular, $\pi\vert_U$ is flat,
with some constant fibre dimension $e\ge t$.

Let $\cone Z'$ denote the affine cone of $Z'$.
Let $\bfv\in \cone Z' \setminus \{\bm0\}$
and $M_{\bfv} := (T_{\bfv} \cone Z')^\perp \subset \A^n$.
We claim that
\begin{equation}
\label{EQ:rational-contact-restriction}
\dim\{\bfx\in M_{\bfv}:\nabla F(\bfx)\in M_{\bfv}^\perp\}\ge e+1.
\end{equation}
For a linear subspace $L\subset \Qbar^n$,
we have $\frac{\mathrm{d}}{\mathrm{d}t} F(\bfx+t\bfv) \vert_{t=0}
= \bfv\cdot \nabla F(\bfx)$ for all $\bfx,\bfv\in L$.
If $F\vert_L\ne 0$,
then $F\vert_L$ is singular at $\bfx$
if and only if $\nabla F(\bfx)$ is orthogonal to $L$.
Thus if $F\vert_{M_{\bfv}} \ne 0$,
then the left-hand side of \eqref{EQ:rational-contact-restriction}
equals $\dim\sing\{\bfx\in M_{\bfv}:F(\bfx)=0\}$.

Suppose first that $U$ is not contained in
$\sing(X) \times Z'$.
Then by shrinking $U$ and $Z'$ if necessary, we may assume that
$U \subset X_{\rm sm} \times Z'$.
For $([\bfx],[\bfv])\in U$, each $\bfw\in T_{\bfv} \cone Z'$
lifts via $\pi\vert_U$ to a tangent vector $(\bfy,\bfw)$
of (the bi-affine cone of $U$ in) $\cone X_{\rm sm} \times \cone Z'$
at $(\bfx,\bfv)$,
since $\pi\vert_U$ is smooth.
By the definition of $S$,
we have $F(\bfx) = \bfx\cdot\bfv = 0$,
and $\nabla F(\bfx)=\alpha\bfv$ for some $\alpha\ne 0$.
Differentiating the first two equations in the direction $(\bfy,\bfw)$,
using the chain and product rules, gives
\begin{equation*}
0 = \bfy \cdot \nabla F(\bfx)
= \alpha \bfy \cdot \bfv
= - \alpha \bfx \cdot \bfw.
\end{equation*}
Thus $\bfx\in (T_{\bfv} \cone Z')^\perp = M_{\bfv}$.
On the other hand, $\nabla F(\bfx) = \alpha\bfv \in T_{\bfv} \cone Z'$,
since $\bfv\in \cone Z'$ and $\cone Z'$ is a cone.
Thus $\nabla F(\bfx)$ is orthogonal to $M_{\bfv}$.
The affine cone of the fibre $\pi\vert_U^{-1}(\bfv)$
therefore lies in the locus in \eqref{EQ:rational-contact-restriction},
proving that inequality.

Now suppose $U\subset\sing(X)\times Z'$.
Then $\dim U \le \dim \sing X + \dim Z'$.
On the other hand, $\dim U = e + \dim Z'
\ge t + \dim Z'
\ge \dim \sing X + \dim Z'$,
since $e \ge t$
and $t \ge \dim \sing X$.
Thus equality holds.
The closure of $U$ in $\sing(X)\times Z'$
therefore contains some irreducible component $X' \times Z'$
of $\sing(X)\times Z'$, with $\dim X' = \dim \sing X$.
Since $\bfx \cdot \bfv$ vanishes on $U$,
it vanishes on $X' \times Z'$ by continuity.
Differentiating the equation $\bfx \cdot \bfv = 0$
with respect to $\bfv\in \cone Z' \setminus \{\bm0\}$, keeping $\bfx\in \cone X'$ fixed,
shows that $\cone X' \subset (T_{\bfv} \cone Z')^\perp = M_{\bfv}$.
On the other hand, the definition of $\sing X$
implies that $\nabla F(\bfx) = \bm0$ vanishes for all $\bfx\in \cone X'$,
so that $\nabla F(\bfx)$ is trivially orthogonal to $M_{\bfv}$.
It follows that $\cone X'$
lies in the locus in \eqref{EQ:rational-contact-restriction},
proving that inequality.

Since $Z'$ is open in $A_t$, we may choose $\bfv$ to be a $\QQ$-point.
The vector space $M_{\bfv}$ is then defined over $\QQ$.
The inequality \eqref{EQ:rational-contact-restriction}
trivially implies $\dim M_{\bfv} \ge 1$, since $e\ge t\ge 0$.
Moreover, $F\vert_{M_{\bfv}} \ne 0$, since $X(\QQ) = \emptyset$.
Thus \eqref{EQ:rational-contact-restriction} in fact implies
$\dim M_{\bfv} - 1
\ge \dim\sing\{\bfx\in M_{\bfv}:F(\bfx)=0\}
\ge e+1$, whence $\dim M_{\bfv} \ge 2$.
Plugging the inequality \eqref{EQ:clean-slop-sing-bound} for $F\vert_{M_{\bfv}} \ne 0$
into \eqref{EQ:rational-contact-restriction} now gives
\begin{equation*}
\frac{2(n-1-\dim Z) - 3}{3}
= \frac{2\dim M_{\bfv} - 3}{3}
\ge e+1 \ge t+1,
\end{equation*}
whence $2(1+\dim Z) \le 2n-3(t+2)$.
This proves \eqref{EQ:rational-terminal-bound},
after taking into account the fact that $A_t$ could be empty,
in which case $1+\dim A_t = 0$.
\end{proof}

Consider the conormal variety of $X$ together with its projections:
\begin{equation}
\label{def:closed-gauss-correspondence}
\begin{tikzcd}
& \Gamma
:= \bar{\{ (x,H)\in X_{\text{sm}}\times \check{\PP}\smash{^{n-1}} : T_xX\subset H\}}
\subset X\times \check{\PP}\smash{^{n-1}}
\arrow[dl, "\pi_1"] \arrow[dr, "\pi_2"] & \\
X && X^*,
\end{tikzcd}
\end{equation}
where $X^*\subset \check\PP\smash{^{n-1}}$ denotes the dual variety of $X$, which by definition is the image of $\Gamma$ under $\pi_2$.
Equivalently, since we are working in characteristic $\ne 3$,
\begin{equation*}
\Gamma
= \bar{\{([\bfx],[\nabla F(\bfx)]): [\bfx]\in X_{\rm sm}\}}
\subset X\times\PP^{n-1},
\end{equation*}
and $X^*$ is the closure of the image of $X_{\text{sm}}$
under the Gauss map $[\bfx] \mapsto [\nabla F(\bfx)]$.
We remark that $\Gamma$ is often called the \emph{conormal variety} of $X$,
a notion that is critical to Appendix~\ref{sec:n10-singular-support}.

\begin{lemma}\label{lem:2n/3}
We have $\dim X^*\geq 2\lceil n/3\rceil-2$.
\end{lemma}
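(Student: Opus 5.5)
The plan is to relate $\dim X^*$ to the rank of the Hessian along $X$, and then invoke Lemma~\ref{LEM:my-guy-slop}, which gives $r(F)\ge 2\lceil n/3\rceil$. We work under the standing assumptions of this section, namely $n\ge4$ and $X(\QQ)=\emptyset$.

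The first step is to compute the differential of the Gauss map
\[
\gamma\colon X_{\rm sm}\to \check\PP^{n-1},\qquad [\bfx]\mapsto[\nabla F(\bfx)].
\]
Lift it to the affine map $\bfx\mapsto\nabla F(\bfx)$ on $\cone X$. Its differential at $\bfx$ is $\bfv\mapsto H(\bfx)\bfv$, restricted to the tangent space
\[
T_{\bfx}\cone X=\nabla F(\bfx)^\perp=\{\bfv:\bfv\cdot\nabla F(\bfx)=0\},
\]
which has dimension $n-1$. This restriction has rank at least $\rank H(\bfx)-1$, since restricting a linear map to a hyperplane drops its rank by at most one.

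Passing to projectivisations also costs at most one dimension, because we quotient by the line spanned by $\nabla F(\bfx)$ in the target. Together with Euler's identity $H(\bfx)\bfx=2\nabla F(\bfx)$, which says the radial direction $\bfx$ is sent onto exactly that line, this gives
\[
\operatorname{rank} d\gamma_{[\bfx]}\ \ge\ \operatorname{rank} H(\bfx)-2 .
\]

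Since we are in characteristic zero, generic smoothness (or simply the fact that the dimension of the image of a dominant map equals the generic rank of its differential) yields
\[
\dim X^*=\max_{[\bfx]\in X_{\rm sm}}\operatorname{rank} d\gamma_{[\bfx]} .
\]

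It remains to check that the maximal Hessian rank over $X$ is attained at a smooth point. The condition $\rank H(\bfx)=r(F)$ is open on $X$, and it holds at some point by the definition of $r(F)$. Because $X$ is irreducible, which follows from Proposition~\ref{Bro-integral-cone}, this open set meets the dense open subset $X_{\rm sm}$. Hence
\[
\dim X^*\ \ge\ r(F)-2\ \ge\ 2\lceil n/3\rceil-2 ,
\]
the last step by Lemma~\ref{LEM:my-guy-slop}.

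The main point needing care is the rank bookkeeping in the first step: we must confirm that the two losses (restricting to the hyperplane $\nabla F(\bfx)^\perp$, and projectivising the target) are each at most one, so that they give $r-2$ and not something worse. This is routine linear algebra once Euler's identity is used to see where $\bfx$ is sent.
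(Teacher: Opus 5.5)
Your proof is correct, but it takes a different route from the paper. The paper's proof is a one-line citation: by a theorem of Segre, $\dim X^*=r(F)-2$, and the bound then follows from Lemma~\ref{LEM:my-guy-slop}. You instead prove directly the only direction that is needed, $\dim X^*\ge r(F)-2$. You do this by computing the differential of the Gauss map at a smooth point of maximal Hessian rank, so you in effect reprove half of Segre's theorem using only the chain rule, Euler's identity and the irreducibility of $X$.

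Your rank count is in fact sharper than you claim. Since $H(\bfx)$ is symmetric and $H(\bfx)\bfx=2\nabla F(\bfx)$, every $\bfy\in\ker H(\bfx)$ satisfies $\bfy\cdot\nabla F(\bfx)=\tfrac12\bfx^tH(\bfx)\bfy=0$. So $\ker H(\bfx)\subset\nabla F(\bfx)^\perp$, and restricting to that hyperplane costs nothing. Since $\bfx\in\nabla F(\bfx)^\perp$ is sent to $2\nabla F(\bfx)$, projectivising then costs exactly one. This gives $\rank d\gamma_{[\bfx]}=\rank H(\bfx)-2$ on the nose, recovering Segre's equality.

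Generic smoothness is also more than you need. You only use that the rank of $d\gamma$ at a point of $X_{\rm sm}$ is at most $\dim X^*$. This follows because the locus where the rank is at least a given value is open. It therefore meets the dense open set of points mapping into $(X^*)_{\rm sm}$, where the differential lands in a tangent space of dimension $\dim X^*$.

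The citation buys brevity and the exact formula. Your argument buys self-containedness, and it shows that nothing beyond elementary differential calculus on $X_{\rm sm}$, plus the openness of the maximal-rank locus, is involved.
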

\begin{proof}
Thanks to a result of Segre \cite{SegreII}, we know that $\dim X^*=r(F)-2$. The claim now follows from Lemma~\ref{LEM:my-guy-slop}. 
\end{proof}

Let $\Gamma_{[\bfv]} \subset X$ be the fibre of $\Gamma$ over $[\bfv]\in \PP^{n-1}$.
We view $\Gamma_{[\bfv]}$ as a closed subset of $X$.
Let
\begin{equation}
W_t:=
\{[\bfv]\in\PP^{n-1}:\dim\Gamma_{[\bfv]}\ge t\}.
\label{EQ:general-large-gauss-fibres}
\end{equation}
Again $W_t$ is closed and defined over $\QQ$,
by repeating the argument for $B_t$
from after \eqref{EQ:universal-section-singular}.
In later sections, $\sing (X_{\rm sm} \cap H_{\bfv})$
will be more directly relevant than
$\Gamma_{[\bfv]}$.
However,
\begin{equation}
\label{EQ:smooth-sing-containment}
\sing (X_{\rm sm} \cap H_{\bfv})
\subset \Gamma_{[\bfv]},
\end{equation}
and there are technical advantages to working with the closed set $\Gamma_{[\bfv]}$.
Moreover, Euler's identity $\bfx \cdot \nabla F(\bfx) = 3F(\bfx)$ implies,
via the definition \eqref{EQ:universal-section-singular} of $S$,
that
\begin{equation}
\label{EQ:gauss-section-containment}
\Gamma\subset S,\qquad
\Gamma_{[\bfv]}\subset S_{[\bfv]} = \sing(X\cap H_{\bfv}).
\end{equation}

\begin{prop}
\label{PROP:geometric-exceptional-hyperplanes-general}
If $t\ge (n+1)/3$, then
\begin{equation}
1+\dim W_t\le\max\{0,n-t-2\}.
\label{EQ:geometric-gauss-bound}
\end{equation}
If $t\ge \max\{(n+1)/3,\dim\sing X\}$, then
\begin{equation}
1+\dim B_t\le\max\{0,n-t-2\}.
\label{EQ:geometric-terminal-bound}
\end{equation}
\end{prop}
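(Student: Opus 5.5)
The plan is to argue purely geometrically over $\Qbar$, comparing dimensions in the correspondences $\Gamma$ and $S$. We use that $X$ is integral (Proposition~\ref{Bro-integral-cone}), together with the lower bound on $\dim X^*$ from Lemma~\ref{lem:2n/3}. For the second bound we also use Lemma~\ref{LEM:no-slop-singular-linear} to rule out linear components of $\sing X$.

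\emph{The bound for $W_t$.} Since $X$ is integral, $X_{\rm sm}$ is irreducible of dimension $n-2$. Hence $\Gamma$, the closure of the graph of the Gauss map on $X_{\rm sm}$, is irreducible of dimension $n-2$. Let $Z$ be an irreducible component of $W_t$. Every fibre of $\pi_2$ over $Z$ has dimension $\ge t$, so
\[
\dim Z + t \le \dim \pi_2^{-1}(Z) \le \dim\Gamma = n-2 .
\]
This already gives $1+\dim W_t\le n-t-1$, so we must exclude equality. Suppose equality holds. Then the closed subset $\pi_2^{-1}(Z)$ has full dimension in the irreducible $\Gamma$, so $\pi_2^{-1}(Z)=\Gamma$. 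Consequently $X^*=\pi_2(\Gamma)\subset Z$, and every fibre of $\Gamma\to X^*$ has dimension $\ge t$. This forces $\dim X^*\le n-2-t$. On the other hand, Lemma~\ref{lem:2n/3} gives $\dim X^*\ge 2\lceil n/3\rceil-2\ge 2n/3-2$. Combining the two yields $t\le n/3$, which contradicts $t\ge (n+1)/3$. Hence $\dim Z\le n-t-3$, as required. (If $W_t$ is empty there is nothing to prove.)

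\emph{The bound for $B_t$.} We decompose $S$ according to whether $[\bfx]$ is a smooth point of $X$.
\begin{enumerate}
\item If $[\bfx]\in X_{\rm sm}$ and $([\bfx],[\bfv])\in S$, then $\nabla F(\bfx)\ne\bm0$ is proportional to $\bfv$. Thus $[\bfv]=[\nabla F(\bfx)]$, and the point lies in $\Gamma$.
\item Otherwise $[\bfx]\in \sing X\cap H_{\bfv}$.
\end{enumerate}
So $S_{[\bfv]}\subset \Gamma_{[\bfv]}\cup(\sing X\cap H_{\bfv})$. If $[\bfv]\in B_t$, some irreducible component of $S_{[\bfv]}$ of dimension $\ge t$ lies in one of these two closed sets. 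In the first case $[\bfv]\in W_t$, which is handled by \eqref{EQ:geometric-gauss-bound}. In the second case, since $t\ge\dim\sing X$, some irreducible component $C$ of $\sing X$ with $\dim C=t$ must be contained in $H_{\bfv}$. For each of the finitely many such $C$, the admissible $[\bfv]$ form the projective linear space orthogonal to the linear span of $\cone C$.

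\emph{Excluding linear components.} If $\cone C$ is not a linear space, its span has dimension $\ge t+2$. The orthogonal projective space then has dimension $\le n-t-3$, which is what we need. If instead $\cone C$ is a linear space of dimension $t+1$ on which $\nabla F$ vanishes, Lemma~\ref{LEM:no-slop-singular-linear} gives $t+1\le n/3$, contradicting $t\ge (n+1)/3$. Taking the union of the two cases gives \eqref{EQ:geometric-terminal-bound}.

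The main subtlety is the equality case for $W_t$. There we need the irreducibility of $\Gamma$, which comes from integrality of $X$, to force $\pi_2^{-1}(Z)=\Gamma$ before invoking Segre's $\dim X^*=r(F)-2$ via Lemma~\ref{lem:2n/3}.
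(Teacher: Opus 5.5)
Your proof is correct and follows essentially the same route as the paper's: for $W_t$ the paper argues directly that the general fibre of $\pi_2\colon\Gamma\to X^*$ has dimension at most $n/3<t$, so $\pi_2^{-1}(W_t)$ is a proper closed subset of the irreducible $\Gamma$, which is your equality-case contradiction in direct form. For $B_t$ it makes the same split into components of $\sing(X\cap H_{\bfv})$ meeting $X_{\rm sm}$ (hence lying in $\Gamma_{[\bfv]}$) versus $t$-dimensional components of $\sing X$, and uses Lemma~\ref{LEM:no-slop-singular-linear} to rule out linear ones.
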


\begin{proof}
Suppose first that $t\ge (n+1)/3$.
Since $X_{\rm sm}$ is irreducible,
the closure $\Gamma$ is irreducible,
with $\dim \Gamma = \dim X_{\rm sm} = n-2$.
Yet $\dim X^* \ge 2n/3-2$ by Lemma~\ref{lem:2n/3}.
Consequently the general fibre of $\pi_2\colon \Gamma\to X^*$ has dimension
at most $n/3<t$.
The set $\pi_2^{-1}(W_t)$ is therefore a proper closed
subset of $\Gamma$, of dimension at most $\dim \Gamma - 1 = n-3$.
If $W_t$ is nonempty, every fibre over it has dimension
at least $t$, so
\[
\dim W_t+t
\le \dim \pi_2^{-1}(W_t)
\le n-3.
\]
This proves \eqref{EQ:geometric-gauss-bound},
since if $W_t = \emptyset$ then $1 + \dim W_t = 0$.

Now assume $t\ge \max\{(n+1)/3,\dim\sing X\}$, and take $[\bfv]\in B_t$.
Choose an irreducible component $Z$ of $\sing(X\cap H_{\bfv})$
with $\dim Z\ge t$.
If $Z$ meets $X_{\rm sm}$, then its dense open subset $Z\cap X_{\rm sm}$ lies in
$\Gamma_{[\bfv]}$, so $[\bfv]\in W_t$.
Now suppose $Z \subset \sing X$.
Then $\dim\sing X \ge \dim Z \ge t$,
where equality must hold in view of the assumption $t \ge \dim\sing X$.
Thus $Z$ is one of the
finitely many $t$-dimensional components of $\sing X$.
Its cone $\cone Z$ cannot be linear: Lemma~\ref{LEM:no-slop-singular-linear}
would give $t+1 = \dim \cone Z \le n/3$, contrary to $t>n/3$.
Thus the linear span $L$ of $\cone Z$ has dimension at least $t+2$.
Since $Z \subset H_{\bfv}$ by definition,
this constrains $\bfv$ to lie in the orthogonal complement $L^\perp$,
where $\dim L^\perp \le n-\dim L \le n-t-2$.
Taking the finite union over all possible
irreducible components $Z \subset \sing X$
proves \eqref{EQ:geometric-terminal-bound}.
\end{proof}

\begin{cor}\label{COR:general-n-exceptional-hyperplanes}
Assume $n\ge10$ and $X(\QQ)=\emptyset$.
Then
\begin{equation}
\label{eq:n10-sing-X}
\dim\Sing(X)\leq \lfloor(2n-6)/3\rfloor \le n-6.
\end{equation}
The set $B_{n-6}(\QQ)$ is finite if $n=10$,
and is empty if $n\ge11$. Moreover,
\begin{equation}\label{eq:rational-terminal-hyperplanes}
B_{n-5}(\QQ)=\emptyset.
\end{equation}
Geometrically, we have $B_{n-4} = \emptyset$ and
\begin{equation}\label{eq:terminal-hyperplane-dimensions}
1+\dim B_{n-6}\le4,
\qquad
1+\dim B_{n-5}\le3.
\end{equation}
\end{cor}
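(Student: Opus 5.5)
The corollary is a matter of plugging specific values of $t$ into Propositions~\ref{PROP:rational-exceptional-hyperplanes-general} and \ref{PROP:geometric-exceptional-hyperplanes-general}, after first bounding $\dim\sing X$ by \eqref{EQ:clean-slop-sing-bound}.

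\textbf{Step 1: the singular locus.} Since $n\ge 10$, the bound \eqref{EQ:clean-slop-sing-bound} reads $1+\dim\sing X\le (2n-3)/3$. Hence $\dim\sing X\le (2n-6)/3$, and as the dimension is an integer we get $\dim\sing X\le\lfloor(2n-6)/3\rfloor$. The comparison $(2n-6)/3\le n-6$ is equivalent to $n\ge 12$. For $n=10$ the floor is $4=n-6$, and for $n=11$ it is $\lfloor 16/3\rfloor=5=n-6$. This proves \eqref{eq:n10-sing-X}.

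\textbf{Step 2: rational points.} We apply Proposition~\ref{PROP:rational-exceptional-hyperplanes-general} with $t=n-6$ and $t=n-5$. Both are $\ge\max\{0,\dim\sing X\}$ by Step 1, and $A_t(\QQ)=B_t(\QQ)$.
\begin{itemize}
\item For $t=n-5$: $n-\lceil 3(n-3)/2\rceil\le 0$ whenever $n\ge 9$. So $1+\dim A_t\le 0$, hence $A_t=\emptyset$ and $B_{n-5}(\QQ)=\emptyset$.
\item For $t=n-6$: the bound is $n-\lceil 3(n-4)/2\rceil$. This equals $1$ at $n=10$ and is $\le 0$ for $n\ge 11$ (it is $-1$ at $n=11$ and decreasing afterwards).
\end{itemize}
So $A_{n-6}$ is empty for $n\ge 11$ and zero-dimensional for $n=10$. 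Since $B_{n-6}(\QQ)$ is Zariski dense in $A_{n-6}$, it is finite for $n=10$ and empty for $n\ge 11$.

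\textbf{Step 3: geometric statements.} We apply \eqref{EQ:geometric-terminal-bound} with $t\in\{n-6,n-5,n-4\}$. Each choice satisfies $t\ge\dim\sing X$ by Step 1. The other hypothesis $t\ge(n+1)/3$ is equivalent to $n\ge 8.5$, $n\ge 7$, $n\ge 5.5$ respectively, all true for $n\ge10$. The bound $\max\{0,n-t-2\}$ then gives:
\begin{itemize}
\item $1+\dim B_{n-6}\le 4$;
\item $1+\dim B_{n-5}\le 3$;
\item $1+\dim B_{n-4}\le 2$.
\end{itemize}

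The last bound is not enough, since we need $B_{n-4}=\emptyset$. This is the one point requiring care. I would argue directly that every hyperplane section $X\cap H_{\bfv}$ is an $(n-3)$-dimensional cubic, and that it is geometrically integral by Theorem~\ref{Thm: ZaksBoundSingLocus}: indeed $\dim X-\dim\sing X\ge (n-2)-(n-6)=4$. A geometrically integral hypersurface has a singular locus of codimension $\ge1$ in it. Hence $\dim\sing(X\cap H_{\bfv})\le n-4$, with equality only if it has codimension exactly one. So it suffices to show that $\dim\sing(X\cap H_{\bfv})\le\dim\sing X+1\le n-5$. This is exactly the first assertion of Theorem~\ref{Thm: ZaksBoundSingLocus}, and it gives $B_{n-4}=\emptyset$ immediately.

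Incidentally, the same Zak bound also recovers $B_{n-5}\subset\{\dim\sing X=n-6\}$. The main obstacle is therefore just this bookkeeping of floors and ceilings at the boundary cases $n=10,11$; the rest is direct substitution.
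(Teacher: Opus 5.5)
Your proposal is correct and follows the paper's proof essentially verbatim. You use \eqref{EQ:clean-slop-sing-bound} for the singular locus, Proposition~\ref{PROP:rational-exceptional-hyperplanes-general} with $t=n-6,n-5$ for the $\QQ$-points, \eqref{EQ:geometric-terminal-bound} with $t=n-6,n-5$ for the dimension bounds, and Zak's bound $\dim\sing(X\cap H_{\bfv})\le\dim\sing X+1\le n-5$ for $B_{n-4}=\emptyset$; the detour through geometric integrality of $X\cap H_{\bfv}$ is unnecessary.

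There are two harmless arithmetic slips. First, $n-\lceil 3(n-4)/2\rceil=\lfloor(12-n)/2\rfloor$ equals $0$ at $n=11,12$, not $-1$. Second, the conditions $n-6,n-5,n-4\ge(n+1)/3$ amount to $n\ge 9.5,8,6.5$ rather than $8.5,7,5.5$. Neither affects the conclusions for $n\ge 10$.
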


\begin{proof}
The inequality \eqref{eq:n10-sing-X} follows from \eqref{EQ:clean-slop-sing-bound},
Indeed, for $n\ge 12$ we have $(2n-6)/3 \le n-6$,
and for $n=10,11$ we have $\lfloor(2n-6)/3\rfloor = n-6$.
Since $n-6\ge(n+1)/3$,
taking $t=n-6,n-5$ in \eqref{EQ:geometric-terminal-bound} proves
\eqref{eq:terminal-hyperplane-dimensions}.
On the other hand,
the claim $B_{n-4} = \emptyset$ follows from \eqref{EQ:clean-slop-sing-bound}
and Theorem~\ref{Thm: ZaksBoundSingLocus}.
Separately,
Proposition~\ref{PROP:rational-exceptional-hyperplanes-general}
with $t=n-6,n-5$ gives
\[
1+\dim A_{n-6}
\le \max\left\{0,\left\lfloor\frac{12-n}{2}\right\rfloor\right\},
\qquad
1+\dim A_{n-5}
\le \max\left\{0,\left\lfloor\frac{9-n}{2}\right\rfloor\right\}.
\]
This proves the assertions about $\QQ$-points.
\end{proof}

\section{Pointwise bounds for  exponential sums}
\label{sec:pointwise}

Let $F\in \ZZ[x_1,\dots,x_n]$ be a cubic form,
and let $X \subset \PP^{n-1}_\QQ$ be the hypersurface $F=0$.
In this section, we assume that $n\geq 10$ and $X(\QQ)=\emptyset$.
By Proposition~\ref{Bro-integral-cone},
$X$ is geometrically integral and is not a cone.
Also, $\dim\sing X \le n-6$ by \eqref{eq:n10-sing-X}.

Recall the definition of the complete exponential sum $S_q(\bfv)$ from \eqref{eq:defSq}.
We will collect together various pointwise upper bounds for $S_q(\bfv)$,
particularly for prime powers $q=p^r$, in view of the multiplicativity property 
\eqref{eq:q-exact-CRT}.
As usual, any implied constants in our estimates are allowed to depend on $F$ in any way.

\subsection{Prime modulus}

Let $p$ be a prime and put 
$\pi_m(p)=\#\PP^m(\FF_p)=1+p+\cdots+p^m$,
for an integer $m\geq 0$.
Let $X_p\subset \PP^{n-1}_{\FF_p}$
denote the hypersurface $F=0$ modulo $p$.  For $\bfv\in\ZZ^n$, write
$\bar{\bfv}$ for its reduction modulo $p$. In this section we shall be concerned with estimating 
\[
S_p(\bfv)
=\sum_{a\in\FF_p^\ast}\sum_{\bfx\in\FF_p^n}
e_p\bigl(aF(\bfx)+\bfv\cdot\bfx\bigr).
\]
The following result, Proposition~\ref{prop:prime-modulus-pointwise},
provides crucial pointwise upper bounds for $S_p(\bfv)$.
Later, in Appendix~\ref{sec:n10-singular-support},
we will provide a complementary set of
stratified bounds for $S_p(\bfv)$,
which will be combined with Proposition~\ref{prop:prime-modulus-pointwise}
in \S\ref{sec:MarkV-construction}.

\begin{prop}
\label{prop:prime-modulus-pointwise}
We have the following estimates:
\begin{enumerate}[(i)]
\item If $p\mid\bfv$ then
$        S_p(\bfv)=S_p(\bm 0)\ll p^{n-3/2}$.

\item If $p\nmid\bfv$ and $n\geq 10$, then
$
S_p(\bfv)\ll p^{n-2}.
$
\end{enumerate}
\end{prop}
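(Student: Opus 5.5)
Both estimates will follow from expressing $S_p(\bfv)$ through point counts on $X_p$ or on its hyperplane sections, and then applying Lemma~\ref{lem:Fq}. Throughout we take $p$ large enough that the geometric facts established over $\QQ$ spread out to $\FF_p$. Small primes only contribute to the implied constant.

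\textbf{Case (i).} If $p\mid\bfv$, summing over $a\in\FF_p^\ast$ gives
$$S_p(\bm0)=\sum_{\bfx\in\FF_p^n}\bigl(p\,\bm1_{F(\bfx)=0}-1\bigr)=p\,\#\{\bfx\in\FF_p^n:F(\bfx)=0\}-p^n.$$
Writing the affine count as $1+(p-1)\#X_p(\FF_p)$, we obtain
$$S_p(\bm0)=p(p-1)\Bigl(\#X_p(\FF_p)-\tfrac{p^{n-1}-1}{p-1}\Bigr).$$
By Proposition~\ref{Bro-integral-cone}, $X$ is geometrically integral and not a cone. For $p\gg1$ the same holds for $X_p$, since both properties spread out. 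Lemma~\ref{lem:Fq} with $N=n-1\ge3$ then bounds the bracket by $O(p^{n-3})$, which gives $S_p(\bm0)\ll p^{n-1}$. This is stronger than the required $p^{n-3/2}$. Even the cruder Lang--Weil branch of Lemma~\ref{lem:Fq} yields $p^{2}\cdot p^{n-1-3/2}$, which is exactly the stated bound, so the proof is robust whichever branch we use.

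\textbf{Case (ii).} If $p\nmid\bfv$, the standard manipulation (substitute $\bfx\mapsto t\bfx$, then sum over $t$) reduces $S_p(\bfv)$ to a linear combination of $p^2\bigl(\#(X_p\cap H_{\bfv})(\FF_p)-\pi_{n-3}(p)\bigr)$ and $p^2\bigl(\#X_p(\FF_p)-\pi_{n-2}(p)\bigr)$, up to lower-order terms. Here $H_{\bfv}$ denotes the hyperplane $\bfv\cdot\bfx=0$. The second term is $\ll p^{2}p^{n-4}$ by case (i). For the first term we use Theorem~\ref{Thm: ZaksBoundSingLocus}. By \eqref{eq:n10-sing-X} we have $\dim\sing X\le n-6$, so $\dim X-\dim\sing X\ge 4$ and the section $X_p\cap H_{\bfv}$ is geometrically integral. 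This is a cubic hypersurface in $\PP^{n-2}$, and we apply Lemma~\ref{lem:Fq} with $N=n-2\ge 8$:
\begin{itemize}
\item if the section is not a cone, the bracket is $\ll p^{N-2}=p^{n-4}$, so this contribution is $\ll p^{n-2}$;
\item if it is a cone, the bound $p^{N-3/2}$ is not good enough, and we instead use the first (singular-locus) bound of Lemma~\ref{lem:Fq} with $\sigma=\dim\sing(X_p\cap H_{\bfv})\le \dim\sing X+1\le n-5$. This gives $p^{(n-2+n-5)/2}=p^{n-7/2}$, hence a contribution $\ll p^{2}p^{n-7/2}=p^{n-3/2}$.
\end{itemize}

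The cone case is where this falls short. To reach $p^{n-2}$ we need $\sigma\le n-6$, that is, we must exclude hyperplane sections whose singular locus has dimension $n-5$. Over $\FF_p$, for a general prime, this is the delicate point: Corollary~\ref{COR:general-n-exceptional-hyperplanes} only gives $B_{n-5}(\QQ)=\emptyset$, not emptiness over $\FF_p$. I therefore expect the main obstacle to be a refined treatment of the cone case. I would first try to show that a cone section with vertex $v$ forces $v$ to lie in $\sing X_p$ with $H_{\bfv}$ tangent along a large linear space. One would then exploit the fact that a conical cubic in $\PP^{n-2}$ with vertex of dimension $k$ has point count error governed by its nonconical base in $\PP^{n-3-k}$, and bound that error via the \emph{nonconical} branch of Lemma~\ref{lem:Fq}. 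This yields $p^{k+1}\cdot p^{n-5-k}=p^{n-4}$, which is enough. The vertex description over $\FF_p$ is justified by Remark~\ref{field-of-maximal-vertex} for $p>3$. Geometric integrality of the base follows from that of the section, so the nonconical branch applies provided the base has dimension at least $2$. This holds because the vertex dimension is bounded by the linear-space bound of Lemma~\ref{LEM:no-slop-singular-linear}, after spreading out $k+1\le n/3+1$.
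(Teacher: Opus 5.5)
\textbf{Case (i): an exponent error.} From the non-cone branch of Lemma~\ref{lem:Fq} you get a bracket $\ll p^{n-3}$, hence $S_p(\bm0)\ll p^{n-1}$. But $p^{n-1}$ is \emph{weaker} than the target $p^{n-3/2}$, not stronger. The Lang--Weil branch is worse still: it gives $p^{2}\cdot p^{n-5/2}=p^{n-1/2}$, not the stated bound.

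What you need is the singular-locus branch of Lemma~\ref{lem:Fq}. Spreading out \eqref{eq:n10-sing-X} gives $\sigma=\dim\sing X_p\le n-6$, so the bracket is $\ll p^{(n-1+n-6)/2}=p^{n-7/2}$. Then $S_p(\bm0)\ll p^{2}\cdot p^{n-7/2}=p^{n-3/2}$, which is how the paper proves (i).

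Relatedly, in (ii) the $X_p$-term in \eqref{eq:Sp-projective-v-nonzero} carries the coefficient $p$, not $p^2$. This matters: with $p^2$, even the corrected bound would only give $p^{n-3/2}$. With the correct coefficient it contributes $O(p^{n-5/2})$.

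\textbf{Case (ii), cone subcase: the genuine gap.} Your reduction to the minimal base is the same as the paper's. It requires the maximal vertex $\PP^s$ of $Y_{\bfv}$ to satisfy $s\le n-6$. Zak's bound only gives $s\le n-5$, and $s=n-5$ leaves a plane cubic base, for which Lemma~\ref{lem:Fq} only saves $p^{1/2}$.

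Your justification for $s\le n-6$ does not work, because the vertex need not lie in $\sing X_p$. Take coordinates $(\bu,\bfz,t)$ with $H_{\bfv}=\{t=0\}$ and $F=C(\bfz)+tQ(\bu,\bfz,t)$. Then $\nabla F(\bu,\bm0,0)=(\bm0,\bm0,Q(\bu,\bm0,0))$. So $\nabla F$ vanishes on the vertex only along the quadric cone $Q(\bu,\bm0,0)=0$. Lemma~\ref{LEM:no-slop-singular-linear} then only constrains its isotropic subspaces, giving $\lfloor (s+1)/2\rfloor\le n/3$. For $n=10$ this still allows $s=5$, which is exactly the case you must exclude. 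The claimed bound $k+1\le n/3+1$ has no justification.

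The missing idea is to use the Hessian. At every point $(\bu,\bm0,0)$, all entries of $H$ vanish except those in the $t$-row and $t$-column. Hence $\rank H\le 2$ there, and trivially $F=0$. So the $(s+1)$-dimensional affine cone over the vertex lies in $V_2\cap\cone X$. By Corollary~\ref{cor:hesrank} this has dimension $\le 2$. Since it is a fixed variety over $\QQ$, independent of $\bfv$, this dimension bound spreads out uniformly to all large $p$. You get $s\le 1$, hence $b=n-s-4\ge 5$, and the nonconical branch applies.

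Equivalently, Lemma~\ref{LEM:no-slop} with weights $-1,0,2$ on $\bu,\bfz,t$ gives $s+1\le 2$ over $\Qbar$. That route would still need a separate spreading-out argument uniform over all hyperplanes.
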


We begin with exact identities that convert the complete exponential sum into
point counting estimates  for $X_p$ and the hyperplane section  
$Y_{\bfv}=X_p\cap V(\bar{\bfv}\cdot\bfx)   \subset\PP^{n-2}_{\FF_p}$.
 Arguing as in Hooley \cite[Eq.~(47)]{hooley-w}, 
we have 
\begin{align}
    S_p(\bfv)
    &=p^2\bigl(\#Y_{\bfv}(\FF_p)-\pi_{n-3}(p)\bigr)
      -p\bigl(\#X_p(\FF_p)-\pi_{n-2}(p)\bigr)
    \label{eq:Sp-projective-v-nonzero}
\end{align}
when $p\nmid \bm{v}$. If $p\mid \bm{v}$, on the other hand, then a simpler orthogonality argument yields
\begin{equation}
    S_p(\bm 0)
    =p(p-1)\bigl(\#X_p(\FF_p)-\pi_{n-2}(p)\bigr).
    \label{eq:Sp-projective-v-zero}
\end{equation}
The following result will allow us to deal with degenerate cases in which $Y_{\bm{v}}$ is either a cone or fails to be geometrically integral.

\begin{lemma}
\label{lem:geometry-all-hyperplane-sections}
There is a finite set of primes, depending only on $F$, such that for every
prime outside this set and every $\bar{\bfv}\neq\bm 0$, the cubic hypersurface
$Y_{\bfv}$
is geometrically integral.  Furthermore, if $Y_{\bfv}$ is a cone over $\bar{\FF}_p$ with maximal vertex of 
projective dimension $s$, then
\begin{equation}
s\leq 1.
\label{eq:prime-sum-vertex-bound}
\end{equation}
\end{lemma}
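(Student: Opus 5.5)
The plan is to prove both assertions first over $\Qbar$, for every hyperplane $H_{\bfv}$, and then spread out to all but finitely many primes by a constructibility argument.

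\emph{Integrality over $\Qbar$.} By \eqref{eq:n10-sing-X} we have $\dim\sing X\le n-6$, while $\dim X=n-2$, so $\dim X-\dim\sing X\ge 4$. Theorem~\ref{Thm: ZaksBoundSingLocus} then shows that $X\cap H_{\bfv}$ is geometrically integral for every $\bfv\ne\bm0$.

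\emph{Vertex bound over $\Qbar$.} Suppose $X\cap H_{\bfv}$ is a cone with maximal vertex $\Lambda$ of projective dimension $s$. By Remark~\ref{field-of-maximal-vertex}, $\Lambda$ corresponds to the kernel of the Hessian of the restricted cubic form $F|_{H_{\bfv}}$. Every point of $\Lambda$ is singular on $X\cap H_{\bfv}$, so $\dim\sing(X\cap H_{\bfv})\ge s$, and $\Lambda$ is a linear space inside $\sing(X\cap H_{\bfv})$. Write $\hat\Lambda$ for its affine cone, a linear subspace of $\Qbar^n$ of dimension $s+1$ contained in $\bfv^\perp$.

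The cone condition says $\Phi(\hat\Lambda,\bfv^\perp,\bfv^\perp)=0$, where $\Phi$ is the trilinear form of $F$. Choose a complement $\Qbar^n=\bfv^\perp\oplus\Qbar\bm u$ and split $\bfv^\perp=\hat\Lambda\oplus M$. The only possibly nonzero trilinear values involving $\hat\Lambda$ are then $\Phi(\hat\Lambda,\cdot,\bm u)$. Assign weights $-1$ on $\hat\Lambda$, $0$ on $M$, and $c$ on $\bm u$. A monomial $\lambda\lambda'\bm u$ has weight $-2+c$, and monomials $\lambda m\bm u$ have weight $-1+c$. Taking $c=2$ makes every nonzero monomial have weight $\ge0$. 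Lemma~\ref{LEM:no-slop} then gives $-(s+1)+2\ge0$, that is, $s\le1$. This is sharp in the weighting and exactly yields \eqref{eq:prime-sum-vertex-bound}.

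\emph{Spreading out.} Both properties must now be transferred to $\bar\FF_p$ for $p$ large, uniformly in $\bar{\bfv}$. The difficulty is that Lemma~\ref{LEM:no-slop} used $X(\QQ)=\emptyset$, which has no direct analogue mod $p$. I would avoid that by encoding each conclusion as the emptiness of a constructible set defined over $\ZZ$:
\[
\mathcal{B}:=\{[\bfv]: X\cap H_{\bfv}\text{ is not geometrically integral}\},\qquad
\mathcal{C}:=\{[\bfv]: F|_{H_{\bfv}}\text{ has Hessian kernel of dimension}\ge3\}.
\]
For $\mathcal{C}$, this is a closed condition given by the vanishing of minors of the Hessian of $F$ restricted to $\bfv^\perp$, computed via a parametrisation in a chart on $\PP^{n-1}_\ZZ$. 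By Remark~\ref{field-of-maximal-vertex}, in characteristic $\ne2,3$ the maximal vertex is exactly this kernel. For $\mathcal{B}$, non-geometric-integrality of fibres in the flat family of hyperplane sections is a constructible condition on the base $\PP^{n-1}_\ZZ$, by EGA~IV 9.7.7.

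Over $\QQ$ both sets have empty fibre, by the previous two steps. Chevalley's theorem then implies their images in $\Spec\ZZ$ are constructible and do not contain the generic point, hence consist of finitely many closed points. Excluding those primes, together with $2$ and $3$, proves the lemma.

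The main obstacle I expect is handling the constructibility of geometric integrality cleanly, and checking the vertex–Hessian identification in positive characteristic. The weight computation itself looks routine.
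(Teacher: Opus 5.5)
Your proof is correct, but it is organised differently from the paper's. The paper spreads out the \emph{inputs} rather than the conclusions. It reduces modulo $p$ the bounds $\dim\Sing X\le n-6$ from \eqref{eq:n10-sing-X} and $\dim(V_2\cap\cone X)\le 2$ from Corollary~\ref{cor:hesrank}. This needs only the routine fact that fibre dimensions of a fixed scheme over $\ZZ$ agree with the generic one for almost all $p$. The paper then argues over $\bar{\FF}_p$ for each $\bar{\bfv}$ separately. Integrality comes from Zak's theorem applied in characteristic $p$. For the vertex, it writes $F=C(\bfz)+tQ(\bu,\bfz,t)$ and observes that along the affine cone over the vertex the Hessian is supported on the $t$-row and $t$-column. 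That $(s+1)$-plane therefore lies in the reduction of $V_2\cap\cone X$, whence $s\le 1$.

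You instead prove both conclusions over $\Qbar$ for every $\bfv\ne\bm0$, and transfer them in one step via constructibility of the bad loci in $\PP^{n-1}_\ZZ$ and Chevalley's theorem. Your weighting $(-1,0,2)$ on $\hat\Lambda\oplus M\oplus\Qbar\bm{u}$ is a valid and more direct application of Lemma~\ref{LEM:no-slop} than the paper's passage through Lemma~\ref{LEM:433-incidence-bound} and the Hessian rank loci. Your route also uses Zak's theorem only in characteristic zero. The price is the heavier input EGA~IV~9.7.7, which needs finite presentation but not flatness; the family of hyperplane sections need not be flat over all of $\PP^{n-1}_\ZZ$.

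Two small points should be recorded.
\begin{enumerate}
\item $\mathcal B$ by itself does not detect a hyperplane $H_{\bfv}\subset X_p$. In that case $X_p\cap H_{\bfv}=H_{\bfv}$ is integral but is not a cubic hypersurface. Such $[\bfv]$ do lie in $\mathcal C$, because the restricted Hessian then vanishes identically and $n-1\ge 3$.
\item Emptiness of the fibres of $\mathcal B$ and $\mathcal C$ over $\QQ$ must hold at all points of $\PP^{n-1}_\QQ$, not only at $\Qbar$-points. This follows because a nonempty constructible subset of the Jacobson scheme $\PP^{n-1}_\QQ$ contains a closed point.
\end{enumerate}
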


\begin{proof}
After excluding finitely many primes,
spreading out \eqref{eq:n10-sing-X} gives
\begin{equation}
\dim\Sing(X_{p}) = \dim\Sing(X) \leq n-6.
\label{eq:prime-sum-sing-Xp}
\end{equation}
Likewise, if $V_{2,p}=\{\bfx\in\A^n_{\bar{\FF}_p}: \rank H(\bfx)\leq2\}$,
then spreading out Corollary~\ref{cor:hesrank} gives
\begin{equation}
\dim (V_{2,p} \cap \{F=0\}) = \dim (V_2 \cap \{F=0\}) \leq 2\cdot 2-2 = 2.
\label{eq:prime-sum-V2p}
\end{equation}
We may also suppose that $p>3$ and that $X_p$ is geometrically integral
of dimension $n-2$
and is not a cone,
by Proposition~\ref{Bro-integral-cone}.
Indeed, it is well known that geometric integrality
and dimension spread out,
whereas the set of vertices of $X$ spreads out by Remark~\ref{field-of-maximal-vertex}.

The hyperplane section $Y_{\bfv}$ is geometrically integral
by Theorem~\ref{Thm: ZaksBoundSingLocus},
because \eqref{eq:prime-sum-sing-Xp} gives
$$\dim X_p - \dim \sing(X_p)
\ge (n-2) - (n-6) = 4.$$
Suppose now that $Y_{\bfv}$ is a cone over $\bar{\FF}_p$
with maximal vertex $\PP^s$.  Over
$\bar{\FF}_p$, choose coordinates $(\bu,\bfz,t)$ such that $t=0$ cuts
out $\bar{\bm{v}}\cdot \bfx=0$, the variables $\bu=(u_0,\ldots,u_s)$ run along the
vertex, and the minimal base is defined by a cubic form $C(\bfz)$.  Then
\[
    F(\bu,\bfz,t)=C(\bfz)+tQ(\bu,\bfz,t)
\]
for a quadratic form $Q$.  At every point $(\bu,\bm 0,0)$ of the affine cone
over the vertex, all entries of the Hessian matrix $H(\bu,\bm 0,0)$ vanish except possibly
those in the $t$-row and the $t$-column.  Thus $\rank H(\bu,\bm 0,0)\leq2$ and so the 
affine $(s+1)$-plane over the vertex is contained in $V_{2,p}$.
Since the vertex is trivially also contained in $\{F=0\}$,
it follows from  \eqref{eq:prime-sum-V2p} that 
\[
s+1\leq 2,
\]
which completes the proof of the lemma. 
\end{proof}

\begin{proof}[Proof of Proposition~\ref{prop:prime-modulus-pointwise}]
It is clearly enough to work outside a finite set of primes depending on $F$.  
To begin with it follows from 
Lemma \ref{lem:Fq} that 
\begin{equation}\label{eq:fvf}
\left|    \#X_p(\FF_p)-\pi_{n-2}(p)\right|
       \ll_n p^{(n-1+\sigma_p)/2}\leq p^{n-7/2},
\end{equation}
since $\sigma_p=\dim\Sing(X_{p})\leq n-6$, 
by \eqref{eq:prime-sum-sing-Xp}.
Inserting this into  \eqref{eq:Sp-projective-v-zero} gives part (i).

Assume now that $\bar{\bfv}\neq\bm 0$.  By 
\eqref{eq:Sp-projective-v-nonzero} and \eqref{eq:fvf}, we have
\begin{equation}
    S_p(\bfv)
      =p^2\bigl(\#Y_{\bfv}(\FF_p)-\pi_{n-3}(p)\bigr)
       +O(p^{n-5/2}).
    \label{eq:prime-sum-reduced-to-Y}
\end{equation}
It remains to count points on $Y_{\bfv}$.
If $Y_{\bfv}$ is not a cone, the second estimate in
Lemma~\ref{lem:Fq} yields
\[
\left|    \#Y_{\bfv}(\FF_p)-\pi_{n-3}(p)\right|\ll_n p^{n-4}.
\]
Suppose now that $Y_{\bfv}$ is a cone.
Its defining cubic form over $\FF_p$
is absolutely irreducible by Lemma~\ref{lem:geometry-all-hyperplane-sections}.
Its maximal vertex is an $s$-plane defined over $\FF_p$, where $s\ge 0$.
After choosing an $\FF_p$-linear complement its minimal base is a geometrically
integral, non-conical cubic hypersurface
$Z_{\bfv}\subset\PP^{n-s-3}_{\FF_p}$,
defined by a cubic form in $n-s-2$ variables.
Let
$$
b=n-s-4.
$$
Projection away from the maximal vertex shows that
\[
    \#Y_{\bfv}(\FF_p)
       =\pi_s(p)+p^{s+1}\#Z_{\bfv}(\FF_p),
\]
and $\pi_{n-3}(p)=\pi_s(p)+p^{s+1}\pi_b(p)$.
If $b\geq2$, then Lemma~\ref{lem:Fq} gives
\[
   \left| \#Z_{\bfv}(\FF_p)-\pi_b(p)\right|\ll_n p^{b-1},
\]
from which it follows that 
\begin{equation}
\left|    \#Y_{\bfv}(\FF_p)-\pi_{n-3}(p)\right|
      \ll_n p^{s+1+b-1}=p^{n-4}.
    \label{eq:prime-sum-Y-good}
\end{equation}
Yet by \eqref{eq:prime-sum-vertex-bound} and Remark~\ref{field-of-maximal-vertex},
we have $s\le 1$
and $b=n-s-4\ge 2$ (in fact $b\ge 5$)
whenever $n\geq 10$.
Combining \eqref{eq:prime-sum-Y-good} with
\eqref{eq:prime-sum-reduced-to-Y} therefore proves part~(ii), uniformly for every
$\bar{\bfv}\neq\bm 0$.
\end{proof}

\subsection{Prime squares}
\label{sec:prime-square-strat}

For integers $t\ge 0$,
recall the definitions of $B_t$ and $W_t$
from \eqref{EQ:general-bad-hyperplanes} and \eqref{EQ:general-large-gauss-fibres},
respectively.
By \eqref{EQ:gauss-section-containment}, we have $W_t \subset B_t$.
Corollary~\ref{COR:general-n-exceptional-hyperplanes}
gives valuable information about $B_{n-6}(\QQ)$ and $B_{n-5}(\QQ)$.

The incidence correspondences $S$ and $\Gamma$
have natural analogues $\mathcal S_R$ and $\Gamma_R$
over any base ring $R$ in place of $\Qbar$.
It is clear from the definition of $\mathcal S$ in \eqref{EQ:universal-section-singular}
that $\mathcal S_R = \mathcal S_\ZZ \otimes R$ for any ring $R$.
Using the definition \eqref{def:closed-gauss-correspondence} of $\Gamma$
as a scheme-theoretic closure, it can also be shown that
$\Gamma_R = \Gamma_\ZZ \otimes R$ for
$R\in \{\Qbar,\QQ,\FF_p\}$ for all but finitely many primes $p$.
Indeed, flat base change for scheme-theoretic closures
\cite[\href{https://stacks.math.columbia.edu/tag/081I}{Tag 081I}]{TSta}
proves this for $R=\QQ$ and $R=\Qbar$.
Moreover, scheme-theoretic density
spreads out \cite[\href{https://stacks.math.columbia.edu/tag/0573}{Tag 0573}]{TSta},
proving the claim for $R=\FF_p$.

Let $\mathcal B_{t,R}$ (resp.~$\mathcal W_{t,R}$)
denote the set of fibres of $\mathcal S_R$ (resp.~$\Gamma_R$)
of dimension $\ge t$.
Then $\mathcal B_{t,R}$ and $\mathcal W_{t,R}$ are closed sets,
by upper semicontinuity for proper morphisms.
Let $C \otimes R$ denote the preimage of a set $C \subset \PP^{n-1}_\ZZ$
along the map $\PP^{n-1}_R \to \PP^{n-1}_\ZZ$.
Since fibre dimensions are preserved by base change
\cite[\href{https://stacks.math.columbia.edu/tag/05F8}{Tag 05F8}]{TSta},
we have $\mathcal B_{t,R} = \mathcal B_{t,\ZZ} \otimes R$ for any ring $R$,
and $\mathcal W_{t,R} = \mathcal W_{t,\ZZ} \otimes R$ for
$R\in \{\Qbar,\QQ,\FF_p\}$ for all but finitely many primes $p$.

Let $\mathcal B_{t,p}$, $\mathcal W_{t,p}$,
$\mathcal S_p$, and $\Gamma_p$
denote $\mathcal B_{t,\FF_p}$, $\mathcal W_{t,\FF_p}$,
$\mathcal S_{\FF_p}$, and $\Gamma_{\FF_p}$,
respectively.
\begin{lemma}
\label{lem:prime-square-stratification}
Let $p$ be any prime and let $t\ge 0$.
If $p\nmid\bfv$ and $[\bar{\bfv}]\notin \mathcal W_{t,p}(\FF_p)$,
then
\begin{equation}
\label{eq:p2-strat-bound}
S_{p^2}(\bfv)\ll_F p^{n+t+1}.
\end{equation}
Moreover, for every prime $p$ and every $\bfv\in\ZZ^n$,
\begin{equation}
\label{eq:p2-cutoff}
S_{p^2}(\bfv)\ll_F p^{2n-3}.
\end{equation}
\end{lemma}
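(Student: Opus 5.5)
Write $q=p^2$ and $a\in(\ZZ/p^2\ZZ)^\ast$, and substitute $\bfx=\bfy+p\bfz$ with $\bfy\bmod p$ and $\bfz\bmod p$. Taylor expansion gives $F(\bfy+p\bfz)\equiv F(\bfy)+p\bfz\cdot\nabla F(\bfy)\bmod{p^2}$. Summing over $\bfz$ then forces $a\nabla F(\bfy)+\bfv\equiv \bm0\bmod p$, and we obtain the standard identity
\[
S_{p^2}(\bfv)=p^n\sum_{a\in(\ZZ/p^2\ZZ)^\ast}\ \sum_{\substack{\bfy\bmod p\\ a\nabla F(\bfy)\equiv-\bfv\,(p)}} e_{p^2}\bigl(aF(\bfy)+\bfv\cdot\bfy\bigr).
\]
Here the phase is well defined on $\bfy\bmod p$ thanks to the congruence condition. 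Bounding the inner phase trivially, we get
\[
|S_{p^2}(\bfv)|\le p^{n}\cdot p\cdot \#\{(a,\bfy)\in\FF_p^\ast\times\FF_p^n:\ a\nabla F(\bfy)=-\bar\bfv\}.
\]
The extra factor $p$ accounts for the lifts of $a\bmod p$ to $a\bmod p^2$. So the problem reduces to counting points on the affine variety $\{\bfy:\nabla F(\bfy)\in \FF_p^\ast\bar\bfv\}$.

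For \eqref{eq:p2-strat-bound}, where $p\nmid\bfv$, let $\bfy$ be such a point, so $\nabla F(\bfy)=\lambda\bar\bfv$ with $\lambda\ne0$. Euler's identity then gives $3F(\bfy)=\lambda\bar\bfv\cdot\bfy$. The solutions lie on the affine cone over the fibre of the Gauss map: the set $\{[\bfy]:[\nabla F(\bfy)]=[\bar\bfv]\}$, with $\nabla F(\bfy)\ne\bm 0$, is contained in the fibre $\Gamma_{p,[\bar\bfv]}$, at least where $[\bfy]$ lies on $X_p$. I would handle the condition $F(\bfy)=0$ by using the $a$-sum more carefully. Alternatively, one can note that the set $\{\bfy:\nabla F(\bfy)\in\bar\FF_p^\ast\bar\bfv\}$ is a cone whose projectivisation has the same dimension as the corresponding Gauss-type fibre, since $F$ is constant on components of level sets of $\nabla F$ (the argument from Lemma~\ref{LEM:hessian-concentration}). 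That argument forces $F=0$ on the projectivised locus, because scaling $\bfy\mapsto c\bfy$ scales $F$ by $c^3$ but $\nabla F$ only by $c^2$.

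Given $[\bar\bfv]\notin\mathcal W_{t,p}$, the fibre has dimension $<t$, i.e.\ at most $t-1$. Its affine cone with the scalar $a$ absorbed then has dimension at most $t$. The number of $\FF_p$-points is $\ll p^{t}$, with an implied constant uniform in $p$ and $\bfv$ because the family has bounded complexity (Lang--Weil style, or simply a degree bound). Hence $S_{p^2}(\bfv)\ll p^{n+1+t}$. The main obstacle is identifying precisely the scheme $\{\nabla F(\bfy)\parallel\bar\bfv\}$ with $\Gamma_{p,[\bar\bfv]}$, since $\Gamma_p$ is a closure and there are extra points from $\sing X_p$ where $\nabla F=\bm0$. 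Those extra points do not contribute, because $\bfv\not\equiv\bm0$ forces $\nabla F(\bfy)\ne\bm 0$. Points with $\nabla F(\bfy)\ne0$ and $[\bfy]\in X_{p,\mathrm{sm}}$ lie in $\Gamma_p$ directly. Off $X_p$, the constancy argument shows the level-set components have $F=0$ after projectivisation, so they also land in $X_{p,\mathrm{sm}}$. For the finitely many bad primes the bound is trivial with an $F$-dependent constant.

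For \eqref{eq:p2-cutoff}, if $p\nmid\bfv$ I would apply the previous bound with $t=n-4$. Since $\mathcal W_{n-4,p}\subset\mathcal B_{n-4,p}$, which is empty for good $p$ by Corollary~\ref{COR:general-n-exceptional-hyperplanes} spread out, this gives $p^{2n-3}$. If $p\mid\bfv$, the condition becomes $\nabla F(\bfy)=\bm0$, i.e.\ $\bfy$ lies on the affine cone of $\sing X_p$. That set has dimension $\le n-5$ by \eqref{eq:n10-sing-X}, which gives $S_{p^2}\ll p^{n}\cdot p^2\cdot p^{n-5}=p^{2n-3}$. Here the full $a$-sum contributes $p^2$, since the condition is independent of $a$.
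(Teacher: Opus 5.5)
There is a genuine gap in the first part. When you bound the $a$-sum trivially (the ``extra factor $p$'' for the lifts of $a\bmod p$), you discard the constraint $F(\bfy)\equiv 0\bmod p$. Your count $\#\{\bfy\in\FF_p^n:\nabla F(\bfy)\in\FF_p^\ast\bar\bfv\}$ then runs over the affine cone of the full fibre of the polar map $[\bfy]\mapsto[\nabla F(\bfy)]$. It does not run over the cone of $\sing(X_{p,\mathrm{sm}}\cap H_{\bar\bfv})\subset\Gamma_{p,[\bar\bfv]}$, and these loci really differ.

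Since $\det H\not\equiv 0$, the polar map $\PP^{n-1}\dashrightarrow\PP^{n-1}$ is dominant and generically finite. On the other hand, the dual variety (the image of $\Gamma_p$, i.e.\ $\mathcal W_{0,p}$) is a proper closed subset. So for most $\bfy\in\FF_p^n$ with $F(\bfy)\neq0$, the vector $\bar\bfv:=-\nabla F(\bfy)$ satisfies $[\bar\bfv]\notin\mathcal W_{0,p}(\FF_p)$. Yet your count contains the whole line $\FF_p^\ast\bfy$, so it is $\ge p-1$ and your bound is only $p^{n+2}$. The $t=0$ case of \eqref{eq:p2-strat-bound} requires $p^{n+1}$, and for large $p$ one in fact has $S_{p^2}(\bfv)=0$ there. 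This case is exactly what drives the generic stratum $j=0$ in \eqref{eq:square-benchmark}. For the diagonal cubic $\sum x_i^3$, the polar fibres $\{[\bfy]:[y_1^2:\cdots:y_n^2]=[\bar\bfv]\}$ are visibly finite, nonempty, and generically off $X$.

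Your proposed repair does not work. Constancy of $F$ on components of a single level set $\{\nabla F=\bfz\}$ says nothing when those components are points. Moreover, the set you count is a union of the level sets $\{\nabla F=\lambda\bar\bfv\}$ over varying $\lambda$. The scaling $F(c\bfy)=c^3F(\bfy)$, $\nabla F(c\bfy)=c^2\nabla F(\bfy)$ only moves you between these level sets and yields no contradiction with $F(\bfy)\ne 0$. Separately, in characteristic $p$, $\dd F=0$ does not force $F$ to be locally constant.

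The missing step is the one you only gestured at (``use the $a$-sum more carefully''). Write $a=b+pc$ with $b\in\FF_p^\ast$ and $c\in\FF_p$. Since $e_{p^2}(pcF(\bfy))=e_p(cF(\bfy))$, the sum over $c$ equals $p\bm1_{p\mid F(\bfy)}$, which gives
\[
S_{p^2}(\bfv)=p^{n+1}\sum_{b\in\FF_p^\ast}\sum_{\substack{\bfy\in\FF_p^n\\ F(\bfy)=0,\ b\nabla F(\bfy)=-\bar\bfv}}e_{p^2}\bigl(bF(\bfy)+\bfv\cdot\bfy\bigr).
\]
By Euler's identity, $F(\bfy)=0$ together with $\nabla F(\bfy)\parallel\bar\bfv$ forces $\bfy\in H_{\bar\bfv}$. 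So the trivial bound now counts $(p-1)$ times the $\FF_p$-points of $\sing(X_{p,\mathrm{sm}}\cap H_{\bar\bfv})\subset\Gamma_{p,[\bar\bfv]}$. Your dimension count then gives $p^{n+1+t}$, and this is precisely the paper's argument.

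Your treatment of \eqref{eq:p2-cutoff} for $p\mid\bfv$ is correct as written, since $\nabla F(\bfy)=\bm0$ already forces $F(\bfy)=0$. For $p\nmid\bfv$, deducing it from the first part with $t=n-4$ via $\mathcal W_{n-4,p}\subset\mathcal B_{n-4,p}=\emptyset$ is equivalent to the paper's direct use of Zak's theorem. However, it is only valid once the first part has been repaired as above.
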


\begin{proof}
It is enough to prove the estimates outside a fixed finite set of primes.
We may therefore assume that
$p>3$, that $X_p$ is geometrically integral, and that $\dim\Sing(X_p)\leq n-6$.

Writing $a=b+pc$ and $\bfx=\bfy+p\bfz$,
and then summing over $c\in\FF_p$ and $\bfz\in\FF_p^n$, gives
\[
S_{p^2}(\bfv)
= \sum_{a\in (\ZZ/p^2\ZZ)^\ast}
\sum_{\bfx\in (\ZZ/p^2\ZZ)^n}e_{p^2}\left(aF(\x)+ \bfv\cdot \x\right)
= p^{n+1}
\sum_{b\in\FF_p^\ast}
\sum_{\substack{\bfy\in\FF_p^n\\
F(\bfy)=0\\
b\nabla F(\bfy)=-\bar{\bfv}}}
e_{p^2}\bigl(bF(\bfy)+\bfv\cdot\bfy\bigr),
\]
since $aF(\x)+ \bfv\cdot \x$ may be rewritten as
$$(b+pc)F(\bfy+p\bfz) + \bfv\cdot (\bfy+p\bfz)
\equiv  bF(\bfy) + \bfv\cdot \bfy
+ p\left(b\nabla F(\bfy)\cdot \bfz
+ cF(\bfy) + \bfv\cdot \bfz\right)  \bmod{p^2}.$$
The final sum over $b$ and $\bfy$ is well-defined:
in view of the conditions $F(\bfy)=0$ and $b\nabla F(\bfy)=-\bar{\bfv}$,
each summand $e_{p^2}\bigl(bF(\bfy)+\bfv\cdot\bfy\bigr)$ 
only depends on $b,\bfy\bmod{p}$ and $\bfv\bmod{p^2}$.

By the triangle inequality and the Lang--Weil estimate,
\begin{equation*}
|S_{p^2}(\bfv)|
\le p^{n+1}
\sum_{b\in\FF_p^\ast}
\sum_{\substack{\bfy\in\FF_p^n\\
F(\bfy)=0\\
b\nabla F(\bfy)=-\bar{\bfv}}} 1
= p^{n+1}
(p-1)\,
\#V(\FF_p)
\ll_n p^{n+2+\dim V},
\end{equation*}
where $V = \sing(X_{p,\rm sm} \cap \{\bar{\bfv}\cdot\bfx=0\})$ if $p\nmid \bfv$,
and $V = \sing(\cone X_p)$ if $p\mid \bfv$.
If $p\nmid\bfv$ and $[\bar{\bfv}]\notin \mathcal W_{t,p}(\FF_p)$,
then $\dim V \le \dim \Gamma_{p,[\bar{\bfv}]} \le t-1$,
by the analogue of \eqref{EQ:smooth-sing-containment} over $\FF_p$.
Thus \eqref{eq:p2-strat-bound} follows.
Moreover, Theorem~\ref{Thm: ZaksBoundSingLocus} implies
$\dim V \le \dim \sing(X_p \cap \{\bar{\bfv}\cdot\bfx=0\})
\le \dim \sing(X_p) + 1
\le n-5$ if $p\nmid \bfv$.
If $p\mid\bfv$,
then $\dim V = 1 + \dim \sing(X_p) \le n-5$.
Either way, \eqref{eq:p2-cutoff} follows.
\end{proof}

\subsection{Higher prime power modulus}

Using the goodness of $F$,
it is possible to obtain a pointwise bound
$S_{p^r}(\bfv)\ll p^{r(1+5n/6+\ve)}$
by following the proof of \cite[Eq.~(7.1)]{HB07},
or Bernert's version in \cite[\S3]{Ber25}.
The van der Corput differencing process is insensitive to any linear or quadratic terms, and $S_{p^r}(\bfv)$ has only
a cubic term $F(\bfx)$
and a linear term $\bfv\cdot \bfx$.
However, for $\bfv\ne \bm0$,
it turns out that it will be more useful for us to bound
$S_{p^r}(\bfv)$ on average over $\bfv$ when $r\ge 3$.
Nonetheless, for $\bfv=\bm0$ and for parts of the averaging procedure over $\bfv\ne \bm0$,
it will be convenient for us to draw on the recent work of
Bernert \cite{Ber25}.

\begin{thm}
\label{Bernert-n10-sing-series}
If $n\ge 10$ and $F$ is good,
then $\mathfrak S := \sum_{q\ge 1} q^{-n} S_q(\bm0)$
converges absolutely to a positive real number,
and satisfies
\[
\mathfrak S=\prod_p\chi_p,
\]
where
\[
\chi_p
:=\sum_{k\ge0}p^{-kn}S_{p^k}(\bm0)
=\lim_{j\to\infty}p^{-j(n-1)}
\#\{\bfx\bmod p^j:F(\bfx)\equiv0\bmod p^j\}
>0.
\]
\end{thm}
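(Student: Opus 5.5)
We will follow the standard singular series argument, with the local input supplied by a pointwise bound for $S_{p^k}(\bm0)$ derived from goodness. The plan has three steps: a uniform bound for $p^{-kn}S_{p^k}(\bm0)$ giving absolute convergence, the product formula from multiplicativity, and positivity of each $\chi_p$ together with control of the tail.

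\textbf{Step 1 (absolute convergence).} Using goodness of $F$, we will invoke the van der Corput bound quoted above, following \cite[\S3]{Ber25} or \cite[Eq.~(7.1)]{HB07}, in the form
\[
S_{p^k}(\bm0)\ll p^{k(1+5n/6+\ve)}.
\]
This gives $p^{-kn}|S_{p^k}(\bm0)|\ll p^{k(1-n/6+\ve)}$. For $n\ge 10$ the exponent is $\le -2/3+\ve<-1/2$, which is not quite enough on its own to sum over all $q$.

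So we separate the exponents $k$:
\begin{itemize}
\item For $k=1$, Proposition~\ref{prop:prime-modulus-pointwise}(i) gives $p^{-n}S_p(\bm0)\ll p^{-3/2}$. That proposition uses $X(\QQ)=\emptyset$, so under mere goodness we instead use Davenport's bound $S_p(\bm0)\ll p^{n-3/2}$, derived from goodness via Weyl differencing.
\item For $k=2$, we use $p^{-2n}S_{p^2}(\bm0)\ll p^{-2}$, again from the goodness-based bound.
\item For $k\ge 3$, the van der Corput estimate gives $p^{-kn}S_{p^k}(\bm0)\ll p^{k(1-n/6+\ve)}\le p^{-2+\ve'}$ for $n\ge 10$.
\end{itemize}
Since $q^{-n}S_q(\bm0)$ is multiplicative by \eqref{eq:q-exact-CRT}, we get $\sum_q q^{-n}|S_q(\bm0)|\le\prod_p(1+O(p^{-3/2})+\sum_{k\ge2}O(p^{-k\delta}))$ for some $\delta>1/2$ with $2\delta>1$. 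This product converges. The main obstacle is the precise exponent bookkeeping at $k=2,3$ when $n=10$. This is exactly where we rely on Bernert's refined bound, which is assumed to deliver $\sum_{k\ge2}p^{-kn}|S_{p^k}(\bm0)|\ll p^{-1-\eta}$.

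\textbf{Step 2 (Euler product and local densities).} Absolute convergence together with multiplicativity gives $\mathfrak S=\prod_p\chi_p$. Orthogonality gives
\[
\sum_{k\le j}p^{-kn}S_{p^k}(\bm0)=p^{-j(n-1)}\#\{\bfx\bmod p^j:F(\bfx)\equiv0\bmod p^j\},
\]
and letting $j\to\infty$ yields the limit formula for $\chi_p$.

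\textbf{Step 3 (positivity).} By Step 1, $\chi_p=1+O(p^{-1-\eta})$, so $\prod_{p>p_0}\chi_p>0$. For each $p\le p_0$ we need $\chi_p>0$. By Demyanov and Lewis, $F$ has a nontrivial $p$-adic zero. If that zero is nonsingular, Hensel lifting gives $\gg p^{j(n-1)}$ solutions modulo $p^j$, hence $\chi_p>0$.

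If every $p$-adic zero is singular, we aim to argue as Davenport does. By Lewis's theorem applied to suitable sections, we produce a nonsingular $p$-adic zero after passing to a subform. Alternatively, goodness excludes the degenerate case, because it forces large rank of the Hessian on $\QQ_p$-points. We expect this to need Bernert's argument rather than a self-contained proof.
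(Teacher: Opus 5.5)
\textbf{Genuine gap at prime moduli.} The paper does not prove this statement; it imports it from Bernert \cite[\S2, Theorem~1]{Ber25}. Your self-contained attempt fails at the case that carries the content for $n=10$, namely prime moduli ($k=1$). There is no ``Davenport bound $S_p(\bm0)\ll p^{n-3/2}$ from goodness via Weyl differencing''. Write $S(a)=\sum_{\bfx\in\FF_p^n}e_p(aF(\bfx))$. Differencing twice gives $|S(a)|^4\le p^{2n}\#I(\FF_p)$, where $I$ is the Hessian incidence variety, so the best this yields is $S_p(\bm0)\ll p^{1+(2n+\dim I)/4}$. For $n=10$, summability over $p$ would need $\dim I\le 11$. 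But even the paper's bound \eqref{EQ:clean-slop-incidence-bound}, which is proved only under $X(\QQ)=\emptyset$, gives just $\dim I\le 12$. Goodness, being a statement about integer points in boxes, gives no control of $\dim I$ over $\overline{\FF}_p$ at all. By \eqref{eq:Sp-projective-v-zero}, the bound $S_p(\bm0)\ll p^{n-3/2}$ is the Hooley-type estimate $\#X_p(\FF_p)-\pi_{n-2}(p)\ll p^{n-7/2}$, which requires $\dim\Sing X_p\le n-6$. The paper gets that only from $X(\QQ)=\emptyset$, via Lemma~\ref{LEM:433-incidence-bound}. The non-conical bound in Lemma~\ref{lem:Fq} alone gives only $p^{-n}S_p(\bm0)\ll p^{-1}$, which is not summable.

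\textbf{Misplaced difficulty, and positivity.} If the van der Corput bound $S_q(\bm0)\ll_\ve q^{1+5n/6+\ve}$ holds uniformly, as you assume, then for $n=10$ you get $\sum_{k\ge2}p^{-kn}|S_{p^k}(\bm0)|\ll p^{-4/3+2\ve}$. So the range $k\ge2$ needs nothing further; the saving at $k=2$ is $p^{-4/3}$ rather than the $p^{-2}$ you claim, but it suffices. Deferring ``the bookkeeping at $k=2,3$'' to Bernert therefore defers a non-issue, while the real obstruction rests on an invalid citation. For general good $F$ you do not have a proof. In the paper's standing setting $X(\QQ)=\emptyset$ your outline would go through, since Proposition~\ref{prop:prime-modulus-pointwise}(i) handles $k=1$; but that is weaker than the theorem as stated. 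Your positivity step at small $p$ is also not an argument as written. The clean route, as in Lemma~\ref{lem:inel-high-rank-point}, runs as follows. Goodness forces $\det H\not\equiv 0$, so $F$ is not a cone. Hence $F$ has a nonsingular $p$-adic zero by \cite[Theorem~2]{Ple71}. Hensel lifting together with your Step~2 identity then gives $\chi_p>0$.
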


\begin{proof}
This follows from \cite[\S2, Theorem~1]{Ber25}.
\end{proof}

The main point in the proof of Theorem~\ref{Bernert-n10-sing-series} is the absolute convergence
\begin{equation}
\label{Bernert-n10-absolute}
\sum_{q\ge 1} q^{-n} |S_q(\bm0)| < \infty.
\end{equation}
We remark that Bernert's result is stronger,
in that he takes the absolute value \emph{inside} the $a$-sum in $S_q(\bm0)$,
rather than \emph{outside} the $a$-sum as in \eqref{Bernert-n10-absolute}.
Also,
as Bernert remarks, Theorem~\ref{Bernert-n10-sing-series} was already proven for $n\ge 11$
by Heath-Brown \cite{HB07}.

\subsection{Hensel lifting}

It will be useful for us to develop a flexible version of Hensel lifting.
For integers $A,T\ge 1$
and vectors $\bfy\in \ZZ^n$, put
\begin{equation}
\label{eq:GAT-max-sum}
\mathcal G_{A,T}(\bfy)
:= \max_{\substack{\alpha\in (\ZZ/T\ZZ)^\ast\\
\bfd\in (\ZZ/T\ZZ)^n}}
\left|
\sum_{\bfz\in (\ZZ/T\ZZ)^n}
e_T(\bfd\cdot\bfz+\alpha Q_{\bfy}(\bfz)+\alpha A F(\bfz))
\right|,
\end{equation}
where $Q_{\bfy}(\bfz):=\frac12\bfz^tH(\bfy)\bfz$
is an integral quadratic form defined in terms of $F$ and $\bfy$.
We proceed by proving the following three results. 

\begin{lemma}
\label{LEM:max-G-multiplicative}
The quantity $\mathcal G_{A,T}(\bfy)$ is multiplicative in $T$.
That is,
\begin{equation}
\label{EQ:max-G-multiplicative}
\mathcal G_{A,T_1T_2}(\bfy)
= \mathcal G_{A,T_1}(\bfy)
\mathcal G_{A,T_2}(\bfy)
\end{equation}
for all $A,T_1,T_2\ge 1$ with $\gcd(T_1,T_2)=1$.
\end{lemma}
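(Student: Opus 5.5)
The plan is the standard Chinese remainder theorem argument. Fix $T=T_1T_2$ with $\gcd(T_1,T_2)=1$. Every $\bfz\in(\ZZ/T\ZZ)^n$ can be written uniquely as $\bfz=T_2\bar T_2\bfz_1+T_1\bar T_1\bfz_2$. Here $\bfz_i$ runs over $(\ZZ/T_i\ZZ)^n$, and $\bar T_2 T_2\equiv 1\bmod T_1$, $\bar T_1T_1\equiv1\bmod T_2$. Equivalently, I will use the additive decomposition $e_T(u)=e_{T_1}(\bar T_2 u)\,e_{T_2}(\bar T_1 u)$, valid for $u\in\ZZ$, which holds because $1/T=\bar T_2/T_1+\bar T_1/T_2 \bmod 1$ after adjusting the inverses so that $\bar T_2T_2+\bar T_1T_1=1$.

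Step one is the factorisation of the inner sum. For $\alpha\in(\ZZ/T\ZZ)^*$ and $\bfd\in(\ZZ/T\ZZ)^n$, let $\Sigma_T(\alpha,\bfd)$ denote the inner sum in \eqref{eq:GAT-max-sum}. Since $\bfd\cdot\bfz+\alpha Q_{\bfy}(\bfz)+\alpha AF(\bfz)$ is a polynomial in $\bfz$ with integer coefficients, its value modulo $T_1$ depends only on $\bfz\bmod T_1$. Applying the decomposition of $e_T$ and summing over $\bfz\bmod T$ as pairs $(\bfz\bmod T_1,\bfz\bmod T_2)$ gives
\[
\Sigma_T(\alpha,\bfd)=\Sigma_{T_1}(\bar T_2\alpha,\bar T_2\bfd)\,\Sigma_{T_2}(\bar T_1\alpha,\bar T_1\bfd),
\]
where the arguments are reduced modulo $T_1$ and $T_2$ respectively. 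Here I use that $Q_{\bfy}$ is an integral quadratic form; the definition asserts this, and it holds because $\frac12 H(\bfy)$ has integral off-diagonal symmetric structure coming from $F$. I also use that $A$ is a fixed integer.

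Step two is to compare the maxima. The map $(\alpha,\bfd)\mapsto\bigl((\bar T_2\alpha,\bar T_2\bfd)\bmod T_1,(\bar T_1\alpha,\bar T_1\bfd)\bmod T_2\bigr)$ is a bijection from $(\ZZ/T\ZZ)^*\times(\ZZ/T\ZZ)^n$ onto the product of the corresponding sets for $T_1$ and $T_2$. This follows from the Chinese remainder theorem, since $\bar T_2$ is a unit modulo $T_1$ and $\bar T_1$ is a unit modulo $T_2$. So as $(\alpha,\bfd)$ ranges over all admissible pairs modulo $T$, the two factors range independently over all admissible pairs modulo $T_1$ and modulo $T_2$. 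Taking absolute values, the maximum of a product of nonnegative quantities depending on independent variables is the product of the maxima. This yields \eqref{EQ:max-G-multiplicative}.

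The only point needing care, and it is minor, is checking that $\bfz\mapsto\bar T_2\cdot(\text{polynomial})\bmod T_1$ really depends only on $\bfz\bmod T_1$. This requires the phase polynomial to have integer coefficients, which in turn needs $Q_{\bfy}$ to be integral. Once that holds, the argument above goes through. There is no genuine obstacle.
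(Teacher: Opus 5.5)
Your proof is correct and follows the paper's argument: the paper also factors the $\bfz$-sum as $\mathcal S_{A,T_1}(\bar{T_2}\bfd,\bar{T_2}\alpha,\bfy)\,\mathcal S_{A,T_2}(\bar{T_1}\bfd,\bar{T_1}\alpha,\bfy)$ by the Chinese remainder theorem, and then notes that the two parameter pairs range independently, so the local maxima can be attained simultaneously. Your explicit splitting $e_T(u)=e_{T_1}(\bar T_2u)\,e_{T_2}(\bar T_1u)$, with $\bar T_1T_1+\bar T_2T_2=1$, simply spells out the paper's one-line CRT step, and your requirement that $Q_{\bfy}$ have integer coefficients is exactly what makes each local sum well defined.
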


\begin{lemma}
\label{LEM:max-G-A-invariant}
As a function of $\bfy$,
the quantity $\mathcal G_{A,T}(\bfy)$
only depends on $\bfy \bmod{\gcd(A,T)}$.
\end{lemma}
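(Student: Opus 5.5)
The claim is that $\mathcal G_{A,T}(\bfy)$ depends only on $\bfy\bmod{g}$, where $g:=\gcd(A,T)$. The plan is to show directly that replacing $\bfy$ by $\bfy+g\bfw$, for any $\bfw\in\ZZ^n$, permutes the set of inner sums over which the maximum is taken. First, we may reduce $\bfy$ modulo $T$: $Q_{\bfy}(\bfz)=\frac12\bfz^tH(\bfy)\bfz$ is linear in $\bfy$ with integer coefficients in $\bfz$, so $Q_{\bfy+T\bfw}(\bfz)\equiv Q_{\bfy}(\bfz)\bmod T$. Since $T\bfw\equiv \bm0\bmod T$ lies in $g\ZZ^n$, only the finer statement needs an argument.

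Write $g\bfw$ as $A\bfu+T\bfu'$ with $\bfu,\bfu'\in\ZZ^n$, which is possible by B\'ezout since $g=\gcd(A,T)$. By the previous paragraph it then suffices to compare $\bfy$ with $\bfy+A\bfu$. The key identity is the cubic Taylor expansion
\[
F(\bfz+\bfu)=F(\bfz)+\bfz^tH(\bfu)\bfz/2+\nabla F(\bfu)\cdot\bfz+F(\bfu),
\]
valid in the sense that $\tfrac12\bfz^tH(\bfu)\bfz=Q_{\bfu}(\bfz)$ is integral. (Using symmetry of the trilinear form, the quadratic-in-$\bfz$ term of $F(\bfz+\bfu)$ is $\frac12\bfz^tH(\bfu)\bfz$ and the linear term is $\bfu^t H(\bfu)\bfz/2=\nabla F(\bfu)\cdot\bfz$.) Also $Q_{\bfy+A\bfu}=Q_{\bfy}+AQ_{\bfu}$ by linearity of $H$.

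Now substitute $\bfz\mapsto\bfz+\bfc$ in the inner sum for the data $(\alpha,\bfd,\bfy)$, choosing the shift so that the cubic part produces the extra quadratic $\alpha AQ_{\bfu}(\bfz)$. That is, $\bfc=\bfu$:
\[
\alpha AF(\bfz+\bfu)=\alpha AF(\bfz)+\alpha AQ_{\bfu}(\bfz)+\alpha A\nabla F(\bfu)\cdot\bfz+\text{const}.
\]
Meanwhile $\alpha Q_{\bfy}(\bfz+\bfu)=\alpha Q_{\bfy}(\bfz)+\alpha\bfu^tH(\bfy)\bfz+\text{const}$, and $\bfd\cdot(\bfz+\bfu)$ contributes a constant plus $\bfd\cdot\bfz$. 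Collecting terms, the sum for $(\alpha,\bfd,\bfy)$ equals a unimodular constant times the sum for $(\alpha,\bfd',\bfy+A\bfu)$, where $\bfd'=\bfd+\alpha H(\bfy)\bfu+\alpha A\nabla F(\bfu)$. The map $\bfd\mapsto\bfd'$ is a bijection of $(\ZZ/T\ZZ)^n$ for fixed $\alpha$, and the shift $\bfz\mapsto\bfz+\bfu$ is a bijection of $(\ZZ/T\ZZ)^n$. Hence the sets of absolute values coincide, the maxima agree, and $\mathcal G_{A,T}(\bfy)=\mathcal G_{A,T}(\bfy+A\bfu)$.

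Combining with invariance under $\bfy\mapsto\bfy+T\bfu'$ gives the lemma. The only delicate point is integrality: $Q_{\bfu}$ must be integer-valued, which the paper already asserts for $Q_{\bfy}$ in its definition. I expect no real obstacle beyond checking the Taylor expansion coefficients for an integral cubic form whose Hessian may involve the factor conventions ($H(\bfx)\bfx=2\nabla F(\bfx)$, $\bfx^tH(\bfx)\bfx=6F(\bfx)$).
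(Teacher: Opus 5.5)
Your proposal is correct and follows essentially the same route as the paper: $T$-periodicity straight from the definition, then $A$-periodicity via the cubic Taylor expansion, the translation $\bfz\mapsto\bfz+\bm{u}$, and the bijection $\bfd\mapsto\bfd+\alpha\bigl(H(\bfy)\bm{u}+A\nabla F(\bm{u})\bigr)$ of $(\ZZ/T\ZZ)^n$. Your explicit B\'ezout step just spells out how the two periodicities combine into $\gcd(A,T)$-periodicity, which the paper leaves to the reader.
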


\begin{lemma}
\label{lem:q-flexible-lifting}
Let $A,T\ge 1$ and $q = A^2T$.
Then
\begin{equation}
\label{eq:q-flexible-pointwise}
|S_q(\bfv)|
\le A^{n+1} T
\sum_{a\in (\ZZ/A\ZZ)^\ast}
\sum_{\substack{\bfy\bmod A\\F(\bfy)\equiv0\bmod A
\\ a\nabla F(\bfy)+\bfv\equiv \bm0\bmod A}} \mathcal G_{A,T}(\bfy).
\end{equation}
\end{lemma}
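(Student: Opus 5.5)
The plan is to imitate the prime-square computation in Lemma~\ref{lem:prime-square-stratification}, but with modulus $q=A^2T$. Two elementary cases frame the argument. When $\gcd(A,T)=1$ one could split by the Chinese remainder theorem, but the statement allows arbitrary $A,T$, so I will instead work directly with a mixed-radix parametrisation.

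Write $a = b + Ac$ with $b$ running over units modulo $A$ (lifted to fixed representatives) and $c\bmod AT$. Also write $\bfx = \bfy + A\bfz$ with $\bfy\bmod A$ and $\bfz\bmod AT$. By Taylor expansion, using that $F$ is a cubic form with $Q_{\bfy}(\bfz)=\tfrac12\bfz^tH(\bfy)\bfz$,
\[
F(\bfy+A\bfz)=F(\bfy)+A\,\bfz\cdot\nabla F(\bfy)+A^2Q_{\bfy}(\bfz)+A^3F(\bfz).
\]
Here $\tfrac12 H(\bfy)$ may have half-integral off-diagonal entries; one checks that $Q_{\bfy}$ is integral because the cross terms of $H$ come with a factor $2$ in $\bfz^tH\bfz$. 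So modulo $q=A^2T$ we have
\[
aF(\bfx)+\bfv\cdot\bfx \equiv aF(\bfy)+\bfv\cdot\bfy + A\,\bfz\cdot(a\nabla F(\bfy)+\bfv)+A^2 a\bigl(Q_{\bfy}(\bfz)+AF(\bfz)\bigr).
\]

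The first key step is to remove the dependence on $c$ and on $\bfz\bmod A$. I would do this by further writing $\bfz=\bfz_1+T\bfz_2$, or more cleanly by summing over $\bfz\bmod AT$ and decomposing $\bfz = \bfw + T\bfu$ with $\bfu\bmod A$. The term $A^2a(\cdots)$ modulo $A^2T$ depends only on $\bfz\bmod T$ and $a\bmod T$. The linear term $A\,\bfz\cdot(a\nabla F(\bfy)+\bfv)$ modulo $A^2T$ involves $\bfz\bmod AT$.

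Summing over $c$ (the part of $a$ modulo $A$ above $b$) gives orthogonality, which forces $F(\bfy)\equiv 0\bmod A$ up to the interplay with the $\bfz$ terms. Summing over $\bfu\bmod A$ forces $b\nabla F(\bfy)+\bfv\equiv\bm0\bmod A$. This is the step I expect to be the main obstacle: the split of $a$ and $\bfx$ must be arranged so that the orthogonality relations decouple exactly. I would first try the cleanest route. Parametrise $a\bmod A^2T$ as $b+A c'$ with $c'\bmod AT$, and note that the phase depends on $c'$ only through $c'F(\bfy)$ modulo $AT$ plus terms of the form $A c'(\ldots)$. Then treat the sum over $c'\bmod A$ (outer digit) exactly by orthogonality, and keep $c'\bmod T$ as the variable $\alpha$.

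After these orthogonality steps, what remains for each admissible pair $(b,\bfy)$ is a sum over $\bfz\bmod T$ of the shape $e_T(\bfd\cdot\bfz+\alpha Q_{\bfy}(\bfz)+\alpha AF(\bfz))$. Here $\bfd$ is determined by $(a\nabla F(\bfy)+\bfv)/A$ and $\alpha$ is a unit modulo $T$. The leftover free variables ($c'$ and the discarded digits) contribute the factor $A^{n+1}T$: a factor $A^n$ from $\bfu$, a factor $A$ from the $c$-orthogonality, and a factor $T$ from summing over $\alpha$.

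Finally, I would bound each inner sum trivially by its maximum over $\alpha$ and $\bfd$, which is exactly $\mathcal G_{A,T}(\bfy)$. The triangle inequality over $b$ and $\bfy$ then yields \eqref{eq:q-flexible-pointwise}.
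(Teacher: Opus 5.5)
Your argument is essentially the paper's proof: with $M=AT$ you write $a=a_0+Me$ and $\bfx=\bfy+A\bfw+M\bm{u}$, use orthogonality in the top digits $e,\bm{u}\bmod A$ to force $F(\bfy)\equiv 0\bmod A$ and $a\nabla F(\bfy)+\bfv\equiv\bm 0\bmod A$, and then bound the remaining sum over $\bfw\bmod T$ by $\mathcal G_{A,T}(\bfy)$. The one point to tighten is the unit condition: $a$ must run over $a_0+Me$ with $\gcd(a_0,M)=1$ and $e\bmod A$ free, which is legitimate because $\rad(M)=\rad(q)$, and not over all $b+Ac$ with $b$ a unit modulo $A$ (that overcounts whenever $T$ has a prime factor not dividing $A$); with this fix, the parameter $\alpha=a_0\bmod T$ (rather than $c'\bmod T$) is genuinely a unit, and the number of $a_0$ above each $b$ is $\varphi(AT)/\varphi(A)\le T$.
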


Lemma~\ref{LEM:max-G-A-invariant} ensures that each summand on the right-hand side of
\eqref{eq:q-flexible-pointwise} is well-defined.

\begin{proof}
[Proof of Lemma~\ref{LEM:max-G-multiplicative}]
Let
\begin{equation}
\label{z-sum-in-max-G}
\mathcal{S}_{A,T}(\bfd,\alpha,\bfy)
:= \sum_{\bfz\bmod T} e_T(\bfd\cdot\bfz+\alpha Q_{\bfy}(\bfz)+\alpha A F(\bfz))
\end{equation}
be the $\bfz$-sum in \eqref{eq:GAT-max-sum}.
If $\gcd(T_1,T_2)=1$,
the Chinese remainder theorem implies
\begin{equation*}
\mathcal{S}_{A,T_1T_2}(\bfd,\alpha,\bfy)
= \mathcal{S}_{A,T_1}(\bar{T_2}\bfd,\bar{T_2}\alpha,\bfy)
\mathcal{S}_{A,T_2}(\bar{T_1}\bfd,\bar{T_1}\alpha,\bfy).
\end{equation*}
By the Chinese remainder theorem, the pairs
$(\bar{T_2}\bfd,\bar{T_2}\alpha)\bmod T_1$
and
$(\bar{T_1}\bfd,\bar{T_1}\alpha)\bmod T_2$
range independently over their respective parameter sets.
Thus the two local maxima can be attained simultaneously,
and taking the maximum of the absolute value gives the
claimed equality.
\end{proof}

\begin{proof}[Proof of Lemma~\ref{LEM:max-G-A-invariant}]
The definition gives $T$-periodicity in $\bfy$.
To prove $A$-periodicity, put
$$
P_{\bfy}(\bfz):=Q_{\bfy}(\bfz)+AF(\bfz).
$$
For $\bfh\in\ZZ^n$, Taylor expansion gives
$$
P_{\bfy+A\bfh}(\bfz)
=P_{\bfy}(\bfz+\bfh)
-\boldsymbol{\ell}\cdot\bfz-P_{\bfy}(\bfh),
$$
where
$\boldsymbol{\ell}:=H(\bfy)\bfh+A\nabla F(\bfh)$.
Translating the summation variable in
\eqref{z-sum-in-max-G} therefore gives
$$
|\mathcal S_{A,T}(\bfd,\alpha,\bfy+A\bfh)|
=
|\mathcal S_{A,T}(\bfd-\alpha\boldsymbol{\ell},
\alpha,\bfy)|.
$$
For each $\alpha$, the map
$\bfd\mapsto\bfd-\alpha\boldsymbol{\ell}$
is a bijection modulo $T$.
Taking maxima proves
$\mathcal G_{A,T}(\bfy+A\bfh)=\mathcal G_{A,T}(\bfy)$.
Together with $T$-periodicity, this gives the desired
$\gcd(A,T)$-periodicity.
\end{proof}

\begin{proof}
[Proof of Lemma~\ref{lem:q-flexible-lifting}]
Let \(M:=AT\).
Then \(q=A^2T=AM\),
$\rad(q)=\rad(AT)=\rad(M)$,
and $M^2=A^2T^2=qT\equiv 0\bmod{q}$.
For $a,\bfx$ as in \eqref{eq:defSq},
write
\[
 \bfx=\bfx_0+M\bfh_0,\qquad
 a=a_0+Me,
\]
with $0\le \bfx_0<M$ and $0\le a_0<M$
running over a complete set of representatives modulo $M$,
and similarly with $0\le \bfh_0<A$ and $0\le e<A$.
Since \(\rad(M)=\rad(q)\),
the condition $\gcd(a,q)=1$ is equivalent to $\gcd(a_0,M)=1$.
Taylor expansion of $F(\bfx_0+M\bfh_0)$ gives
\begin{equation*}
a F(\bfx)+\bfv\cdot\bfx
\equiv (a_0+Me) F(\bfx_0) + a_0 M\nabla F(\bfx_0)\cdot\bfh_0
+ \bfv\cdot(\bfx_0+M\bfh_0) \bmod{q},
\end{equation*}
since $M^2 \equiv 0\bmod{q}$.
Orthogonality in \(e,\bfh_0\bmod{A}\) gives
\begin{equation}
\label{eq:q-first-lift}
S_q(\bfv)=A^{n+1}
\sum_{\substack{0\le a_0<M \\ \gcd(a_0,M)=1}}
\sum_{\substack{0\le \bfx_0<M\\
F(\bfx_0)\equiv 0\bmod{A}\\
a_0\nabla F(\bfx_0)+\bfv\equiv \bm0\bmod{A}}}
e_q(a_0 F(\bfx_0)+\bfv\cdot\bfx_0).
\end{equation}
For all $\bfx_0$ in the support of \eqref{eq:q-first-lift},
write \(\bfx_0=\bfy+A\bfz\), with
$0\le \bfy<A$ and $0\le \bfz<T$.
Then $$\bfd := \frac{a_0\nabla F(\bfy)+\bfv}{A} \in \ZZ^n,$$
since $a_0\nabla F(\bfx_0)+\bfv\equiv \bm0\bmod{A}$.
Taylor expansion of $F(\bfy+A\bfz)$ gives
\begin{equation*}
\begin{split}
\frac{e_q(a_0 F(\bfx_0)+\bfv\cdot\bfx_0)}{e_q(a_0 F(\bfy) + \bfv\cdot\bfy)}
&= e_q(a_0 A\nabla F(\bfy)\cdot\bfz
+ a_0 A^2Q_{\bfy}(\bfz) + a_0 A^3F(\bfz)
+ A\bfv\cdot \bfz) \\
&= e_T(\bfd\cdot\bfz+a_0 Q_{\bfy}(\bfz)+a_0 A F(\bfz))
\end{split}
\end{equation*}
in view of the identity $e_q(A^2x) = e_{A^2T}(A^2x) = e_T(x)$.
By the triangle inequality on \eqref{eq:q-first-lift},
\begin{equation*}
|S_q(\bfv)|
\le A^{n+1}
\sum_{\substack{0\le a_0<M \\ \gcd(a_0,M)=1}}
\sum_{\substack{0\le \bfy<A\\
F(\bfy)\equiv 0\bmod{A}\\
a_0\nabla F(\bfy)+\bfv\equiv \bm0\bmod{A}}}
|\mathcal{S}_{A,T}(\bfd,a_0,\bfy)|,
\end{equation*}
where $\mathcal{S}_{A,T}$ was defined in \eqref{z-sum-in-max-G}.
By definition, $|\mathcal{S}_{A,T}|
\le \mathcal G_{A,T}(\bfy)$.
Since $\mathcal G_{A,T}(\bfy)$ is independent of $a_0$,
and the condition $a_0\nabla F(\bfy)+\bfv\equiv \bm0\bmod{A}$
depends only on $a_0\bmod{A}$,
the desired inequality \eqref{eq:q-flexible-pointwise} follows
from the fact that each fibre of the map $(\ZZ/M\ZZ)^\ast \to (\ZZ/A\ZZ)^\ast$
has cardinality $\varphi(M)/\varphi(A)
= \varphi(AT)/\varphi(A) \le T$.
\end{proof}

\subsection{Gauss sums}

Given $\bm{a}\in \ZZ^n$ and a positive integer $d$, the quantity
\begin{equation}\label{eq:N_d}
N_d(\bm{a})=\#\left\{ \bm{y}\in (\ZZ/d\ZZ)^n: H(\bm{a})\bm{y}\equiv \bm{0} \bmod{d}\right\}
\end{equation}
will play a pivotal role in estimating \eqref{eq:GAT-max-sum},
where $H(\bm{a})$ is the Hessian matrix associated to $F$. 

\begin{lemma}
\label{Bernert-Smith-bound}
Let $r(\bfa)$ be the rank of $H(\bfa)$ over $\QQ$.
Then $N_d(\bfa) \ll_F d^{n-r(\bfa)} |\bfa|^{r(\bfa)}$.
\end{lemma}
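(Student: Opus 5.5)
We reduce the count $N_d(\bfa)$ to the elementary divisor (Smith normal form) structure of the integer matrix $H(\bfa)$. Let $r=r(\bfa)$. Since $F$ has integer coefficients, $H(\bfa)\in\Mat_{n\times n}(\ZZ)$ (up to a fixed factor depending only on $F$, which we absorb). By Smith normal form there exist $U,V\in\GL_n(\ZZ)$ with
\[
UH(\bfa)V=\diag(s_1,\dots,s_r,0,\dots,0),
\]
where $s_1\mid s_2\mid\cdots\mid s_r$ are positive integers. Since $U,V$ are invertible over $\ZZ$, the substitution $\bfy=V\bfz$ is a bijection on $(\ZZ/d\ZZ)^n$. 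Multiplying the congruence on the left by $U$ does not change its solution set. Hence
\[
N_d(\bfa)=d^{\,n-r}\prod_{i=1}^r\gcd(s_i,d),
\]
because each of the last $n-r$ coordinates of $\bfz$ is free, and for $i\le r$ the congruence $s_iz_i\equiv0\bmod d$ has exactly $\gcd(s_i,d)$ solutions.

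It remains to bound $\prod_{i\le r}\gcd(s_i,d)\le\prod_{i\le r}s_i$. By the theory of determinantal divisors, $s_1\cdots s_r$ equals the gcd of all $r\times r$ minors of $H(\bfa)$. Since the rank is $r$ over $\QQ$, some $r\times r$ minor $\Delta$ is nonzero, so $s_1\cdots s_r\le|\Delta|$. Every entry of $H(\bfa)$ is a linear form in $\bfa$ with coefficients depending only on $F$, hence is $\ll_F|\bfa|$. Expanding the determinant then gives $|\Delta|\ll_F|\bfa|^r$, with the implied constant depending only on $n$ and $F$. Combining the two displays yields $N_d(\bfa)\ll_F d^{n-r(\bfa)}|\bfa|^{r(\bfa)}$. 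If $r=0$ the bound is the trivial $N_d(\bfa)\le d^n$; if $\bfa=\bm0$ then $r=0$ as well.

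The only mildly delicate point is integrality. If $F$ has integer coefficients, then the second partial derivatives are integral, so $H(\bfa)$ has integer entries and no adjustment is needed. Should a normalisation such as $\frac12$ appear elsewhere, one would scale by a fixed integer $c_F$, replacing $d$ by $c_Fd$, which costs only a constant depending on $F$. Otherwise the argument is routine.
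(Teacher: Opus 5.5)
Your argument is correct, but it takes a different route from the paper, which gives no proof of its own. The paper simply quotes Bernert \cite[Eq.~(3.1)]{Ber25}, applied to the cubic form $C=6F$ so that Bernert's matrix $M(\bfx)$ and bilinear forms $B_i(\bfx,\bfy)$ coincide with $H(\bfx)$ and the entries of $H(\bfx)\bfy$. It also remarks that goodness plays no role.

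You instead give a self-contained proof in three steps:
\begin{enumerate}
\item put $H(\bfa)$ in Smith normal form over $\ZZ$;
\item read off the exact count $N_d(\bfa)=d^{n-r}\prod_{i\le r}\gcd(s_i,d)$;
\item bound $s_1\cdots s_r$ by a nonvanishing $r\times r$ minor, using the $r$-th determinantal divisor.
\end{enumerate}

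The citation route is shorter, but it requires matching Bernert's normalisation of the cubic form. Your route is elementary and has no normalisation issue at all. Since $F\in\ZZ[\bfx]$, the matrix $H(\bfa)$ is already integral, so your closing contingency about rescaling by $c_F$ is unnecessary.

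Your route also gives slightly more. Each factor $\gcd(s_i,d)$ divides both $s_i$ and $d$, so $N_d(\bfa)\le d^{n-r}\gcd(d^r,s_1\cdots s_r)$. The argument is in the same spirit as the Smith normal form count $N_T(\bfy)=p^{\sum_\nu\min(j_\nu,t)}$ that the paper itself uses in the proof of Lemma~\ref{lem:q-terminal}.
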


\begin{proof}
This is \cite[Eq.~(3.1)]{Ber25},
applied to the cubic form $C := 6F$.
It does not require goodness.
For $C=6F$,
Bernert's notation $M,\bfh,q$ corresponds to our notation $H,\bfa,d$,
and his bilinear forms $B_i(\bfx,\bfy)$
are simply the entries of the vector $M(\bfx)\bfy = H(\bfx)\bfy$.
\end{proof}

\begin{lemma}
\label{LEM:square-and-diff}
If $A,T\ge 1$ are integers with $T\mid A$, then
\begin{equation*}
\mathcal G_{A,T}(\bfa)
\le T^{n/2} N_T(\bfa)^{1/2}.
\end{equation*}
\end{lemma}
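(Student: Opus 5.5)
The plan is to apply Weyl differencing (squaring and differencing) to the inner sum $\mathcal{S}_{A,T}(\bfd,\alpha,\bfa)$ from \eqref{z-sum-in-max-G}. The hypothesis $T\mid A$ makes the cubic term $\alpha A F(\bfz)$ vanish modulo $T$. Indeed, $e_T(\alpha A F(\bfz))=1$ because $T\mid A$ and $F$ has integer coefficients. So for every $\alpha\in(\ZZ/T\ZZ)^\ast$ and $\bfd$ we get
\[
\mathcal{S}_{A,T}(\bfd,\alpha,\bfa)=\sum_{\bfz\bmod T} e_T\bigl(\bfd\cdot\bfz+\alpha Q_{\bfa}(\bfz)\bigr),
\]
which is a quadratic Gauss sum modulo $T$ attached to the integral quadratic form $Q_{\bfa}(\bfz)=\tfrac12\bfz^tH(\bfa)\bfz$. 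Since the desired bound is uniform in $\alpha$ and $\bfd$, it suffices to bound $|\mathcal{S}|^2$ uniformly by $T^nN_T(\bfa)$.

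First we expand $|\mathcal{S}|^2=\sum_{\bfz,\bfw}e_T(\bfd\cdot(\bfz-\bfw)+\alpha(Q_{\bfa}(\bfz)-Q_{\bfa}(\bfw)))$ and substitute $\bfz=\bfw+\bfh$. The identity $Q_{\bfa}(\bfw+\bfh)-Q_{\bfa}(\bfw)=\bfw^tH(\bfa)\bfh+Q_{\bfa}(\bfh)$ holds exactly over $\ZZ$ by symmetry of $H(\bfa)$. Note that $H(\bfa)$ has integer entries and $Q_{\bfa}$ is integral, as stated in the paper. Hence
\[
|\mathcal{S}|^2=\sum_{\bfh\bmod T}e_T\bigl(\bfd\cdot\bfh+\alpha Q_{\bfa}(\bfh)\bigr)\sum_{\bfw\bmod T}e_T\bigl(\alpha\,\bfw\cdot H(\bfa)\bfh\bigr).
\]
The summand is well defined modulo $T$ in $\bfh$ and $\bfw$, since $Q_{\bfa}$ is integral. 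Next, orthogonality shows that the inner $\bfw$-sum equals $T^n$ if $\alpha H(\bfa)\bfh\equiv\bm0\bmod T$ and vanishes otherwise. Because $\alpha$ is a unit modulo $T$, the condition is just $H(\bfa)\bfh\equiv\bm0\bmod T$. The triangle inequality on the $\bfh$-sum then gives $|\mathcal{S}|^2\le T^n N_T(\bfa)$, with $N_T$ as in \eqref{eq:N_d}. Taking square roots and the maximum over $(\alpha,\bfd)$ yields the lemma.

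I do not expect a serious obstacle. The only point needing care is that $Q_{\bfa}$ is a genuine integral quadratic form, including at $p=2$. This is already asserted when $\mathcal G_{A,T}$ is defined, and it holds because the diagonal entries of the Hessian of an integral cubic form are even, so the half-factor causes no issue. The differencing identity is exact over $\ZZ$, so no division by $2$ modulo $T$ is needed.
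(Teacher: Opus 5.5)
Your proof is correct and follows the same route as the paper. The hypothesis $T\mid A$ removes the cubic term, and squaring and differencing the resulting quadratic sum gives $|\mathcal S_{A,T}(\bfd,\alpha,\bfa)|^2\le T^nN_T(\bfa)$; the paper does this step by citing the close of the proof of \cite[Lemma~11]{41}, whereas you carry it out explicitly, including the check that $Q_{\bfa}$ is integral at $p=2$.
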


\begin{proof}
Since $T\mid A$,
the definition of $\mathcal G_{A,T}$ in \eqref{eq:GAT-max-sum} simplifies to
\[
\mathcal G_{A,T}(\bfa)
= \max_{\substack{\alpha\in (\ZZ/T\ZZ)^\ast\\
\bfd\in (\ZZ/T\ZZ)^n}}
\left|
\sum_{\bfz\in (\ZZ/T\ZZ)^n}
e_T(\bfd\cdot\bfz+\alpha Q_{\bfa}(\bfz))
\right|,
\]
where $Q_{\bfa}(\bfz):=\frac12\bfz^tH(\bfa)\bfz$.
On squaring and differencing the $\bfz$-sum, exactly as in the close of the proof of
\cite[Lemma~11]{41}, we are led to the desired conclusion.
\end{proof}

\section{Squarefull exponential sums on average}
\label{sec:squarefull-average}

Throughout this section, we assume $n\ge10$ and $X(\QQ)=\emptyset$;
in particular, $F$ is good.

\subsection{Squarefull exponential sums: averaging over $\bm{v}$} 

Any squarefull integer $r$ admits a factorisation $r=c^2d$ for square-free $d$ such that $d\mid c$. Let $V\geq 1$. In this section we consider the averages
$$
\sum_{\bm{v}\in \mathcal{V}} |S_r(\bm{v})|,
$$
for an arbitrary set
$\mathcal V\subset\ZZ^n\cap(\bfv_0+[-V,V]^n)$,
uniformly in $\bfv_0\in\RR^n$.
Let 
\begin{equation}\label{Defi:d1}
d_1=\prod_{\substack{p\mid d \\ v_p(c)=1}}p, \qquad d_2=\frac{d}{d_1}.
\end{equation}
In particular, we have 
$d_1d_2^2\mid c$. Thus we can 
write $c=d_1c_2$, where $\gcd(d_1,c_2)=1$ and $d_2^2\mid c_2$. 
We begin by proving the following estimate.

\begin{lemma}\label{lem:c/d-bound}
Let $c,d$ be positive integers such that $d$ is square-free and $d\mid c$.
Let $\bm{k}\in \ZZ^n$ such that $d\mid F(\bm{k})$. 
Then
$$
\#\left\{ \bm{x}\in (\ZZ/c\ZZ)^n: 
\begin{array}{l}
\bm{x}\equiv \bm{k}\bmod{d}\\
F(\bm{x})\equiv 0\bmod{c}
\end{array}
\right\}\ll \left(\frac{c}{d}\right)^{n-1+\ve} 
\gcd(d_2,\nabla F(\bm{k}))\gcd(d_2,\bm{k}),
$$
for any $\ve>0$, where $d_2=d/d_1$ and $d_1$ is defined in \eqref{Defi:d1}.
\end{lemma}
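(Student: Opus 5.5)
The count is multiplicative over primes by the Chinese remainder theorem, and so are the right-hand side factors $(c/d)^{n-1}$ and the two gcd's. The plan is therefore to reduce to prime powers: fix $p\mid c$, write $p^e\parallel c$, and prove a local bound for each prime.

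Let $\delta=1$ if $p\mid d$ and $\delta=0$ otherwise. We need to count $\bfx\bmod p^e$ with $\bfx\equiv\bfk\bmod p^\delta$ and $F(\bfx)\equiv0\bmod p^e$. For $p\nmid d$ the constraint $\bfx\equiv \bfk\bmod{1}$ is empty. Bounding $\#\{\bfx\bmod p^e: F(\bfx)\equiv 0\}\ll p^{e(n-1)+e\ve}$ requires the exponent $n-1+\ve$ uniformly, which is not true pointwise for $F$ with bad reduction. However, the $\ve$ and the implied constant absorb a bounded factor $C^{\omega(c)}\ll c^\ve$ together with finitely many bad primes, so what we really need is a bound $\ll C p^{e(n-1)}$ for all good $p$. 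I would get this from the standard Hensel/Lang--Weil count: the number of zeros mod $p^e$ is $p^{(e-1)n}\cdot$ (count of points lying over nonsingular zeros mod $p$), plus a contribution from points whose reduction is singular. The singular contribution is controlled by $\dim\sing(\cone X_p)\le n-5$, via a descent $\bfx=\bfy+p\bfz$ with $\bfy$ singular. This is the messy step. If the descent becomes too delicate for large $e$, I would fall back on the trivial bound $p^{en}/p^{?}$ combined with goodness, in the form of Bernert's bound \eqref{Bernert-n10-absolute}. More simply, one can write the count as $p^{-e}\sum_{a\bmod p^e} \sum_{\bfx} e_{p^e}(aF(\bfx))=p^{e(n-1)}\sum_{k\le e}p^{-kn}S_{p^k}(\bm 0)$, and the partial sums of $\chi_p$ are $\ll 1+O(p^{-1-\delta'})$ by \eqref{Bernert-n10-absolute}. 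That yields $\ll p^{e(n-1)}(1+Cp^{-1-\delta'})$, whose product over $p$ is $\ll c^\ve$. I would use this character-sum route as the main argument for $\delta=0$.

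For $p\mid d_1$, so that $e=1$ (since $v_p(c)=1$), the count is just $1$ or $0$: $\bfx\equiv\bfk\bmod p$ is determined, and $p^{e(n-1)}/p^{n-1}=1$. This matches $(c/d)$ locally equal to $p^0$ with no gcd factor.

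For $p\mid d_2$ we have $e\ge2$ and must bound
\[
\#\{\bfx\bmod p^e:\bfx\equiv\bfk\bmod p,\ F(\bfx)\equiv0\bmod p^e\}\ll p^{(e-1)(n-1)}\gcd(p,\nabla F(\bfk))\gcd(p,\bfk).
\]
Write $\bfx=\bfk+p\bfz$ with $\bfz\bmod p^{e-1}$. If $p\nmid\nabla F(\bfk)$, Hensel's lemma (the smooth point case) gives exactly $p^{(e-1)(n-1)}$ solutions, since $F(\bfk+p\bfz)$ is linear in $\bfz$ mod $p$ with nonzero gradient and the conditions lift uniquely one coordinate at a time. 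If $p\mid\nabla F(\bfk)$ but $p\nmid\bfk$, we lose at most a factor $p$. Concretely, $F(\bfk+p\bfz)\equiv F(\bfk)+p\nabla F(\bfk)\cdot\bfz+p^2Q_{\bfk}(\bfz)+p^3F(\bfz)$. I would count by reducing to the count of $\bfz\bmod p^{e-2}$ solving $p^{-2}F(\bfk+p\bfz)\equiv0\bmod p^{e-2}$. This is a polynomial in $\bfz$ whose quadratic part $Q_\bfk$ has $H(\bfk)\bfk=2\nabla F(\bfk)\equiv0$, but $H(\bfk)\not\equiv0$ generically. Here I would use that $\bfk$ is a nonzero point of $V_{n-?}$ and handle the case by induction/character sums, giving $\ll p^{(e-2)(n-1)+n-1}\cdot p$. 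Finally, if $p\mid\bfk$, then $\bfx=p\bfw$ and the condition becomes $p^3F(\bfw)\equiv0\bmod p^e$. Counting with the $\delta=0$ result on $\bfw\bmod p^{e-1}$ and $F(\bfw)\equiv0\bmod p^{e-3}$ gives $\ll p^{(e-1)n-(e-3)(n-1)}\cdot$, and checking exponents loses $p^2$ relative to $p^{(e-1)(n-1)}$, which matches $\gcd(p,\nabla F)\gcd(p,\bfk)=p^2$.

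The main obstacle is the middle case $p\mid\nabla F(\bfk)$, $p\nmid\bfk$, uniformly in $e$. There I would try the exponential-sum expression $\sum_{a\bmod p^{e}}$ restricted to the coset, reducing via the Taylor expansion above to the sums $\mathcal G_{A,T}$ from Lemma~\ref{lem:q-flexible-lifting}. These would then be bounded using Lemma~\ref{LEM:square-and-diff} with $N_T(\bfk)$, where the rank of $H(\bfk)$ mod $p$ is at least $1$ because $V_1\cap\cone X=\{\bm0\}$ by Corollary~\ref{cor:hesrank}, after spreading out to all but finitely many $p$.
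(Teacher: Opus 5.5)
Your reduction to prime powers agrees with the paper's proof, and so does your treatment of every case but one. That covers orthogonality plus \eqref{Bernert-n10-absolute} when $p\nmid d$, the count $1$ when $p\mid d_1$, exact Hensel lifting when $p\nmid\nabla F(\bm k)$, and the substitution $\bfx=p\bfw$ when $p\mid\bm k$.

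Two small fixes:
\begin{enumerate}
\item \eqref{Bernert-n10-absolute} does not give local partial sums of size $1+O(p^{-1-\delta'})$. You only need each of them to be at most $\sum_{q\ge1}q^{-n}|S_q(\bm0)|\ll_F1$.
\item When $p\mid\bm k$ and $e\ge3$, the count is $p^{2n}$ times the $\delta=0$ count modulo $p^{e-3}$, which is $\ll p^{2n+(e-3)(n-1)}=p^{(e-1)(n-1)+2}$.
\end{enumerate}

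The genuine gap is the case $p\nmid\bm k$, $p\mid\nabla F(\bm k)$, which you flag as the main obstacle but do not resolve. Put $s=e-2$ and $g(\bfz)=p^{-2}F(\bm k+p\bfz)=f_0+f_1+f_2+pf_3$ with $f_2=Q_{\bm k}$. You must show, uniformly in $p$ up to a loss $p^{O(s\ve)}$, that $g$ has $\ll(s+1)p^{s(n-1)}$ roots modulo $p^s$. The decisive input is $\rank H(\bm k\bmod p)\ge2$, which is exactly what spreading out $V_1\cap\cone X=\{\bm0\}$ from Corollary~\ref{cor:hesrank} gives. Rank $\ge1$, which is what you propose to use, cannot suffice: $g=y_1^2$ has $p^{s(n-1)+\lfloor s/2\rfloor}$ roots modulo $p^s$.

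Your suggested mechanism also does not close as you describe it. Expanding $\bfx=\bm k+p\bfz$ in the $h$-th term of the orthogonality expansion yields $\mathcal G_{A,T}$ with $A=p$ and $T=p^{h-2}$. So Lemma~\ref{LEM:square-and-diff}, which needs $T\mid A$, is unavailable once $h\ge4$. The bound that is available is Lemma~\ref{lem:q-terminal} with $a=1$. It saves only $p^{\varrho}$, where $\varrho=\rank H(\bm k\bmod p)$, against $\varphi(p^h)$ values of the outer unit. The $h$-th term is then bounded only by $p^{(e-1)(n-1)-1+h-\varrho}$, which for $h$ near $e$ is far above the target $p^{(e-1)(n-1)+1}$.

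The paper instead proves Lemma~\ref{LEM:rank-two-root-lifting} directly:
\begin{enumerate}
\item For odd $p$, diagonalise $f_2$ over $\ZZ_p$ and fix all variables except two that carry unit coefficients.
\item Complete the square so that $p\mid f_1$, then induct on $s$.
\item If $p\nmid f_0$, the reduction is a smooth affine conic and Hensel applies.
\item If $p\mid f_0$, points with $\bfy\not\equiv\bm0$ lift by Hensel. The residue $\bfy\equiv\bm0$ forces $p^2\mid f_0$, and $\bfy=p\bfz$ returns a polynomial of the same shape modulo $p^{s-2}$.
\end{enumerate}
Write $N(p^e,p^\delta)$ for your local count. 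The finitely many primes where rank $\ge2$ is not available, or $p\le3$, are absorbed via $N(p^e,p)\le N(p^e,1)\ll p^{e(n-1)}$.

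If you want to keep exponential sums, there is a repair. Use the coset-restricted version of Lemma~\ref{lem:q-flexible-lifting} with $A=p^{\lfloor h/2\rfloor}$ and $T\in\{1,p\}$. The saving must then come from counting $\bfy\bmod A$ with $\bfy\equiv\bm k\bmod p$ and $\nabla F(\bfy)\equiv\bm0\bmod A$, not from $N_T(\bm k)$. Hensel uniqueness in the $\varrho$ directions where $H(\bm k)$ is invertible modulo $p$ bounds this count by $p^{(\lfloor h/2\rfloor-1)(n-\varrho)}$. With $\varrho\ge2$, every $h$-term is then $\le p^{(e-1)(n-1)+1}$, giving $\ll e\,p^{(e-1)(n-1)+1}$ in total.
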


\begin{proof}
Let $N(c,d)$ be the quantity to be counted in the lemma.
By the Chinese remainder theorem, it suffices to treat the case where
$c=p^\ell$ and $d=p^j$, where $j\in\{0,1\}$ and $\ell\ge j$.
If $j=0$, additive orthogonality gives
\begin{equation*}
N(p^\ell,1)
= p^{-\ell} \sum_{1\le a\le p^\ell} \sum_{\bfx\bmod{p^\ell}} e_{p^\ell}(aF(\bfx))
= p^{-\ell} \sum_{h=0}^\ell p^{(\ell-h)n}S_{p^h}(\bm0)
= p^{(n-1)\ell} \sum_{h=0}^\ell p^{-hn}S_{p^h}(\bm0)
\end{equation*}
where $h = \ell - v_p(a)$.
Thus $N(p^\ell,1)
\le p^{(n-1)\ell} \sum_{q\ge 1} q^{-n} |S_q(\bm0)|
\ll_F p^{(n-1)\ell}$
by \eqref{Bernert-n10-absolute}.

Suppose $j=1$.
For $\ell=1$ we have $N(p,p)=1$.
Now suppose $\ell\ge 2$.
For any fixed prime $p$ we have
$N(p^\ell,p) \le N(p^\ell,1)
\ll_F p^{(n-1)\ell} \ll_p p^{(n-1)(\ell-1)}$.
At every other prime we have $p>3$ and
\begin{equation}\label{eq:rank-two-on-cubic}
F(\bm u)=0,\quad \bm u\in \bar{\FF}_p^n \setminus \{\bm0\}
\quad\Longrightarrow\quad \operatorname{rank}H(\bm u)\ge2,
\end{equation}
by spreading out the claim $V_1 \cap \cone X = \{\bm0\}$ of Corollary~\ref{cor:hesrank}.

If $p\mid\bm k$, homogeneity gives
$N(p^2,p) = p^n = p^{(\ell-1)(n-1)+1}$ for $\ell=2$, and
\[
N(p^\ell,p)=p^{2n}N(p^{\ell-3},1)
\ll_F p^{2+2(n-1)} p^{(\ell-3)(n-1)}
= p^{(\ell-1)(n-1)+2}
\]
for $\ell\ge3$.
If $p\nmid \bm k$ and $p\nmid\nabla F(\bm k)$, Hensel lifting gives
$N(p^\ell,p) = p^{(\ell-1)(n-1)}$.

It remains to consider the case where $p\nmid\bm k$ and $p\mid\nabla F(\bm k)$.
Write $\bm x=\bm k+p\bm y$.
If $p^2\nmid F(\bm k)$,
then $N(p^\ell,p)=0$ for all $\ell\ge 2$.
Now suppose $p^2\mid F(\bm k)$.
Then
\[
p^{-2}F(\bm k+p\bm y)
=p^{-2}F(\bm k)+p^{-1}\nabla F(\bm k)\cdot\bm y
+\tfrac12\bm y^tH(\bm k)\bm y+pF(\bm y)
\in \ZZ_p[\bfy].
\]
Here $\rank H(\bm k \bmod{p}) \ge 2$ by \eqref{eq:rank-two-on-cubic}.
Applying Lemma~\ref{LEM:rank-two-root-lifting}
with $s=\ell-2\ge 0$ therefore gives
\[
N(p^\ell,p) = p^n \#\{\bfy\bmod{p^s}: p^{-2}F(\bm k+p\bm y) \equiv 0\bmod{p^s}\}
\le p^n (\ell-1)p^{(\ell-2)(n-1)}.
\]
This is satisfactory, since $\ell-1 \ll p^{(\ell-1)\ve}$
and $p^n p^{(\ell-2)(n-1)} = p^{(\ell-1)(n-1) + 1}$.
\end{proof}

\begin{lemma}
\label{LEM:rank-two-root-lifting}
Suppose $p$ is odd.
Let $f_i\in \ZZ_p[\bfy]$ be homogeneous of degree $i$, for $0\le i\le 3$.
Suppose $\rank(f_2\bmod{p}) \ge 2$.
Let $g:=f_0+f_1+f_2+pf_3$.
Then, for every $s\ge0$,
\begin{equation}\label{eq:rank-two-root-lifting}
\#\{\bm y\bmod p^s:g(\bm y)\equiv0\bmod p^s\}
\le (s+1)p^{s(n-1)}.
\end{equation}
\end{lemma}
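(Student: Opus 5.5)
The plan is strong induction on $s$, peeling off one $p$-adic digit at a time. The key observation is that the shape $g=f_0+f_1+f_2+pf_3$, with $\rank(f_2\bmod p)\ge2$, is preserved under the rescaling $\bfy=\bfy_0+p\bfz$ around a singular root.

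Write $N_s(g)$ for the left-hand side of \eqref{eq:rank-two-root-lifting}.

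\emph{Base cases.} Trivially $N_0=1$. For $s=1$, $\bar g=\bar f_0+\bar f_1+\bar f_2$ is an affine quadratic polynomial over $\FF_p$ whose quadratic part has rank $\ge2$. Since $p$ is odd, diagonalise $\bar f_2$ and complete squares in two variables $x_1,x_2$ with nonzero diagonal coefficients $a,b$. For each fixed value of the remaining $n-2$ coordinates, we get a nonzero polynomial $ax_1^2+bx_2^2+(\text{linear})$ in $(x_1,x_2)$. It has degree $2$ in $x_1$, so it has at most $2p$ zeros. Hence $N_1\le 2p^{n-1}$.

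\emph{Inductive step} ($s\ge2$). Split the roots $\bfy\bmod p^s$ according to $\bfy_0=\bfy\bmod p$, which must be a zero of $\bar g$.

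If $\nabla\bar g(\bfy_0)\ne\bm0$, Hensel's lemma gives exactly $p^{(s-1)(n-1)}$ lifts. By the base case, these nonsingular residues contribute at most $2p^{n-1}\cdot p^{(s-1)(n-1)}=2p^{s(n-1)}$.

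If $\nabla\bar g(\bfy_0)=\bm0$, then $\bfy_0$ solves $\nabla(\bar f_1+\bar f_2)=\bm0$. This is an inhomogeneous linear system whose matrix is the matrix of $\bar f_2$, of rank $\ge2$, so there are at most $p^{n-2}$ such $\bfy_0$. For such $\bfy_0$ there are no lifts unless $p^2\mid g(\bfy_0)$. In that case, expanding as in the proof of Lemma~\ref{lem:c/d-bound} gives
\[
g'(\bfz):=p^{-2}g(\bfy_0+p\bfz)=f_0'+f_1'+f_2'+pf_3',
\]
with $f_i'\in\ZZ_p[\bfz]$ homogeneous of degree $i$ and $f_2'\equiv f_2\bmod p$. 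For instance, the term $pf_3(\bfy_0+p\bfz)$ contributes a constant, a linear term $p\nabla f_3(\bfy_0)\cdot\bfz$ divided by $p$, a quadratic term divisible by $p$, and $p^2f_3(\bfz)$. Thus $g'$ satisfies the hypotheses again. The lifts $\bfy=\bfy_0+p\bfz$ with $\bfz\bmod p^{s-1}$ and $g'(\bfz)\equiv0\bmod p^{s-2}$ number $p^{n}N_{s-2}(g')$. By induction this is at most $p^n(s-1)p^{(s-2)(n-1)}$. Summing over the at most $p^{n-2}$ singular $\bfy_0$ gives at most $(s-1)p^{s(n-1)}$.

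Adding the two contributions gives $N_s\le(2+s-1)p^{s(n-1)}=(s+1)p^{s(n-1)}$, as required.

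The step needing most care is verifying that the rescaled polynomial $g'$ genuinely keeps the form $f_0'+f_1'+f_2'+pf_3'$ with unchanged reduction of the quadratic part. In particular, every contribution of $pf_3$ after dividing by $p^2$ must be either of degree $\le1$ or divisible by $p$. This is what makes the induction close with the linear factor $s+1$ rather than a power of $2$.
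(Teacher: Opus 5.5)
Your proof is correct, and its mechanism is the same as the paper's. Nonsingular residues are handled by Hensel lifting. At singular residues you rescale $\bfy=\bfy_0+p\bfz$, which preserves the shape $f_0+f_1+f_2+pf_3$ and the reduction of $f_2$. The recursion $s\to s-2$ then gives the same split $2+(s-1)$ of the constant $s+1$.

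The difference lies in how the singular residues are controlled:
\begin{itemize}
\item The paper first diagonalises $f_2$ over $\ZZ_p$ and fixes $y_3,\dots,y_n$, reducing to the binary case. It then completes the square so that the origin is the only possible singular residue modulo $p$.
\item You stay in $n$ variables. You observe that the singular residues solve the affine linear system $\nabla(\bar f_1+\bar f_2)=\bm0$, whose matrix has rank at least $2$ because $p$ is odd. So there are at most $p^{n-2}$ of them, and $p^{n-2}\cdot p^{n}\cdot p^{(s-2)(n-1)}=p^{s(n-1)}$ closes the count.
\end{itemize}

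Your route avoids both the fibration over $y_3,\dots,y_n$ and the completing-the-square step, and you prove the $s=1$ bound $2p^{n-1}$ explicitly rather than calling it well known. The paper's route buys a one-point singular locus in two variables, so no linear algebra is needed there.
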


\begin{proof}
Since $p\ne 2$, we may diagonalise the quadratic form $f_2$ over $\ZZ_p$.
After permuting coordinates if necessary, we can
assume the coefficients are listed in increasing order of valuation.
Then $f_2 = \sum_{1\le i\le n} c_iy_i^2$
with $n\ge 2$ and $v_p(c_1)=v_p(c_2)=0$, say.
Fix $y_3,\ldots,y_n$ modulo $p^s$. After collecting homogeneous
terms in $y_1,y_2$, the resulting binary polynomial again has
the form $h_0+h_1+h_2+ph_3$, with each $h_i$ homogeneous of
degree $i$, and
$h_2\equiv c_1y_1^2+c_2y_2^2\bmod p$.
Thus its quadratic part has rank two modulo $p$. The binary
case of the lemma gives at most $(s+1)p^s$ solutions for each
of the $p^{s(n-2)}$ choices of the remaining variables,
proving the required bound.

For $n=2$, we induct on $s\ge 0$.
If $s\le 1$ the bound \eqref{eq:rank-two-root-lifting} is well known, so assume $s\ge 2$.
Suppose again that $f_2 = c_1y_1^2+c_2y_2^2$.
By completing the square, we may assume that $p\mid f_1$.
If $p\nmid f_0$, then $g\equiv 0\bmod{p}$ is a smooth affine conic,
so $\#\{\bfy\in \FF_p^2: g=0\} \le p+1 \le (s+1)p$,
and \eqref{eq:rank-two-root-lifting} holds by Hensel's lemma.
If $p\mid f_0$, then $g\equiv 0\bmod{p}$ is smooth away from the origin.
The contribution to \eqref{eq:rank-two-root-lifting} from $\bfy\equiv \bm0\bmod{p}$
is zero unless $p^2\mid f_0$, in which case it
is at most $p^2 (s-1)p^{s-2}$, by the $s-2$ case of \eqref{eq:rank-two-root-lifting}
for the polynomial
$$p^{-2} g(p\bfz) = p^{-2}f_0 + p^{-1}f_1(\bfz) + f_2(\bfz) + p^2f_3(\bfz).$$
The contribution to \eqref{eq:rank-two-root-lifting} from $\bfy\not\equiv \bm0\bmod{p}$
is at most $p^{s-1} (1+1)p$, by Hensel's lemma and
the $s=1$ case of \eqref{eq:rank-two-root-lifting}.
Adding the two contributions gives \eqref{eq:rank-two-root-lifting}.
\end{proof}

We will apply Lemma~\ref{lem:c/d-bound} twice
in this paper,
each time together with the same auxiliary bound,
Lemma~\ref{LEM:weighted-rank-locus} below,
whose statement requires some notation.
For \(0\leq \varrho\leq n\), let
\begin{equation}
\delta_n(\varrho) :=
\begin{cases}
0&0\le \varrho\le 1,\\
\min\{2\varrho-2,\lfloor n/3\rfloor+\varrho-1,n-2\}&2\leq \varrho<2\lceil n/3\rceil,\\
n-1&\varrho\geq2\lceil n/3\rceil.
\end{cases}
\label{eq:q-delta}
\end{equation}
Then for all primes $p$,
\begin{equation}
\label{eq:q-first-rank}
\#\{\bfx\in\FF_p^n:F(\bfx)=0,\ \rank H(\bfx)=\varrho\}
\ll_F p^{\delta_n(\varrho)},
\end{equation}
by the Lang--Weil estimate
and Corollary~\ref{cor:hesrank}.
Let
\begin{equation}
\gamma_n(\varrho)
:= \bm{1}_{\delta_n(\varrho)\le n-5}
+ \bm{1}_{\varrho=0}.
\label{eq:q-gamma}
\end{equation}

\begin{lemma}
\label{LEM:weighted-rank-locus}
If $0\le \varrho\le n$, then
$$\sum_{\substack{\bm u\in\FF_p^n\\F(\bm u)=0\\
\rank H(\bm u) = \varrho}}
\gcd(p,\nabla F(\bm u))\gcd(p,\bm u)
\ll_F p^{\delta_n(\varrho) + \gamma_n(\varrho)}.$$
\end{lemma}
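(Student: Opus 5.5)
Split the sum according to the value of the weight $\gcd(p,\nabla F(\bm u))\gcd(p,\bm u)\in\{1,p,p^2\}$. Points with $\bm u\ne\bm0$ and $\nabla F(\bm u)\ne\bm0$ in $\FF_p^n$ have weight $1$, and by \eqref{eq:q-first-rank} they contribute $\ll_F p^{\delta_n(\varrho)}$. The point $\bm u=\bm0$ has weight $p^2$ and lies in the set only when $\varrho=\rank H(\bm0)=0$. In that case $\delta_n(0)=0$ and $\gamma_n(0)=2$, since $\delta_n(0)=0\le n-5$ for $n\ge 10$, so the bound $p^{0+2}$ holds exactly. This accounts for the extra indicator $\bm 1_{\varrho=0}$ in $\gamma_n$.

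It remains to bound the points $\bm u\ne\bm0$ with $\nabla F(\bm u)=\bm0$ in $\FF_p^n$, each of weight $p$. These are the $\FF_p$-points of the affine cone over $\sing(X_p)$, restricted to $\rank H(\bm u)=\varrho$. First discard finitely many primes; for those the claim holds trivially after adjusting the implied constant, since the left side is at most $p^2\cdot p^n$, which is $O_F(1)$ when $p$ is bounded. For the remaining primes, spread out \eqref{eq:n10-sing-X} to get $\dim\sing(X_p)\le n-6$, so the affine cone has dimension at most $n-5$. By Lang--Weil these points contribute $\ll p\cdot p^{\min\{n-5,\,\delta_n(\varrho)\}}$; the rank-$\varrho$ locus on the cone also has dimension at most $\delta_n(\varrho)$ by Corollary~\ref{cor:hesrank}, spread out.

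Finally match the two regimes to $\gamma_n$. If $\delta_n(\varrho)\le n-5$, then $\gamma_n(\varrho)\ge1$ and the singular contribution is $\ll p^{1+\delta_n(\varrho)}\le p^{\delta_n(\varrho)+\gamma_n(\varrho)}$. If $\delta_n(\varrho)>n-5$, then $\gamma_n(\varrho)=0$ and we need $p\cdot p^{n-5}\le p^{\delta_n(\varrho)}$, i.e.\ $\delta_n(\varrho)\ge n-4$. This holds because $\delta_n$ is integer-valued, so $\delta_n(\varrho)>n-5$ forces $\delta_n(\varrho)\ge n-4$. Combining the three contributions gives the lemma.

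The only subtle step is ensuring that the bound from \eqref{eq:q-first-rank}, the Lang--Weil bound on the singular cone, and \eqref{eq:n10-sing-X} all hold uniformly in $p$ after spreading out. I would handle this with the same finite-set-of-primes argument as in the proof of Lemma~\ref{lem:prime-square-stratification}.
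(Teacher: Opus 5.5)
Your proof is correct and follows essentially the same route as the paper; the only difference is organisational, in that you split first by the weight $\gcd(p,\nabla F(\bm u))\gcd(p,\bm u)\in\{1,p,p^2\}$, whereas the paper splits into the cases $\varrho=0$, $\gamma_n(\varrho)\ge 1$ and $\gamma_n(\varrho)=0$, using the crude weight bounds $p^2$ and $p$ in the first two. Both arguments rest on the same inputs: \eqref{eq:q-first-rank} for the generic points, the Lang--Weil bound $\ll p^{n-5}$ on the singular cone coming from \eqref{eq:n10-sing-X}, and the integrality of $\delta_n$, which forces $\delta_n(\varrho)\ge n-4$ whenever $\gamma_n(\varrho)=0$.
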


\begin{proof}
The case $\varrho=0$ is immediate from \eqref{eq:q-first-rank},
since $\delta_n(0)=0 \le n-5$ and $\gamma_n(0)=2$.
Now suppose $\varrho\ge 1$.
Then $H(\bm u)$ is nonzero, so $\bm u\not\equiv \bm0 \bmod{p}$ and $\gcd(p,\bm u) = 1$.
The case $\gamma_n(\varrho)\ge 1$ is then immediate from \eqref{eq:q-first-rank}.
Finally, suppose $\gamma_n(\varrho)=0$.
Then $\delta_n(\varrho)\ge n-4$ by \eqref{eq:q-gamma}.
In view of \eqref{eq:n10-sing-X}, the Lang--Weil estimate gives
$$\#\{\bm u\bmod{p}: \nabla F(\bm u) \equiv 0\bmod{p}\} \ll_F p^{n-5}
\le p^{\delta_n(\varrho)-1}.$$
On the other hand, there are $\ll p^{\delta_n(\varrho)}$ smooth residues $\bm u$ at hand, by \eqref{eq:q-first-rank}.
After multiplying by $\gcd(p,\nabla F(\bm u))$,
we find that both the singular and smooth contributions are satisfactory.
\end{proof}
	
Let us now record an initial  upper bound for $S_r(\bm{v})$, in which we recall the factorisation $r=c^2d$ of any squarefull $r$, with square-free $d$  such that $d\mid c$.

\begin{lemma}\label{lem:starT}
Let $r=c^2d$ be squarefull. Then 
$$
|S_r(\bm{v})|\leq 
c^{n+1}d^{n/2+1}
\sum_{a\in (\ZZ/c\ZZ)^*} \sum_{
\substack{\bm{a}\in (\ZZ/c\ZZ)^n\\ 
c\mid F(\bm{a})\\
c\mid (a\nabla F(\bm{a})+\bm{v})}} N_d(\bm{a})^{1/2},
$$
where $N_d(\bm{a})$ is given by \eqref{eq:N_d}.
\end{lemma}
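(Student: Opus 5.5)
This follows by combining the flexible Hensel lifting of Lemma~\ref{lem:q-flexible-lifting} with the Gauss sum bound of Lemma~\ref{LEM:square-and-diff}. Take $A:=c$ and $T:=d$, so that $q=A^2T=c^2d=r$. Lemma~\ref{lem:q-flexible-lifting} then gives
\[
|S_r(\bfv)|
\le c^{n+1} d
\sum_{a\in (\ZZ/c\ZZ)^\ast}
\sum_{\substack{\bfa\bmod c\\F(\bfa)\equiv0\bmod c
\\ a\nabla F(\bfa)+\bfv\equiv \bm0\bmod c}} \mathcal G_{c,d}(\bfa).
\]
The summation conditions here are exactly those in the claimed bound. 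By Lemma~\ref{LEM:max-G-A-invariant}, each summand depends only on $\bfa\bmod{\gcd(c,d)}=\bfa\bmod d$, so it is well defined for $\bfa\bmod c$.

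Since $d\mid c$, the hypothesis $T\mid A$ of Lemma~\ref{LEM:square-and-diff} holds. That lemma gives $\mathcal G_{c,d}(\bfa)\le d^{n/2}N_d(\bfa)^{1/2}$, and $N_d(\bfa)$ likewise depends only on $\bfa\bmod d$. Substituting, the prefactor becomes $c^{n+1}d\cdot d^{n/2}=c^{n+1}d^{n/2+1}$, which is the claimed bound.

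There is no real obstacle. The only point needing care is the bookkeeping in Lemma~\ref{lem:q-flexible-lifting}. The $a_0$-sum runs modulo $M=cd$ and collapses to a sum modulo $A=c$ with multiplicity $\varphi(cd)/\varphi(c)\le d$, and this is what produces the factor $T=d$. This is already handled in the proof of Lemma~\ref{lem:q-flexible-lifting}, so the statement follows by direct substitution.
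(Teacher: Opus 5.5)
Your proposal is correct and matches the paper's proof, which likewise applies Lemma~\ref{lem:q-flexible-lifting} and Lemma~\ref{LEM:square-and-diff} with $A:=c$ and $T:=d$, using $d\mid c$. Your bookkeeping of the prefactor $c^{n+1}d\cdot d^{n/2}=c^{n+1}d^{n/2+1}$ is right.
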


\begin{proof}
Apply Lemmas~\ref{lem:q-flexible-lifting} and~\ref{LEM:square-and-diff}
with $A := c$ and $T := d$, using the fact that $d\mid c$.
\end{proof}

\begin{lemma}\label{lem:frog}
We have
\begin{equation}\label{eq:weighted-prime-smith-mass}
\sum_{\substack{\bm u\in\FF_p^n\\F(\bm u)=0}}
N_p(\bm u)^{1/2}
\gcd(p,\nabla F(\bm u))\gcd(p,\bm u)
\ll_F p^{3n/2 - \lceil n/3\rceil - 1}.
\end{equation}
\end{lemma}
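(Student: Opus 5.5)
Stratify the sum in \eqref{eq:weighted-prime-smith-mass} by $\varrho=\rank H(\bm u)$ over $\FF_p$, for $0\le\varrho\le n$. On the stratum $\rank H(\bm u)=\varrho$ we have exactly $N_p(\bm u)=p^{n-\varrho}$, since $N_p(\bm u)$ counts the kernel of $H(\bm u)$ over $\FF_p$. Lemma~\ref{LEM:weighted-rank-locus} then bounds the contribution of the stratum by
\[
\ll_F p^{(n-\varrho)/2+\delta_n(\varrho)+\gamma_n(\varrho)}.
\]
Since there are $n+1$ strata, it suffices to verify, for every $0\le\varrho\le n$, the exponent inequality
\[
\tfrac{n-\varrho}{2}+\delta_n(\varrho)+\gamma_n(\varrho)\le \tfrac{3n}{2}-\lceil n/3\rceil-1 .
\]
The implied constant is uniform in $p$, because the Lang--Weil input behind \eqref{eq:q-first-rank} is.

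The verification splits according to the three ranges in \eqref{eq:q-delta}, always with $n\ge 10$.

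\textbf{Large rank.} If $\varrho\ge 2\lceil n/3\rceil$, then $\delta_n=n-1>n-5$, so $\gamma_n=0$. The exponent $(n-\varrho)/2+n-1$ is decreasing in $\varrho$. At $\varrho=2\lceil n/3\rceil$ it equals exactly $3n/2-\lceil n/3\rceil-1$, so this range is sharp and determines the claimed bound.

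\textbf{Very small rank.} For $\varrho\le1$ we have $\delta_n=0$ and $\gamma_n\le 2$, giving exponent at most $n/2+2$. Since $\lceil n/3\rceil\le (n+2)/3$, the target is at least $7n/6-5/3$, and $n/2+2$ is smaller once $n\ge 10$.

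\textbf{Intermediate rank.} For $2\le\varrho<2\lceil n/3\rceil$, bound $\delta_n\le\lfloor n/3\rfloor+\varrho-1$ and $\gamma_n\le1$. The exponent is then at most $n/2+\varrho/2+\lfloor n/3\rfloor$, which increases with $\varrho$. At the worst case $\varrho=2\lceil n/3\rceil-1$ it is $n/2+\lceil n/3\rceil-1/2+\lfloor n/3\rfloor$.

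Comparing this with $3n/2-\lceil n/3\rceil-1$ reduces to
\[
2\lceil n/3\rceil+\lfloor n/3\rfloor\le n-\tfrac12 .
\]
This fails when $3\nmid n$, since the left side is then $n+1$ or $n+2$ according as $n\equiv1$ or $2 \bmod 3$. So near the top of this range the crude bound is not enough, and this is the step I expect to be the main obstacle. Two refinements are available, and I would use them as follows.

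First, the bound $\delta_n\le n-2$ makes the exponent at most $(n-\varrho)/2+n-2+\gamma_n$. Once $\delta_n(\varrho)\ge n-4$, the indicator $\gamma_n$ vanishes.

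Second, the bound $\delta_n\le 2\varrho-2$ controls small $\varrho$. There $\gamma_n=1$ and the exponent is $n/2+3\varrho/2-1$. This is fine while $\varrho\le 2n/3-2\lceil n/3\rceil/3$, roughly $\varrho\le 4n/9$.

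The plan for the rest of the range is to take, for each $\varrho$, the minimum of the three expressions in \eqref{eq:q-delta} together with the correct $\gamma_n$. Near the top, $\delta_n=n-2$ and $\gamma_n=0$, giving exponent $(n-\varrho)/2+n-2$. At $\varrho=2\lceil n/3\rceil-1$ this equals $3n/2-\lceil n/3\rceil-3/2$, which is acceptable. The middle portion of the range is covered by $\lfloor n/3\rfloor+\varrho-1$ whenever this is at most $n-5$, since then $\gamma_n=1$.

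The leftover cases are the $\varrho$ with $\lfloor n/3\rfloor+\varrho-1\in\{n-4,n-3\}$. For these $\gamma_n=0$, so the exponent is at most $n/2+\varrho/2+\lfloor n/3\rfloor-1$. With $\varrho\le n-\lfloor n/3\rfloor-2$ this is at most $n-3/2+\lfloor n/3\rfloor/2$, which is at most the target since $\lceil n/3\rceil\le n/3+2/3$ and $n\ge 10$. I would carry out this bookkeeping by checking the residue classes of $n \bmod 3$ separately.
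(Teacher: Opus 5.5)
Your route is the paper's. You stratify by $\varrho=\rank H(\bm u)$, insert $N_p(\bm u)=p^{n-\varrho}$ and Lemma~\ref{LEM:weighted-rank-locus}, and compare $\frac{n-\varrho}{2}+\delta_n(\varrho)+\gamma_n(\varrho)$ with $\frac{3n}{2}-\lceil n/3\rceil-1$. Your large-rank and $\varrho\le 1$ cases are fine. The conclusion is true, but the intermediate range has a bookkeeping slip that skips exactly the sharp case.

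\textbf{The slip.} In the branch $\delta_n(\varrho)=n-2$, the exponent $\frac{n-\varrho}{2}+n-2$ is \emph{decreasing} in $\varrho$. So evaluating it at $\varrho=2\lceil n/3\rceil-1$ checks the most favourable point of the branch, not the worst.
\begin{itemize}
\item The binding point is the smallest $\varrho$ in the branch, namely $\varrho=n-1-\lfloor n/3\rfloor$, where $\lfloor n/3\rfloor+\varrho-1=n-2$. This value is not among your ``leftover'' ranks.
\item For $n\equiv 1\pmod 3$ it equals $2\lceil n/3\rceil-2$, a different rank from the one you checked. For $n\equiv 0,2$ it coincides with $2\lceil n/3\rceil-1$.
\item At this rank the exponent is $n-\frac32+\frac12\lfloor n/3\rfloor$. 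This is at most the target if and only if $\lfloor n/3\rfloor+2\lceil n/3\rceil\le n+1$.
\item That inequality holds, with equality exactly when $n\equiv 1\pmod 3$. For $n=10$, $\varrho=6$ one has $\delta_{10}(6)=8$, $\gamma_{10}(6)=0$, and exponent $2+8=10$, which equals the target.
\end{itemize}
So your write-up as it stands does not verify the tight case in the case $n=10$ of the main theorem.

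Your diagnosis of the crude bound is also off. The quantity $2\lceil n/3\rceil+\lfloor n/3\rfloor$ equals $n$, $n+1$, $n$ for $n\equiv 0,1,2\pmod 3$. Hence the crude bound fails for every $n$, including $3\mid n$. This is harmless, since you refine for all $n$ anyway.

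\textbf{The paper's argument.} The paper avoids the case split. For all $1\le\varrho<2\lceil n/3\rceil$ it writes the exponent as $\frac n2+\frac12(\delta_n(\varrho)-\varrho)+\frac12(\delta_n(\varrho)+2\gamma_n(\varrho))$. It then uses two bounds:
\begin{itemize}
\item $\delta_n(\varrho)-\varrho\le\lfloor n/3\rfloor-1$;
\item $\delta_n(\varrho)+2\gamma_n(\varrho)\le n-2$, because $\gamma_n(\varrho)=1$ forces $\delta_n(\varrho)\le n-5$.
\end{itemize}
This gives the uniform bound $n+\frac12(\lfloor n/3\rfloor-3)$, and everything reduces to the same inequality $\lfloor n/3\rfloor+2\lceil n/3\rceil\le n+1$. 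Averaging the two upper bounds for $\delta_n$ this way handles the transition between your ``leftover'' and ``top'' subcases in one step. It also makes clear why the case $n\equiv 1\pmod 3$ is sharp.
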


\begin{proof}
Lemma~\ref{LEM:weighted-rank-locus} bounds the left-hand side by
$\sum_{0\le \varrho\le n} p^{\delta_n(\varrho) + \gamma_n(\varrho)} p^{(n-\varrho)/2}$.
For $\varrho\ge 2\lceil n/3\rceil$,
we have $\delta_n(\varrho) = n-1$ and $\gamma_n(\varrho) = 0$, so
\begin{equation*}
\delta_n(\varrho)+\gamma_n(\varrho)
+\frac{n-\varrho}{2}
= n-1 + \frac{n-\varrho}{2}
\le \frac{3n}{2}-\lceil n/3\rceil-1.
\end{equation*}
For $\varrho=0$, we have $\delta_n(\varrho)=0$ and $\gamma_n(\varrho)=2$,
so $\delta_n(\varrho)+\gamma_n(\varrho) + (n-\varrho)/2
= 2 + n/2 \le 3n/2 - \lceil n/3\rceil - 1$,
since $n\ge 10$.
For $1\le\varrho<2\lceil n/3\rceil$, definitions
\eqref{eq:q-delta} and \eqref{eq:q-gamma} give
\[
\delta_n(\varrho)-\varrho\le \lfloor n/3\rfloor - 1,
\qquad
\delta_n(\varrho)+2\gamma_n(\varrho)
= \delta_n(\varrho) + 2\cdot \bm{1}_{\delta_n(\varrho)\le n-5} \le n-2.
\]
Consequently,
\[
\begin{aligned}
\delta_n(\varrho)+\gamma_n(\varrho)+\frac{n-\varrho}{2}
&=\frac n2+
\frac{\delta_n(\varrho)-\varrho}{2}
+\frac{\delta_n(\varrho)+2\gamma_n(\varrho)}{2}\\
&\leq n+\frac{\lfloor n/3\rfloor-3}{2}\\
&\leq\frac{3n}{2}-\lceil n/3\rceil-1,
\end{aligned}
\]
where the last inequality follows from
$\lfloor n/3\rfloor+2\lceil n/3\rceil\leq n+1$.
\end{proof}

In what follows it will be convenient to set
\begin{equation}\label{eq:Nc-V}
E_c(\mathcal{V})=\max_{\bm{k}\in (\ZZ/c\ZZ)^n} 
\#\left\{\bfv\in \mathcal{V}: \bm{v}\equiv \bm{k} \bmod{c}\right\},
\end{equation}
for any integer $c>0$, 
for which we have the trivial bound $E_c(\mathcal{V})\ll (1+V/c)^n.$ 
Let 
\begin{equation}\label{Eq: Defi.Lc,d,V}
L_j(c,d,\mathcal{V}) = 		\sum_{\substack{\bfv\in \mathcal{V}}}
c^{n+1}d^{n/2+1}
\sum_{a\in (\ZZ/c\ZZ)^*} \sum_{
\substack{\bm{a}\in (\ZZ/c\ZZ)^n\\ 
c\mid F(\bm{a})\\
c\mid (a\nabla F(\bm{a})+\bm{v})}} N_d(\bm{a})^{j/2},
\end{equation}
for integer $j\geq 0$. It follows from Lemma \ref{lem:starT} that
\begin{equation}\label{eq:from-lemstar}
\sum_{\substack{\bfv\in \mathcal{V}}} |S_{r}(\bm{v})|\leq L_1(c,d,\mathcal{V}).
\end{equation}
We are now ready to prove
the following result.

\begin{lemma}\label{lem:cube-sum1}
Let $d\mid c$ with $d$ square-free.
We have
\[
L_1(c,d,\mathcal V)
\ll_{F,\varepsilon}
c^{2n+1+\varepsilon}d^{n-\lceil n/3\rceil+1}E_c(\mathcal V).
\]
\end{lemma}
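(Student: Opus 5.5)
The plan is to interchange the order of summation in \eqref{Eq: Defi.Lc,d,V} so that the $\bfv$-sum becomes innermost, and then reduce everything to the local bounds of Lemmas~\ref{lem:c/d-bound} and~\ref{lem:frog}.

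\textbf{Step 1: remove the $\bfv$-sum.} Fix $a\in(\ZZ/c\ZZ)^\ast$ and $\bfa\bmod c$ with $c\mid F(\bfa)$. The condition $c\mid a\nabla F(\bfa)+\bfv$ pins $\bfv$ to the single residue class $-a\nabla F(\bfa)\bmod c$, so at most $E_c(\mathcal V)$ vectors $\bfv\in\mathcal V$ contribute. Since the remaining summand does not depend on $a$, the $a$-sum contributes at most $\varphi(c)\le c$. This gives
\[
L_1(c,d,\mathcal V)\le c^{n+2}d^{n/2+1}E_c(\mathcal V)\sum_{\substack{\bfa\bmod c\\ c\mid F(\bfa)}}N_d(\bfa)^{1/2}.
\]
It then suffices to prove
\[
\sum_{\substack{\bfa\bmod c\\ c\mid F(\bfa)}}N_d(\bfa)^{1/2}\ll c^{n-1+\ve}d^{n/2-\lceil n/3\rceil},
\]
because $c^{n+2}\cdot c^{n-1}=c^{2n+1}$ and $d^{n/2+1}\cdot d^{n/2-\lceil n/3\rceil}=d^{n+1-\lceil n/3\rceil}$.

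\textbf{Step 2: fibre over residues modulo $d$.} Since $N_d(\bfa)$ depends only on $\bfa\bmod d$, I will split the sum according to $\bfk=\bfa\bmod d$; such $\bfk$ must satisfy $d\mid F(\bfk)$ because $d\mid c$. For each fixed $\bfk$, Lemma~\ref{lem:c/d-bound} bounds the number of lifts $\bfa\bmod c$ with $c\mid F(\bfa)$ by
\[
(c/d)^{n-1+\ve}\gcd(d_2,\nabla F(\bfk))\gcd(d_2,\bfk).
\]
I will then crudely replace $d_2$ by $d$ in both gcds, which only enlarges the bound. This leaves
\[
(c/d)^{n-1+\ve}\sum_{\substack{\bfk\bmod d\\ F(\bfk)\equiv0\bmod d}}N_d(\bfk)^{1/2}\gcd(d,\nabla F(\bfk))\gcd(d,\bfk).
\]

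\textbf{Step 3: factor over primes.} Since $d$ is square-free, the Chinese remainder theorem makes $N_d(\bfk)=\prod_{p\mid d}N_p(\bfk)$ and both gcds multiplicative over $p\mid d$. Hence the $\bfk$-sum factors as a product over $p\mid d$ of exactly the local sum in \eqref{eq:weighted-prime-smith-mass}. Lemma~\ref{lem:frog} bounds each factor by $C_Fp^{3n/2-\lceil n/3\rceil-1}$, so the product is at most $C_F^{\omega(d)}d^{3n/2-\lceil n/3\rceil-1}\ll d^{3n/2-\lceil n/3\rceil-1+\ve}$. Combining with the factor $(c/d)^{n-1+\ve}$ gives $c^{n-1+\ve}d^{n/2-\lceil n/3\rceil}$, using $d\le c$ to absorb the $d^\ve$ into $c^\ve$.

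I expect no serious obstacle, since the substantive local input is already contained in Lemmas~\ref{lem:c/d-bound} and~\ref{lem:frog}. The points needing care are bookkeeping:
\begin{enumerate}[(i)]
\item the well-definedness of the summands as functions modulo $d$ and modulo $c$;
\item the fact that replacing $d_2$ by $d$ in the gcds is harmless;
\item the fact that the implied constant in Lemma~\ref{lem:c/d-bound} is uniform in $\bfk$, so that the $(c/d)^\ve$ and $C_F^{\omega(d)}$ losses are absorbed into $c^\ve$.
\end{enumerate}
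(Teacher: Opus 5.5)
Your proposal is correct and follows essentially the same route as the paper: interchange the sums to extract $c^{n+2}d^{n/2+1}E_c(\mathcal V)$, fibre over residues modulo $d$ using Lemma~\ref{lem:c/d-bound}, and then factor via the Chinese remainder theorem and apply Lemma~\ref{lem:frog} prime by prime. The only cosmetic difference is that you enlarge $\gcd(d_2,\cdot)$ to $\gcd(d,\cdot)$ so that Lemma~\ref{lem:frog} applies verbatim at every $p\mid d$, whereas the paper keeps $d_2$ and effectively drops the gcd weights at primes dividing $d_1$; both give the same bound.
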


\begin{proof}
Interchanging the sums in \eqref{Eq: Defi.Lc,d,V} gives
\[
L_1(c,d,\mathcal V)
\le c^{n+2}d^{n/2+1}E_c(\mathcal V)
\sum_{\substack{\bm a\bmod c\\c\mid F(\bm a)}}N_d(\bm a)^{1/2}.
\]
Splitting the last sum into residue classes modulo $d$ and applying
Lemma~\ref{lem:c/d-bound} shows that it is at most
\[
\ll_{F,\varepsilon}(c/d)^{n-1+\varepsilon}
\sum_{\substack{\bm u\bmod d\\d\mid F(\bm u)}}
N_d(\bm u)^{1/2}
\gcd(d_2,\nabla F(\bm u))\gcd(d_2,\bm u).
\]
By the Chinese remainder theorem and Lemma~\ref{lem:frog}, the last
sum is $\ll_{F,\varepsilon}d^{3n/2 - \lceil n/3\rceil - 1+\varepsilon}$.
Combining these estimates proves the assertion.
\end{proof}

We now provide an alternative estimate based on Poisson summation.
Put
\begin{equation}\label{eq:poisson-weight-exponents}
 a_n=\frac n2-\lceil n/3\rceil-1,\qquad b_n=\frac{\lfloor n/3\rfloor-1}{2}.
\end{equation}
For $n\ge10$ we have $0\le a_n\le b_n$.
The factors $d_1,d_2$ retain the meaning of \eqref{Defi:d1}.

\begin{lemma}\label{lem:onion}
Let $r=c^2d$, where $d\mid c$ is square-free, and let
$\mathcal V\subset\ZZ^n\cap(\bfv_0+[-V,V]^n)$, with $V\ge1$.
Uniformly in $\bfv_0\in\RR^n$, we have
\[
 \sum_{\bfv\in\mathcal V}|S_r(\bfv)|
 \le L_1(c,d,\mathcal V)
 \ll_{F,\ve}
 r^{n/2+1+\ve}d_1^{a_n}d_2^{b_n}(V+r^{1/3})^n.
\]
\end{lemma}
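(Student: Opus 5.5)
The first inequality is \eqref{eq:from-lemstar}, so it remains to bound $L_1(c,d,\mathcal V)$. Unlike Lemma~\ref{lem:cube-sum1}, which pays $c^{n+\ve}$ for the $\bfv$-constraint through $E_c(\mathcal V)$, the idea is to exploit cancellation in the $\bfv$-sum via Poisson summation. I would first enlarge $\mathcal V$ to the whole box $\ZZ^n\cap(\bfv_0+[-V,V]^n)$; all terms are nonnegative, so this is harmless. I would then smooth the box with a weight $W((\bfv-\bfv_0)/V')$ for $V' := V+r^{1/3}$, so the weight is $\gg 1$ on the original box.

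The next step is to swap the order of summation. For each $a\in(\ZZ/c\ZZ)^\ast$ and each $\bfa\bmod c$ with $c\mid F(\bfa)$, the vector $\bfv$ runs over a single residue class $-a\nabla F(\bfa)\bmod c$. Poisson summation in that class gives
\[
\sum_{\bfv\equiv \bfk \bmod c} W((\bfv-\bfv_0)/V')
= (V'/c)^n\sum_{\bfh\in\ZZ^n}\widehat W(V'\bfh/c)\,e_c(\ldots).
\]
Summing $e_c(-\bfh\cdot a\nabla F(\bfa))$ over $a$ and $\bfa$ then produces complete sums essentially of the shape $\sum_a\sum_{\bfa} e_c(b F(\bfa) - a\bfh\cdot\nabla F(\bfa))$. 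Here I would detect $c\mid F(\bfa)$ by additive characters. The weight $N_d(\bfa)^{1/2}$ depends only on $\bfa\bmod d$, so I would split $\bfa=\bfu+d\bfz$ and treat the $\bfz$-sum modulo $c/d$ as a cubic-type exponential sum.

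The $\bfh=\bm0$ term gives $(V'/c)^n c^{n+2}d^{n/2+1}\sum_{\bfa}N_d(\bfa)^{1/2}$. Via Lemmas~\ref{lem:c/d-bound} and~\ref{lem:frog} this is
\[
\ll V'^n c^{n+1+\ve}d^{2n-\lceil n/3\rceil}.
\]
Since $r^{n/2+1}=c^{n+2}d^{n/2+1}$, this matches the target shape. More precisely, the finer input of Lemma~\ref{LEM:weighted-rank-locus}, where the weight $\gcd(d_2,\nabla F)\gcd(d_2,\bfu)$ costs extra only at $d_2$, should produce exactly $d_1^{a_n}d_2^{b_n}$. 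The exponent $b_n=(\lfloor n/3\rfloor -1)/2$ reflects the worse rank strata when the $\gcd$ weights are present. I would redo the rank-stratum optimisation of Lemma~\ref{lem:frog} separately for $d_1$, without weights, giving $a_n$, and for $d_2$, with weights, giving $b_n$.

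The main obstacle is the nonzero frequencies $\bfh\ne\bm0$. These are truncated to $|\bfh|\ll c^{1+\ve}/V'\le c^{1+\ve}r^{-1/3}$. I would bound the inner complete sums pointwise, using Hensel lifting in the style of Lemma~\ref{lem:q-flexible-lifting} together with the square-and-difference bound of Lemma~\ref{LEM:square-and-diff}, aiming to save a factor $c^{n/2}$ over the trivial count per frequency. The choice $V'\ge r^{1/3}\approx c$ is what should make the number of frequencies times this pointwise bound no larger than the zero-frequency term. If the pointwise savings prove insufficient, I would fall back on the observation that the bound is claimed only up to $(V+r^{1/3})^n$, which is at least $c^{n}d^{n/3}$. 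In that regime, using Lemma~\ref{lem:cube-sum1} with $E_c\ll(1+V/c)^n$ should almost suffice, and the Poisson argument would be needed only to recover the loss in $d$.
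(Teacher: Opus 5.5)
\textbf{There is a genuine gap: the nonzero frequencies.} Your setup agrees with the paper: the first inequality from \eqref{eq:from-lemstar}, enlarging the box to radius $V+r^{1/3}$, smoothing, and Poisson summation in the classes $\bfv\equiv-a\nabla F(\bfa)\bmod c$. But the nonzero frequencies are the whole content of the lemma, and you do not handle them. The pointwise route you suggest cannot work.

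The weight $N_d(\bfa)^{1/2}$ has period $d$, so the oscillation lives only modulo $c/d$. The natural bound for the complete $\bfa$-sum attached to a frequency $\bfh$ is therefore of the shape $(cd)^{n/2+\ve}d_1^{a_n}d_2^{b_n}N_{c/d}(\bfh)^{1/2}$, not a saving of $c^{n/2}$. Moreover $N_{c/d}(\bfh)\ge (c/d)^{n-\rank H(\bfh)}$ is not uniformly bounded. So frequencies with $H(\bfh)$ of low rank, or with $\det H(\bfh)$ sharing factors with $c/d$, do not give an adequate saving individually.

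The missing idea is to average over $\bfh$ using goodness. Lemma~\ref{Bernert-Smith-bound} and Definition~\ref{defn : good} give $\sum_{|\bfh|\le R}N_m(\bfh)^{1/2}\ll R^{\ve}m^{n/2}(1+R^3/m)^{n/2}$. With $R\approx c/V$ and $m=c/d$ one has $R^3/m=r/V^3$. This is why $V\ge r^{1/3}$ is the threshold, not ``$r^{1/3}\approx c$'', which is false in general since $r^{1/3}=c^{2/3}d^{1/3}$.

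To reach the shape $N_{c/d}(\bfh)^{1/2}$, the paper does not detect $c\mid F(\bfa)$ with additive characters. That would make your $\bfz$-sum genuinely cubic modulo $c/d$, and you offer no bound for it. Instead the paper proceeds as follows.
\begin{enumerate}
\item It weakens the condition to $d_1\mid F(\bfa)$, so the phase $a\bfh\cdot\nabla F(\bfa)$ is quadratic in $\bfa$.
\item It factors the $\bfa$-sum over $c=d_1c_2$ by the Chinese remainder theorem.
\item It bounds the $d_1$-part trivially by Lemma~\ref{lem:frog}.
\item It bounds the square of the $c_2$-part by $\bigl(\sum_{\bfb\bmod d_2}N_{d_2}(\bfb)\bigr)c_2^nN_{c/d}(\bfh)$, via Cauchy--Schwarz over $\bu\bmod d_2$ followed by quadratic differencing.
\end{enumerate}

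\textbf{Further corrections.} Your account of $d_2^{b_n}$ is off. Lemma~\ref{lem:frog} already contains the weights $\gcd(p,\nabla F(\bu))\gcd(p,\bu)$ and yields the exponent $a_n$ at every prime, so your $\bfh=\bm0$ term needs no $b_n$ at all. The exponent $b_n$ comes from the Cauchy--Schwarz step, through $\sum_{\bu\in\FF_p^n}N_p(\bu)\ll p^{n+\lfloor n/3\rfloor-1}$. That bound rests on the incidence bound \eqref{EQ:clean-slop-incidence-bound}.

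Your fallback also fails. First, $(V+r^{1/3})^n\ge r^{n/3}=c^{2n/3}d^{n/3}$, not $c^nd^{n/3}$. Second, when $c>V$, Lemma~\ref{lem:cube-sum1} with $E_c(\mathcal V)\ll 1$ exceeds the target by a factor up to $(c/V)^n\,d/c$. For $d=1$ and $V=r^{1/3}$ this is $c^{n/3-1}$. That is a loss in $c/d$, not in $d$, and it is exactly what Poisson summation plus goodness recovers. In the paper, Lemma~\ref{lem:cube-sum1} is used only when $c\le V$, where $E_c(\mathcal V)\ll (V/c)^n$ gives $r^{n/2+1}(d/c)d^{a_n}V^n$.

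Finally, your zero-frequency bound should read $V'^n c^{n+1+\ve}d^{n+1-\lceil n/3\rceil}$, since Lemma~\ref{lem:c/d-bound} contributes $(c/d)^{n-1+\ve}$ rather than $c^{n-1+\ve}$. With your exponent $d^{2n-\lceil n/3\rceil}$ it would not match the target.
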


\begin{proof}
The first inequality is \eqref{eq:from-lemstar}.
On replacing the box radius by $V+r^{1/3}$,
the second inequality reduces to the claim
\[
 L_1(c,d,\mathcal V)
 \ll_{F,\ve}r^{n/2+1+\ve}d_1^{a_n}d_2^{b_n}V^n
 \qquad(V\ge r^{1/3}).
\]
We henceforth assume $V\ge r^{1/3}$.
If $c\le V$, then $E_c(\mathcal V)\ll(V/c)^n$, and
Lemma~\ref{lem:cube-sum1} gives
\begin{align*}
L_1(c,d,\mathcal V)
&\ll_{F,\ve}c^{n+1+\ve}d^{n-\lceil n/3\rceil+1}V^n\\
&= r^{n/2+1}c^\ve\frac dc\,d^{a_n}V^n
\le r^{n/2+1+\ve}d_1^{a_n}d_2^{b_n}V^n,
\end{align*}
since $d/c\le 1$, $d=d_1d_2$, and $a_n\le b_n$.
We may therefore assume $c>V$.

Two auxiliary results will be useful.
First, for integers $m\ge1$ and real $R\ge1$,
Lemma~\ref{Bernert-Smith-bound} and goodness
(Definition~\ref{defn : good}) give
\begin{equation}\label{eq:good-kernel-average}
\sum_{|\bfh|\le R}N_m(\bfh)^{1/2}
\ll_{F,\ve}m^{n/2}R^\ve \sum_{j=0}^n(R^3/m)^{j/2}
\ll_n R^\ve m^{n/2}(1+R^3/m)^{n/2}.
\end{equation}
Second, 
on recalling the definition \eqref{eq:N_d}
of $N_p(\bu)$, it follows from combining 
the incidence bound \eqref{EQ:clean-slop-incidence-bound}
 with the Lang--Weil estimate, that
\begin{equation}\label{eq:laval_dim}
 \sum_{\bu\in\FF_p^n}N_p(\bu)
 \ll_F p^{n+\lfloor n/3\rfloor-1}.
\end{equation}

In the nonnegative sum \eqref{Eq: Defi.Lc,d,V}
defining $L_1(c,d,\mathcal V)$, we may weaken $c\mid F(\bfa)$
to $d_1\mid F(\bfa)$, and majorise the box $\bfv_0+[-V,V]^n$ by the
smooth weight function $e^n\exp(-\|\bfv-\bfv_0\|_2^2/V^2)$.
For $a\in(\ZZ/c\ZZ)^\ast$, put
\[
 \sigma(\bfh;a)=
 \sum_{\substack{\bfa\bmod c\\d_1\mid F(\bfa)}}
 N_d(\bfa)^{1/2}e_c(a\bfh\cdot\nabla F(\bfa)).
\]
Poisson summation in the class
$\bfv\equiv-a\nabla F(\bfa)\bmod c$ gives
\begin{equation}\label{eq:onion-poisson}
\begin{aligned}
 L_1(c,d,\mathcal V)
 &\ll_n c^{n+1}d^{n/2+1}(V/c)^n
\sum_{a \in (\ZZ/c\ZZ)^*}\sum_{\bfh\in\ZZ^n}
 |\sigma(\bfh;a)|
 \exp(-\pi^2\|\bfh\|_2^2V^2/c^2).
\end{aligned}
\end{equation}
Here the Fourier factor $e_c(\bfh\cdot\bfv_0)$ has absolute
value one, so all estimates remain uniform in $\bfv_0$.

Recall that $c=d_1c_2$, $\gcd(d_1,c_2)=1$, and $d_2^2\mid c_2$.
Write $\bar{c_2}$ for the inverse of $c_2$ modulo $d_1$,
and $\bar{d_1}$ for the inverse of $d_1$ modulo $c_2$.
The Chinese remainder theorem gives
\begin{align*}
 \sigma(\bfh;a)
 =\left(
 \sum_{\substack{\bu\bmod d_1\\d_1\mid F(\bu)}}
 N_{d_1}(\bu)^{1/2}
 e_{d_1}(a\bar{c_2}\bfh\cdot\nabla F(\bu))
 \right)
\left(
 \sum_{\bu\bmod c_2}N_{d_2}(\bu)^{1/2}
 e_{c_2}(a\bar{d_1}\bfh\cdot\nabla F(\bu))
 \right).
\end{align*}
By Lemma~\ref{lem:frog}, with its gcd factors omitted, the first
factor is
\[
 \ll_{F,\ve}d_1^{3n/2-\lceil n/3\rceil-1+\ve}
 =d_1^{n+a_n+\ve}.
\]
For the second factor, as in \cite[Eq.~(5.14)]{41} Cauchy's inequality over $\bu\bmod d_2$,
followed by quadratic differencing, gives
\begin{align*}
\left|
 \sum_{\bu\bmod c_2}N_{d_2}(\bu)^{1/2}
 e_{c_2}(a\bar{d_1}\bfh\cdot\nabla F(\bu))
 \right|^2\le
 \left(\sum_{\bfb\bmod d_2}N_{d_2}(\bfb)\right)
 c_2^nN_{c_2/d_2}(\bfh).
\end{align*}
Since $d_2$ is square-free, combining the Chinese remainder theorem with \eqref{eq:laval_dim} gives
\[
 \sum_{\bfb\bmod d_2}N_{d_2}(\bfb)
 \ll_{F,\ve}d_2^{n+\lfloor n/3\rfloor-1+\ve}.
\]
Combining the two factors, and using $c/d=(d_1c_2)/(d_1d_2)=c_2/d_2$, yields
\begin{equation}\label{eq:onion-sigma-refined}
\begin{aligned}
 |\sigma(\bfh;a)|
 \ll_{F,\ve}
 d_1^{n+a_n+\ve}c_2^{n/2}
 d_2^{n/2+b_n+\ve}N_{c/d}(\bfh)^{1/2}
 =
 (cd)^{n/2}d^\ve d_1^{a_n}d_2^{b_n}
 N_{c/d}(\bfh)^{1/2}.
\end{aligned}
\end{equation}
In \eqref{eq:onion-poisson}, the contribution from
$|\bfh|\gg (c/V)\log(2c)$ is negligible by the trivial
bound for $\sigma(\bfh;a)$.
In particular, substituting \eqref{eq:onion-sigma-refined} into \eqref{eq:onion-poisson} and applying \eqref{eq:good-kernel-average} with $m=c/d$ therefore gives
\begin{align*}
 L_1(c,d,\mathcal V)
 &\ll_{F,\ve}
 c^{n/2+2+\ve}d^{n+1}d_1^{a_n}d_2^{b_n}V^n
 \sum_{|\bfh|\ll(c/V)\log(2c)}N_{c/d}(\bfh)^{1/2}\\
 &\ll_{F,\ve}
 r^{n/2+1+\ve}d_1^{a_n}d_2^{b_n}
 V^n(1+r/V^3)^{n/2}\\
 &\ll_{F,\ve}
 r^{n/2+1+\ve}d_1^{a_n}d_2^{b_n}V^n.
\end{align*}
In the last step we used $V\ge r^{1/3}$.
This proves the result.
\end{proof}

\subsection{Further averaging over cubefull  moduli} 

Let 
$\mathcal V\subset\ZZ^n\cap(\bfv_0+[-V,V]^n)$ be any subset, 
with  $R,V\ge1$.
We proceed to prove the following result. 

\begin{lemma}\label{Le: Cubefull.finalbound}
For every $\ve>0$, uniformly in $\bfv_0\in\RR^n$, we have
\begin{equation}\label{eq:cubefull-average-sharp}
 \sum_{\substack{r\asymp R\\r\ \mathrm{cubefull}}}
 \sum_{\bfv\in\mathcal V}|S_r(\bfv)|
 \ll_{F,\ve}
 R^{n/2+1+\theta_n+\ve}(V+R^{1/3})^n,
\end{equation}
where 
\begin{equation}\label{eq:powerfull-averaging-exponents}
 \theta_n=\max\left\{\frac25,\frac{n-2\lceil n/3\rceil}{6}\right\}.
\end{equation}
In particular, for $n=10$ the power of $R$ in this bound is
$32/5+\ve$.
\end{lemma}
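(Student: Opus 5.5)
The plan is to sum the pointwise-in-$r$ average bound of Lemma~\ref{lem:onion} over cubefull $r\asymp R$, organising the sum by the pair $(d_1,d_2)$ from \eqref{Defi:d1}. For each cubefull $r\asymp R$, Lemma~\ref{lem:onion} gives
\[
\sum_{\bfv\in\mathcal V}|S_r(\bfv)|\ll_{F,\ve} R^{n/2+1+\ve}d_1^{a_n}d_2^{b_n}(V+R^{1/3})^n,
\]
uniformly in $\bfv_0$, after replacing $r^{1/3}$ by $\asymp R^{1/3}$. It then suffices to prove
\[
\Sigma(R):=\sum_{\substack{r\asymp R\\ r\ \mathrm{cubefull}}} d_1(r)^{a_n}d_2(r)^{b_n}\ll_\ve R^{\theta_n+\ve},
\]
where $d_1(r),d_2(r)$ are determined by the canonical factorisation $r=c^2d$ with $d\mid c$ squarefree.

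\textbf{Step 1: $p$-adic structure of $r$.} From $r=c^2d$ we have $v_p(r)=2v_p(c)+\bm 1_{p\mid d}$. Cubefullness forces $v_p(c)\ge 2$ when $p\nmid d$. If $p\mid d_1$ then $v_p(r)=3$, and if $p\mid d_2$ then $v_p(r)=2v_p(c)+1\ge 5$. Hence $r=d_1^3d_2^5s$, where $s$ is cubefull up to a bounded power structure at primes dividing $d_2$. More precisely, $s$ is the product of a cubefull integer coprime to $d_1d_2$ and a factor supported on primes of $d_2$ with even exponents. In either case, for fixed $(d_1,d_2)$ the number of admissible $r\asymp R$ is
\[
\ll_\ve \bigl(R/(d_1^3d_2^5)\bigr)^{1/3}R^{\ve},
\]
using the standard count $\#\{s\le Y\text{ cubefull}\}\ll Y^{1/3}$. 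The extra $d_2$-supported squares contribute only a divisor-type factor $R^\ve$.

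\textbf{Step 2: the sum over $(d_1,d_2)$.} This gives
\[
\Sigma(R)\ll R^{1/3+\ve}\sum_{d_1^3d_2^5\ll R} d_1^{a_n-1}d_2^{b_n-5/3}.
\]
The double sum is a product of two one-variable sums, so it is dominated by the extreme corners. The $d_1$-sum contributes $\max\{R^{a_n/3},\log R\}$, using $a_n\ge0$. The $d_2$-sum contributes $\max\{R^{(b_n-2/3)/5},1\}$. When both are large, the constraint $d_1^3d_2^5\ll R$ means the joint contribution is bounded by the larger of the two, up to $R^\ve$. So
\[
\Sigma(R)\ll R^{\ve}\max\Bigl\{R^{1/3},\,R^{(a_n+1)/3},\,R^{1/3+(b_n-2/3)/5}\Bigr\}.
\]
Since $(a_n+1)/3=(n-2\lceil n/3\rceil)/6$, the middle term is exactly the second entry of $\theta_n$. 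For $n=10$ we have $a_{10}=0$ and $b_{10}=1$, so the last term is $R^{1/3+1/15}=R^{2/5}$. This yields $\theta_{10}=2/5$ and the power $5+1+2/5=32/5$.

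\textbf{Main obstacle: the final exponent comparison for general $n$.} The step I expect to need the most care is checking, for every $n\ge10$, that
\[
\tfrac13+\tfrac{b_n-2/3}{5}\le\theta_n.
\]
Here $b_n\approx n/6$, so the left side grows like $n/30$, while $(n-2\lceil n/3\rceil)/6$ grows like $n/18$. I would verify the small cases $n=10,11,12$ by hand and then use the linear growth to conclude. I would also double-check that the extra powers of primes of $d_2$ in $s$ do not spoil the $Y^{1/3}$ count. The fallback is to absorb them by writing $r=d_1^3d_2^5s$ with $s\mid$ (cubefull)$\cdot d_2^{\infty}$ and bounding the number of such $s$ by $R^\ve(R/d_1^3d_2^5)^{1/3}$ via a Dirichlet-series (Rankin) argument.
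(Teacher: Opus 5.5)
Your proposal is correct. Its first step is the paper's: both arguments use Lemma~\ref{lem:onion} to reduce to showing $\sum_{r\asymp R,\ r\ \mathrm{cubefull}} d_1^{a_n}d_2^{b_n}\ll R^{\theta_n+\ve}$. The two routes differ in how this weighted count is bounded.

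The paper observes that $w_n(r)=d_1^{a_n}d_2^{b_n}$ is multiplicative on cubefull $r$. At $p^\nu$ it equals $1$, $p^{a_n}$ or $p^{b_n}$ according as $\nu\ge4$ is even, $\nu=3$, or $\nu\ge5$ is odd. The paper then reads off the abscissa of convergence $\max\{1/4,(a_n+1)/3,(b_n+1)/5\}$ of the Euler product and applies Rankin's trick, which handles all the local bookkeeping at once.

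You instead parametrise $r=d_1^3d_2^5s$, count $s$, and sum over the simplex $d_1^3d_2^5\ll R$. This is more hands-on, and it shows which moduli are extremal: $r$ near $d_1^3$ or near $d_2^5$. The exponents agree. Indeed $\frac13+\frac{b_n-2/3}{5}=\frac{b_n+1}{5}=\frac{\lfloor n/3\rfloor+1}{10}$ exactly, so your third term is the paper's third threshold. Your $1/3$ in place of the paper's $1/4$ is harmless, because $(a_n+1)/3\ge 1/3$. In fact every exponent of $s$ is even, so counting $s$ as a square gives $Y^{1/4+\ve}$ and recovers the $1/4$.

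Two points of execution.
\begin{enumerate}
\item The part of $s$ supported on primes dividing $d_2$ should be counted by Rankin's trick:
\[
\#\{s\le Y:\rad(s)\mid d_2\}\le Y^{\ve}\prod_{p\mid d_2}(1-p^{-\ve})^{-1}\ll_\ve (Yd_2)^{\ve}.
\]
The naive bound $(1+\log_2 Y)^{\omega(d_2)}$ can cost a power of $R$ when $\omega(d_2)\asymp \log R/\log\log R$. So the fallback you mention is the one to use.
\item For the final exponent comparison, the crude bounds on $\lfloor n/3\rfloor$ and $\lceil n/3\rceil$ only settle $n\ge 15$, so checking $n=10,11,12$ by hand is not enough. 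Writing $n=3m+j$ reduces the check to $(m+1)/10\le m/6$ for $j\in\{0,2\}$, and to $(m+1)/10\le (m-1)/6$ for $j=1$. Both hold for all $n\ge 11$, and $n=10$ is covered by the $2/5$ in $\theta_n$. This is exactly the check the paper performs.
\end{enumerate}
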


\begin{proof}
For a squarefull integer $r=c^2d$, put
$w_n(r)=d_1^{a_n}d_2^{b_n}$, with $d_1,d_2$ as in \eqref{Defi:d1}, and where $a_n,b_n$ are given by \eqref{eq:poisson-weight-exponents}.
Then $w_n$ is multiplicative on squarefull integers, and
\[
 w_n(p^\nu)=
 \begin{cases}
  1&\nu\ge2\text{ even},\\
  p^{a_n}&\nu=3,\\
  p^{b_n}&\nu\ge5\text{ odd}.
 \end{cases}
\]
For real $s$ sufficiently large, it follows that
\begin{align*}
 \sum_{r\ \mathrm{cubefull}}\frac{w_n(r)}{r^s}
 &=\prod_p\left(
  1+p^{a_n-3s}
   +\frac{p^{-4s}+p^{b_n-5s}}{1-p^{-2s}}
 \right).
\end{align*}
This product converges for
\[
 s>\max\left\{\frac14,\frac{a_n+1}{3},\frac{b_n+1}{5}\right\}.
\]
Now
\[
 \frac{a_n+1}{3}=\frac{n-2\lceil n/3\rceil}{6},\qquad
 \frac{b_n+1}{5}=\frac{\lfloor n/3\rfloor+1}{10}.
\]
For $n\ge 11$, it is then easy to check that
$(a_n+1)/3 \ge \max\{1/2, (b_n+1)/5\}$.
For $n=10$, we have $(a_n+1)/3 = 1/3$ and $(b_n+1)/5 = 2/5$.
Thus for any $n\ge 10$, the convergence threshold is exactly
 $\theta_n$ from
\eqref{eq:powerfull-averaging-exponents}.
Rankin's trick gives, for every $\eta>0$,
\[
 \sum_{\substack{r\asymp R\\r\ \mathrm{cubefull}}}w_n(r)
 \ll_{n,\eta}R^{\theta_n+\eta}.
\]
Finally, 
combining these estimates with Lemma \ref{lem:onion}
completes the proof.
\end{proof}

\section{Lifting Smithwork}
\label{sec:smith-work}

As hinted at in \S\ref{sec:intro},
the Poisson summation based estimate of \S\ref{sec:squarefull-average}
is strong for some collections of moduli $q$ and vectors $\bfv$.
However, for other $q$ and $\bfv$,
it does not fit well with the stratification estimates
that we will provide in \S\ref{sec:MarkV-construction}.
To resolve this issue, it will be useful to
modify the choice of the auxiliary lifting modulus $A$
in the factorisation $q = A^2T$ of Lemma~\ref{lem:q-flexible-lifting}.
Very roughly speaking,
in \S\ref{sec:squarefull-average} we took $A\approx q^{1/2}$ for a squarefull modulus $q$,
whereas in the present section we will
eventually take $A\approx q^{1/3}$ for a cubefull modulus $q$.
The reduction in the size of $A$
will slightly weaken our estimates on average over $\bfv$,
while also making the estimates
more applicable to relatively thin sets of vectors $\bfv$.
The restriction to cubefull moduli will also force us to  treat
arbitrary cube-free moduli separately later, in \S\ref{sec:MarkV-construction}.
The precise manner in which
we combine cube-free and cubefull estimates
is somewhat complicated,
and is done in \S\ref{sec:n10-mixed-moduli}.

The main result of this section is Lemma~\ref{lem:q-smith-cubefull},
concerning cubefull averages for $n=10$.
In fact, Lemma~\ref{lem:q-smith-cubefull}
is restricted to cubefull moduli
whose prime factors are all sufficiently large,
with the complementary case being deferred to \S\ref{sec:fixed-primes}.
However, we begin more generally,
assuming only that $n\ge 10$ and $X(\QQ)=\emptyset$,
and that $q$ is an arbitrary positive integer.
Our results are based on the fact that
the bound for $S_q(\bfv)$ given in
\eqref{eq:q-flexible-pointwise} depends only on $\bfv\bmod{A}$.
Thus \eqref{eq:q-flexible-pointwise} controls $S_q(\bfv)$
uniformly along a given residue class $\bfv\bmod{A}$.

\begin{lemma}
\label{lem:q-flexible-average}
Let $q = A^2T$, where $A,T\ge 1$.
Then
\begin{equation}
\sum_{\bfa\bmod{A}}
\max_{\substack{\bfv\bmod{q} \\ \bfv\equiv \bfa\bmod{A}}} |S_q(\bfv)|
\leq
A^{n+2}T
\sum_{\substack{\bfy\bmod A\\F(\bfy)\equiv0\bmod A}}
\mathcal G_{A,T}(\bfy),
\label{eq:q-flexible-average}
\end{equation}
where $\mathcal G_{A,T}(\bfy)$ is defined in \eqref{eq:GAT-max-sum}.
\end{lemma}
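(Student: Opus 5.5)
The plan is to apply Lemma~\ref{lem:q-flexible-lifting} pointwise and then sum over residue classes $\bfa\bmod A$, exchanging the order of summation. Fix $\bfa\bmod A$ and any $\bfv\bmod q$ with $\bfv\equiv\bfa\bmod A$. Lemma~\ref{lem:q-flexible-lifting} gives
\[
|S_q(\bfv)|\le A^{n+1}T\sum_{a\in(\ZZ/A\ZZ)^\ast}\sum_{\substack{\bfy\bmod A\\F(\bfy)\equiv0\bmod A\\ a\nabla F(\bfy)+\bfa\equiv\bm0\bmod A}}\mathcal G_{A,T}(\bfy).
\]
The right-hand side depends on $\bfv$ only through $\bfa=\bfv\bmod A$. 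By Lemma~\ref{LEM:max-G-A-invariant}, each summand $\mathcal G_{A,T}(\bfy)$ is well defined on $\bfy\bmod A$. Hence the same bound holds for the maximum over $\bfv\equiv\bfa\bmod A$.

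Next I will sum over $\bfa\bmod A$ and interchange the sums. For fixed $a\in(\ZZ/A\ZZ)^\ast$ and fixed $\bfy\bmod A$ with $F(\bfy)\equiv0\bmod A$, the congruence $a\nabla F(\bfy)+\bfa\equiv\bm0\bmod A$ determines $\bfa\bmod A$ uniquely, namely $\bfa\equiv-a\nabla F(\bfy)$. Summing over $\bfa$ therefore removes the congruence condition. What remains is a sum over $a$ of a quantity independent of $a$, which contributes a factor $\varphi(A)\le A$. This yields
\[
\sum_{\bfa\bmod A}\max_{\bfv\equiv\bfa}|S_q(\bfv)|\le A^{n+1}T\cdot A\sum_{\substack{\bfy\bmod A\\F(\bfy)\equiv0\bmod A}}\mathcal G_{A,T}(\bfy),
\]
which is exactly \eqref{eq:q-flexible-average}.

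There is no real obstacle here. The only point needing care is that the bound in \eqref{eq:q-flexible-pointwise} depends only on $\bfv\bmod A$: the condition $a\nabla F(\bfy)+\bfv\equiv\bm0\bmod A$ visibly depends only on $\bfv\bmod A$, and $\mathcal G_{A,T}$ carries no $\bfv$-dependence at all.
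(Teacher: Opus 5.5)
Your proof is correct and follows essentially the same route as the paper. You plug in the pointwise bound of Lemma~\ref{lem:q-flexible-lifting}, which depends on $\bfv$ only through $\bfv\bmod A$, then interchange the $\bfa$ and $\bfy$ sums using that $\bfa\equiv -a\nabla F(\bfy)\bmod A$ is uniquely determined, and finally bound the remaining $a$-sum by $\varphi(A)\le A$.
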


\begin{proof}
Plug \eqref{eq:q-flexible-pointwise} into
the left-hand side of \eqref{eq:q-flexible-average}.
Switching the order of summation of
the variables $\bfa,\bfy\bmod{A}$ gives \eqref{eq:q-flexible-average},
since $\sum_{a\in (\ZZ/A\ZZ)^\ast} A^{n+1} T \le A^{n+2} T$.
\end{proof}

Henceforth suppose that
\begin{equation}
\label{EQ:radTA}
\rad(T)\mid \rad(A).
\end{equation}
For \(a\geq1\),
\(t\geq0\),
and $0\le i<a$, let
\begin{equation}
\lambda_i(a,t) :=
\begin{cases}
\frac12\bm{1}_{i<t}&t\leq a,\\[1mm]
\frac12&t>a \text{ and } i<2a-t,\\
1&t>a \text{ and } i\geq 2a-t.
\end{cases}
\label{eq:q-penalty}
\end{equation}

\begin{lemma}
\label{lem:q-terminal}
Suppose $v_p(A) = a \ge 1$ and
$T = p^t$, where $t\ge 0$.
Let the Smith normal form of \(H(\bfy)\bmod p^a\) be
\[
\diag(p^{j_1},\ldots,p^{j_n}),\qquad
0\leq j_\nu\leq a,
\]
and put
\[
c_i:=\#\{\nu:j_\nu\leq i\}\qquad(0\leq i<a).
\]
If either $t\le a$ or $p\ne 2$, then
\begin{equation}
\mathcal G_{A,T}(\bfy)
\leq
p^{nt-\sum_{i=0}^{a-1}\lambda_i(a,t)c_i}.
\label{eq:q-terminal-profile}
\end{equation}
\end{lemma}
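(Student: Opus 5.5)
The proof splits according to whether the cubic term $\alpha AF(\bfz)$ in \eqref{eq:GAT-max-sum} is visible modulo $T=p^t$.

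\textbf{Case $t\le a$.} Since $p^a\mid A$, we have $\alpha AF(\bfz)\equiv 0\bmod p^t$. The inner sum in \eqref{eq:GAT-max-sum} is then a pure quadratic Gauss sum $\sum_{\bfz\bmod p^t}e_{p^t}(\bfd\cdot\bfz+\alpha Q_{\bfy}(\bfz))$. I would square and difference exactly as in Lemma~\ref{LEM:square-and-diff}, using
\[
Q_{\bfy}(\bfz+\bfw)-Q_{\bfy}(\bfz)=\bfz^tH(\bfy)\bfw+Q_{\bfy}(\bfw).
\]
Summing over $\bfz$ gives $|\mathcal S|^2\le p^{nt}\#\{\bfw\bmod p^t: H(\bfy)\bfw\equiv\bm0\bmod p^t\}$, and this holds for every $p$, including $p=2$. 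By the Smith normal form (note $t\le a$, so the invariants modulo $p^t$ are $p^{\min(j_\nu,t)}$), the count equals $p^{\sum_\nu\min(j_\nu,t)}$. Hence $\mathcal G\le p^{nt-\frac12\sum_\nu(t-j_\nu)_+}$. It remains to check the combinatorial identity
\[
\sum_{i<t}c_i=\sum_{i<t}\#\{\nu:j_\nu\le i\}=\sum_\nu(t-j_\nu)_+,
\]
which is exactly $\sum_i\lambda_i(a,t)c_i$ with $\lambda_i=\tfrac12\bm1_{i<t}$.

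\textbf{Case $t>a$, $p$ odd.} Here I would use $p$-adic stationary phase rather than plain squaring. Write $\bfz=\bfu+p^{k}\bfh$ with $k=\lceil t/2\rceil$. Since $2k\ge t$, the phase is linear in $\bfh$ modulo $p^t$, up to a cubic term $\alpha Ap^{3k}F(\bfh)$, which vanishes. Summing over $\bfh\bmod p^{t-k}$ restricts $\bfu\bmod p^k$ to the congruence
\[
\bfd+\alpha H(\bfy)\bfu+\alpha A\nabla F(\bfu)\equiv\bm0\bmod p^{t-k}.
\]
We then obtain $|\mathcal S|\le p^{n(t-k)}\cdot\#\{\text{solutions }\bfu\}$. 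For $p$ odd, $H(\bfy)$ is symmetric and can be diagonalised over $\ZZ_p$, with the same valuations $j_\nu$, which keeps the bookkeeping clean.

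The core step, and the main obstacle, is counting solutions of this nonlinear congruence digit by digit. Consider the map $\Psi(\bfu)=H(\bfy)\bfu+A\nabla F(\bfu)$. Its increments satisfy
\[
\Psi(\bfu+p^m\bfh)-\Psi(\bfu)=p^m\bigl(H(\bfy)+AH(\bfu)\bigr)\bfh+Ap^{2m}\nabla F(\bfh),
\]
and the perturbation $AH(\bfu)$ is divisible by $p^a$. So at $p$-adic scales $i<a$ the linearisation has the same Smith profile as $H(\bfy)$, with $c_i$ unit-ish directions available at level $i$. I would run a Hensel-type induction over the digits of $\bfu$, bounding the number of lifts at each step by $p^{n-c_i}$ (or its square-root analogue).

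The two regimes of $\lambda_i$ should then emerge as follows. For $i\ge 2a-t$ the nonlinear term $Ap^{2m}\nabla F$ is invisible at the relevant level, so we get genuine Hensel lifting and a full saving $c_i$. For $i<2a-t$ the nonlinear term interferes, and I would fall back on a squaring/differencing argument at those scales, which yields only half the saving, $\tfrac12c_i$. Getting the crossover exactly at $2a-t$, and making the two mechanisms compatible in one induction, is where I expect the real work. I would first try a single squaring applied only to the $\bfu$-count restricted to the low digits, combined with exact Hensel counting on the top digits.
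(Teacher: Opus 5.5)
Your case $t\le a$ is correct and is exactly the paper's argument: Lemma~\ref{LEM:square-and-diff} together with the identity $\sum_\nu\min(j_\nu,t)=nt-\sum_{i<t}c_i$.

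The case $t>a$ has a genuine gap. After stationary phase at level $k=\lceil t/2\rceil$, you bound the remaining $\bm{u}$-sum trivially by the number of $\bm{u}\bmod p^k$ satisfying the congruence modulo $p^{t-k}$.

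\begin{itemize}
\item When $t\ge 2a$ this already works, and even for $p=2$. The congruence then implies $H(\bfy)\bm{u}\equiv-\alpha^{-1}\bfd\bmod p^a$, which has at most $p^{\sum_\nu j_\nu}$ solutions modulo $p^a$. This gives exactly $p^{nt-\sum_{i<a}c_i}$, the claimed bound.
\item When $a<t<2a$, put $m=t-k=\lfloor t/2\rfloor<a$. The term $\alpha A\nabla F(\bm{u})$ vanishes modulo $p^{m}$, so the congruence is linear. When solvable, it has exactly $\prod_\nu p^{\min(j_\nu,m)}$ solutions modulo $p^m$.
\end{itemize}

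So in the range $a<t<2a$ your method yields at best $p^{nt-\sum_{i<m}c_i}$. No Hensel-type digit induction can improve a count that is already exact.

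This is strictly weaker than the lemma. Take $a=2$, $t=3$ and $H(\bfy)\equiv0\bmod p$ with $c_0=0$, $c_1=c>0$, and take $\bfd=\bm0$. Your bound is then the trivial $p^{3n}$, whereas the lemma asserts $p^{3n-c}$. The missing saving lives in the quadratic phase $\alpha Q_{\bfy}(\bm{u})$ on the unconstrained high digits of $\bm{u}$, which you discard by taking absolute values.

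Your account of the two regimes of $\lambda_i$ is also off. At the low levels the cubic term causes no ``interference'': as just noted, it is invisible modulo $p^{t-k}$. The factor $\tfrac12$ for $i<2a-t$ is simply the square-root size of a quadratic Gauss sum.

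The missing idea is to split at the level where the cubic term becomes constant on fibres, not at $\lceil t/2\rceil$.

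\begin{enumerate}
\item Since $v_p(A)=a$, the quantity $\alpha AF(\bfz)\bmod p^t$ depends only on $\bfz\bmod p^{t-a}$. Writing $\bfz=\bfv+p^{t-a}\bfx$ with $\bfx\bmod p^a$, each fibre becomes the pure Gauss sum
\[
\sum_{\bfx\bmod p^a}e_{p^a}\bigl((\bfd+\alpha H(\bfy)\bfv)\cdot\bfx+\alpha p^{t-a}Q_{\bfy}(\bfx)\bigr).
\]
\item Translate $\bfx$ by vectors $\bfh\in\ker(p^{t-a}H(\bfy)\bmod p^a)$. This needs $p$ odd, to ensure $p^{t-a}Q_{\bfy}(\bfh)\equiv0$. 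It shows the fibre sum vanishes unless $\bfd+\alpha H(\bfy)\bfv\in\im(p^{t-a}H(\bfy)\bmod p^a)$.
\item That condition leaves at most $\prod_\nu p^{\max(0,t-2a+j_\nu)}$ admissible classes $\bfv\bmod p^{t-a}$. This is the full saving at levels $i\ge 2a-t$.
\item Squaring and differencing bounds each surviving fibre sum by $(p^{na}N_{p^a}(p^{t-a}\bfy))^{1/2}$. This is the half saving at levels $i<2a-t$.
\end{enumerate}

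Multiplying the count in step 3 by the bound in step 4 gives \eqref{eq:q-terminal-profile}.
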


\begin{proof}
By \eqref{eq:GAT-max-sum} and \eqref{z-sum-in-max-G},
it suffices to prove the bound in \eqref{eq:q-terminal-profile}
for $|\mathcal S_{A,T}(\bfd,\alpha,\bfy)|$, uniformly in
$\bfd\in(\ZZ/T\ZZ)^n$ and
$\alpha\in(\ZZ/T\ZZ)^\ast$.
Recall the definition of $N_d(\bfy)$ from \eqref{eq:N_d}.
If $t\le a$, then by Lemma~\ref{LEM:square-and-diff} we have
\begin{equation*}
|\mathcal{S}_{A,T}(\bfd,\alpha,\bfy)|^2
\le T^n N_T(\bfy)
= p^{nt} p^{\sum_{\nu=1}^{n} \min(j_\nu,t)}.
\end{equation*}
Here
\begin{equation*}
\begin{split}
\sum_{\nu=1}^{n} \min(j_\nu,t)
&= \sum_{\nu=1}^{n} \#\{0\le i<t: i<j_\nu\} \\
&= \sum_{i=0}^{t-1} \#\{\nu:j_\nu>i\}
= nt - \sum_{i=0}^{t-1}c_i
= nt - 2\sum_{i=0}^{a-1}\lambda_i(a,t)c_i,
\end{split}
\end{equation*}
by the $t\le a$ case of \eqref{eq:q-penalty}.
Thus \eqref{eq:q-terminal-profile} holds for $t\le a$.

Now suppose $p\ne 2$ and $t>a$.
The cubic term in \eqref{z-sum-in-max-G}
depends only on $\bfz\bmod{p^{t-a}}$.
Writing $\bfz=\bfv+p^{t-a}\bfx$ gives
\begin{equation*}
|\mathcal{S}_{A,T}(\bfd,\alpha,\bfy)|
\le \sum_{\bfv\bmod p^{t-a}}
\left|\sum_{\bfx\bmod p^a}
e_{p^a}((\bfd+\alpha H(\bfy)\bfv)\cdot\bfx+\alpha p^{t-a}Q_{\bfy}(\bfx))\right|.
\end{equation*}
Translating $\bfx$ by $\bfh\in \ker(p^{t-a}H(\bfy)\bmod{p^a})$
and averaging over $\bfh$,
noting that $p^{t-a}Q_{\bfy}(\bfh)
\equiv \bar{2}p^{t-a}H(\bfy)\bfh\cdot \bfh
\equiv 0 \bmod{p^a}$
since $p$ is odd, shows that the $\bfx$-sum vanishes
unless $$\bfd+\alpha H(\bfy)\bfv
\in \ker(p^{t-a}H(\bfy)\bmod{p^a})^\perp
= \im(p^{t-a}H(\bfy)\bmod{p^a}),$$
where we recall that $\ker(M)^\perp = \im(M)$
holds for every symmetric matrix $M\bmod{p^a}$.
After writing $H(\bfy)$ in Smith normal form,
we find that the number of admissible vectors $\bfv\bmod{p^{t-a}}$ is either zero
or $\prod_{\nu=1}^{n} p^{t-a - \min(t-a,a-j_\nu)}
= \prod_{\nu=1}^{n} p^{\max(0,t-2a+j_\nu)}$.
Lemma~\ref{LEM:square-and-diff} bounds
the remaining sum over $\bfx\bmod{p^a}$
for each admissible $\bfv$, giving in total
\begin{equation*}
|\mathcal{S}_{A,T}(\bfd,\alpha,\bfy)|
\le p^{\sum_{\nu=1}^{n} \max(0,t-2a+j_\nu)}
(p^{na} N_{p^a}(p^{t-a}\bfy))^{1/2}.
\end{equation*}
Put $h=\max\{2a-t,0\}$, so that $0\leq h<a$. By the
definition of $c_i$, we have
\[
\sum_{\nu=1}^n\max(0,t-2a+j_\nu)
=n(t-a)-\sum_{i=h}^{a-1}c_i,
\]
and
\[
N_{p^a}(p^{t-a}\bfy)
=p^{\sum_{\nu=1}^n\min(t-a+j_\nu,a)}
=p^{na-\sum_{i=0}^{h-1}c_i}.
\]
Substituting into the preceding bound gives
\[
|\mathcal S_{A,T}(\bfd,\alpha,\bfy)|
\leq
p^{nt-\frac12\sum_{i=0}^{h-1}c_i-\sum_{i=h}^{a-1}c_i}.
\]
For $t>a$, the weights $\lambda_i(a,t)$ are $1/2$ for
$i<h$ and $1$ for $i\geq h$. Maximising over $\bfd$ and
$\alpha$ therefore gives the required bound.
\end{proof}

For \(0\leq\varrho\leq n\), let $\delta_n(\varrho)$ be as in \eqref{eq:q-delta}
and let $\gamma_n(\varrho)$ be as in \eqref{eq:q-gamma}.

\begin{lemma}
\label{lem:q-easy-hessian-lift-bound}
For $a\ge 1$ and \(0\leq \varrho\leq n\),
\[
\#\{\bfx\bmod p^a:
F(\bfx)\equiv0\bmod{p^a},\;
\rank H(\bm x\bmod p)=\varrho\}
\ll_{\varepsilon,F}
p^{(a-1)(n-1+\ve)+\delta_n(\varrho)+\gamma_n(\varrho)\bm{1}_{a\ge 2}}.
\]
\end{lemma}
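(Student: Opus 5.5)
The plan is to fibre the count over residues $\bm u=\bfx\bmod p$ and reuse Lemma~\ref{lem:c/d-bound} together with Lemma~\ref{LEM:weighted-rank-locus}. Write $N$ for the quantity on the left-hand side. Every $\bfx$ counted satisfies $F(\bm u)\equiv 0\bmod p$ and $\rank H(\bm u)=\varrho$ over $\FF_p$. So
\[
N=\sum_{\substack{\bm u\in\FF_p^n\\ F(\bm u)=0\\ \rank H(\bm u)=\varrho}}
\#\{\bfx\bmod p^a:\ \bfx\equiv\bm u\bmod p,\ F(\bfx)\equiv 0\bmod p^a\}.
\]

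\textbf{Case $a=1$.} Each inner count equals $1$. Then $N\ll_F p^{\delta_n(\varrho)}$ follows directly from \eqref{eq:q-first-rank}, which is the claim since the exponent reduces to $\delta_n(\varrho)$.

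\textbf{Case $a\ge 2$.} Apply Lemma~\ref{lem:c/d-bound} with $c=p^a$, $d=p$ and $\bm k$ a lift of $\bm u$. Since $v_p(c)=a\ge 2$, the definition \eqref{Defi:d1} gives $d_1=1$ and $d_2=p$. Hence the inner count is
\[
\ll_{\ve,F}\ p^{(a-1)(n-1+\ve)}\gcd(p,\nabla F(\bm u))\gcd(p,\bm u).
\]
Summing over $\bm u$ with the rank restriction, Lemma~\ref{LEM:weighted-rank-locus} bounds the weighted sum of the gcd factors by $\ll_F p^{\delta_n(\varrho)+\gamma_n(\varrho)}$. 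Multiplying the two bounds gives exactly the stated exponent $(a-1)(n-1+\ve)+\delta_n(\varrho)+\gamma_n(\varrho)$, and the indicator $\bm 1_{a\ge2}$ records that the $\gamma_n$ loss only occurs in this case.

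One point needs checking: the rank in the statement is the rank of $H(\bm x\bmod p)$ over $\FF_p$, and this must match the rank condition in Lemma~\ref{LEM:weighted-rank-locus}. It does, since both lemmas work with reductions over $\FF_p$.

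The main obstacle is uniformity in $p$. Lemma~\ref{lem:c/d-bound} handles the finitely many excluded primes by a bound depending on $p$ (using $N(p^\ell,1)\ll_F p^{(n-1)\ell}$). I will check that its implied constant depends only on $F$ and $\ve$. This should hold, since the excluded set depends only on $F$.

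Similarly, \eqref{eq:q-first-rank} is valid for all primes, with small primes absorbed into the implied constant. So the final bound is uniform in $p$ with constants depending only on $\ve$ and $F$.
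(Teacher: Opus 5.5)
Your proposal is correct and follows the paper's proof: the case $a=1$ is exactly \eqref{eq:q-first-rank}, and for $a\ge 2$ you fibre over $\bm u=\bfx\bmod p$, apply Lemma~\ref{lem:c/d-bound} with $c=p^a$, $d=p$ (so $d_1=1$, $d_2=p$), and sum the resulting gcd weights using Lemma~\ref{LEM:weighted-rank-locus}. The uniformity you flag is not an issue. The exceptional primes in Lemma~\ref{lem:c/d-bound} form a finite set determined by $F$, so its implied constant depends only on $F$ and $\ve$.
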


\begin{proof}
If $a=1$, this is trivial by \eqref{eq:q-first-rank}.
If $a\ge 2$, we apply Lemma~\ref{lem:c/d-bound} with \(c=p^a\) and \(d=p\).
Lemma~\ref{LEM:weighted-rank-locus} then completes the proof.
\end{proof}

For $j\ge 0$, let
\begin{equation}
\tau_n(j) :=
\begin{cases}
0&j=0,\\
\min\{n-1,\lfloor n/3\rfloor+j-1\}&1\leq j\leq n-1,\\
n&j\geq n.
\end{cases}
\label{eq:q-tau}
\end{equation}

\begin{lemma}
\label{q-translated-rank}
For every prime \(p\)
and matrix \(\mathcal T\in\Mat_{n\times n}(\FF_p)\),
\begin{equation*}
\#\{\bfy\in\FF_p^n:
\rank(\mathcal T+H(\bfy))\leq j\}
\ll_F p^{\tau_n(j)}.
\end{equation*}
\end{lemma}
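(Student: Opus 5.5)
The plan is to bound the dimension of the locus
\[
D_{\mathcal T,j}:=\{\bfy\in\A^n_{\bar\FF_p}:\rank(\mathcal T+H(\bfy))\le j\}
\]
uniformly in $\mathcal T$. Then we convert this to a point count with a degree-uniform Schwartz--Zippel/Lang--Weil bound. The defining minors have degree $\le n$ independently of $\mathcal T$, so $\#D_{\mathcal T,j}(\FF_p)\ll_n p^{\dim D_{\mathcal T,j}}$ with an implied constant independent of $\mathcal T$ and $p$. For the finitely many excluded primes $p$ (depending on $F$), the trivial bound $p^n\ll_F 1$ suffices. The case $j\ge n$ is trivial, so we may assume $p$ large and $0\le j\le n-1$.

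First, spread out Corollary~\ref{cor:hesrank} to $\bar\FF_p$ for all $p$ outside a finite set. Over $\bar\FF_p$ this gives $V_{0,p}=\{\bm0\}$, and $\dim V_{\varrho,p}\le\lfloor n/3\rfloor+\varrho-1$ for $1\le\varrho\le n-1$, where $V_{\varrho,p}$ is the rank-$\le\varrho$ Hessian locus mod $p$. We also use that $\det H$ remains a nonzero polynomial mod $p$, which follows from goodness. Since $\bfy\mapsto H(\bfy)$ is linear and $V_{0,p}=\{\bm0\}$, this map is injective. This settles $j=0$: the condition $H(\bfy)=-\mathcal T$ has at most one solution, matching $\tau_n(0)=0$.

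For $1\le j\le n-1$, introduce the translated incidence variety
\[
I_{\mathcal T}:=\{(\bfy,\bfz):(\mathcal T+H(\bfy))\bfz=\bm0\}\subset\A^n\times\A^n,
\]
and project it to $\bfz$. The key observation is that, by symmetry of the bilinear form, $H(\bfy)\bfz=H(\bfz)\bfy$. Hence for fixed $\bfz$ the fibre $\{\bfy: H(\bfz)\bfy=-\mathcal T\bfz\}$ is either empty or an affine space of dimension $n-\rank H(\bfz)$. Stratify $\bfz$ by $\varrho=\rank H(\bfz)$:
\begin{itemize}
\item $\varrho=0$ forces $\bfz=\bm0$ and contributes dimension $n$;
\item $\varrho=n$ contributes at most $n$;
\item $1\le\varrho\le n-1$ contributes at most $(\lfloor n/3\rfloor+\varrho-1)+(n-\varrho)=n+\lfloor n/3\rfloor-1$.
\end{itemize}
Thus $\dim I_{\mathcal T}\le n+\lfloor n/3\rfloor-1$, uniformly in $\mathcal T$. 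Every $\bfy\in D_{\mathcal T,j}$ has a fibre of dimension $\ge n-j$ in the $\bfz$-direction, so $\dim D_{\mathcal T,j}\le\lfloor n/3\rfloor+j-1$.

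It remains to obtain the cap $n-1$ for $j\le n-1$. Here $D_{\mathcal T,j}\subset\{\det(\mathcal T+H(\bfy))=0\}$, and this polynomial has degree-$n$ homogeneous part $\det H(\bfy)\not\equiv0\bmod p$, so it is not identically zero. Hence $\dim D_{\mathcal T,j}\le n-1$, and combining with the previous bound gives exactly $\tau_n(j)$.

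I expect the main obstacle to be the uniformity. The dimension bounds must hold over $\bar\FF_p$ for all large $p$, which requires carefully spreading out the rank loci $V_\varrho$; this is done as in the proof of Lemma~\ref{lem:geometry-all-hyperplane-sections}. The point count must also be uniform in $\mathcal T$, for which I would use the elementary bound $\#Y(\FF_p)\ll_{n,\deg}p^{\dim Y}$ for affine varieties cut out by equations of bounded degree.
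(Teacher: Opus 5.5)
Your argument is correct, but it takes a different route from the paper.

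The paper homogenises. It considers $W_{\mathcal T,j}=\{[x_0:\bfy]\in\PP^n:\rank(x_0\mathcal T+H(\bfy))\le j\}$. Its intersection with the chart $x_0\neq 0$ is your locus $D_{\mathcal T,j}$, and its section by the hyperplane $x_0=0$ is the projectivisation of $V_j$. Every positive-dimensional component of a projective variety meets a hyperplane in codimension at most one, so this gives $\dim D_{\mathcal T,j}\le\dim V_j$ directly, and Corollary~\ref{cor:hesrank} finishes.

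You instead use the symmetry $H(\bfy)\bfz=H(\bfz)\bfy$ to fibre the translated incidence correspondence over $\bfz$ with affine-linear fibres. This shows that translating by $\mathcal T$ cannot push the incidence dimension above $\max_\varrho(\dim V_\varrho+n-\varrho)$. You then read off the bound on $D_{\mathcal T,j}$ from its fibres of dimension at least $n-j$.

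What each route buys:
\begin{itemize}
\item The paper's degeneration argument works for any linear family of matrices, with no symmetry needed. It gives the sharper general inequality $\dim D_{\mathcal T,j}\le\dim V_j$. It also covers $j=0$ and the cap $n-1$ at once, since both are already built into $\dim V_j$.
\item Your argument avoids projective closures and is more hands-on. However, it needs a separate injectivity argument for $j=0$ and a leading-form argument for the cap $n-1$.
\item Your argument only recovers bounds of incidence type $\dim I-(n-j)$. That suffices here precisely because the bound $\lfloor n/3\rfloor+j-1$ in Corollary~\ref{cor:hesrank} was itself derived in that way from \eqref{EQ:clean-slop-incidence-bound}.
\end{itemize}

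The uniform point count and the treatment of the finitely many excluded primes are handled identically in both.
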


\begin{proof}
The case $j\ge n$ is trivial, since the rank condition is
automatic and $\tau_n(j)=n$. Hence suppose $0\le j<n$.
Let $p$ be large in terms of $F$,
so that Corollary~\ref{cor:hesrank} spreads out to the prime $p$.
Consider the projective variety
$W_{\mathcal T,j}
:= \{[x_0:\bfy]\in\PP^n:
\rank(x_0\mathcal T+H(\bfy))\leq j\}$,
and 
identify $\A^n$ with the chart $x_0\ne0$ in $\PP^n$
via $\bfy\mapsto[1:\bfy]$. Then
$W_{\mathcal T,j}\cap\A^n
=\{\bfy\in\A^n:\rank(\mathcal T+H(\bfy))\leq j\}$, and we 
have  $$\dim(W_{\mathcal T,j} \cap \A^n)
\le \dim(W_{\mathcal T,j})
\le 1+\dim(W_{\mathcal T,j} \cap \{x_0=0\})
= \dim(V_j),$$
where $V_j$ is defined 
in \eqref{EQ:Hessian-Vr-locus}.
But $\dim(V_j) \le \tau_n(j)$, by Corollary~\ref{cor:hesrank}.
The Lang--Weil estimate completes the proof, uniformly in $\mathcal T$ and $p$,
since the locus $\rank(\mathcal T+H(\bfy))\leq j$ is cut out by
minors of uniformly bounded number and degree.
The finitely many exceptional primes are absorbed into the implied constant.
\end{proof}

For \(a\geq1\) and any sequence $\bfc=(c_0,\ldots,c_{a-1})\in \ZZ^a$
with $0\leq c_0\leq\cdots\leq c_{a-1}\leq n$,
set
\begin{equation}
\label{eq:q-profile-cost}
\mathcal C_{n,a}(\bfc)
:= \min\left\{
(a-1)(n-1)+\gamma_n(c_0)\bm{1}_{a\ge 2},\;
\sum_{i=1}^{a-1}\tau_n(c_{i-1}+c_i)
\right\}.
\end{equation}

\begin{lemma}\label{lem:q-profile}
Let $a\ge1$ and $\bfc=(c_0,\ldots,c_{a-1})\in\ZZ^a$
with $0\le c_0\le\cdots\le c_{a-1}\le n$.
Let $0\le j_1\le\cdots\le j_n\le a$ be the unique integers
satisfying
\[
\#\{\nu:j_\nu\le i\}=c_i
\qquad(0\le i<a).
\]
For a prime $p$, let $\mathcal P_p(\bfc)$ be the set of
$\bfx\bmod p^a$ such that $F(\bfx)\equiv0\bmod p^a$
and $H(\bfx)\bmod p^a$ has Smith normal form
$\diag(p^{j_1},\ldots,p^{j_n})$.
Then, for every $\ve>0$ and all sufficiently large primes $p$
in terms of $F$ and $\ve$, we have
\[
\#\mathcal P_p(\bfc)
\ll_{\ve,F}
p^{\delta_n(c_0)+\mathcal C_{n,a}(\bfc)+\ve a}.
\]
\end{lemma}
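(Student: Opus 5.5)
Since $\mathcal C_{n,a}(\bfc)$ is a minimum of two quantities, I will prove two separate upper bounds for $\#\mathcal P_p(\bfc)$ and take the better one. The factor $p^{\delta_n(c_0)}$ is the common starting point. Here $c_0$ is the number of Smith invariants $j_\nu=0$, which equals $\rank H(\bfx\bmod p)$. So $\bfx\bmod p$ lies in the set counted by \eqref{eq:q-first-rank}, which has $\ll p^{\delta_n(c_0)}$ elements.

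\emph{First bound.} Every $\bfx\in\mathcal P_p(\bfc)$ satisfies $F(\bfx)\equiv0\bmod p^a$ and $\rank H(\bfx\bmod p)=c_0$. Lemma~\ref{lem:q-easy-hessian-lift-bound} therefore gives directly
\[
\#\mathcal P_p(\bfc)\ll p^{(a-1)(n-1+\ve)+\delta_n(c_0)+\gamma_n(c_0)\bm1_{a\ge2}}.
\]
This is the first term of the minimum in \eqref{eq:q-profile-cost}.

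\emph{Second bound.} I will forget the condition $F\equiv0$ beyond level $p$ and count digit by digit. Write $\bfx=\bfx_0+p\bfx_1+\cdots+p^{a-1}\bfx_{a-1}$ with $\bfx_i\in\FF_p^n$. Given $\bfx_{<i}$, I want to show that $\bfx_i$ is confined to a set of size $\ll p^{\tau_n(c_{i-1}+c_i)}$. The multiplicativity over digits then gives $p^{\delta_n(c_0)}\prod_{i=1}^{a-1}p^{\tau_n(c_{i-1}+c_i)}$, up to the $p^{\ve a}$ loss.

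To get the digit constraint, I will use the prescribed Smith form to choose $U,V\in\GL_n(\ZZ_p)$, depending only on $\bfx_{<i}$ (after a bounded amount of lifting), that bring $H(\bfx_0+\cdots+p^{i-1}\bfx_{i-1})$ to a shape where the invariants $\le i-1$ are already visible. The condition that exactly $c_i$ invariants are $\le i$ in $H(\bfx)=H(\bfx_{<i})+p^iH(\bfx_i)+O(p^{i+1})$ should then say the following. After dividing by $p^{i}$ and reducing mod $p$ on the block complementary to the invariants $<i$, the matrix $\mathcal T+H'(\bfx_i)$ has rank exactly $c_i-c_{i-1}$. Here $\mathcal T$ is determined by $\bfx_{<i}$, and $H'$ is the corresponding Schur-complement block of $UH(\cdot)V$. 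Converting this block-rank condition into a rank condition on a full $n\times n$ matrix of the form $\mathcal T'+H(\bfy)$, with $\bfy$ a linear image of $\bfx_i$, costs at most $c_{i-1}$ in rank, since the known invertible block has size $c_{i-1}$. This gives total rank $\le c_{i-1}+c_i$, and Lemma~\ref{q-translated-rank} then bounds the number of $\bfx_i\bmod p$ by $\ll_F p^{\tau_n(c_{i-1}+c_i)}$, uniformly in $\mathcal T'$.

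\emph{Main obstacle.} The hard step is making this Smith-form reduction rigorous at level $i$. Concretely, I must show that the rank condition on the relevant block can be rewritten as $\rank(\mathcal T'+H(\bfy))\le c_{i-1}+c_i$ with $\bfy$ ranging bijectively (or with bounded fibres) over $\FF_p^n$ as $\bfx_i$ does. The plan is to work with the minors: an $(m+1)$-minor of $p^{-i}$-scaled rows/columns must vanish mod $p$. Expanding along the $c_{i-1}$ rows and columns where valuations are below $i$ should reduce this to the kernel-block statement.

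The requirement that $p$ be large is used only in two places: to apply Lemma~\ref{q-translated-rank} and the spread-out Corollary~\ref{cor:hesrank}, and to ensure $p>3$ so that Taylor expansions and Smith forms behave. Finally, taking the minimum of the two bounds proves the lemma.
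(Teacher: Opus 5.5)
Your proposal follows the paper's proof: the first entry of the minimum comes from Lemma~\ref{lem:q-easy-hessian-lift-bound}, and the second from the same digit-by-digit induction. In that induction the paper eliminates in the first $c_{a-2}$ rows and columns of $AH(\bfx)B$, so that the bottom-right block of $A(M+H(\bfz))B$ has rank $c_{a-1}-c_{a-2}$ modulo $p$. It then applies Lemma~\ref{q-translated-rank} directly to $M+H(\bfz)$, using that multiplying by the invertible $A,B$ preserves rank, so no change of the digit variable is needed.

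Two small corrections. Bordering that block by $c_{i-1}$ rows and $c_{i-1}$ columns can raise the rank by up to $2c_{i-1}$, not $c_{i-1}$; this is exactly what produces your (correct) total $c_{i-1}+c_i$. Also, taking $p$ large in terms of $F$ and $\ve$ is needed to absorb the accumulated constant $e^{O_F(a)}$ into $p^{\ve a}$, which your last paragraph omits.
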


\begin{proof}
Every $\bfx\in\mathcal P_p(\bfc)$ satisfies
$\rank H(\bfx\bmod p)=c_0$.
Discarding the remaining Smith conditions and applying
Lemma~\ref{lem:q-easy-hessian-lift-bound} with $\varrho=c_0$
gives the bound corresponding to the first entry in
\eqref{eq:q-profile-cost}.
For the second entry, we will prove by induction on $a\ge 1$ that
\begin{equation*}
\#\mathcal P_p(\bfc)
\le e^{O_F(a)}
p^{\delta_n(c_0) + \sum_{i=1}^{a-1}\tau_n(c_{i-1}+c_i)}
\end{equation*}
for all $p$.
This will then imply Lemma~\ref{lem:q-profile}
by taking $p^\ve \ge e^{O_F(1)}$.

The base case $a=1$ is simply \eqref{eq:q-first-rank}.
Now suppose $a\ge 2$.
Let $\bfy\bmod{p^{a-1}}$ be a vector in $\mathcal P_p(c_0,\dots,c_{a-2})$.
Fix a lift $\bfy\bmod{p^a}$,
invertible matrices $A,B\bmod{p^a}$,
and a matrix $M\bmod{p}$,
for which $$A(H(\bfy)-p^{a-1}M)B
\equiv \diag(p^{j_1},\dots,p^{j_{c_{a-2}}},0,\dots,0)
\bmod{p^a}.$$
This is possible by the Smith normal form of $H(\bfy)\bmod{p^{a-1}}$.
Our task is now to give an upper bound for the number of vectors
$\bfx \equiv \bfy+p^{a-1}\bfz \bmod{p^a}$, with $\bfz\bmod{p}$,
for which the matrix
\begin{equation}
\label{matrix-pivot-hensel}
AH(\bfx)B \equiv \diag(p^{j_1},\dots,p^{j_{c_{a-2}}},0,\dots,0)
+ p^{a-1}A(M+H(\bfz))B \bmod{p^a}
\end{equation}
has Smith normal form $\diag(p^{j_1},\ldots,p^{j_{c_{a-1}}},0,\dots,0) \bmod{p^a}$.
By elimination in the first $c_{a-2}$ rows and columns,
using the fact that $p^{a-1-j_i} p^{a-1} \equiv 0\bmod{p^a}$
for all $1\le i\le c_{a-2}$, we find that
the right-hand side of \eqref{matrix-pivot-hensel}
has the same Smith normal form as
the matrix $$\diag(p^{j_1},\dots,p^{j_{c_{a-2}}},p^{a-1}\pi_{n-c_{a-2}}),$$
where $\pi_{n-c_{a-2}}$ is the bottom-right
$(n-c_{a-2}) \times (n-c_{a-2})$ block of $A(M+H(\bfz))B$.
Thus $$\rank(\pi_{n-c_{a-2}}\bmod{p}) = c_{a-1}-c_{a-2}.$$
Adding a row or a column to a matrix
over $\FF_p$ increases its rank by at most one, so
\begin{equation*}
\begin{split}
\rank(M+H(\bfz)\bmod{p})
&= \rank(A(M+H(\bfz))B\bmod{p}) \\
&\le 2c_{a-2} + \rank(\pi_{n-c_{a-2}}\bmod{p})
= c_{a-1}+c_{a-2}.
\end{split}
\end{equation*}
Lemma~\ref{q-translated-rank} leaves
\(O_F(p^{\tau_n(c_{a-1}+c_{a-2})})\) possibilities for \(\bfz\).
This completes the induction.
\end{proof}

For any sequence
\(\boldsymbol\lambda=(\lambda_0,\ldots,\lambda_{a-1})\in \RR_{\ge 0}^a\), put
\begin{equation}
\label{eq:q-master-functional}
\mathfrak M_{n,a}(\boldsymbol\lambda)
:= \max_{\substack{\bfc=(c_0,\ldots,c_{a-1})\in \ZZ^a \\
0\leq c_0\leq\cdots\leq c_{a-1}\leq n}}
\left\{\delta_n(c_0) + \mathcal C_{n,a}(\bfc)
-\sum_{i=0}^{a-1}\lambda_i c_i\right\}.
\end{equation}
For \(a\geq1\) and \(t\geq0\),
recall $\lambda_i(a,t)$ from \eqref{eq:q-penalty} and let
\begin{equation}
\label{eq:q-local-D}
\Phi_n(a,t)
:= \mathfrak M_{n,a}\bigl((\lambda_i(a,t))_{0\leq i<a}\bigr),
\qquad
D_n(a,t) := (n+2)a+(n+1)t+\Phi_n(a,t).
\end{equation}

\begin{prop}
\label{prop:q-smith-bound}
Let $q\ge 1$ be squarefull.
Suppose every prime $p\mid q$
is large enough in terms of $F$ and $\ve$.
Choose \(1\leq a_p\leq\lfloor v_p(q)/2\rfloor\)
and let $t_p := v_p(q)-2a_p$.
If \(A := \prod_{p\mid q} p^{a_p}\)
and $T := \prod_{p\mid q} p^{t_p}$,
then the condition \eqref{EQ:radTA} holds and
\begin{equation}
\sum_{\bfa\bmod{A}}
\max_{\substack{\bfv\bmod{q} \\ \bfv\equiv \bfa\bmod{A}}} |S_q(\bfv)|
\ll_{\ve,F}
q^\ve \prod_{p\mid q}p^{D_n(a_p,t_p)}.
\label{eq:q-global-smith-bound}
\end{equation}
\end{prop}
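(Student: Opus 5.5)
We apply Lemma~\ref{lem:q-flexible-average} with the given factorisation $q=A^2T$, then factor the right-hand side over primes and bound each local factor using the Smith-profile estimates (Lemmas~\ref{lem:q-terminal} and~\ref{lem:q-profile}).

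First, \eqref{EQ:radTA} is immediate. Since $a_p\ge 1$ for every $p\mid q$, we have $\rad(A)=\rad(q)$, and $\rad(T)\mid\rad(q)$. Also $A^2T=\prod_p p^{2a_p+t_p}=q$. Lemma~\ref{lem:q-flexible-average} then bounds the left-hand side of \eqref{eq:q-global-smith-bound} by
\[
A^{n+2}T\sum_{\substack{\bfy\bmod A\\ F(\bfy)\equiv 0\bmod A}}\mathcal G_{A,T}(\bfy).
\]
Next we factor this sum. By Lemma~\ref{LEM:max-G-multiplicative}, $\mathcal G_{A,T}(\bfy)=\prod_{p\mid q}\mathcal G_{A,p^{t_p}}(\bfy)$. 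By Lemma~\ref{LEM:max-G-A-invariant}, each factor depends only on $\bfy\bmod p^{\min(a_p,t_p)}$, hence only on $\bfy\bmod p^{a_p}$. The Chinese remainder theorem therefore splits the $\bfy$-sum into a product over $p\mid q$ of local sums
\[
\Sigma_p:=\sum_{\substack{\bfy\bmod p^{a_p}\\ F(\bfy)\equiv0\bmod p^{a_p}}}\mathcal G_{A,p^{t_p}}(\bfy).
\]
The prefactor also factors, as $A^{n+2}T=\prod_p p^{(n+2)a_p+t_p}$. Since $D_n(a,t)=(n+2)a+(n+1)t+\Phi_n(a,t)$, it suffices to prove, for each large prime $p$,
\[
\Sigma_p\ll_{\ve,F} p^{nt_p+\Phi_n(a_p,t_p)+\ve(a_p+t_p)}.
\]
This is legitimate because the implied constants are uniform and the product over primes of $p^{\ve v_p(q)}$ equals $q^\ve$. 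The constant factors $C^{\omega(q)}$ arising from the implied constants are $\ll q^\ve$. Alternatively, for $p$ large in terms of $F$ and $\ve$, we can absorb $C$ into $p^{\ve}$, as in the proof of Lemma~\ref{lem:q-profile}.

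For the local bound, write $a=a_p$ and $t=t_p$. Partition the $\bfy$ in $\Sigma_p$ according to the Smith normal form $\diag(p^{j_1},\dots,p^{j_n})$ of $H(\bfy)\bmod p^a$, equivalently according to the profile $\bfc=(c_0,\dots,c_{a-1})$ with $c_i=\#\{\nu:j_\nu\le i\}$. Here $\bfc$ is nondecreasing with entries in $[0,n]$, and there are at most $(n+1)^a\ll p^{\ve a}$ profiles. Since $p$ is large, $p\neq 2$, so Lemma~\ref{lem:q-terminal} applies on each class $\mathcal P_p(\bfc)$ and gives
\[
\mathcal G\le p^{nt-\sum_i\lambda_i(a,t)c_i}.
\]
When $t=0$ we have $\mathcal G=1$ and $\lambda_i=0$, which is consistent. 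Lemma~\ref{lem:q-profile} gives
\[
\#\mathcal P_p(\bfc)\ll p^{\delta_n(c_0)+\mathcal C_{n,a}(\bfc)+\ve a}.
\]
Multiplying these two bounds and taking the maximum over $\bfc$ yields exactly $p^{nt+\mathfrak M_{n,a}(\boldsymbol\lambda(a,t))+O(\ve a)}=p^{nt+\Phi_n(a,t)+O(\ve a)}$, as required.

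The main obstacle is bookkeeping rather than new analysis. First, one must check that Lemma~\ref{LEM:max-G-A-invariant} really makes $\mathcal G_{A,p^{t}}$ a function of $\bfy\bmod p^{a}$ only. This holds because $\gcd(A,p^t)\mid p^{a}$, and the condition $F(\bfy)\equiv0\bmod A$ localises to $F(\bfy)\equiv0\bmod p^{a}$. Second, one must check that the Smith normal form of $H(\bfy)\bmod p^{a}$ used in Lemma~\ref{lem:q-terminal} matches the classes $\mathcal P_p(\bfc)$ of Lemma~\ref{lem:q-profile}. Third, one must control the accumulation of constants over the primes dividing $q$. This is handled by the largeness of $p$ in terms of $F$ and $\ve$, together with $\omega(q)\ll\log q/\log\log q$.
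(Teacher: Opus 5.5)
Your proposal is correct and follows the paper's proof. The paper likewise applies Lemma~\ref{lem:q-flexible-average}, factors over primes via the $T$-multiplicativity \eqref{EQ:max-G-multiplicative}, bounds each local $\mathcal G$ factor on the Smith classes $\mathcal P_p(\bfc)$ by Lemma~\ref{lem:q-terminal} and their sizes by Lemma~\ref{lem:q-profile}, and absorbs the number of profiles and the accumulated constants into $q^\ve$. The only cosmetic differences are that you invoke Lemma~\ref{LEM:max-G-A-invariant} explicitly to justify the CRT splitting, and that you count profiles by $(n+1)^a$ rather than $O((a+1)^n)$.
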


\begin{proof}
Plugging Lemma~\ref{lem:q-terminal}
into Lemma~\ref{lem:q-flexible-average}
using the $T$-multiplicativity \eqref{EQ:max-G-multiplicative}
gives
\[
\sum_{\bfa\bmod{A}}
\max_{\substack{\bfv\bmod{q} \\ \bfv\equiv \bfa\bmod{A}}} |S_q(\bfv)|
\leq
A^{n+2}T
\prod_{p\mid q} \sum_{\substack{\bfy\bmod p^{a_p}\\F(\bfy)\equiv0\bmod p^{a_p}}}
p^{nt_p-\sum_{i=0}^{a_p-1}\lambda_i(a_p,t_p)c_i},
\]
where $\bfc \in \ZZ^{a_p}$ is such that $\bfy\in \mathcal P_p(\bfc)$
in the notation of Lemma~\ref{lem:q-profile}.
The number of possible vectors $\bfc$ is $O((a_p+1)^n)$,
since each $\bfc$ corresponds to a unique Smith normal form modulo $p^{a_p}$.
Therefore, by Lemma~\ref{lem:q-profile} and the divisor bound on $q$,
\[
\sum_{\bfa\bmod{A}}
\max_{\substack{\bfv\bmod{q} \\ \bfv\equiv \bfa\bmod{A}}} |S_q(\bfv)|
\ll_\ve A^{n+2}T q^\ve T^n
\prod_{p\mid q} p^{\Phi_n(a_p,t_p)}.
\]
The definition of $D_n(a,t)$ in \eqref{eq:q-local-D} accounts for
the powers of $A$ and $T$.
\end{proof}

\begin{lemma}
\label{lem:q-smith-cubefull}
Assume $n=10$.
Let \(r\ge 1\) be cubefull.
Suppose every prime $p\mid r$
is large enough in terms of $F$ and $\ve$.
Let
\begin{equation}
\label{eq:q-buw}
b := \prod_{v_p(r)>0} p^{\lfloor v_p(r)/3 \rfloor},
\qquad z_i := \prod_{\substack{v_p(r)>0 \\ v_p(r) \equiv i\bmod{3}}} p,
\end{equation}
for $0\le i\le 2$,
so that $r=b^3z_1z_2^2$ and $z_0z_1z_2\mid b$.
Define
\[
A := bz_2,\quad T := bz_1.
\]
Then \(r=A^2T\), the condition \eqref{EQ:radTA} holds, and
\begin{equation}
\sum_{\bfa\bmod{A}}
\max_{\substack{\bfv\bmod{r} \\ \bfv\equiv \bfa\bmod{A}}} |S_r(\bfv)|
\ll_{F,\ve}
r^{10+\ve}z_0^{-2}z_1^{-4}z_2^{-1}.
\label{eq:q-route-zero}
\end{equation}
\end{lemma}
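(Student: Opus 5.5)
The plan is to apply Proposition~\ref{prop:q-smith-bound} to $q=r$ with a specific choice of local exponents, and then reduce \eqref{eq:q-route-zero} to a finite optimisation problem over Smith profiles. Write $v=v_p(r)=3k+i$ with $k\ge1$, since $r$ is cubefull. Then $v_p(A)=k+\bm1_{i=2}$ and $v_p(T)=k+\bm1_{i=1}$, so I take
\[
(a_p,t_p)=\begin{cases}(k,k)& i=0,\\ (k,k+1)& i=1,\\ (k+1,k)& i=2.\end{cases}
\]
These are admissible. We have $a_p\ge1$, and $a_p\le\lfloor v/2\rfloor$ holds because $k\ge1$; for example $k+1\le\lfloor(3k+2)/2\rfloor$. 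Moreover $t_p\ge1$, so \eqref{EQ:radTA} holds, and $r=A^2T$ follows from $r=b^3z_1z_2^2$. Since $q^\ve=r^\ve$, the estimate \eqref{eq:q-global-smith-bound} reduces \eqref{eq:q-route-zero} to the local inequalities
\[
D_{10}(a_p,t_p)\le 10v_p(r)-e_i,\qquad e_0=2,\ e_1=4,\ e_2=1 .
\]
By \eqref{eq:q-local-D} we have $D_{10}(a,t)=12a+11t+\Phi_{10}(a,t)$ and $v=2a+t$. So the inequalities become $\Phi_{10}(a,t)\le 8a-t-e_i$. Explicitly, the three targets are $\Phi_{10}(k,k)\le 7k-2$, $\Phi_{10}(k,k+1)\le 7k-5$, and $\Phi_{10}(k+1,k)\le 7(k+1)$.

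Next I will make the data for $n=10$ explicit. From \eqref{eq:q-delta} and \eqref{eq:q-tau}, $\delta_{10}(c)$ equals $0$ for $c\le1$, $\min\{2c-2,c+2\}$ for $2\le c\le7$, and $9$ for $c\ge8$. Also $\gamma_{10}(c)=\bm1_{\delta_{10}(c)\le5}+\bm1_{c=0}$, and $\tau_{10}(j)=\min\{9,j+2\}$ for $1\le j\le9$, with $\tau_{10}(0)=0$ and $\tau_{10}(j)=10$ for $j\ge10$. The weights \eqref{eq:q-penalty} also simplify. In the case $t=a$ every $\lambda_i$ equals $\tfrac12$. In the case $t=a+1$ we have $\lambda_i=\tfrac12$ for $i<a-1$ and $\lambda_{a-1}=1$. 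In the case $t=a-1$ we have $\lambda_i=\tfrac12\bm1_{i<a-1}$.

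The bound on $\mathfrak M_{10,a}$ will come from a case split on the profile $\bfc$, using whichever entry of the minimum \eqref{eq:q-profile-cost} is favourable. When $c_0$ is large (say $c_0\ge8$, or generally once $c_0\ge c$ for a threshold to be tuned), I use the first entry $9(a-1)+\gamma_{10}(c_0)$. Monotonicity $c_i\ge c_0$ then makes the penalty at least $\tfrac12 c_0 a$, up to the boundary adjustments, and the total is comfortably below $7a$; for instance, with $c_0\ge8$ the bound is roughly $9a-4a$. When the profile starts small, I use the chain $\sum_{i\ge1}\tau_{10}(c_{i-1}+c_i)$. Here each step contributes at most $\min\{9,c_{i-1}+c_i+2\}-\lambda_ic_i$. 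I will show that this increment is at most $7$ for every admissible pair $c_{i-1}\le c_i$, and that the initial term $\delta_{10}(c_0)-\lambda_0c_0$ together with the boundary steps yields the required savings $e_i$. Where a pure chain bound is slightly too weak, I will interpolate: switch to the first entry at the index where the profile crosses the threshold. Equivalently, I bound the minimum by a convex combination of the two entries.

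I expect the main obstacle to be this optimisation, and in particular the tight cases. These are $t=a+1$, which needs a saving of $5$, and $a=1$, where the chain sum is empty. For $a=1$ the quantity is simply $\max_{c_0}\{\delta_{10}(c_0)-\lambda_0c_0\}$. With $\lambda_0=\tfrac12$, the maximum is at most $5$, which meets the targets $7-2$ and $7(k+1)$. With $\lambda_0=1$ (the case $k=1$, $t=2$), the maximum is at most $2$, matching $7-5=2$. These small cases show that the exponents $z_0^{-2}z_1^{-4}z_2^{-1}$ are sharp for this method. For general $a$, I would first verify the per-step inequality $\min\{9,c_{i-1}+c_i+2\}-\tfrac12c_i\le7$ and check that the endpoint with weight $1$ absorbs the extra $3$. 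If the per-step inequality fails for mid-range $c_i$, around $c_i=7$, my fallback is the mixed bound described above.
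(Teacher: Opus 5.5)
Your reduction is the paper's. The choice of $(a_p,t_p)$, the admissibility check, \eqref{EQ:radTA}, and the three targets $\Phi_{10}(k,k)\le 7k-2$, $\Phi_{10}(k,k+1)\le 7k-5$, $\Phi_{10}(k+1,k)\le 7k+7$ all match. The gap is the optimisation of $\mathfrak M_{10,a}$. That optimisation is the entire content of the lemma, and you only sketch it, along a route that fails.

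\begin{itemize}
\item \emph{Per-step bound.} Your per-step increment uses $\min\{9,c_{i-1}+c_i+2\}$. But $\tau_{10}(j)=10$ for $j\ge10$, as you yourself recorded, so $c_{i-1}=c_i=5$ gives $\tau_{10}(10)-\tfrac52=\tfrac{15}{2}>7$. The pure chain bound is then too weak. For $(a,t)=(k,k)$ and $\bfc=(5,\dots,5)$ it gives $\tfrac92+\tfrac{15}{2}(k-1)>7k-2$ once $k\ge3$. For $(a,t)=(2,3)$ and $\bfc=(5,5)$ it gives $\tfrac{19}{2}>9$.
\item \emph{Fallback.} Switching to the first entry ``at the index where the profile crosses the threshold'' is not a valid bound. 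In \eqref{eq:q-profile-cost}, $\mathcal C_{10,a}(\bfc)$ is the minimum of two \emph{global} sums $U$ and $V$. The quantity $\min\{U,V\}$ is not dominated by a sum that takes the terms of $V$ before some index and those of $U$ after it. Only $\min\{U,V\}\le\theta U+(1-\theta)V$, with a single $\theta\in[0,1]$ for each profile $\bfc$, is legitimate. So the two procedures are not ``equivalent'', and you never say which $\theta$ works where.
\item \emph{Formula for $\delta_{10}$.} Your formula drops the cap $n-2=8$; in fact $\delta_{10}(7)=8$. With your value $9$, the tight case $(a,t)=(1,1)$ would give $9-\tfrac72=\tfrac{11}{2}>5$, contradicting your own claim there.
\end{itemize}

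What closes the argument, and is exactly the paper's proof, is a split on $x=c_0$.
\begin{itemize}
\item If $x\ge4$, use the first entry alone. Here $\gamma_{10}(x)=0$ and every $c_i\ge x$. The inequalities $\delta_{10}(x)-\tfrac x2\le5$, $\delta_{10}(x)-x\le2$ and $9-\tfrac x2\le7$ then give the bounds $7k-2$, $7k-5$ and $5+9+7(k-1)=7k+7$ in the three cases.
\item If $x\le3$, first replace $\gamma_{10}(x)\bm{1}_{a\ge2}$ by $\gamma_{10}(x)$, which is harmless. Then take $\theta=\tfrac12$ and use $\tau_{10}(c_{i-1}+c_i)\le\tau_{10}(2c_i)\le c_i+5$. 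Each step then contributes at most $\tfrac92+\tfrac52=7$ after the penalty $\tfrac12c_i$. The initial term $R:=\delta_{10}(x)-\tfrac x2+\tfrac12\gamma_{10}(x)$ satisfies $R\le3$ and $R-\tfrac x2\le2$. This yields $7k-4$, $7k-5$ and $R+7k+\tfrac52\le7k+\tfrac{11}{2}$.
\end{itemize}
With the threshold at $4$ and equal weights, no per-index switching is needed.
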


\begin{proof}
It suffices to prove
\[
\Phi_{10}(e,e)\le 7e-2,\qquad
\Phi_{10}(e,e+1)\le 7e-5,\qquad
\Phi_{10}(e+1,e)\le 7e+7
\]
for every integer $e\ge1$.
Indeed, if $p\mid r$
and $v_p(r)=3e+y$ with $0\le y\le 2$,
then the pair
$(v_p(A),v_p(T))$ is respectively
$(e,e)$, $(e,e+1)$ or $(e+1,e)$ according as
$y=0,1$ or $2$.
Writing $a=v_p(A)$ and $t=v_p(T)$,
it then follows from the $n=10$ case of \eqref{eq:q-local-D} that
\begin{equation*}
D_{10}(a,t) \le
\begin{cases}
30e-2 = v_p(r^{10}z_0^{-2})&y=0,\\
30e+6 = v_p(r^{10}z_1^{-4})&y=1,\\
30e+19 = v_p(r^{10}z_2^{-1})&y=2.
\end{cases}
\end{equation*}
Together with Proposition~\ref{prop:q-smith-bound},
this implies \eqref{eq:q-route-zero}.

Write $\delta=\delta_{10}$, $\gamma=\gamma_{10}$ and
$\tau=\tau_{10}$,
for the functions defined in \eqref{eq:q-delta}, \eqref{eq:q-gamma} and
\eqref{eq:q-tau}, respectively.
Their values at integers $0\le x\le 10$ are given in the following table:
\[
\begin{array}{c|*{11}{c}}
x         & 0 & 1 & 2 & 3 & 4 & 5 & 6 & 7 & 8 & 9 & 10 \\ \hline
\delta(x) & 0 & 0 & 2 & 4 & 6 & 7 & 8 & 8 & 9 & 9 & 9  \\
\gamma(x) & 2 & 1 & 1 & 1 & 0 & 0 & 0 & 0 & 0 & 0 & 0  \\
\tau(x)   & 0 & 3 & 4 & 5 & 6 & 7 & 8 & 9 & 9 & 9 & 10.
\end{array}
\]
Moreover, $\tau(x)=10$ for every integer $x\ge 10$.

For an integer sequence $0\le c_0\le\cdots\le c_{a-1}\le10$, put
\[
x=c_0,\qquad
P=\sum_{i=0}^{a-1}\lambda_i(a,t)c_i
= \frac12\sum_{i=0}^{e-1}c_i
+ \frac12c_{e-1}\, \bm{1}_{y=1},
\]
by \eqref{eq:q-penalty},
where the extra term $\frac12c_{e-1}$ appears only when $(a,t)=(e,e+1)$.
Define
\[
U=\delta(x)+9(a-1)+\gamma(x)-P,\qquad
V=\delta(x)+\sum_{i=1}^{a-1}\tau(c_{i-1}+c_i)-P.
\]
The definitions \eqref{eq:q-local-D},
\eqref{eq:q-master-functional}
and \eqref{eq:q-profile-cost}
give
\[
\Phi_{10}(a,t)\le\max_{\bfc}\min\{U,V\}.
\]
Here we have replaced
$\gamma(x)\bm{1}_{a\ge2}$ in \eqref{eq:q-profile-cost}
by the larger quantity
$\gamma(x)$.
Indeed, the argument below will handle all cases $a\ge 1$ uniformly.

First suppose $x\ge4$. Then $\gamma(x)=0$, and
from the table of values of $\delta$ we find that
\[
\delta(x)-x/2\le5,\qquad
\delta(x)-x\le2,\qquad
9-x/2\le7.
\]
Since every $c_i\ge x$, we have $P \ge (e+\bm{1}_{y=1})x/2$
and these inequalities imply
\[
\begin{array}{c|c|c}
(a,t)&\text{upper bound for }U&\text{linear combination used}\\ \hline
(e,e)&5+7(e-1)=7e-2&(\delta(x)-x/2) + (e-1)(9-x/2)\\
(e,e+1)&2+7(e-1)=7e-5&(\delta(x)-x) + (e-1)(9-x/2)\\
(e+1,e)&14+7(e-1)=7e+7&(\delta(x)-x/2) + 9 + (e-1)(9-x/2),
\end{array}
\]
where the third column records the decomposition of $U$
used to obtain the second column.

Now suppose $x\le3$. Put
\[
R=\delta(x)-x/2+\gamma(x)/2.
\]
Checking the values of $\delta$ and $\gamma$ gives
$R\le3$ and $R-x/2\le2$.
Moreover, $\tau$ satisfies
\[
\tau(c_{i-1}+c_i)\le \tau(2c_i) \le \min\{c_i+5, 10\},
\]
because $c_{i-1}\le c_i$ and $c_i\ge 0$.
From our formulas for $U,V,P$ we obtain
\begin{equation*}
(U+V)/2
= R + 9(a-1)/2 + \sum_{i=1}^{a-1} \tau(c_{i-1}+c_i)/2
- \sum_{i=1}^{e-1} c_i/2 - \bm{1}_{y=1} c_{e-1}/2.
\end{equation*}
Now plug in $\tau(c_{i-1}+c_i)\le c_i+5$ for $i\le e-1$,
and $\tau(c_{i-1}+c_i)\le 5+5$ for $i=e$ if $a=e+1$.
Since $9(a-1)/2 + 5(a-1)/2 = 7(a-1)$
and $\bm{1}_{y=1} c_{e-1}/2 \ge \bm{1}_{y=1} x/2$,
we thus obtain
\[
\begin{array}{c|c}
(a,t)&\text{upper bound for }(U+V)/2\\ \hline
(e,e)&R+7(e-1)\le7e-4\\
(e,e+1)&R-x/2+7(e-1)\le7e-5\\
(e+1,e)&R+7e+\frac52\le7e+\frac{11}2 \le 7e+7.
\end{array}
\]

Since $\min\{U,V\}\le U$ and
$\min\{U,V\}\le(U+V)/2$, the two cases establish
all three required bounds for $\Phi_{10}$.
\end{proof}

\addtocontents{toc}{\protect\columnbreak}
\section{Localisation at small primes}
\label{sec:fixed-primes}

In the last few results of \S\ref{sec:smith-work},
we restricted attention to moduli $q\ge 1$ supported on sufficiently large primes $p$,
in order to accommodate the factor of $e^{O_F(\Omega(q))}$
that arose from the Smith-based Hensel lifting analysis
in the proof of Lemma~\ref{lem:q-profile}.
Here, $\Omega(q)$ denotes the total number of prime factors of $q$ (with multiplicity).
The factor $e^{O_F(\Omega(q))}$ is perhaps unexpected,
and certainly less commonly seen in Diophantine contexts compared to
$e^{O_F(\omega(q))}$,
where $\omega(q)$ denotes the number of distinct prime factors of $q$
(without multiplicity).
Nonetheless, the proof we have given naturally leads to the factor $e^{O_F(\Omega(q))}$ and so we are forced to confront small primes $p$ separately,
or else the factor $e^{O_F(\Omega(q))}$ would contribute a fixed power of $q$
in our estimates
whenever $q$ is a large power of a fixed prime.

The goal of the present section is precisely to provide satisfactory
estimates at any fixed prime $p=p_0$.
It is easiest to do this after first localising the counting function in the circle method
to convenient $p$-adic congruence classes and real boxes,
as we record below in the formula \eqref{eq:inel-restricted-count}.
In the case of a real box this is done by choosing a suitable real weight function,
a standard procedure that we will recall in \S\ref{sec:intro-circle}.
In the $p$-adic case, the analogue of choosing a weight function is
simply to restrict to a union of congruence classes.
Throughout this section, we assume $n\ge 10$ and $X(\QQ) = \emptyset$.
Then $X$ is non-conical by Proposition~\ref{Bro-integral-cone}.

\begin{lemma}\label{lem:inel-high-rank-point}
Let $p$ be a prime.
Then there is a primitive vector $\boldsymbol{\xi}_p\in\Z_p^n$ such that
\[
 F(\boldsymbol{\xi}_p)=0,\qquad
 \nabla F(\boldsymbol{\xi}_p)\neq\bm 0,\qquad
 \rank_{\QQ_p}H(\boldsymbol{\xi}_p) = r(F) \ge 2 \lceil n/3 \rceil.
\]
\end{lemma}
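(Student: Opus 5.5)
The plan is to produce $\boldsymbol{\xi}_p$ by density. The conditions defining a suitable point are Zariski-open on the cone over $X$, and $\QQ_p$-points of a smooth variety with one $\QQ_p$-point are dense enough to meet any nonempty open subset.

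First, I would record the geometric input. By Proposition~\ref{Bro-integral-cone}, $X$ is geometrically integral, so $X_{\rm sm}$ is a dense open subset of $X$. By definition of $r(F)$, the locus
\[
U:=\{[\bfx]\in X:\ \rank H(\bfx)=r(F)\}=X\setminus V(\text{all }r(F)\times r(F)\text{ minors})
\]
is a nonempty open subset of $X$, and it is defined over $\QQ$. Hence $U':=U\cap X_{\rm sm}$ is a nonempty Zariski open subset of the irreducible variety $X$, also defined over $\QQ$. Lemma~\ref{LEM:my-guy-slop} gives $r(F)\ge 2\lceil n/3\rceil$. Since ranks over $\QQ_p$ and $\Qbar$ agree for a matrix with entries in $\QQ$, or more generally in $\QQ_p$, any point of $U'(\QQ_p)$ has $\rank_{\QQ_p}H=r(F)$ and $\nabla F\neq\bm0$.

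Second, I need $U'(\QQ_p)\neq\emptyset$. Since $n\ge 10$, the work of Demyanov and Lewis quoted in the introduction gives a nontrivial $p$-adic zero of $F$. If that zero is a smooth point of $X$, then $X_{\rm sm}(\QQ_p)\ne\emptyset$. The $p$-adic implicit function theorem then shows that $X_{\rm sm}(\QQ_p)$ is a $p$-adic analytic manifold of dimension $n-2$. Such a manifold is Zariski dense in the irreducible variety $X$, because a nonzero polynomial cannot vanish on a nonempty $p$-adic open subset of a smooth irreducible variety. It therefore meets $U'$.

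The main obstacle is that the $p$-adic zero might lie in $\sing X$. To handle this, I would pass to a $\QQ_p$-line through the singular zero $\bfx_0$ and a generic $\QQ_p$-point $\bfy$. Since $\nabla F(\bfx_0)=0$, we get
\[
F(s\bfx_0+t\bfy)=s\,t^2\,\tfrac12\bfy^tH(\bfx_0)\bfy+t^3F(\bfy).
\]
This has the residual root $[s:t]=[-2F(\bfy):\bfy^tH(\bfx_0)\bfy]$, which is $\QQ_p$-rational. We may choose $\bfy$ with $\bfy^tH(\bfx_0)\bfy\neq0$, because $H(\bfx_0)\ne0$ by Corollary~\ref{cor:hesrank} ($V_1\cap\cone X=\{\bm0\}$). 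As $\bfy$ varies over a $p$-adic open set, this residual point traces out a $p$-adic open family. Its image is the projection of $X$ from the point $[\bfx_0]$, which is dominant since $X$ is not a cone. So for generic $\bfy$ the residual point lies in $U'$.

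Finally, I would scale the resulting point of $U'(\QQ_p)$ by a power of $p$ to make it primitive in $\Z_p^n$. All three conditions are invariant under this scaling, which gives $\boldsymbol{\xi}_p$.
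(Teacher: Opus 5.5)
Your proposal is correct, but it obtains the smooth $p$-adic zero by a different route from the paper.

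The paper cites Pleasants (Theorem~2 of \cite{Ple71}), which directly gives a \emph{nonsingular} $p$-adic zero of any non-conical cubic form in at least $10$ variables. It then runs exactly your density step: the open locus where $\rank H=r(F)$ meets the Zariski-dense set $X_{\rm sm}(\QQ_p)$, and Lemma~\ref{LEM:my-guy-slop} gives the lower bound on $r(F)$.

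You start instead from the weaker Demyanov--Lewis input, which gives only some nontrivial zero. You then upgrade a singular zero $\bfx_0$ by the residual-intersection trick, using non-conicity only to ensure $H(\bfx_0)\neq 0$. In effect you reprove the case of Pleasants' theorem that is needed. This makes the argument more self-contained, at the cost of a case split.

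Your singular case can be stated more cleanly than via ``$p$-adic open families'' and projection from $[\bfx_0]$. The polynomial map $\Phi(\bfy)=-2F(\bfy)\bfx_0+(\bfy^tH(\bfx_0)\bfy)\bfy$ sends $\A^n$ into $\cone X$ and satisfies $\Phi(\bfw)=(\bfw^tH(\bfx_0)\bfw)\bfw$ for $\bfw\in\cone X$. Hence $\Phi^{-1}(\cone{U'})$ is a nonempty Zariski-open subset of $\A^n$ defined over $\QQ_p$. It is nonempty because the nonzero quadric $\bfw^tH(\bfx_0)\bfw$ cannot vanish on the irreducible cubic cone $\cone X$. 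It has a $\QQ_p$-point simply because $\QQ_p$ is infinite, so no $p$-adic analysis is needed in this case.

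One small correction: justify the rank condition by noting that $U$ is cut out by minors with rational coefficients, and that rank is unchanged under extension from $\QQ_p$ to $\bar{\QQ}_p$. Arguing via $\Qbar$ does not work, since $\Qbar$ does not contain $\QQ_p$.
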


\begin{proof}
It is known that every non-conical cubic form in $\ge 10$ variables has a smooth $p$-adic zero for all $p\le \infty$; see for instance Theorem~2 of
\cite{Ple71}.
By definition of $r(F)$, the locus on $X_{\QQ_p}$ where
$\rank H = r(F)$ is a nonempty Zariski-open subset.  Since $X$ is
geometrically integral, this open subset meets the Zariski-dense
set $X_{\mathrm{sm}}(\QQ_p)$.
Finally, $r(F)\ge 2\lceil n/3\rceil$ by Lemma~\ref{LEM:my-guy-slop}.
\end{proof}

Since $H(\boldsymbol{\xi}_p)$ is symmetric of rank $r(F)$ over the
characteristic-zero field $\QQ_p$,
we may assume after permuting coordinates that
its leading $r(F)\times r(F)$ principal
minor is nonsingular.  Write
\[
\Delta_p(\bfx)
:=\det\left(\frac{\partial^2F}
{\partial x_i\partial x_j}(\bfx)\right)_{1\leq i,j\leq r(F)}.
\]
Choose $M_p\geq1$ sufficiently large that, on the ball
\begin{equation}\label{eq:inel-ball}
B_p:=\boldsymbol{\xi}_p+p^{M_p}\Z_p^n,
\end{equation}
all points are primitive, one fixed component of $\nabla F$ is nonzero, and
\begin{equation}\label{eq:inel-minor-valuation}
v_p(\Delta_p(\bfx))=v_p(\Delta_p(\boldsymbol{\xi}_p))
\qquad(\bfx\in B_p).
\end{equation}

To preserve the multiplicativity of $S_q(\bfv)$ later on
in Lemma~\ref{lem:restricted-multiplicative},
consider the unit orbit
\begin{equation}\label{eq:inel-Omega}
\Omega_p := \Z_p^\ast B_p
=\bigcup_{\lambda\in\Z_p^\ast}\lambda B_p.
\end{equation}
By homogeneity, the smoothness and Hessian-rank properties on $B_p$ extend to $\Omega_p$.

For $k\geq1$, let $\Omega_p[k]$ denote the image of $\Omega_p$ in
$(\Z/p^k\Z)^n$.
For $\bfv\in\Z^n$, put
\begin{equation}\label{eq:inel-localised-sum}
S_{p^k}^{\Omega_p}(\bfv)
:= \sum_{\alpha\in(\Z/p^k\Z)^\ast}
\sum_{\bfx\in\Omega_p[k]}
e_{p^k}\bigl(\alpha F(\bfx)+\bfv\cdot\bfx\bigr).
\end{equation}
For $\lfloor k/2\rfloor\geq M_p$, the proof of Lemma~\ref{lem:starT}, with $c = p^{\lfloor k/2 \rfloor}$, shows that
\begin{equation*}
S_{p^k}^{\Omega_p}(\bfv)
\ll_p c^{n+1} \sum_{a\in (\ZZ/c\ZZ)^\ast}
\#\{\bfx\bmod{c}: \bfx\in \Omega_p,\; a\nabla F(\bfx) \equiv -\bfv \bmod{c}\}.
\end{equation*}
By the $p$-adic implicit function theorem,
there are only $O_p(1)$ choices for $(x_1,\dots,x_\rho)\bmod{c}$
once the residues $a,x_{\rho+1},\dots,x_n\bmod{c}$ are fixed,
where $\rho=r(F)$.
Thus 
\begin{equation}
\label{fixed-p-adic-stationary-phase}
S_{p^k}^{\Omega_p}(\bfv)
\ll_p c^{n+1} c^{1+n-r(F)}
\ll_p p^{k(n+1-r(F)/2)}.
\end{equation}
The finitely many remaining values of $k$ are absorbed into the implied constant
in \eqref{fixed-p-adic-stationary-phase}.

We proceed to sketch how to restrict the circle method to appropriate residue classes
at a finite set of primes.
Let $\mathcal P_{\mathrm{small}}$ be a finite set of primes, choose $\Omega_p$ as in \eqref{eq:inel-Omega}, and put
\begin{equation}
\label{large-fixed-W}
W := \prod_{p\in\mathcal P_{\mathrm{small}}}p^{M_p}.
\end{equation}
The Chinese remainder theorem defines a subset
$\Omega[H] = \prod_{p\mid H} \Omega_p[v_p(H)] \subset(\Z/H\Z)^n$
for any divisor $H\mid W$.
In \S\ref{sec:intro-circle} we will use the circle method to study the counting function
\begin{equation}\label{eq:inel-restricted-count}
N(P)
:= \sum_{\substack{\bfx\in\Z^n\\F(\bfx)=0\\
\bfx\bmod W\in\Omega[W]}}
w(\bfx/P),
\end{equation}
for a suitable fixed compactly supported weight function $w\in C^\infty_c(\RR^n)$.
Proving a lower bound for
\eqref{eq:inel-restricted-count} is enough for the existence of an
integral zero.
To this end, observe that the restricted local density at each $p\mid W$ is positive.
Since every $\Omega_p$ contains a neighbourhood of the smooth zero
$\boldsymbol{\xi}_p$, the $p$-adic implicit function theorem gives
\begin{equation}
\label{eq:localised-pos-density}
\liminf_{j\to\infty}p^{-j(n-1)}
\#\left\{\bfx\bmod p^j:
\bfx\in\Omega_p[j],\;F(\bfx)\equiv0\bmod{p^j}\right\}>0.
\end{equation}

For any integer $H\mid W$, let
\begin{equation}\label{eq:inel-global-complete-sum}
S_{q,H}(\bfv)
:=\sum_{a\in(\Z/q\Z)^\ast}
\sum_{\substack{\bfx\bmod\lcm(q,H)\\
\bfx\bmod H\in\Omega[H]}}
e_q\bigl(aF(\bfx)\bigr)e_{\lcm(q,H)}(\bfv\cdot\bfx).
\end{equation}
The following lemma records the necessary multiplicativity properties of $S_{q,W}(\bfv)$.
\begin{lemma}
\label{lem:restricted-multiplicative}
Let $q_0,q_1\ge 1$.
If $\gcd(Wq_0,q_1)=1$, then $$S_{q_0q_1,W}(\bfv)
= S_{q_0,W}(\bfv) S_{q_1}(\bfv).$$
Now suppose $p\mid q_0 \Rightarrow p\mid W$.
Then $$S_{q_0,W}(\bfv) = \prod_{p\mid W} S_{p^{v_p(q_0)},p^{M_p}}(\bfv).$$
Moreover, for all $p\mid W$ with $v_p(q_0) \ge M_p$,
we have $$S_{p^{v_p(q_0)},p^{M_p}}(\bfv) = S_{p^{v_p(q_0)}}^{\Omega_p}(\bfv).$$
Finally, for all $p\mid W$ with $v_p(q_0) < M_p$,
we have $$S_{p^{v_p(q_0)},p^{M_p}}(\bfv) \ll_p 1.$$
\end{lemma}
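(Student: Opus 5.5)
The plan is to verify the four claims directly from the definition \eqref{eq:inel-global-complete-sum}, using the Chinese remainder theorem in the usual way. For the first claim, I will take $\gcd(Wq_0,q_1)=1$. Then $\lcm(q_0q_1,W)=\lcm(q_0,W)\,q_1$, so I write $\bfx\bmod \lcm(q_0q_1,W)$ as a pair $(\bfx_0,\bfx_1)$ with $\bfx_0\bmod \lcm(q_0,W)$ and $\bfx_1\bmod q_1$. Similarly I write $a\in(\ZZ/q_0q_1\ZZ)^\ast$ as a pair $(a_0,a_1)$ of units. The condition $\bfx\bmod W\in\Omega[W]$ involves only $\bfx_0$. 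The phase $e_{q_0q_1}(aF(\bfx))\,e_{\lcm(q_0,W)q_1}(\bfv\cdot\bfx)$ factors, via the standard CRT splitting $1/(mn)=\bar n/m+\bar m/n$, into a product of a term in $(a_0,\bfx_0)$ and a term in $(a_1,\bfx_1)$. This factorisation twists $a_0,a_1$ by units and twists $\bfv$ by the units $\bar{q_1}$ modulo $\lcm(q_0,W)$ and $\overline{\lcm(q_0,W)}$ modulo $q_1$.

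These twists are removed as follows. The twist in $a$ disappears by reparametrising $a_i$. For the twist in $\bfv$, I substitute $\bfx\mapsto u\bfx$ and use homogeneity, exactly as for $S_q(u\bfv)=S_q(\bfv)$ before \eqref{eq:q-exact-CRT}. Here the key point is that the unit-orbit construction \eqref{eq:inel-Omega} makes $\Omega[W]$ stable under multiplication by units. This stability is precisely why $\Omega_p$ was defined as a $\ZZ_p^\ast$-orbit, so I expect this step to be the only one needing real care. The $q_1$ factor is then exactly $S_{q_1}(\bfv)$.

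For the second claim, all primes of $q_0$ divide $W$. Applying the same CRT factorisation over the primes $p\mid W$, each local factor is a sum modulo $\lcm(p^{v_p(q_0)},p^{M_p})$ with the constraint $\Omega_p[M_p]$, so it is $S_{p^{v_p(q_0)},p^{M_p}}(\bfv)$. This includes primes with $v_p(q_0)=0$, where the $a$-sum is trivial. Unit twists are again absorbed by unit-stability of each $\Omega_p[M_p]$ and by reparametrising $a$.

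For the third claim, $v_p(q_0)=k\ge M_p$ gives $\lcm=p^k$. The condition $\bfx\bmod p^{M_p}\in\Omega_p[M_p]$ is then equivalent to $\bfx\bmod p^k\in\Omega_p[k]$, because $\Omega_p$ is a union of cosets of $p^{M_p}\ZZ_p^n$. Indeed, $\lambda B_p=\lambda\boldsymbol\xi_p+p^{M_p}\ZZ_p^n$ for every unit $\lambda$. Comparing with \eqref{eq:inel-localised-sum} gives the identity. For the fourth claim, $v_p(q_0)<M_p$ means the sum has at most $p^{M_p}\cdot p^{nM_p}$ terms of modulus one, which is $O_p(1)$ by the trivial bound.
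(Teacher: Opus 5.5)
Your proposal is correct and follows the paper's proof. The first two identities come from the Chinese remainder theorem as in \eqref{eq:q-exact-CRT}, with the unit twists of $\bfv$ removed via homogeneity and the $\ZZ_p^\ast$-stability of $\Omega_p$; the third comes from comparing definitions, where your check that $\Omega_p$ is a union of cosets of $p^{M_p}\ZZ_p^n$ is exactly what the paper's one-line justification needs; and the fourth is the trivial bound.
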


\begin{proof}
The first two displays follow from the Chinese remainder theorem,
as in the proof of \eqref{eq:q-exact-CRT},
together with the scalar $\ZZ_p^\ast$-invariance of the set $\Omega_p$
defined in \eqref{eq:inel-Omega}.
The third display follows directly on comparing
the definitions \eqref{eq:inel-localised-sum} and \eqref{eq:inel-global-complete-sum}.
The fourth display follows from the trivial bound on $S_{p^{v_p(q_0)},p^{M_p}}(\bfv)$,
because $p,v_p(q_0),M_p\ll_p 1$.
\end{proof}

Define the localised singular series
\begin{equation}\label{eq:localised-singular-series}
\mathfrak S_\Omega := \sum_{q\ge1} \lcm(q,W)^{-n} S_{q,W}(\bm0).
\end{equation}

\begin{lemma}
\label{LEM:localised-singular-series-convergence}
For $n\geq 10$, the series $\mathfrak S_\Omega$ converges absolutely to a positive real number.
\end{lemma}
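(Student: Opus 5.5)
We will prove absolute convergence by factorising the summand into local factors and comparing with Bernert's unrestricted series, and then prove positivity by identifying each local factor with a local density. Write each $q\ge1$ uniquely as $q=q_0q_1$, where every prime divisor of $q_0$ divides $W$ and $\gcd(q_1,W)=1$. Then $\lcm(q,W)=\lcm(q_0,W)\,q_1$. By Lemma~\ref{lem:restricted-multiplicative},
\[
S_{q,W}(\bm0)=S_{q_1}(\bm0)\prod_{p\mid W}S_{p^{v_p(q_0)},p^{M_p}}(\bm0),
\]
so the summand $\lcm(q,W)^{-n}S_{q,W}(\bm0)$ factors as $q_1^{-n}S_{q_1}(\bm0)$ times a product over $p\mid W$ of $p^{-n\max(v_p(q_0),M_p)}S_{p^{v_p(q_0)},p^{M_p}}(\bm0)$. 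It therefore suffices to show two things: that $\sum_{\gcd(q_1,W)=1}q_1^{-n}|S_{q_1}(\bm0)|<\infty$, and that for each of the finitely many $p\mid W$ the local series $\sum_{k\ge0}p^{-n\max(k,M_p)}|S_{p^k,p^{M_p}}(\bm0)|$ converges.

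The first sum is dominated by \eqref{Bernert-n10-absolute}, since $F$ is good. For the second, the terms with $k<M_p$ are $O_p(1)$ by Lemma~\ref{lem:restricted-multiplicative}. For $k\ge M_p$, the term equals $p^{-kn}S^{\Omega_p}_{p^k}(\bm0)$, and \eqref{fixed-p-adic-stationary-phase} bounds this by $\ll_p p^{k(1-r(F)/2)}$. Since $r(F)\ge2\lceil n/3\rceil\ge8$ for $n\ge10$, this decays geometrically, so the local series converges. This proves absolute convergence, and moreover gives the Euler product
\[
\mathfrak S_\Omega=\prod_{p\nmid W}\chi_p\prod_{p\mid W}\chi_p^\Omega,\qquad
\chi_p^\Omega:=\sum_{k\ge0}p^{-n\max(k,M_p)}S_{p^k,p^{M_p}}(\bm0).
\]

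For positivity, Theorem~\ref{Bernert-n10-sing-series} gives $\chi_p>0$ for every $p$ and $\prod_p\chi_p>0$. The product $\prod_{p\nmid W}\chi_p$ differs from this only by finitely many positive factors, so it is also positive. It remains to show $\chi_p^\Omega>0$ for each $p\mid W$. Apply orthogonality modulo $p^j$ with $j\ge M_p$, in the same way as in the $j=0$ case of the proof of Lemma~\ref{lem:c/d-bound}. Summing $a$ over $\ZZ/p^j\ZZ$ according to $h=j-v_p(a)$, and keeping the restriction $\bfx\bmod p^{M_p}\in\Omega_p[M_p]$ throughout, the partial sum $\sum_{k\le j}p^{-n\max(k,M_p)}S_{p^k,p^{M_p}}(\bm0)$ equals $p^{-j(n-1)}$ times the number of $\bfx\bmod p^j$ with $\bfx\in\Omega_p[j]$ and $F(\bfx)\equiv0\bmod p^j$. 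The normalisations $\lcm(p^k,p^{M_p})$ are exactly what make this telescoping consistent when $k<M_p$. Letting $j\to\infty$ and using \eqref{eq:localised-pos-density} gives $\chi_p^\Omega>0$.

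The main obstacle is this final identity: one has to check carefully that the restricted sums with $k<M_p$, which run over $\bfx$ modulo $p^{M_p}$ rather than modulo $p^k$, combine correctly with the lcm normalisation to give exactly the restricted point count. I would verify this directly by writing the count as $p^{-j}\sum_{a\bmod p^j}\sum_{\bfx\in\Omega_p[j]}e_{p^j}(aF(\bfx))$ and reducing each term modulo $\lcm(p^h,p^{M_p})$.
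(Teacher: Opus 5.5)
Your proposal is correct and follows essentially the same route as the paper. You reduce, via Lemma~\ref{lem:restricted-multiplicative} and Theorem~\ref{Bernert-n10-sing-series}, to the finitely many local factors at $p\mid W$; you get their absolute convergence from \eqref{fixed-p-adic-stationary-phase}; and you get positivity from the orthogonality identity equating the partial sums with the normalised restricted point counts modulo $p^j$, combined with \eqref{eq:localised-pos-density}. The paper verifies the identity you flag as the main obstacle exactly as you propose: for each class of $a$ with $v_p(a)=j-k$, it counts lifts modulo $p^j$ of residues modulo $p^{\max(k,M_p)}$.
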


\begin{proof}
In view of Theorem~\ref{Bernert-n10-sing-series}
and Lemma~\ref{lem:restricted-multiplicative},
it suffices to show that for each $p\mid W$
the Euler factor $\sum_{k\ge 0} \lcm(p^k,p^{M_p})^{-n} S_{p^k,p^{M_p}}(\bm0)$
converges absolutely to a positive real number.
The absolute convergence itself is clear from Lemma~\ref{lem:restricted-multiplicative}
and \eqref{fixed-p-adic-stationary-phase}, since $r(F) \ge 2n/3 \ge 3$.
It remains to exhibit positivity.
First observe that if $j\ge M_p$ then
\begin{equation*}
\#\left\{\bfx\bmod p^j:
\bfx\in\Omega_p[j],\;F(\bfx)\equiv0\bmod{p^j}\right\}
= \sum_{\substack{\bfx\bmod{p^j} \\ \bfx\bmod p^{M_p}\in\Omega_p[M_p]}}
p^{-j} \sum_{1\le a\le p^j} e_{p^j}(aF(\bfx)).
\end{equation*}
Let $0\le k\le j$.
The total contribution to the right-hand side
from $a$ with $v_p(a) = j-k$ is
\begin{equation*}
\sum_{\substack{\bfx\bmod{p^j} \\ \bfx\bmod p^{M_p}\in\Omega_p[M_p]}}
p^{-j} \sum_{\substack{1\le b\le p^k \\ \gcd(b,p^k)=1}} e_{p^k}(bF(\bfx))
= p^{-j} p^{n(j-\max(k,M_p))} S_{p^k,p^{M_p}}(\bm0),
\end{equation*}
by counting lifts modulo $p^j$ of residues modulo
$p^{\max(k,M_p)}$.
It follows that
\begin{equation*}
\sum_{0\le k\le j} \lcm(p^k,p^{M_p})^{-n} S_{p^k,p^{M_p}}(\bm0)
= p^{-j(n-1)}
\#\left\{\bfx\bmod p^j:
\bfx\in\Omega_p[j],\;F(\bfx)\equiv0\bmod{p^j}\right\}.
\end{equation*}
Taking $j\to \infty$ completes the proof,
in view of the lower bound \eqref{eq:localised-pos-density}.
\end{proof}

The following variant of Proposition~\ref{prop:q-smith-bound} will also be useful
in the circle method.
\begin{lemma}
\label{lem:q-fixed-shifted box}
Fix a prime $p$, and let
\(\Omega_p\) be the neighbourhood chosen in \eqref{eq:inel-Omega}.
Let $k \ge 0$ and $a := \lceil k/3 \rceil$.
Then
\begin{equation}
\sum_{\bfa\bmod{p^a}}
\max_{\substack{\bfv\bmod{p^{\max(k,M_p)}} \\ \bfv\equiv \bfa\bmod{p^a}}}
|S_{p^k,p^{M_p}}(\bfv)|
\ll_{F,p}
p^{k(n + (4-r(F))/6)}.
\label{eq:q-fixed-shifted box}
\end{equation}
\end{lemma}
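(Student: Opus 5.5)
Since $p$ is fixed, any quantity bounded in terms of $F$, $p$ and $M_p$ may be absorbed into the implied constant. In particular, the finitely many $k$ with $\lfloor k/2\rfloor< M_p$ (or $k<M_p$) are trivial, since then the left-hand side of \eqref{eq:q-fixed-shifted box} is $\ll_p 1$ by Lemma~\ref{lem:restricted-multiplicative}. We may therefore assume $k\ge 2M_p$ and $k$ large. Lemma~\ref{lem:restricted-multiplicative} then identifies $S_{p^k,p^{M_p}}(\bfv)$ with $S^{\Omega_p}_{p^k}(\bfv)$, and the max is over $\bfv\bmod p^k$ with $\bfv\equiv\bfa\bmod p^a$, where $a=\lceil k/3\rceil$.

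The plan is to redo the localised stationary-phase bound \eqref{fixed-p-adic-stationary-phase}, but with the flexible lifting of Lemma~\ref{lem:q-flexible-lifting} using $A=p^{a'}$ with $a'\approx k/3$, rather than $c=p^{\lfloor k/2\rfloor}$. This mirrors the proof of Lemma~\ref{lem:q-flexible-average}.

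\textbf{Step 1: lifting.} Write $k=2a'+t$ with $a'=\lfloor k/3\rfloor\le a$ and $t=k-2a'\ge a'$. The proof of Lemma~\ref{lem:q-flexible-lifting} goes through verbatim with $\bfx$ restricted to the $\ZZ_p^\ast$-stable set $\Omega_p$, provided $a'\ge M_p$, since membership in $\Omega_p$ is then determined modulo $p^{a'}$. This gives
\[
|S^{\Omega_p}_{p^k}(\bfv)|\le p^{a'(n+1)}p^t\sum_{\alpha\in(\ZZ/p^{a'})^\ast}\ \sum_{\substack{\bfy\bmod p^{a'},\ \bfy\in\Omega_p\\ F(\bfy)\equiv0,\ \alpha\nabla F(\bfy)+\bfv\equiv\bm0 \bmod p^{a'}}}\mathcal G_{p^{a'},p^t}(\bfy).
\]
The right-hand side depends only on $\bfv\bmod p^{a'}$, hence only on $\bfa\bmod p^a$.

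\textbf{Step 2: bounding $\mathcal G$ on $\Omega_p$.} By \eqref{eq:inel-minor-valuation}, the Hessian of every $\bfy\in\Omega_p$ has at least $r(F)$ Smith invariants of valuation $\le v_p(\Delta_p(\boldsymbol\xi_p))=O_p(1)$. Feeding this into Lemma~\ref{lem:q-terminal}, or redoing its proof for $p=2$ in the regime $t>a$ while accepting $O_p(1)$ losses, gives
\[
\mathcal G_{p^{a'},p^t}(\bfy)\ll_p p^{nt-r(F)\sum_i\lambda_i(a',t)}.
\]
With $t\in\{a',a'+1,a'+2\}$, this sum is about $a'/2$ plus a bounded amount, so $\mathcal G\ll_p p^{nt-r(F)a'/2}$ up to bounded factors.

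\textbf{Step 3: counting.} Summing over $\bfa$ removes the congruence in $\bfv$, exactly as in Lemma~\ref{lem:q-flexible-average}. We are left to count $\bfy\bmod p^{a'}$ in $\Omega_p$ with $F(\bfy)\equiv0$. By the smoothness of $\Omega_p$ (Hensel), this number is $\ll_p p^{a'(n-1)}$. Multiplying by $p^{a'}$ choices of $\alpha$ and by $p^{a-a'}=O(1)$ gives a total of
\[
\ll_p p^{a'(n+2)+t(n+1)+a'(n-1)-r(F)a'/2}.
\]
Substituting $a'\approx t\approx k/3$, the exponent is
\[
\tfrac{k}{3}\bigl(3n+2-\tfrac{r(F)}{2}\bigr)=k\Bigl(n+\tfrac{4-r(F)}{6}\Bigr),
\]
which matches \eqref{eq:q-fixed-shifted box}, with the rounding of $k/3$ affecting only the constant.

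\textbf{Main obstacle.} The delicate step is Step 2 at $p=2$ with $t>a'$, where Lemma~\ref{lem:q-terminal} is not available. I would instead sum over $\bfz=\bfv+p^{t-a'}\bfx$ and use the nondegenerate $r(F)\times r(F)$ block of $H(\bfy)$, which is controlled uniformly on $\Omega_p$. Via Weyl differencing of the quadratic part in the first $r(F)$ variables, this should give square-root cancellation $p^{-r(F)t/2}$ per unit of $t$, up to $O_p(1)$ losses. This is stronger than needed: the target requires saving only $p^{r(F)k/6}$, and a saving of $p^{r(F)t/2}\approx p^{r(F)k/6}$ suffices on its own.
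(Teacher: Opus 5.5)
Your argument is the paper's argument, and the exponent bookkeeping $\frac{k}{3}\bigl(3n+2-\frac{r(F)}{2}\bigr)$ is right. The shared steps are: restrict the flexible lifting of Lemma~\ref{lem:q-flexible-lifting} to $\Omega_p$ with $A\approx p^{k/3}$; feed the bounded-valuation $r(F)\times r(F)$ minor into Lemma~\ref{lem:q-terminal}; and count $\bfy\bmod A$ on $\Omega_p$ by Hensel.

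The one unproved step is self-inflicted. Lifting with $a'=\lfloor k/3\rfloor$ forces $t=k-2a'>a'$ whenever $3\nmid k$. That puts you in the case of Lemma~\ref{lem:q-terminal} which requires $p\ne 2$, and at $p=2$ your proposal only says the bound ``should'' hold. The paper instead lifts with exactly the $a=\lceil k/3\rceil$ from the statement. For $k\ge 3M_p$ one then has $0\le t=k-2a\le a$: the pairs $(a,t)$ are $(e,e)$, $(e+1,e-1)$ and $(e+1,e)$ for $k=3e,3e+1,3e+2$. So the $t\le a$ case of Lemma~\ref{lem:q-terminal} applies directly and gives $\mathcal G_{p^a,p^t}(\bfy)\ll_{F,p}p^{nt-r(F)t/2}$. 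That case rests only on Lemma~\ref{LEM:square-and-diff} and is valid at every prime. This choice also makes the sum over $\bfa\bmod p^a$ match the lifting modulus, so you never pass between classes modulo $p^a$ and $p^{a'}$. Incidentally, that passage costs $p^{n(a-a')}$, not $p^{a-a'}$, though both are $O_p(1)$.

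If you prefer to keep $a'=\lfloor k/3\rfloor$, the case $p=2$, $s:=t-a'\in\{1,2\}$ is easy with a bounded loss, and needs no differencing of the cubic. Write $\bfz=\bfu+2^{s}\bfx$ with $\bfu\bmod 2^s$ and $\bfx\bmod 2^{a'}$. The cubic term $\alpha 2^{a'}F(\bfz)\bmod 2^t$ then depends only on $\bfu$. Bound the $\bfu$-sum trivially, which costs $2^{ns}=O(1)$ terms. Then square and difference the remaining quadratic sum $\sum_{\bfx\bmod 2^{a'}}e_{2^{a'}}(\bfb\cdot\bfx+\alpha 2^{s}Q_{\bfy}(\bfx))$, exactly as in Lemma~\ref{LEM:square-and-diff}. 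Since at least $r(F)$ Smith invariants of $H(\bfy)$ are bounded on $\Omega_2$, this gives
\[
|\mathcal S_{A,T}(\bfd,\alpha,\bfy)|\le 2^{ns}\bigl(2^{na'}N_{2^{a'}}(2^{s}\bfy)\bigr)^{1/2}\ll_F 2^{nt-r(F)a'/2}\ll_F 2^{nt-r(F)t/2}.
\]
In the proof of Lemma~\ref{lem:q-terminal}, the kernel-averaging step that needs $p$ odd serves only to restrict the $\bfu$-sum to admissible classes. That restriction is irrelevant when $s$ is bounded. Choosing the ceiling, as the paper does, makes all of this unnecessary.
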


\begin{proof}
The finitely many cases $k<3M_p$ are absorbed into the implied constant.
Thus suppose $k\ge 3M_p$, and let $t := k-2a$.
On \(\Omega_p\), a fixed
\(r(F)\times r(F)\) Hessian minor has bounded valuation.
Therefore, if $\bfy\in \Omega_p$, then $H(\bfy)$ has at least $r(F)$ Smith invariant factors $p^j$ with $j\ll_p 1$.
Since \(0\leq t\leq a\), it follows from Lemma~\ref{lem:q-terminal}
that $\mathcal G_{p^a,p^t}(\bfy) \ll_{F,p} p^{nt-r(F) t/2}$.
Moreover, the \(p\)-adic implicit function theorem gives
\[
\#\{\bfy\bmod p^a:
\bfy\in\Omega_p,\ F(\bfy)\equiv0\bmod p^a\}
\ll_{F,p} p^{(n-1)a}.
\]
The proof of Lemma~\ref{lem:q-flexible-average}, with the restriction
to \(\Omega_p\) retained, now gives
\[
\sum_{\bfa\bmod{p^a}}
\max_{\substack{\bfv\bmod{p^k} \\ \bfv\equiv \bfa\bmod{p^a}}}
|S_{p^k}^{\Omega_p}(\bfv)|
\ll_{F,p} p^{(n+2)a} p^t
p^{(n-1)a}  p^{nt-r(F) t/2}.
\]
Here $p^a,p^t \asymp_p p^{k/3}$, so \eqref{eq:q-fixed-shifted box} follows.
\end{proof}

\section{Second moments over linear spaces}
\label{sec:n10-second-moments}

Throughout this section suppose that $n\ge10$ and $X(\QQ)=\emptyset$.
In particular, $F$ is absolutely irreducible
by Proposition~\ref{Bro-integral-cone}.
For a prime power $q=p^a$,
a nontrivial additive character $\psi_q$ of $\FF_q$,
and a vector $\bfv\in \FF_q^n$,
put
\begin{equation}
T_q(\bfv)=T(\psi_q,F;\bfv):=\sum_{\substack{\bfx\in\FF_q^n\\F(\bfx)=0}}
\psi_q(\bfx\cdot\bfv)
-q^{n-1}\bm1_{\bfv=\bm0}.
\label{n10:T}
\end{equation}
When $q=p$ and $\psi_p=e_p$, orthogonality in the outer sum of \eqref{eq:defSq} gives
\begin{equation}\label{eq:pST}
S_p(\bfv)=pT_p(\bfv),    
\end{equation}
even when $\bfv=\bm0$.
We will be interested in the second moment of $T_q(\bfv)$ over certain linear spaces.

Given a linear space $L$ over $\QQ$,
we may define $L(\FF_{p^a})$ by choosing an integral model of $L$.
For any two fixed choices of integral models of $L$,
the sets $L(\FF_{p^a})$ then coincide for all sufficiently large primes $p$.
For brevity, we thus omit any further discussion of integral models of $L$.

\begin{lemma}
\label{n10:second-moment}
Let $L\subset\QQ^n$ be a linear space of dimension $n-2$.
For every sufficiently large prime $p$, every $q=p^a$, and every
nontrivial $\psi_q$, one has
\begin{equation}
 \sum_{\bfv\in L(\FF_q)}|T_q(\bfv)|^2\ll_{F,L}q^{2n-3}.
 \label{n10:moment-two}
\end{equation}
\end{lemma}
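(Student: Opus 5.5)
Expand the square and use orthogonality over $L(\FF_q)$. Writing $T_q(\bfv)=\sum_{F(\bfx)=0}\psi_q(\bfx\cdot\bfv)-q^{n-1}\bm1_{\bfv=\bm0}$, the $\bfv=\bm0$ correction only shifts the count by $O(q^{2n-2})/q^{\,\cdot}$-type terms that we handle at the end. The main step is the identity
\[
\sum_{\bfv\in L(\FF_q)}\Bigl|\sum_{\substack{\bfx\in\FF_q^n\\F(\bfx)=0}}\psi_q(\bfx\cdot\bfv)\Bigr|^2
=q^{n-2}\,\#\{(\bfx,\bfy)\in\FF_q^{2n}: F(\bfx)=F(\bfy)=0,\ \bfx-\bfy\in L^\perp\}.
\]
Here $L^\perp$ has dimension $2$, so the right-hand side becomes
\[
q^{n-2}\sum_{\bfu\in L^\perp(\FF_q)}\#\{\bfx: F(\bfx)=F(\bfx-\bfu)=0\}.
\]
Subtracting the $\bfv=\bm0$ term, which amounts to subtracting $q^{2n-2}$ from the $\bfv=\bm0$ summand, makes the target equivalent to showing that this weighted count equals $q^{2n-2}+O(q^{2n-3})$.

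We then split over $\bfu\in L^\perp(\FF_q)$. For $\bfu=\bm0$ the count is $\#\{F=0\}=q^{n-1}+O(q^{n-3/2})$ by Lemma~\ref{lem:Fq}, which contributes $q^{n-2}\cdot O(q^{n-1})=O(q^{2n-3})$. For the $q^2-1$ nonzero $\bfu$ we want
\[
\#\{\bfx:F(\bfx)=F(\bfx-\bfu)=0\}=q^{n-2}+E(\bfu).
\]
The subtraction of the main term $q^{2n-2}$ should then match $q^{n-2}\bigl(q^{n-1}+(q^2-1)q^{n-2}\bigr)$ only up to an error of size $O(q^{2n-3})$. To see this, note that orthogonality shows the full sum over $\bfv\in\FF_q^n$ works out exactly. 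It is cleaner to redo the computation by writing $T_q$ through the centred count, so that the main terms cancel identically (Parseval over $L$ with the centred function $\bm1_{F=0}-q^{-1}$). After that, it suffices to show
\[
\sum_{\bfu\in L^\perp\setminus\{\bm0\}}|E(\bfu)|\ll q^{n-1},
\]
up to lower-order terms.

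The key geometric input is that, for $\bfu\neq\bm0$, the variety $\{F(\bfx)=0,\ F(\bfx)-F(\bfx-\bfu)=0\}$ is a complete intersection of codimension $2$ whose singular locus is small. Since $F(\bfx)-F(\bfx-\bfu)=\bfu\cdot\nabla F(\bfx)-\tfrac12\bfu^tH(\bfx)\bfu+F(\bfu)$ is a quadric in $\bfx$ with quadratic part $\bfx\mapsto \bfu\cdot\nabla F(\bfx)=\tfrac12\bfx^tH(\bfu)\bfx$, the geometry is governed by $\rank H(\bfu)$. The first thing to try is a Deligne/Hooley bound of the form
\[
E(\bfu)\ll q^{(n-2+s(\bfu)+1)/2},
\]
where $s(\bfu)$ is the dimension of the singular locus. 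Then we need $s(\bfu)\le n-4$ for generic $\bfu$ in the $2$-plane $L^\perp$, and a Lang--Weil bound $E(\bfu)\ll q^{n-3}$ off a finite set of lines. The input should be a bound on the singular locus like Corollary~\ref{cor:hesrank} ($\dim V_r$ small), together with \eqref{EQ:clean-slop-incidence-bound}. Because $L$ is fixed over $\QQ$, exceptional directions in $L^\perp$ form a proper closed subset, hence finitely many lines ($O(q)$ points). On those lines we only need the trivial Lang--Weil bound $E(\bfu)\ll q^{n-2}$, which contributes $O(q)\cdot q^{n-2}=O(q^{n-1})$ and is acceptable. Generic points need $E(\bfu)\ll q^{n-3}$, summing over $q^2$ values to $q^{n-1}$. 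So it is enough that, for generic $\bfu\in L^\perp$, the intersection is geometrically irreducible of dimension $n-2$ with the expected count up to $O(q^{n-3})$ — that is, a power saving of a full $q$, which holds when the singular locus has projective codimension at least $2$ (using the codim-$\ge 2$ analogue of Lemma~\ref{lem:Fq}).

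\textbf{Main obstacle.} The hardest step is this generic geometry. It requires showing that $\{F=0\}\cap\{\bfu\cdot\nabla F=\text{affine terms}\}$ is geometrically integral with singular locus of codimension $\ge2$ for generic $\bfu$ in an arbitrary rational plane $L^\perp$, using only $X(\QQ)=\emptyset$. The first approach would be Jacobian analysis: singular points need $\nabla F(\bfx)\parallel H(\bfu)\bfx+\ldots$, which relates to the incidence variety $I$, and then the bound \eqref{EQ:clean-slop-incidence-bound} together with $r(F)\ge 2\lceil n/3\rceil$ should control the dimension. Spreading out then handles all large $p$, uniformly in $q=p^a$.
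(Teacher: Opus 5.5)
There is a genuine gap. Your opening identity is correct and is the same Parseval step the paper uses. The argument breaks at the next move: you group pairs $(\bfx,\bfy)$ with $\bfx-\bfy\in L^\perp$ by the difference $\bm u=\bfx-\bfy$, then bound each $|E(\bm u)|$ separately.

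That commits you to the pointwise estimate $E(\bm u)\ll q^{n-3}$ for all but $O(q)$ vectors $\bm u\in L^\perp(\FF_q)$. This is a saving of a full power of $q$ for the $(n-2)$-dimensional variety $\{F(\bfx)=F(\bfx-\bm u)=0\}$, and it is never proved. The ``Jacobian analysis'' is only a hope, and the criteria you state would not suffice anyway:
\begin{itemize}
\item Lang--Weil gives only $E(\bm u)\ll q^{n-5/2}$.
\item Your Hooley--Katz bound $q^{(n-1+s(\bm u))/2}$ with $s(\bm u)\le n-4$ (singular locus of codimension $2$) also gives only $q^{n-5/2}$.
\item Summed over $q^2$ values of $\bm u$ and multiplied by $q^{n-2}$, either gives $q^{2n-5/2}$, which is too large.
\end{itemize}
You would need the singular locus of the projective closure, including its part at infinity, to have codimension at least $3$ for generic $\bm u$ in an arbitrary rational plane. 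Nothing in Corollary~\ref{cor:hesrank} or \eqref{EQ:clean-slop-incidence-bound} evidently delivers this. Your claim that the bad $\bm u$ form a proper closed subset already presupposes that one good $\bm u$ exists.

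A smaller slip: in your centred expansion, the cross terms contribute $-2q^{-1}(\#\{F=0\}-q^{n-1})$ for each of the $q^2$ vectors $\bm u$. So you need $\#\{F=0\}=q^{n-1}+O(q^{n-2})$, which is the non-conical case of Lemma~\ref{lem:Fq}, not the $O(q^{n-3/2})$ you quote.

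The missing idea is to group the same pairs by the common image of $\bfx$ and $\bfy$ in $\FF_q^n/L^\perp$ instead. Change variables rationally so that $L=\{v_1=v_2=0\}$, and write $\bfx=(x_1,x_2,\bfx')$. The centred form of your identity then reads $\sum_{\bfv\in L(\FF_q)}|T_q(\bfv)|^2=q^{n-2}\sum_{\bfx'\in\FF_q^{n-2}}(\nu_F(\bfx')-q)^2$. Here $\nu_F(\bfx')$ is the number of points on the affine plane cubic $F(X_1,X_2,\bfx')=0$. This is a sum of squares, so only square-root cancellation in each fibre is needed:
\begin{itemize}
\item Since $X(\QQ)=\emptyset$, the binary cubic $F(X_1,X_2,0,\dots,0)$ is nonzero and irreducible over $\QQ$, hence square-free.
\item This gives $\nu_F(\bfx')\ll q$ for every $\bfx'$, and absolute irreducibility of the generic fibre (Lemma~\ref{generic-irred}).
\item Noether's theorem then yields a nonzero polynomial $G_{F,L}$ off whose zero set the fibres stay absolutely irreducible for all large $p$.
\item The Weil bound $(\nu_F-q)^2\ll q$ off $G_{F,L}=0$, together with the trivial bound on its $O(q^{n-3})$ zeros, gives $q^{n-2}(q^{n-2}\cdot q+q^{n-3}\cdot q^2)\ll q^{2n-3}$.
\end{itemize}
No information about $\{F(\bfx)=F(\bfx-\bm u)=0\}$ is needed at all.
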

\begin{proof}
Let $X_1,\dots,X_n$ and $V_1,\dots,V_n$ be the generic $\bfx$ and $\bfv$ variables, respectively.
After a change of variables defined over $\QQ$, with the corresponding dual change in the $\bfx$-variables, we may assume that $L=\{V_1=V_2=0\}$.
Moreover, observe that for every $\bfv=(0,0,\bfv')\in L(\FF_q)$ we can rewrite
\[
\begin{aligned}
T_q(\bfv)
&=\sum_{\substack{\bfx\in\FF_q^n\\F(\bfx)=0}}
  \psi_q(\bfx\cdot\bfv)-q^{n-1}\bm{1}_{\bfv=\bm0}\\
&=\sum_{\bfx'\in\FF_q^{n-2}}\psi_q(\bfv'\cdot\bfx')
  \sum_{\substack{(x_1,x_2)\in\FF_q^2\\F(x_1,x_2,\bfx')=0}}1
  -q^{n-1}\bm{1}_{\bfv=\bm0}\\
&=\sum_{\bfx'\in\FF_q^{n-2}}
  \bigl(\nu_F(\bfx')-q\bigr)\psi_q(\bfv'\cdot\bfx'),
\end{aligned}
\]
where
\[
\nu_F(\bfx')=\#\{(x_1,x_2)\in\FF_q^2:F(x_1,x_2,\bfx')=0\},
\]
and $\bfx'=(x_3,\ldots,x_n)$. Hence
\begin{equation*}
\begin{split}
\sum_{\substack{\bfv\in\FF_q^n\\v_1=v_2=0}}|T_q(\bfv)|^2
&=\sum_{\substack{\bfv\in\FF_q^n\\v_1=v_2=0}}
\sum_{(\bfx',\bfy')\in\FF_q^{2n-4}}
\bigl(\nu_F(\bfx')-q\bigr)\bigl(\nu_F(\bfy')-q\bigr)
\psi_q\bigl(\bfv'\cdot(\bfx'-\bfy')\bigr) \\
&= q^{n-2}\sum_{\bfx'\in\FF_q^{n-2}} \bigl(\nu_F(\bfx')-q\bigr)^2,
\end{split}
\end{equation*}
by averaging over $\bfv$.

Since $X(\QQ)=\emptyset$, we have $F(X_1,X_2,0,\dots,0) \ne 0$,
so $\deg F(X_1,X_2,\bfx') = 3$ for all $\bfx'\in\FF_q^{n-2}$
if $p$ is sufficiently large.
By Noether's lemma on absolute irreducibility, as in \cite[Ch.~V, Thm.~2A]{Sch76}, there exists a polynomial
$G_{F,L}(\mathbf{X}')\in\mathbb{Z}[X_3,\ldots,X_n]$ such that
\[
G_{F,L}(\bfx')\neq0
\quad\Longrightarrow\quad
F(X_1,X_2,\bfx')\text{ is irreducible over }\bar{\FF}_q,
\]
for every sufficiently large prime $p$ and every $q=p^a$, where $G_{F,L}(\bfx')$ is evaluated in $\FF_q$.
Moreover, $G_{F,L}$ can be chosen nonzero because the
generic specialization is absolutely irreducible,
as we show in Lemma~\ref{generic-irred} below.

For sufficiently large $p$, the reduction of $G_{F,L}$ is nonzero, so there are $O_{F,L}(q^{n-3})$ vectors $\bfx'\in\FF_q^{n-2}$ at which $G_{F,L}$ vanishes.
For each such $\bfx'$, the polynomial $F(X_1,X_2,\bfx')$
has leading cubic part $F(X_1,X_2,0,\dots,0) \ne 0$,
provided that $p$ is large enough.
Thus $\nu_F(\bfx')\ll q$ trivially.
For the remaining vectors $\bfx'$, the Lang--Weil bound gives $|\nu_F(\bfx')-q|\ll q^{1/2}$.
Hence
\[
\begin{aligned}
\sum_{\bfv\in L(\FF_q)}|T_q(\bfv)|^2
&\ll_{F,L}
q^{n-2} \sum_{\substack{\bfx'\in\FF_q^{n-2}\\G_{F,L}(\bfx')=0}} q^2
+ q^{n-2} \sum_{\substack{\bfx'\in\FF_q^{n-2}\\G_{F,L}(\bfx')\neq0}} q \\
&\ll_{F,L}q^{2n-3},
\end{aligned}
\]
as claimed.
\end{proof}

\begin{lemma}
\label{generic-irred}
Let $K=\Qbar(T_3,\ldots,T_n)$.
Then $F(X_1,X_2,T_3,\dots,T_n)$ is irreducible in $\bar{K}[X_1,X_2]$.
\end{lemma}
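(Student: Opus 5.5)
We argue by contradiction: suppose $F(X_1,X_2,T_3,\dots,T_n)$ factors nontrivially in $\bar K[X_1,X_2]$. Since it is cubic in $(X_1,X_2)$, with top-degree part $F(X_1,X_2,0,\dots,0)\ne 0$, any factorisation has a factor of degree $1$ or an irreducible quadratic factor times a linear one; either way there is a linear factor $\ell = \alpha X_1+\beta X_2+\gamma$ over $\bar K$. The plan is to show that such a factor is incompatible with $X(\QQ)=\emptyset$, via the geometry of $\sing X$ established in \S\ref{sec : geom_cubic_hyper}.

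The first step is to interpret reducibility geometrically. Consider the projection $\A^n\to\A^{n-2}$, $\bfx\mapsto\bfx'=(x_3,\dots,x_n)$, restricted to the affine cone $\cone X$. Its generic fibre is the affine plane cubic curve $F(X_1,X_2,\bfx')=0$. If this curve is geometrically reducible, then every fibre over a dense open set of $\bfx'$ is reducible. So for generic $\bfx'$ the plane cubic $C_{\bfx'}$ in the $2$-plane $\{\bfx'\}\times\A^2$ is reducible, and any two components meet. Hence $C_{\bfx'}$ is singular, with a singular point $\bfx$ satisfying $\partial_1F(\bfx)=\partial_2F(\bfx)=0$ and $F(\bfx)=0$. (Over $\bar K$ even a line plus a conic meet at some point of the projective closure; I would work with the projective closure in $\PP^2$ over $\bar K$, and handle separately intersection points at infinity, which lie on $F(X_1,X_2,0,\dots,0)=0$ and contribute only a bounded family.)

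The second step is dimension counting. The locus
\[
\Sigma=\{\bfx\in\cone X:\ \partial_1F(\bfx)=\partial_2F(\bfx)=0\}
\]
then dominates $\A^{n-2}$, so $\dim\Sigma\ge n-2$. On the other hand, $\Sigma$ is $\cone X\cap\{\partial_1F=\partial_2F=0\}$, and we want to show that it has dimension $\le n-3$. The hypersurface sections $\partial_1F=0$ and $\partial_2F=0$ fail to cut $\cone X$ properly only along components on which they vanish identically. One route is to compare with the incidence correspondence: a point $\bfx\in\Sigma$ with $x_1,x_2$ free means that the tangent hyperplane contains the coordinate directions $\bfe_1,\bfe_2$. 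I would rephrase this via the Gauss map, as the dual variety condition $[\nabla F(\bfx)]\in L^\perp$ with $L=\mathrm{span}(\bfe_1,\bfe_2)$, a rational codimension-$2$ linear condition on $X^*$. Using $\dim\Gamma=n-2$ and the bound on $\sing X$ from \eqref{EQ:clean-slop-sing-bound}, the singular part contributes dimension at most $(2n-3)/3+1<n-2$. The smooth part maps to $X^*\cap\PP(L^\perp)$. Since $\dim X^*\ge 2\lceil n/3\rceil-2$ by Lemma~\ref{lem:2n/3}, and the fibres of $\pi_2$ have dimension $n-2-\dim X^*$, we get $\dim\le n-2-\dim X^* +\dim(X^*\cap \PP(L^\perp))+1$. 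This is $\le n-3$ provided $X^*\cap\PP(L^\perp)$ is a proper intersection, i.e.\ it drops the dimension by $2$.

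The main obstacle is that properness: $\PP(L^\perp)$ is a specific rational codimension-$2$ space, which might meet $X^*$ excessively. To exclude this, I would use the rationality of $L$ together with Proposition~\ref{PROP:rational-exceptional-hyperplanes-general}-style arguments. Excess intersection produces a family of rational hyperplanes containing $L^\perp$ with large singular hyperplane sections. Alternatively, and more simply, if the generic fibre has a linear factor, then Galois descent makes the degree-one factor unique (when the remaining factor is an irreducible conic), hence defined over $K$ up to scaling. This gives a rational family, and specialising at a rational $\bfx'$ yields a rational line in $C_{\bfx'}$; its point at infinity $\alpha X_2-\beta X_1$ is a zero of $F(X_1,X_2,0,\dots,0)$. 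If $\alpha,\beta$ are rational multiples, this is a rational zero of $F$, a contradiction. I would try this descent argument first, falling back on the dimension count in the three-lines case.
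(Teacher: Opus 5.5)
Your proposal has a genuine gap, located in the case you postpone. You never use that $F(X_1,X_2,T_3,\dots,T_n)$ is already irreducible in $K[X_1,X_2]$. This follows from the absolute irreducibility of $F$ (Proposition~\ref{Bro-integral-cone}) by Gauss's lemma, since the coefficient of $X_1^3$ is the nonzero constant $F(1,0,\dots,0)$.

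Without this fact, your descent step does not reach a contradiction. In the line-plus-conic case the unique linear factor descends only to $K=\Qbar(T_3,\dots,T_n)$. Its direction is a root of $F(X_1,X_2,0,\dots,0)$ lying in $\Qbar$, which is automatically in $K$ but has no reason to lie in $\QQ$. Likewise, specialising at a rational $\bfx'$ gives a line over $\Qbar$, not over $\QQ$, so the promised rational zero does not appear. You could repair this case by descending over $\QQ(T_3,\dots,T_n)$ instead, or by noting that a $K$-rational linear factor contradicts irreducibility over $K$.

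The three-line case is handed to the dimension count, which you do not carry out. Besides the possible excess intersection $X^*\cap\PP(L^\perp)$ that you flag, the fibre dimension $n-2-\dim X^*$ of $\pi_2$ is only the generic value and can jump over this special locus. Moreover, the count is aimed at showing that the generic affine fibre is smooth, which is more than the lemma needs.

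The missing idea handles every case at once, and it is the paper's argument. Irreducibility over $K$ forces any factorisation over $\bar K$ into $\Gal(\bar K/K)$-conjugate factors of equal degree, hence into three conjugate lines $X_1+b_iX_2+c_i$. Their top-degree parts divide the binary cubic $F(X_1,X_2,0,\dots,0)$, so each $b_i\in\Qbar\subset K$ is Galois-fixed, and therefore $b_1=b_2=b_3$. But $X(\QQ)=\emptyset$ makes this binary cubic irreducible over $\QQ$, hence squarefree, which is a contradiction.

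The fact you were missing in the three-line case is exactly that the directions lie in $\Qbar\subset K$. Being distinct and Galois-fixed, they make each line individually Galois-stable, hence defined over $K$, which is incompatible with irreducibility over $K$.
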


\begin{proof}
Since $F(X_1,X_2,T_3,\dots,T_n)$ is irreducible in $\Qbar[T_3,\dots,T_n][X_1,X_2]$,
it is irreducible in $K[X_1,X_2]$ by Gauss's lemma.
If $F(X_1,X_2,T_3,\dots,T_n)$ were reducible in $\bar{K}[X_1,X_2]$,
then by Galois theory it would have three $\Gal(\bar{K}/K)$-conjugate linear factors
$F_i = X_1+b_iX_2+c_i\in \bar{K}[X_1,X_2]$, say,
possibly after switching the variables $X_1$ and $X_2$.
Equating top-degree terms in $X_1,X_2$ gives
$F(X_1,X_2,0,\dots,0)
\in \bar{K}^\ast \prod_{i=1}^{3} (X_1+b_iX_2)$.
Factoring $F(X_1,X_2,0,\dots,0)$ over $\Qbar$
gives $b_i\in \Qbar\subset K$.
Thus $\Gal(\bar{K}/K)$ fixes each $b_i$, whence $b_1=b_2=b_3$.
However, $X(\QQ) = \emptyset$, so the binary cubic form
$F(X_1,X_2,0,\dots,0) \in \QQ[X_1,X_2]$
is irreducible, and hence square-free.
This is a contradiction.
The statement of the lemma follows.
\end{proof}

We next record precisely how the moment will be used.
Fix a prime $\ell\ne p$ and a field isomorphism
$\iota:\Qbar_\ell\to \CC$.
Let $i:\cone X=\{F=0\}\hookrightarrow\A^n$, and let
$\pi_1,\pi_2$ be the projections of
$\A^n_{\bfx}\times\A^n_{\bfv}$.
For a nontrivial additive character $\psi$ of $\FF_p$, put
\begin{equation}
K_{p,\psi}:=R\pi_{2!}
\bigl(\pi_1^*i_*\Qbar_\ell
\otimes\mathcal L_\psi(\bfx\cdot\bfv)\bigr),
\label{n10:fourier-complex}
\end{equation}
where we use $\iota$ to identify the values of $\psi$ with elements of $\Qbar_\ell$ when
defining the Artin--Schreier sheaf $\mathcal L_\psi$.
Here $\mathcal L_\psi(\bfx\cdot\bfv)$ denotes the pullback
of $\mathcal L_\psi$ along the map $(\bfx,\bfv)\mapsto\bfx\cdot\bfv$.

For every $q=p^a$, $a\ge1$, and every
$\bfv\in\FF_q^n\setminus\{\bm0\}$, its trace is $T_q(\bfv)$,
where $\psi_q=\psi\circ\Tr_{\FF_q/\FF_p}$.
More precisely, by the construction of $K_{p,\psi}$ and the
Grothendieck--Lefschetz trace formula,
\begin{equation}
 T_q(\bfv)
 =t_{K_{p,\psi}}(\bfv;q)
 =\sum_i(-1)^i\iota\left(
   \operatorname{tr}\left(
     \operatorname{Fr}_{q,\bfv}
     \mid
     \mathcal H^i(K_{p,\psi})_{\bar{\bfv}}
   \right)\right),
   \label{eq : GLT}
\end{equation}
where $\operatorname{Fr}_{q,\bfv}$ denotes geometric Frobenius.
The sheaf inside $R\pi_{2!}$ in
\eqref{n10:fourier-complex} is punctually pure of weight $0$.
Consequently, by the compact-support weight theorem
\cite[Theorem~3.3.1]{Del}, each cohomology sheaf
$\mathcal H^i(K_{p,\psi})$ is constructible and mixed
with integer weights $\leq i$.

It will be useful to have the following \emph{uniform} constructibility result as $p$ varies.

\begin{lemma}\label{n10:ss-adapted}
The variety $\A^n \setminus \{\bm0\}$ has a finite partition into smooth
homogeneous locally closed sets $C_j$ defined over $\QQ$, such that
$\mathcal H^i(K_{p,\psi})$ is lisse on $C_j$
for all sufficiently large $p$
and every nontrivial $\psi$.
Moreover,
$\sum_i \rank \mathcal H^i(K_{p,\psi})\vert_{C_j} \ll_F 1$, uniformly over $p$ and $\psi$.
\end{lemma}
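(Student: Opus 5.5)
The plan is to obtain the partition by spreading out a constructibility statement from the generic fibre, working over a finitely generated base. First I will define the complex over $\ZZ[1/N]$ rather than prime by prime. Over $S:=\Spec \ZZ[1/N][\mu_p]$ this is awkward, since the Artin--Schreier sheaf depends on $p$. So I will instead use the uniform algebraic input: the hypotheses of Deligne's generic base change / Katz--Laumon uniformity for Fourier-type transforms. Concretely, the relevant result is Katz--Laumon \cite{KL85}, which treats families of the form $R\pi_{2!}(\pi_1^*\mathcal F\otimes \mathcal L_\psi(\bfx\cdot\bfv))$ with $\mathcal F$ coming from a scheme of finite type over $\ZZ$. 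Their Th\'eor\`eme 3.1.2 (generic constructibility over $\ZZ$) gives a finite partition of $\A^n_{\ZZ[1/N]}$ into locally closed subschemes $C_j$ such that, for all $p\nmid N$ and all nontrivial $\psi$, the sheaves $\mathcal H^i(K_{p,\psi})$ are lisse on $C_j\otimes\FF_p$. I will apply this with $\mathcal F=i_*\Qbar_\ell$ on $\cone X_{\ZZ[1/N]}$.

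Next I will refine this partition to meet the three requirements: the pieces should be smooth, homogeneous, and defined over $\QQ$. Smoothness comes from stratifying each generic fibre $C_{j,\QQ}$ by iterated singular loci. After enlarging $N$, these strata spread out to smooth subschemes, and lisseness is preserved under restriction. For homogeneity I will use the $\GG_m$-action $\bfv\mapsto\lambda\bfv$. Since $F$ is homogeneous, substituting $\bfx\mapsto\lambda^{-1}\bfx$ shows that $[\lambda]^*K_{p,\psi}\cong K_{p,\psi}$, so each $\mathcal H^i$ is $\GG_m$-equivariant on $\A^n\setminus\{\bm0\}$. Consequently the maximal open set on which a given $\mathcal H^i$ is lisse is $\GG_m$-stable. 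I will therefore replace each stratum by the $\GG_m$-saturation built from these lisse loci: take the largest lisse open set, remove it, and repeat on the closed complement. Each step keeps everything homogeneous, and uniformity in $p$ is maintained by again applying the Katz--Laumon spreading out to the finitely many resulting $\QQ$-strata. Finally I will discard the origin, since the statement concerns only $\A^n\setminus\{\bm0\}$.

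For the rank bound, lisseness on $C_j$ means the rank is constant on each stratum, so it suffices to bound the stalk dimensions at a single point. At $\bfv\ne\bm0$, the stalk of $\mathcal H^i$ equals $H^i_c$ of $\{\bfx\in\A^n: F(\bfx)=0\}$ with coefficients in $\mathcal L_\psi(\bfx\cdot\bfv)$. The sum of the Betti numbers of such a cohomology is bounded in terms of $n$ and the degrees alone, uniformly in $p$ and $\psi$, by Katz's sums-of-Betti-numbers bound (Bombieri--Katz). This gives $\sum_i\rank\ll_F 1$.

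I expect the main obstacle to be uniformity in $\psi$ and $p$ at the same time. The Artin--Schreier sheaf does not live over $\ZZ$, so generic constructibility has to be obtained from Katz--Laumon's theorem for the Fourier transform of a complex over $\ZZ$, or alternatively from Deligne's generic base change for $\mathcal L_\psi$ defined over $\ZZ[\zeta_p]$ with $p$ varying. If this is too delicate, I will fall back on the following route. Take the singular-support / characteristic-cycle stratification of $\A^n_\QQ$ adapted to the conormal data of $\cone X$ (as in Appendix~\ref{sec:n10-singular-support}), and spread it out. Lisseness of the Fourier transform away from the projection of the characteristic cycle then holds uniformly for large $p$, because the characteristic cycle of $i_*\Qbar_\ell$ spreads out.
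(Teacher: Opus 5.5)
Your proposal follows essentially the same route as the paper: a partition uniform in $p$ and $\psi$ from Katz--Laumon/Fouvry--Katz generic constructibility, followed by $\GG_m$-saturation justified by the scaling invariance of $K_{p,\psi}$. The paper cites the Fouvry--Katz stratification theorem, which also supplies the uniform rank bound; you obtain that bound instead from Katz's sums-of-Betti-numbers estimate at stalks, which works equally well.

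One step needs to be stated more precisely. You should saturate $\QQ$-defined pieces of the fixed partition, not the $p$-dependent maximal lisse loci. At each step, take a smooth open dense subset, contained in a single stratum, of a top-dimensional component of the current homogeneous closed set. Replace it by its $\GG_m$-orbit, which is open there and lisse by your invariance argument, remove it, and induct on dimension and number of components. This needs no second application of Katz--Laumon.
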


\begin{proof}
By \cite[Theorem~2.1]{FK01}, applied to the morphism
$\cone X\times\A^n \to \A^n$
in place of $X\to T$,
we obtain a finite partition $\A^n\setminus \{\bm0\} = \bigsqcup_j Y_j$
into locally closed sets $Y_j$
such that $\mathcal H^i(K_{p,\psi})$ is lisse on $Y_j$
for all sufficiently large $p$,
and such that $\sum_i \rank \mathcal H^i(K_{p,\psi})\vert_{Y_j} \ll_F 1$.
The sets $Y_j$
are not necessarily smooth and homogeneous.
We modify the partition $\{Y_j\}$ as follows.

Let $V := \A^n \setminus \{\bm0\}$.
Since $F$ is homogeneous,
the sheaves $\mathcal H^i(K_{p,\psi})$
factor through the projectivisation $V \to \PP^{n-1}$.
If $\dim Y_j = n$, then $Y_j$ contains a smooth dense open set $U_j \subset V$.
Then $\GG_m U_j\subset V$,
the union of all scalar multiples of $U_j$,
has the property that
$\mathcal H^i(K_{p,\psi})$ is lisse on $\GG_m U_j$
for all sufficiently large $p$.
This follows from gluing,
since the translates $\lambda U_j$ for $\lambda\in \GG_m$
form an open cover of $\GG_m U_j$.
Moreover, $\GG_m U_j$ is open in $V$, being a union of open sets $\lambda U_j$.

Choose $j_0$ for which $\dim Y_{j_0} = n$.
We then create a new partition of $V$
consisting of the set $C_0 = \GG_m U_{j_0}$
together with the sets $Y'_j = Y_j \setminus C_0$ for all $j$,
and let $V' := V \setminus C_0$.
Then the sets $Y'_j$ partition $V'$.
Moreover, $V'$ is homogeneous and closed in $V$.
If $\dim Y'_j$ is maximal over all $j$,
then $Y'_j$ contains a smooth open set $U'_j \subset V'$,
where $U'_j$ is dense in an irreducible component of $V'$.
Any irreducible component of $V'$ is homogeneous, so it follows that
$\mathcal H^i(K_{p,\psi})$ is lisse on $\GG_m U'_j \subset V'$
for all sufficiently large $p$.

Next, choose $j_1$ for which $\dim Y'_{j_1}$ is maximal and again let us again create a new partition of $V'$
consisting of the set $C_1 = \GG_m U'_{j_1}$
together with the sets $Y''_j = Y'_j \setminus C_1$ for all $j$,
and let $V'' := V' \setminus C_1$.
The set $V''$ is homogeneous and closed in $V'$.
Continuing in this manner,
decreasing the pair
$$(\dim V^{(r)},\text{number of top-dimensional irreducible components of }V^{(r)})$$
at each step $r\ge 1$,
we eventually obtain a partition
$V = \bigsqcup_{j'} C_{j'}$ into smooth homogeneous locally closed sets $C_{j'}$,
such that $\mathcal H^i(K_{p,\psi})$ is lisse on $C_{j'}$
for all sufficiently large $p$.
\end{proof}

\begin{rem}
Since $F$ is homogeneous,
there exists an alternative sheaf-theoretic interpretation of $T_q(\bfv)$
that involves constant coefficients instead of Artin--Schreier coefficients. This perspective is taken in Appendix~\ref{sec:n10-singular-support}. 
\end{rem}

\begin{prop}
\label{n10:moment-trace}
    Let $V\subset\mathbb{A}^{n}$ be a geometrically irreducible
    affine cone of codimension $r+1$ and degree $d$, defined
    over $\QQ$, and consider the restriction
    $K_{p,\psi}|_{V_{\FF_p}}$.
    Let $U\subset V\setminus\{\bm0\}$ be a nonempty smooth
    open set, defined over $\QQ$, such that, for all but
    finitely many primes $p$, the sheaves
    $\mathcal H^i(K_{p,\psi})|_{U_{\FF_p}}$ are lisse and
    the sum of their ranks is bounded independently of $p$.
    Then either
    \[
        T_p(\bfv)\ll_{F,V,U}p^{(n+r-2)/2}
        \qquad
        \bigl(\bfv\in U(\FF_p)\bigr)
    \]
    for all sufficiently large primes $p$, or $d\ge r+1$.
\end{prop}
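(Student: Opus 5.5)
The plan is to argue by contraposition: assume $d\le r$ and prove the pointwise bound. I would combine a classical degree--span inequality with the second moment bound of Lemma~\ref{n10:second-moment}, using weights to turn a failure of the pointwise bound into a large second moment.

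\emph{Step 1 (geometry).} A geometrically irreducible variety that is nondegenerate in its linear span $M$ has degree at least $\codim_M+1$. Let $c=n-\dim M$, where $M$ is the span of $V$. Since $\codim_M V = r+1-c$, we get $d\ge r+2-c$. Thus $d\le r$ forces $c\ge 2$. The span of a $\QQ$-variety is defined over $\QQ$, so $V\subset L$ for some $\QQ$-linear space $L$ of dimension exactly $n-2$ (enlarge $M$ if necessary). In particular $U(\FF_{q})\subset L(\FF_q)$ for all $q=p^a$, for $p$ large.

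\emph{Step 2 (weights).} Fix a large prime $p$ as in the hypothesis. Each $\mathcal H^i(K_{p,\psi})|_{U_{\FF_p}}$ is lisse and mixed with integer weights $\le i$. By Deligne's theory of mixed lisse sheaves it has a finite weight filtration whose graded pieces $\mathcal G_{i,w}$ are lisse and pure of integer weight $w$. If every nonzero $\mathcal G_{i,w}$ has $w\le n+r-2$, then \eqref{eq : GLT} and the uniform rank bound give $T_p(\bfv)\ll p^{(n+r-2)/2}$ on $U(\FF_p)$, as desired. So suppose some $\mathcal G_{i,w}\neq0$ with $w\ge n+r-1$, and take $w$ maximal. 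Let $\mathcal G=\sum_i(-1)^i\mathcal G_{i,w}$ as a virtual sheaf; its trace function is the weight-$w$ part of $T_q$. One should check that cancellation between different $i$ does not kill $\mathcal G$; I expect this can be handled by taking $w$ maximal and, if necessary, passing to the semisimplification and Frobenius eigenvalue considerations.

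\emph{Step 3 (lower bound for the moment; the main obstacle).} We need
\[
\sum_{\bfv\in U(\FF_{p^a})}|t_{\mathcal G}(\bfv;p^a)|^2 \gg p^{a(\dim U+w)}
\]
for infinitely many $a$. The approach is to apply Grothendieck--Lefschetz to $\mathcal G\otimes\mathcal G^\vee$ on the geometrically irreducible $U$ of dimension $n-r-1$. The top cohomology $H^{2\dim U}_c$ equals the geometric coinvariants, which are nonzero since they contain the trace pairing, and it has Frobenius eigenvalues of absolute value $p^{a(\dim U+w)}$. The lower cohomology contributes $O(p^{a(\dim U+w-1/2)})$ by Deligne. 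Averaging over $a$ to avoid root-of-unity cancellation gives the claimed lower bound along a density-one set of $a$. The lower-weight parts of $T_q$ are handled by Cauchy--Schwarz: they contribute at most $p^{a(\dim U+w-1/2)}$ to the cross terms. Combining, for such $a$,
\[
\sum_{\bfv\in U(\FF_q)}|T_q(\bfv)|^2\gg q^{\,n-r-1+w}\ge q^{\,2n-2}.
\]

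\emph{Step 4 (contradiction).} By Step 1, $U(\FF_q)\subset L(\FF_q)$, and $\dim L=n-2$. Lemma~\ref{n10:second-moment} therefore bounds the left side by $O_{F,L}(q^{2n-3})$. Letting $a\to\infty$ with $p$ fixed gives a contradiction. Hence all weights are $\le n+r-2$, and the pointwise bound holds unless $d\ge r+1$. The delicate points are all in Steps 2--3: the non-vanishing of the top-weight virtual piece and the uniform-in-$a$ extraction of the main term. The geometric Step 1 and the final comparison are routine.
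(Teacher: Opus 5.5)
Your overall route is the paper's. The paper also argues by contradiction from $d\le r$. It uses the degree bound for nondegenerate varieties (Swinnerton-Dyer's lemma applied twice, which amounts to your $\deg\ge\codim_M+1$) to place $V$ inside a $\QQ$-rational linear space $L$ of dimension $n-2$. It then plays Lemma~\ref{n10:second-moment} against a second-moment lower bound as $a\to\infty$ with $p$ fixed.

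The difference is that the paper does not prove the weight dichotomy itself. It quotes Theorem~3.5 and Remark~3.6 of \cite{Xu20} with $w=n+r-2$, which give directly that either $|T_{p^a}(\bfv)|\le Cp^{a(n+r-2)/2}$ on $U(\FF_{p^a})$ for all $a$, or $\limsup_{a\to\infty}p^{-a(2n-2)}\sum_{U(\FF_{p^a})}|T_{p^a}|^2\ge 1$. Your Steps~2--3 try to reprove this, and as written they do not quite work.

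\textbf{Step 2.} Cancellation between the pure pieces $\mathcal G_{i,w}$ for different $i$ can genuinely happen; nothing prevents the same pure lisse sheaf from occurring in consecutive degrees. So you cannot ``check'' that it does not kill $\mathcal G$. With your case split, the case where all top-weight pieces cancel yields neither the bound nor a contradiction.

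The fix is to work in the Grothendieck group of semisimplified lisse sheaves on $U$. Let $w$ be the largest weight for which $\sum_i(-1)^i[\mathcal G_{i,w}]$ is nonzero. The vanishing classes contribute nothing to the trace, so the case $w\le n+r-2$ still gives the pointwise bound, and the case split becomes exhaustive.

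\textbf{Step 3.} This then needs a real argument. For a virtual $\mathcal G=[A]-[B]$, the top-degree term is $q^{a(\dim U+w)}\tau(a)$. Here $\tau(a)$ is the weight-normalised virtual Frobenius trace on
\[
\mathrm{End}_{\pi_1^{\mathrm{geom}}}(A)+\mathrm{End}_{\pi_1^{\mathrm{geom}}}(B)-\mathrm{Hom}_{\pi_1^{\mathrm{geom}}}(A,B)-\mathrm{Hom}_{\pi_1^{\mathrm{geom}}}(B,A).
\]
``Containing the trace pairing'' does not rule out cancellation against the cross terms.

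Take $A,B$ semisimple with no common constituent. The Zariski closure of the arithmetic monodromy of $A\oplus B$ is then reductive, so Frobenius acts semisimply on these spaces. Its fixed vectors on the $\mathrm{Hom}$ terms form $\mathrm{Hom}_{\pi_1}(A,B)=0$ and $\mathrm{Hom}_{\pi_1}(B,A)=0$. The $\mathrm{End}$ terms contribute eigenvalue $1$ with multiplicity $\dim\mathrm{End}_{\pi_1}(A)+\dim\mathrm{End}_{\pi_1}(B)\ge1$.

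Hence the mean of $\tau(a)$ over $a$ is a positive integer, and $\limsup_a\tau(a)\ge1$. This gives a positive-density set of $a$, not a density-one set. For example, if $B$ is a quadratic twist of $A$ by a character of $\Gal(\bar{\FF}_p/\FF_p)$, then $\tau(a)=0$ for all even $a$. The $\limsup$ is all Step~4 needs.

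With these two repairs, or simply by citing Xu, your argument is complete.
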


\begin{proof}
Fix a sufficiently large prime $p$.
From \eqref{eq : GLT}, we know that the restriction of
$T_{p^a}$ to $U(\FF_{p^a})$ is an alternating sum of
trace functions of lisse sheaves that are mixed with
integer weights.
Hence, an application of
\cite[Theorem~3.5 and Remark~3.6]{Xu20}
with $w=n+r-2$ gives that either
\[
|T_{p^a}(\bfv)|\le Cp^{a(n+r-2)/2}
\qquad
\bigl(a\ge1,\ \bfv\in U(\FF_{p^a})\bigr),
\]
where $C$ is a uniform upper bound for the sum of the
ranks in the statement, or
\[
\limsup_{a\to\infty}
\frac{1}{p^{a(2n-2)}}
\sum_{\bfv\in U(\FF_{p^a})}
|T_{p^a}(\bfv)|^2
\ge1,
\]
since
\[
\dim U+w+1
=(n-r-1)+(n+r-2)+1
=2n-2.
\]
In the first case, taking $a=1$ gives the desired bound.
Let us therefore assume that the second alternative holds and,
by contradiction, that $d\le r$.
Consider the projective variety associated to $V$, say
$Y\subset\mathbb{P}^{n-1}$.
Since $Y$ has codimension $r+1$ in $\mathbb{P}^{n-1}$ and
degree $d\le r$, by applying
\cite[Lemma~3 and Corollary]{Swi73} twice, it follows that
$Y$ is contained in a linear subspace
$P\subset\mathbb{P}^{n-1}$ of codimension two.
Moreover, since $Y$ is defined over $\QQ$, its linear span
is also defined over $\QQ$, and we may choose $P$ to be
defined over $\QQ$.
Let $L\subset\mathbb{A}^{n}$ be the linear subspace
associated to $P$.
Then $V\subset L$ and $\dim L=n-2$.

By Lemma~\ref{n10:second-moment}, for every $a\ge1$ we have
\[
\sum_{\substack{
\bfv\in L(\FF_{p^a})\\
\bfv\ne\bm0}}
|T_{p^a}(\bfv)|^2
\ll_{F,L}p^{a(2n-3)}.
\]
On the other hand, since
$U\subset L\setminus\{\bm0\}$, positivity gives
\[
\begin{split}
1
&\le
\limsup_{a\to\infty}
\frac{1}{p^{a(2n-2)}}
\sum_{\bfv\in U(\FF_{p^a})}
|T_{p^a}(\bfv)|^2 \\
&\le
\limsup_{a\to\infty}
\frac{1}{p^{a(2n-2)}}
\sum_{\substack{
\bfv\in L(\FF_{p^a})\\
\bfv\ne\bm0}}
|T_{p^a}(\bfv)|^2
\\
&\ll \lim_{a\rightarrow\infty} p^{-a}
\\& =0.
\end{split}
\]
This gives a contradiction.
\end{proof}

\section{Hybrid cube-free stratifications}
\label{sec:MarkV-construction}

Throughout this section $n\ge 10$ and $X(\QQ)=\emptyset$.

\subsection{A fixed partition of the rational frequencies}

We first obtain a stratification using Theorem~\ref{n10:ss}.
Recall from \eqref{eq:pST} that $S_p(\bfv)=pT_p(\bfv)$.
Combining Theorem~\ref{n10:ss} with
Corollary~\ref{COR:general-n-exceptional-hyperplanes}
and Proposition~\ref{prop:prime-modulus-pointwise},
we obtain a stratification
\[
\A^{n}=Z_0
\supset Z_{1}\supset\cdots\supset Z_{n-3}
\]
of homogeneous closed sets $Z_j$
(affine cones of projective varieties),
for $0\le j\le n-3$,
with the following properties:
\begin{itemize}
\item[$i)$]
For $0\le j\le n-4$ we have $\cone W_j \subset Z_{j+1} \subset \cone B_j$,
in the notation of \eqref{EQ:general-bad-hyperplanes}
and \eqref{EQ:general-large-gauss-fibres}.

\item[$ii)$]
For every $j=0,\ldots,n-3$, the variety $Z_j$ has codimension
at least $j$ in $\A^n$.
Moreover, $\dim Z_{n-5} \le 4$,
$\dim Z_{n-4} \le 3$
and $Z_{n-3}=\{\bm0\}$.

\item[$iii)$]
Fix integral models of $Z_j$.
If $0\le j\le n-6$, then
for every $\bfv\in (Z_j\setminus Z_{j+1})(\FF_p)$, we have
\begin{equation}
\label{eq:FK-benchmark}
S_p(\bfv)\ll_F p^{\frac{n+1+j}{2}}.
\end{equation}
If $j=n-5$,
then \eqref{eq:FK-benchmark} holds for all $\bfv\in (Z_j\setminus Z_{j+2})(\FF_p)$.
If $j=n-4$,
then \eqref{eq:FK-benchmark} holds for all $\bfv\in Z_j(\FF_p)$, including $\bfv=\bm0$.

\item[$iv)$]
For every $j=1,\ldots,n-4$,
if $R_j$ is the Zariski closure of $Z_j(\QQ)$ in $Z_j$, then
\begin{equation}
\label{ZjRj6}
Z_j(\QQ)=R_j(\QQ)
\end{equation}
and $\dim R_j \le n-j-1$.
Moreover, $\dim R_{n-5} \le 1$
and $R_{n-4} = \{\bm0\}$.
\end{itemize}
In particular, combining the inclusion $\cone W_j \subset Z_{j+1}$
with Lemma~\ref{lem:prime-square-stratification},
we find that if $0\le j\le n-5$ then 
for every $\bfv\in \ZZ^n$ with $\bfv\bmod{p}\in (Z_j\setminus Z_{j+1})(\FF_p)$, we have
\begin{equation}
\label{eq:square-benchmark}
S_{p^2}(\bfv)\ll_F p^{n+1+j}.
\end{equation}
Moreover, if $j=n-4$,
then \eqref{eq:square-benchmark} holds for all $\bfv\in \ZZ^n$, including $\bfv\in p\ZZ^n$.

We now modify the stratification for square-free moduli
so that it will be more convenient to work with.
It will be more convenient for us to rearrange the vectors
$\bfv\in\QQ^n$ according to the weight of the exponential sum
on each component of $Z_j$.
For example, suppose that $L\subset Z_j$ is a linear space of
dimension $n-j-1$, defined over $\QQ$, with
$L\not\subset Z_{j+1}$, where $1\le j\le n-6$.
By Lemma~\ref{n10:ss-adapted},
there exists a smooth
dense open subset
\[
U\subset L\setminus Z_{j+1},
\]
defined over $\QQ$, on which the cohomology sheaves of
$K_{p,\psi}$ are lisse for all sufficiently large primes $p$,
with uniformly bounded total rank.
Proposition \ref{n10:moment-trace} then gives
\[
S_p(\bfv)\ll_{F,L,U}p^{\frac{n+j}{2}}
\qquad (\bfv\in U(\FF_p)).
\]
Hence we can assign $U(\QQ)$ to level $j-1$ in the rearranged
partition, while leaving the lower-dimensional complement
$L\setminus U$ in $Z_j$ to be treated separately.

With this example in mind, for every $j=1,\ldots,n-6$, consider
\begin{equation}
\label{n10:promotion-loci}
L_j:=
\bigcup_{\substack{
Y\text{ a geometric irreducible component of }R_j\\
Y\text{ defined over }\QQ,\quad Y\not\subset Z_{j+1}\\
\dim Y=n-j-1,\quad \deg Y\leq j
}}
Y.
\end{equation}
By Lemma~\ref{n10:ss-adapted}, we can 
choose a smooth homogeneous dense open subset
\begin{equation}
\label{n10:promotion-opens}
U_j\subset
L_j\setminus Z_{j+1},
\end{equation}
defined over $\QQ$, such that the restrictions to $U_j$ of the
cohomology sheaves of $K_{p,\psi}$ are lisse for all sufficiently
large primes $p$ and every nontrivial $\psi$, with uniformly
bounded total rank. If $L_j=\emptyset$, take
$U_j=\emptyset$.

Recall the convention $Z_0=\A^n$ from above.
With the additional conventions
\[
U_0=U_{n-5}:=\emptyset,
\]
we define, for every $0\leq j\leq n-6$,
\begin{equation}
\label{n10:rational-partition}
P_j:=
\left(
\bigl((Z_j\setminus Z_{j+1})\setminus U_j\bigr)
\cup U_{j+1}
\right)(\QQ)\setminus\{\boldsymbol{0}\},
\end{equation}
and put
\[
P_{n-5}:=Z_{n-5}(\QQ)\setminus\{\bm0\},\quad P_{n-4}=\{\boldsymbol{0}\}.
\]
The sets $(Z_j\setminus Z_{j+1})(\QQ)$ for $0\le j\le n-6$,
together with the sets $P_{n-5}$ and $P_{n-4}$, certainly partition $\QQ^n$.
Therefore, $\{P_j\}_{j=0}^{n-4}$ is also a partition of $\QQ^n$.
Indeed, each $U_j(\QQ)$, for $1\le j\le n-6$, is removed from the set
$(Z_j\setminus Z_{j+1})(\QQ)$ and assigned to the preceding set $P_{j-1}$.
We record now some properties of this partition which will be used extensively later.
For any set $Y \subset \QQ^n$, let
\begin{equation*}
N(Y,L,\bfv_0,m,\bfb) :=
\#\{\bfv\in Y\cap\ZZ^n:
|\bfv-\bfv_0|\le L,\;
\bfv\equiv\bfb\bmod m\}.
\end{equation*}

\begin{prop}
\label{PROP:Dante-partition-Pj}
Let $\{P_{j}\}_{j}$ be the partition of $\QQ^{n}$ defined above.
Then the following hold.
\begin{itemize}
\item[$i)$] For every $0\leq j\leq n-5$ and every $\bfv\in P_{j}\cap \ZZ^n$,
there exists a positive integer
$$\Delta(\bfv)\ll_{F}(2+|\bfv|)^{O_{F}(1)}$$
such that
\[
p\nmid \Delta(\bfv)
\quad\Longrightarrow\quad
S_{p}(\bfv)\ll_{F} p^{\frac{n+j+1}{2}}.
\]
\item[$ii)$] Set
\[
T=1+\frac{L}{m}.
\]
Then, for every $m,L\geq 1$, $\bfv_0\in \RR^n$, and every class $\bfb\bmod m$, one has
\begin{equation}\label{n10:progression-counts}
N(P_j,L,\bfv_0,m,\bfb)
\ll_{F,\ve}
\begin{cases}
T^{n}          &\text{if $j=0$},\\
T^{n-j-2+\frac{1}{j+1}+\ve}     &\text{if $1\leq j\leq 2$},\\
T^{n-j-2+\ve}  &\text{if $3\leq j\leq n-6$},\\
T         &\text{if $j=n-5$}.
\end{cases}
\end{equation}
\end{itemize}
\end{prop}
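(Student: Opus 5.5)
Part $i)$ is proved by unwinding the definition \eqref{n10:rational-partition} and spreading out. Take $\bfv\in P_j\cap\ZZ^n$ with $0\le j\le n-6$. Either $\bfv\in (Z_j\setminus Z_{j+1})(\QQ)\setminus U_j(\QQ)$, or $\bfv\in U_{j+1}(\QQ)$.

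In the first case $\bfv\notin Z_{j+1}(\QQ)$. Choose a polynomial $G$ from a fixed finite generating set of the ideal of the integral model of $Z_{j+1}$ with $G(\bfv)\ne 0$. Set $\Delta(\bfv):=C_F\,|G(\bfv)|$, where $C_F$ is the product of the finitely many bad primes in property $iii)$. This has polynomial size in $|\bfv|$. If $p\nmid\Delta(\bfv)$, then $\bfv\bmod p\in (Z_j\setminus Z_{j+1})(\FF_p)$, and \eqref{eq:FK-benchmark} gives $S_p(\bfv)\ll p^{(n+1+j)/2}$. We have $\bfv\bmod p\ne\bm0$, since otherwise $p\mid G(\bfv)$ whenever $G$ has no constant term, which we may arrange because $Z_{j+1}$ is a cone containing $\bm0$.

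In the second case $\bfv\in U_{j+1}(\QQ)$ with $j+1\le n-6$. The set $U_{j+1}$ is a union of open subsets of components $Y\subset L_{j+1}$. Each such $Y$ is geometrically irreducible of codimension $j+2$ and degree $\le j+1$, and $\mathcal H^i(K_{p,\psi})$ is lisse on $U_{j+1}$ with bounded rank. Proposition~\ref{n10:moment-trace}, with $r=j+1$, excludes the alternative $d\ge r+1$. Hence $T_p(\bfv)\ll p^{(n+j-1)/2}$ on $U_{j+1}(\FF_p)$, and so $S_p(\bfv)\ll p^{(n+j+1)/2}$ by \eqref{eq:pST}. To guarantee $\bfv\bmod p\in U_{j+1}(\FF_p)$, let $\Delta$ be a product of nonvanishing values at $\bfv$ of equations for the closed complement of $U_{j+1}$ inside $\bar U_{j+1}$, times bad primes.

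The case $j=n-5$ uses property $iii)$ for $Z_{n-5}\setminus Z_{n-3}$ together with $Z_{n-4}$. Since $Z_{n-3}=\{\bm0\}$, we only need $\bfv\not\equiv\bm0\bmod p$, so we take $\Delta(\bfv)$ to be a nonzero coordinate of $\bfv$ times the bad primes.

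For part $ii)$ with $j=0$ the bound is trivial. For $j=n-5$ we have $P_{n-5}\subset R_{n-5}(\QQ)$ by \eqref{ZjRj6}, and $\dim R_{n-5}\le 1$, so Lemma~\ref{LEM:basically-zippel} gives $\ll T$. For $1\le j\le n-6$ we have $P_j\subset R_j(\QQ)\cup U_{j+1}(\QQ)$. The $U_{j+1}$ part lies in a finite union of varieties of dimension $n-j-2$, so Lemma~\ref{LEM:basically-zippel} gives $T^{n-j-2}$. The part of $R_j$ of dimension $\le n-j-2$ is handled the same way.

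The remaining contribution comes from top-dimensional components $Y$ of $R_j$, of dimension $n-j-1$, that are not contained in $Z_{j+1}$ and not removed into $U_j$, together with the lower-dimensional leftover $L_j\setminus U_j$. There are three subcases.
\begin{itemize}
\item If $Y$ is not defined over $\QQ$, then $Y(\QQ)\subset Y\cap Y^\sigma$ for some Galois conjugate $Y^\sigma\ne Y$. This has lower dimension, so the contribution is $\ll T^{n-j-2}$.
\item If $Y$ is defined over $\QQ$ with $\deg Y\le j$, then $Y\subset L_j$, and only $Y\setminus U_j$, of lower dimension, remains in $P_j$.
\item If $Y$ is defined over $\QQ$ with $\deg Y\ge j+1$, we apply the dimension growth bounds of Pila~\cite{Pila} and Vermeulen~\cite{Vermeulen} on the irreducible cone $Y$ of dimension $n-j-1$ and degree $d\ge j+1$.
\end{itemize}

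The third subcase is the main obstacle. Pila's bound for an irreducible affine variety of dimension $m$ and degree $d$ in a box of side $T$ is $T^{m-1+1/d+\ve}$. With $m=n-j-1$ and $d\ge j+1$ this gives $T^{n-j-2+1/(j+1)+\ve}$, which is exactly the claimed exponent for $j=1,2$. For $j\ge3$ we need the sharper dimension growth results (Vermeulen, degree $\ge 4$) giving $T^{m-1+\ve}$, and we use $d\ge j+1\ge4$.

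Two technical issues must be checked: these counting results have to be made uniform in an arithmetic progression $\bfv\equiv\bfb\bmod m$ and in a shifted box. I would handle both by the substitution $\bfv=\bfb+m\bfw$. The substituted variety has the same degree and dimension, the counting bounds are uniform in the coefficients, and the box for $\bfw$ has side $\ll T$.
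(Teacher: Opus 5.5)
Your proposal follows the paper's proof essentially step by step. In part $i)$ it makes the same two-case choice of $\Delta(\bfv)$, using a nonvanishing defining equation of $Z_{j+1}$, respectively of $\overline{U_{j+1}}\setminus U_{j+1}=L_{j+1}\setminus U_{j+1}$, together with Proposition~\ref{n10:moment-trace} with $r=j+1$. For $P_{n-5}$ it uses $\dim R_{n-5}\le 1$. In part $ii)$ it uses the same trichotomy of components, with Pila for $j\le 2$ and Vermeulen for $j\ge 3$. One small point: in the case where $Y$ is not defined over $\QQ$, use $\bigcap_\sigma Y^\sigma$, which is defined over $\QQ$, rather than $Y\cap Y^\sigma$. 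Otherwise Lemma~\ref{LEM:basically-zippel} does not apply.

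There is one genuine omission. Vermeulen's affine dimension growth bound $T^{m-1+\varepsilon}$ holds only for varieties that are \emph{not cylindrical over a curve}, and this hypothesis cannot be dropped. For example, if $C$ is the curve $y=x^d$, the cylinder $C\times\A^{m-1}$ has $\gg T^{m-1+1/d}$ integral points in $[-T,T]^{m+1}$. You must verify this hypothesis for $Y$, and this is where the conical structure you mention actually gets used. Suppose $Y=\pi^{-1}(C)$ for a linear surjection $\pi$ and a curve $C$. Then $C=\pi(Y)$ is homogeneous and geometrically irreducible, so it is a line through the origin. Hence $Y$ is a linear space, contradicting $\deg Y\ge j+1\ge 4$.

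Your substitution $\bfv=\bfc+m\bfw$ (with $\bfc\in\ZZ^n$ chosen to recentre the box) destroys homogeneity. So the check must be made on $Y$ itself and then transferred. This works because if the transformed variety were $\pi^{-1}(C')$, then $Y=\pi^{-1}(mC'+\pi(\bfc))$ would be cylindrical too. Thus non-cylindricity is preserved under such affine changes of variables, just as degree, dimension and field of definition are. With this added, and using that Vermeulen's bound is uniform in the variety, your argument is complete.
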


\begin{proof}
We start by establishing part $(i)$. Assume that
$\bfv\in P_j\cap\ZZ^n$, initially with
$0\le j\le n-6$. Fix an integer $N\ge1$, depending only on
$F$, such that all the chosen models and local bounds are
valid for $p\nmid N$.
By combining the bound \eqref{eq:FK-benchmark} with
Proposition \ref{n10:moment-trace}, we obtain
\[
S_p(\bfv)\ll_F p^{\frac{n+j+1}{2}}
\]
whenever $p\nmid N$, the reduction of $\bfv$ is nonzero,
and
\begin{equation}
\label{EQ:modified-FK-condition}
\bfv\bmod p\notin
(Z_{j+1}\setminus U_{j+1})(\FF_p).
\end{equation}

By the definition of $P_j$, either $\bfv\notin Z_{j+1}$
or $\bfv\in U_{j+1}$. In the first case, choose $g$ from
a fixed finite family of integral defining equations for
$Z_{j+1}$ such that $g(\bfv)\ne0$. Then, for $p\nmid N$,
\[
p\nmid g(\bfv)
\quad\Longrightarrow\quad
\bfv\bmod p\notin Z_{j+1}(\FF_p).
\]
In the second case, $U_{j+1}$ is open in $L_{j+1}$, so
$L_{j+1}\setminus U_{j+1}$ is closed. Choose $g$ from
a fixed finite family of integral defining equations for
this complement such that $g(\bfv)\ne0$. Since
$\bfv\in L_{j+1}$, its reduction belongs to
$L_{j+1}(\FF_p)$ for $p\nmid N$. Hence
\[
p\nmid Ng(\bfv)
\quad\Longrightarrow\quad
\bfv\bmod p\in U_{j+1}(\FF_p).
\]
In both cases define
$\Delta(\bfv):=N\,|g(\bfv)|$.
Then $p\nmid\Delta(\bfv)$ ensures that the reduction of
$\bfv$ is nonzero and lies outside
$(Z_{j+1}\setminus U_{j+1})(\FF_p)$, so the required
bound follows. Since $g$ is chosen from a fixed finite
family of polynomials depending only on $F$, we have
\[
1\le\Delta(\bfv)\ll_F(2+|\bfv|)^{O_F(1)}.
\]

Finally, if $j=n-5$, take
$\Delta(\bfv):=N|\bfv|$.
As noted after \eqref{eq:FK-benchmark},
Proposition~\ref{prop:prime-modulus-pointwise} then gives
\[
S_p(\bfv)\ll_F p^{n-2}
=p^{\frac{n+j+1}{2}}
\qquad(p\nmid\Delta(\bfv)).
\]
This completes the proof of part $(i)$.

Let us turn our attention to part $(ii)$.
When $j=0$, the
result is trivial since $P_0\subset\QQ^n$.
Let us deal with the cases $j=1,2$. Since
$Z_j(\QQ)=R_j(\QQ)$ by \eqref{ZjRj6}, the definition
of $P_j$ gives
\[
\begin{aligned}
N(P_j,L,\bfv_0,m,\bfb)
&\leq N\bigl(R_j\setminus(Z_{j+1}\cup U_j),L,\bfv_0,m,\bfb\bigr)
      +N(Z_{j+1},L,\bfv_0,m,\bfb)\\
&=N\bigl(R_j\setminus(Z_{j+1}\cup U_j),L,\bfv_0,m,\bfb\bigr)
      +N(R_{j+1},L,\bfv_0,m,\bfb).
\end{aligned}
\]
Since $\codim R_{j+1}\geq j+2$,
Lemma~\ref{LEM:basically-zippel}
implies that
\[
N(R_{j+1},L,\bfv_0,m,\bfb)\ll_F T^{n-j-2},
\]
which is acceptable. Let $Y$ be a geometric irreducible component
of the Zariski closure
\[
\bar{R_j\setminus(Z_{j+1}\cup U_j)}
\subset \A^n
\]
of $R_j\setminus(Z_{j+1}\cup U_j)$ in $\A^n$.
Based on the definition of $L_j$ and the density of $U_j$ in $L_j$
(see \eqref{n10:promotion-loci} and \eqref{n10:promotion-opens}),
we distinguish three cases.
\begin{itemize}
\item[$i)$]
The component $Y$ is defined over $\QQ$ and has
$\deg Y\geq j+1$. Hence, by the uniform
Bombieri--Pila bound recorded in \cite[Theorem~A]{Pila}, we have
\[
N(Y,L,\bfv_0,m,\bfb)
\ll_{F,\varepsilon}
T^{n-j-2+\frac{1}{j+1}+\ve}.
\]

\item[$ii)$]
The component $Y$ is not defined over $\QQ$. In this case,
there exists a subvariety $W\subset Y$, defined over
$\QQ$, of codimension at least $1$ in $Y$, such that
$Y(\QQ)\subset W(\QQ)$. Hence,
by Lemma~\ref{LEM:basically-zippel},
we have
\[
N(Y,L,\bfv_0,m,\bfb)
\ll_F T^{n-j-2}.
\]

\item[$iii)$]
The component $Y$ has $\deg Y\leq j$ and is defined over $\QQ$.
Then necessarily $\dim Y\leq n-j-2$,
or else $Y$ would be contained in $L_j$ by \eqref{n10:promotion-loci}.
Hence,
by Lemma~\ref{LEM:basically-zippel},
we have
\[
N(Y,L,\bfv_0,m,\bfb)
\ll_F T^{n-j-2}.
\]
\end{itemize}
Summing over the finitely many components, the bound follows.
For $3\leq j\leq n-6$, the argument is almost the same, except
for the treatment of the geometrically irreducible components
of
\[
\bar{R_j\setminus(Z_{j+1}\cup U_j)}
\]
that are defined over $\QQ$ and have degree at least
$j+1\geq4$. If $Y$ is such a component, then we have
\[
N(Y,L,\bfv_0,m,\bfb)
\ll_{F,\ve} T^{n-j-2+\ve},
\]
by \cite[Theorem~1.2]{Vermeulen}, provided that $Y$ is not
cylindrical over a curve.

We claim that $Y$ is not cylindrical over a curve. Since
$R_j$, $Z_{j+1}$ and $U_j$ are homogeneous, $Y$ is an affine
cone associated to a projective variety. Put $r:=\dim Y$.
If $Y$ were cylindrical over a curve, there would exist a
surjective $\QQ$-linear map
\[
\pi:\A^n\longrightarrow\A^{n-r+1}
\]
and an affine curve $C$ such that $Y=\pi^{-1}(C)$.
Since $\pi$ is linear and $Y$ is homogeneous, $C$ is also
homogeneous. Moreover, $C$ is geometrically irreducible,
since $Y$ is geometrically irreducible. Hence $C$ must be
a line. It follows that $Y=\pi^{-1}(C)$ is a linear space,
contradicting $\deg Y\geq4$.
The case $j=n-5$ follows from the fact that $R_{n-5}$ has dimension $\leq 1$ and $R_{n-4}=\{\mathbf 0\}$.
\end{proof}

For prime squares, recall the bounds in \eqref{eq:square-benchmark}.
Hence we obtain a partition $\{Q_j\}_{j=0}^{n-4}$ of $\QQ^n$,
where
\[
Q_j=(Z_j\setminus Z_{j+1})(\QQ)
\qquad (0\leq j\leq n-5),
\qquad
Q_{n-4}=\{\bm0\},
\]
where we recall the convention that $Z_0=\A^n$. Here we use the fact that $Z_{n-4}(\QQ)=\{\bm0\}$.

As for the partition associated with prime moduli, we record
the following proposition.
\begin{prop}
\label{PROP:partition-Qj}
Let $\{Q_j\}_{j=0}^{n-4}$ be the partition of $\QQ^n$
defined above. Then the following hold.
\begin{itemize}
\item[$i)$]
For every $0\leq j\leq n-4$ and every
$\bfv\in Q_j\cap\ZZ^n$, there exists a positive integer
\[
\Theta(\bfv)\ll_F(2+|\bfv|)^{O_F(1)}
\]
such that
\[
p\nmid\Theta(\bfv)
\quad\Longrightarrow\quad
S_{p^2}(\bfv)\ll_F p^{n+j+1}.
\]

\item[$ii)$]
Set 
\[
T=1+\frac{L}{m}.
\]
Then, for every $m,L\geq 1$, 
$\bfv_0\in\RR^n$, and every class $\bfb\bmod m$, one has
\begin{equation}\label{n10:Q-progression-counts_square}
N(Q_j,L,\bfv_0,m,\bfb)
\ll_F
\begin{cases}
T^n
&\text{if $j=0$},\\
T^{n-j-1}
&\text{if $1\leq j\leq n-6$},\\
T
&\text{if $j=n-5$},\\
1
&\text{if $j=n-4$}.
\end{cases}
\end{equation}
In particular, for $1\le j\le n-4$ we have $N(Q_j,L,\bfv_0,m,\bfb)
\ll_F T^{n-j-1}$.
\end{itemize}
\end{prop}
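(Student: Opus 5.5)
For part (i), I will mimic the proof of Proposition~\ref{PROP:Dante-partition-Pj}(i), but with the prime-square benchmark \eqref{eq:square-benchmark} replacing \eqref{eq:FK-benchmark}. Fix an integer $N\ge1$, depending only on $F$, such that the integral models of the $Z_j$ are fixed and \eqref{eq:square-benchmark} is valid for $p\nmid N$.

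Take $\bfv\in Q_j\cap\ZZ^n$ with $0\le j\le n-5$. Then $\bfv\in Z_j\setminus Z_{j+1}$, so some member $g$ of a fixed finite family of integral defining equations for $Z_{j+1}$ has $g(\bfv)\ne0$. Set $\Theta(\bfv):=N|g(\bfv)|$.
\begin{itemize}
\item If $p\nmid\Theta(\bfv)$, then $\bfv\bmod p\notin Z_{j+1}(\FF_p)$. Since $Z_{j+1}$ is a cone containing $\bm0$, this already forces $p\nmid\bfv$.
\item The reduction lies in $Z_j(\FF_p)$ because $\bfv\in Z_j(\QQ)$ and $p\nmid N$. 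Hence $\bfv\bmod p\in(Z_j\setminus Z_{j+1})(\FF_p)$, and \eqref{eq:square-benchmark} gives $S_{p^2}(\bfv)\ll_F p^{n+1+j}$.
\item For $j=n-4$ we have $\bfv=\bm0$, and the remark after \eqref{eq:square-benchmark} gives the bound for all $\bfv$, so $\Theta:=N$ works.
\end{itemize}
The polynomial size bound $\Theta(\bfv)\ll_F(2+|\bfv|)^{O_F(1)}$ is clear since $g$ ranges over a fixed finite family.

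For part (ii), the case $j=0$ is trivial, and the case $j=n-4$ is trivial since $Q_{n-4}=\{\bm0\}$. For $1\le j\le n-5$ I will use
\[
Q_j\cap\ZZ^n\subset Z_j(\QQ)\cap\ZZ^n=R_j(\QQ)\cap\ZZ^n,
\]
which follows from \eqref{ZjRj6}. Property $iv)$ gives $\dim R_j\le n-j-1$, and Lemma~\ref{LEM:basically-zippel} applied to an integral model of $R_j$ then gives $N(Q_j,L,\bfv_0,m,\bfb)\ll_F T^{n-j-1}$.

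For $j=n-5$ the same argument gives $T^{4}$, but $iv)$ also gives $\dim R_{n-5}\le1$, so Lemma~\ref{LEM:basically-zippel} yields the sharper bound $T$. The final ``in particular'' statement follows from $1\le n-j-1$ for $j\le n-4$ together with $T\ge1$.

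I expect no serious obstacle here, since unlike the $P_j$ there is no promotion step and no Pila or Vermeulen input is needed. The only point needing care is to make sure $N$ absorbs every prime at which the integral models of $Z_j$ and $Z_{j+1}$ fail to reduce compatibly, and every prime at which \eqref{eq:square-benchmark} fails. This is a finite set depending only on $F$.
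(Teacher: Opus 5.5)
Your proposal is correct and is essentially the paper's argument. Part (i) reuses the construction of $\Delta(\bfv)$ from Proposition~\ref{PROP:Dante-partition-Pj}(i), with \eqref{eq:square-benchmark} in place of \eqref{eq:FK-benchmark}. Part (ii) needs only $Z_j(\QQ)=R_j(\QQ)$, the dimension bounds in property $iv)$, and Lemma~\ref{LEM:basically-zippel}, with no dimension growth input.
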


\begin{proof}
The arguments are analogous to those  of Proposition \ref{PROP:Dante-partition-Pj},
except that the proof of \eqref{n10:Q-progression-counts_square} only requires
Lemma~\ref{LEM:basically-zippel}, rather than 
any progress towards the dimension growth conjecture.
\end{proof}

\subsection{The cube-free averages}\label{sec:silver}

In this section we are going to combine the two partitions of $\QQ^n$ we have constructed.
Since $\{P_j\}_j$ and $\{Q_k\}_k$
are two partitions of $\QQ^n$,
the set $\{P_j\cap Q_k\}_{j,k=0}^{n-4}$
is also a partition of $\QQ^n$.
It will be useful to record the following fact:
observe that by definition of our partitions,
together with the definition of $U_j$ in \eqref{n10:promotion-opens},
we have
\begin{equation}
\label{EQ:PjQk-formula}
P_j\cap Q_k=
\begin{cases}
\bigl((Z_j\setminus Z_{j+1})\setminus U_j\bigr)(\QQ)
\setminus\{\boldsymbol{0}\}
& 0\le j\le n-6 \text{ and } k=j,\\[2pt]
U_{j+1}(\QQ)
& 0\le j\le n-6 \text{ and } k=j+1,\\[2pt]
Z_{n-5}(\QQ)\setminus\{\boldsymbol{0}\}
& j=k=n-5,\\[2pt]
\{\boldsymbol{0}\}
& j=k=n-4,\\[2pt]
\emptyset
& \text{otherwise}.
\end{cases}
\end{equation}

\begin{defin}
Let $N_F$ be the product of finitely many primes depending only on $F$.
Let $\bfv\in \ZZ^n$.
For $p\nmid N_F$, let $\bar{\bfv}$ denote the reduction
of $\bfv$ modulo $p$, and put
\[
\begin{aligned}
e(\bfv;p)
&=\min\left(
 \left\{0\le h\le n-5:
 \bar{\bfv}\notin
 (Z_{h+1,p}\setminus U_{h+1,p})(\FF_p)\right\}
 \cup\{n-4\}\right),\\
f(\bfv;p)
&=\min\left(
 \left\{0\le h\le n-5:
 \bar{\bfv}\notin Z_{h+1,p}(\FF_p)\right\}
 \cup\{n-4\}\right),
\end{aligned}
\]
with the convention $U_{h,p}=\emptyset$ for $h\ge n-5$.
For $p\mid N_F$, put $e(\bfv;p)=f(\bfv;p)=0$.
\end{defin}

The definitions of $e(\bfv;p)$ and $f(\bfv;p)$
are motivated by
\eqref{EQ:modified-FK-condition} and \eqref{eq:square-benchmark},
respectively.
For cube-free moduli $d$, we now define further key invariants
based on the factorisation type of $d$.

\begin{defin}

Every positive cube-free integer $d$ can be written uniquely as
$d=d_1d_2^2$, where $d_1,d_2$ are coprime positive square-free
integers.
Suppose now that $\bfv\in P_j\cap Q_k\cap \ZZ^n$, so that necessarily
\begin{equation*}
0\le j\le k\le n-4.
\end{equation*}
Then let
\begin{equation}\label{n10:K}
K(d,\bfv)=
\prod_{\substack{p\mid d_1\\e(\bfv;p)> j+2}}
p^{\frac{e(\bfv;p)-j}{2}-1}
\prod_{\substack{p\mid d_2\\f(\bfv;p)> k+2}}
p^{f(\bfv;p)-k-2}.
\end{equation}
Moreover, for all integers $h\ge j+1$ and $\ell\ge k+1$ define the formal infinite products
\[
q_{\bfv,h}=\prod_{\substack{p\\e(\bfv;p)\ge h}}p,
\qquad
s_{\bfv,\ell}=\prod_{\substack{p\\f(\bfv;p)\ge \ell}}p,
\]
and let
\begin{equation}\label{n10:radicals}
R_{h,\ell}(d,\bfv)
= \gcd(d_1,q_{\bfv,h})
\gcd(d_2,s_{\bfv,\ell}).
\end{equation}
\end{defin}

We start with the following lemma
based on the invariants $K(d,\bfv)$ and $R_{j+2,k+2}(d,\bfv)$.
\begin{lemma}\label{n10:Lemma-weight-gen}
Let $0\le j,k\le n-5$ and
$\bfv\in(P_j\cap Q_k)\cap\ZZ^n$.
If $D_1,D_2\ge 1/2$,
then for any two square-free integers $d_{1}\sim D_1$ and $d_2\sim D_2$
with $\gcd(d_{1},d_{2})=1$,
and for every $\ve>0$, we have
\begin{equation}
\label{EQ:K-relaxed-hybrid-pointwise}
S_{d_{1}d_{2}^{2}}(\bfv)
\ll_{F,\ve} D^\ve D_1^{\frac{n+j+3}{2}}D_2^{n+k+3} K(d_1d_2^2,\bfv),
\end{equation}
where $D = D_1D_2^2$.
In fact, if $K_j\ge 1/2$ then
\begin{equation}\label{n10:weighted-fixed-point}
\sideset{}{^*}\sum_{d_1\sim D_1}
\sideset{}{^*}\sum_{\substack{
d_2\sim D_2\\\gcd(d_1,d_2)=1
\\ K(d_1d_2^2,\bfv)\sim K_j}}
|S_{d_1d_2^2}(\bfv)|
\ll_{F,\ve}
[D(2+|\bfv|)]^\ve
D_1^{\frac{n+j+3}{2}}D_2^{n+k+3}
K_j,
\end{equation}
where every starred sum is restricted to square-free integers.
Moreover, if $j+1\le h\le n-3$ and $k+1\le\ell\le n-3$, then
\begin{equation}
\label{n10:residual-fixed-point}
\sideset{}{^*}\sum_{d_1\sim D_1}
\sideset{}{^*}\sum_{\substack{
d_2\sim D_2\\\gcd(d_1,d_2)=1
\\ R_{h,\ell}(d_1d_2^2,\bfv) = 1}}
|S_{d_1d_2^2}(\bfv)|
\ll_{F,\ve}
[D(2+|\bfv|)]^\ve
D_1^{\frac{n+\max\{j+3,h\}}{2}}
D_2^{n + \max\{k+2,\ell\}}.
\end{equation}
\end{lemma}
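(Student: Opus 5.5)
Our plan is to deduce everything from multiplicativity \eqref{eq:q-exact-CRT}: $S_{d_1d_2^2}(\bfv)=\prod_{p\mid d_1}S_p(\bfv)\prod_{p\mid d_2}S_{p^2}(\bfv)$. We will prove local bounds at each prime, multiply them, and absorb the $O_F(1)^{\omega(d)}$ loss into $D^\ve$ via the divisor bound.

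\textbf{Local bounds.} First take $p\nmid N_F$ with $p\mid d_1$, and set $e=e(\bfv;p)$. By the definition of $e(\bfv;p)$, the reduction $\bar{\bfv}$ lies outside $(Z_{e+1,p}\setminus U_{e+1,p})(\FF_p)$ when $e\le n-5$. The argument of Proposition~\ref{PROP:Dante-partition-Pj}(i), namely \eqref{eq:FK-benchmark} combined with Proposition~\ref{n10:moment-trace} on $U_{e+1}$, then gives $S_p(\bfv)\ll p^{(n+e+1)/2}$. When $e=n-4$ we use Proposition~\ref{prop:prime-modulus-pointwise} instead, which gives $p^{n-2}$ for $p\nmid\bfv$ and $p^{n-3/2}$ in general; both are at most $p^{(n+e+1)/2}$ since $n-3/2\le (2n-3)/2$. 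Comparing with the target exponent $\frac{n+j+3}{2}$:
\begin{itemize}
\item if $e\le j+2$, the local bound is already $\le p^{(n+j+3)/2}$;
\item otherwise the excess is $p^{(e-j)/2-1}$, which is exactly the factor of $p$ in $K$.
\end{itemize}
Similarly, for $p\mid d_2$ set $f=f(\bfv;p)$. Lemma~\ref{lem:prime-square-stratification} in the form \eqref{eq:square-benchmark}, together with the $j=n-4$ clause, gives $S_{p^2}(\bfv)\ll p^{n+1+f}$. This is at most $p^{n+k+3}$ times $p^{f-k-2}$ when $f>k+2$.

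For the finitely many $p\mid N_F$ we use trivial bounds, $S_p\ll 1$ and $S_{p^2}\ll 1$. Multiplying the local bounds gives \eqref{EQ:K-relaxed-hybrid-pointwise}, since $p\le 2D_1$ type factors collect to $D_1^{(n+j+3)/2}D_2^{n+k+3}$ up to the constant $C^{\omega(d)}\ll D^\ve$.

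\textbf{Proof of \eqref{n10:weighted-fixed-point}.} The pointwise bound alone loses the count of moduli, so we need a sharper accounting. The key observation is that the excess primes are rare. If $\bfv\ne\bm0$ and $e(\bfv;p)\ge h>j+1$, then $\bar{\bfv}\in(Z_{j+1}\setminus U_{j+1})(\FF_p)$. Since $\bfv\in P_j$, it is a rational point not in $Z_{j+1}\setminus U_{j+1}$, so $p\mid\Delta(\bfv)$, the integer from Proposition~\ref{PROP:Dante-partition-Pj}(i). The same holds with $\Theta(\bfv)$ for $f$.

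So every prime where the benchmark fails, i.e.\ where $e>j$ or $f>k$, divides the fixed integer $\Delta(\bfv)\Theta(\bfv)\ll(2+|\bfv|)^{O(1)}$. We will therefore refine the local bounds as follows:
\begin{itemize}
\item $|S_p|\ll p^{(n+j+1)/2}$ at primes $p\nmid\Delta$;
\item $|S_{p^2}|\ll p^{n+k+1}$ at primes $p\nmid\Theta$.
\end{itemize}
These save $p^{-1}$ relative to the target exponents.

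Factor $d_1=g_1m_1$ with $g_1\mid\Delta$ and $\gcd(m_1,\Delta)=1$, and similarly $d_2=g_2m_2$. The $g$-parts range over divisors of $\Delta\Theta$, giving only $O((2+|\bfv|)^\ve)$ choices. On the $g$-part we apply the pointwise local bounds; the excess factor there is exactly $K\sim K_j$. For the $m$-part, summing $m_1^{(n+j+1)/2}$ over $m_1\le 2D_1/g_1$ gives $\ll (D_1/g_1)^{(n+j+3)/2}$, and likewise for $m_2$. For primes $p\mid g_1$ with $e\le j+2$ we use the benchmark $p^{(n+j+3)/2}$, which is consistent. Combining these proves \eqref{n10:weighted-fixed-point}.

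\textbf{Proof of \eqref{n10:residual-fixed-point}.} The identical scheme applies. The condition $R_{h,\ell}=1$ forces $e(\bfv;p)\le h-1$ for $p\mid d_1$ and $f(\bfv;p)\le\ell-1$ for $p\mid d_2$. The local bounds then give:
\begin{itemize}
\item $p^{(n+h)/2}$ at primes dividing $g_1$, and $p^{(n+j+1)/2}$ at primes dividing $m_1$;
\item $p^{n+\ell}$ at primes dividing $g_2$, and $p^{n+k+1}$ at primes dividing $m_2$.
\end{itemize}
We sum the $m$-parts with their one-unit saving, which turns the exponents into $\frac{n+j+3}{2}$ and $n+k+2$. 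We bound the $g$-parts trivially using $g_1\le 2D_1$ and the divisor count. Taking the maximum of the two exponents in each variable yields the stated powers $\max\{j+3,h\}$ and $\max\{k+2,\ell\}$.

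\textbf{Main obstacle.} The main obstacle is the rarity step: verifying that $e(\bfv;p)>j$ implies $p\mid\Delta(\bfv)$, and likewise $f(\bfv;p)>k$ implies $p\mid\Theta(\bfv)$. This requires matching the nested sets $Z_{h,p}\setminus U_{h,p}$ against the defining equations used to build $\Delta$, including the boundary cases $j=n-5$ and $k=n-4$ (for example $\bfv\equiv\bm0$). We would handle these boundary cases by enlarging $\Delta$ and $\Theta$ by $N_F|\bfv|$.
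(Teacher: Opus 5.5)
Your proposal is correct and follows essentially the paper's route. Both arguments use multiplicativity, the local bounds $S_p(\bfv)\ll p^{(n+e(\bfv;p)+1)/2}$ and $S_{p^2}(\bfv)\ll p^{n+f(\bfv;p)+1}$, the improved bounds at primes coprime to $\Delta(\bfv)$ and $\Theta(\bfv)$, and a divisor-bound count over the parts of $d_1,d_2$ dividing these integers; the paper uses $q=\gcd(d_1,q_{\bfv,j+1},\Delta(\bfv))$ and $s=\gcd(d_2,s_{\bfv,k+1},\Theta(\bfv))$ where you use $g_1,g_2$, and the step you flag as the main obstacle is not needed, since $K(g_1g_2^2,\bfv)\le K(d_1d_2^2,\bfv)\ll K_j$ already follows from the positivity of the exponents in the definition of $K$.
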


\begin{proof}
In the notation of Propositions~\ref{PROP:Dante-partition-Pj}
and~\ref{PROP:partition-Qj}, we have
\[
\begin{aligned}
p\nmid\Delta(\bfv)
&\quad\Longrightarrow\quad
S_p(\bfv)\ll_F p^{\frac{n+j+1}{2}},\\
p\nmid\Theta(\bfv)
&\quad\Longrightarrow\quad
S_{p^2}(\bfv)\ll_F p^{n+k+1}.
\end{aligned}
\]
For $q=\gcd(d_1,q_{\bfv,j+1},\Delta(\bfv))$
and $s=\gcd(d_2,s_{\bfv,k+1},\Theta(\bfv))$,
multiplicativity
and the local bounds \eqref{EQ:modified-FK-condition} and \eqref{eq:square-benchmark}
give
\begin{equation}
\label{EQ:general-S-expansion}
S_{d_1d_2^2}(\bfv)
\ll_{F,\ve}
D^\ve d_1^{\frac{n+j+1}{2}}d_2^{n+k+1}
\prod_{p\mid q}p^{(e(\bfv;p)-j)/2}
\prod_{p\mid s}p^{f(\bfv;p)-k}.
\end{equation}
Here the product of the local constants is absorbed into
$D^\ve$. By the definition of $K$, we have
\[
\prod_{p\mid q}p^{(e(\bfv;p)-j)/2}
\prod_{p\mid s}p^{f(\bfv;p)-k}
\le
K(qs^2,\bfv)\,qs^2.
\]
To conclude the proof of \eqref{EQ:K-relaxed-hybrid-pointwise},
we observe that $qs^2 \le d_1d_2^2$ and
$K(qs^2,\bfv) \le K(d_1d_2^2,\bfv)$,
since the exponents in the definition \eqref{n10:K} are positive.

For fixed $q,s$, there are
$O(D_1D_2/(qs))$ choices for $d_1,d_2$ satisfying
$q\mid d_1$ and $s\mid d_2$.
Consequently, the left-hand side of \eqref{n10:weighted-fixed-point} is
\[
\ll_{F,\ve}
D^\ve D_1^{\frac{n+j+1}{2}}D_2^{n+k+1}
\sum_{\substack{q\mid \gcd(q_{\bfv,j+1},\Delta(\bfv))\\q\le2D_1}}
\sum_{\substack{
s\mid \gcd(s_{\bfv,k+1},\Theta(\bfv))\\s\le2D_2\\\gcd(q,s)=1}}
K_j qs^2 \frac{D_1D_2}{qs}.
\]
Using $s\le 2D_2$ and the divisor bound on the positive integers
$\Delta(\bfv),\Theta(\bfv) \ll (2+|\bfv|)^{O(1)}$,
the claim \eqref{n10:weighted-fixed-point} follows.

Finally, if $R_{h,\ell}(d_1d_2^2,\bfv) = 1$,
then $e(\bfv;p)\le h-1$ and $f(\bfv;p)\le \ell-1$
in the inequality \eqref{EQ:general-S-expansion},
so
\begin{equation*}
S_{d_1d_2^2}(\bfv)
\ll_{F,\ve}
D^\ve d_1^{\frac{n+j+1}{2}}d_2^{n+k+1}
q^{(h-1-j)/2} s^{\ell-1-k}.
\end{equation*}
Proceeding as we did for \eqref{n10:weighted-fixed-point},
we find that the left-hand side of \eqref{n10:residual-fixed-point} is
\[
\ll_{F,\ve}
D^\ve D_1^{\frac{n+j+1}{2}}D_2^{n+k+1}
\sum_{\substack{q\mid \gcd(q_{\bfv,j+1},\Delta(\bfv))\\q\le2D_1}}
\sum_{\substack{
s\mid \gcd(s_{\bfv,k+1},\Theta(\bfv))\\s\le2D_2\\\gcd(q,s)=1}}
q^{(h-1-j)/2} s^{\ell-1-k} \frac{D_1D_2}{qs}.
\]
The divisor bound
yields
\eqref{n10:residual-fixed-point} on optimising over $q$ and $s$,
since $\frac{n+j+3}{2} + \max\{0,(h-3-j)/2\}
= \frac{n+\max\{j+3,h\}}{2}$
and $n+k+2 + \max\{0,\ell-2-k\}
= n + \max\{k+2,\ell\}$.
\end{proof}

Let $D\ge 1/2$, $L\ge1$, $m\in\ZZ_{\ge1}$, $\bfv_0\in\RR^n$, and $\bfb\in\ZZ^n$.
For the statements of the next three lemmas, we write
\begin{equation}\label{n10:average-box}
\begin{split}
T&=1+L/m,\qquad H=2+|\bfv_0|+L+m,\\
\mathcal V&=\{\bfv\in\ZZ^n:|\bfv-\bfv_0|\le L,
\ \bfv\equiv\bfb\bmod m\}.
\end{split}
\end{equation}
All constants below will be uniform in the parameters of
\eqref{n10:average-box}.

\begin{lemma}
\label{LEM:n5divisorbound}
Let notation be as in \eqref{n10:average-box}.
For every $\varepsilon>0$,
\[
\sum_{\substack{
d\sim D\\d\ \mathrm{cube\text{-}free}\\
\gcd(d,m)=1}}
\sum_{\substack{
\bfv\in(P_{n-5}\cap Q_{n-5})\cap\mathcal V}}
|S_d(\bfv)|\ll_{F,\ve} (DH)^\ve D^{n-1}T.
\]
\end{lemma}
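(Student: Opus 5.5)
We swap the order of summation: first sum over $\bfv\in(P_{n-5}\cap Q_{n-5})\cap\mathcal V$, then over $d$. By \eqref{EQ:PjQk-formula}, this set is $(Z_{n-5}(\QQ)\setminus\{\bm0\})\cap\mathcal V$. By property $iv)$ of the stratification and \eqref{ZjRj6}, we have $Z_{n-5}(\QQ)=R_{n-5}(\QQ)$ with $\dim R_{n-5}\le 1$. Lemma~\ref{LEM:basically-zippel} therefore gives $\ll_F T$ such vectors $\bfv$. This is exactly the $j=n-5$ case of \eqref{n10:progression-counts}. It thus suffices to prove, uniformly for each nonzero $\bfv$ in the set,
\[
\sum_{\substack{d\sim D\\ d\ \mathrm{cube\text{-}free}}}|S_d(\bfv)|\ll_{F,\ve}(DH)^\ve D^{n-1},
\]
where $|\bfv|\le |\bfv_0|+L\le H$.

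For the pointwise bound we write $d=d_1d_2^2$ with $d_1,d_2$ coprime and square-free, and use the multiplicativity \eqref{eq:q-exact-CRT}. Consider first primes $p\mid d_1$ with $p\nmid N|\bfv|$. Part (ii) of Proposition~\ref{prop:prime-modulus-pointwise} gives $S_p(\bfv)\ll p^{n-2}$, which is the $j=n-5$ case of part $i)$ of Proposition~\ref{PROP:Dante-partition-Pj}. Next consider primes $p\mid d_2$ with $p\nmid\Theta(\bfv)$. Here \eqref{eq:square-benchmark} with $j=n-5$, that is Proposition~\ref{PROP:partition-Qj} with $k=n-5$, gives $S_{p^2}(\bfv)\ll p^{2n-4}$. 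At the remaining primes, which divide $N\,|\bfv|\,\Theta(\bfv)$, we use the weaker bounds $S_p(\bfv)\ll p^{n-3/2}$ (trivially $\ll p^{n}$ also suffices after the divisor-bound accounting below) and $S_{p^2}(\bfv)\ll p^{2n-3}$ from \eqref{eq:p2-cutoff}. Each bad prime therefore costs a factor at most $p$ relative to the good exponents $n-1$ per unit of $d$.

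Concretely, we have $d_1^{n-2}d_2^{2n-4}=d^{n-2}$, and in general
\[
S_d(\bfv)\ll_\ve d^{\ve} d^{n-2}\,\gcd(d,M(\bfv))^{c},
\]
where $M(\bfv)=N|\bfv|\Theta(\bfv)\ll H^{O(1)}$ and $c$ is a fixed small constant. With the bad-prime bounds above we may take $c=1$, since $p^{n-1}\le p\cdot p^{n-2}$ and $p^{2n-3}\le p\cdot p^{2n-4}$. Set $g=\gcd(d,M(\bfv))$. We sum over the divisors $g$ of $M(\bfv)$, of which there are $\ll H^\ve$, and over $d\sim D$ with $g\mid d$, of which there are $\ll D/g$. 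This gives
\[
\sum_{d\sim D}|S_d(\bfv)|\ll H^\ve D^\ve D^{n-2}\sum_{g\mid M(\bfv)} g\cdot\frac{D}{g}\ll (DH)^{\ve}D^{n-1}.
\]
Multiplying by the $\ll T$ choices of $\bfv$ proves the lemma. The coprimality condition $\gcd(d,m)=1$ is not needed in this argument.

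The main point to check is the uniformity of the exceptional constants. The product of the implied constants over primes dividing $d$ is $e^{O(\omega(d))}\ll D^\ve$. The finitely many primes dividing $N_F$ are absorbed into the implied constant. The fact that $\Theta(\bfv)$ is polynomially bounded in $H$ is supplied by part $i)$ of Proposition~\ref{PROP:partition-Qj}.
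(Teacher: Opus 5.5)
Your argument is correct and is essentially the paper's. The paper cites \eqref{n10:residual-fixed-point} with $j=k=n-5$ and $h=\ell=n-3$, giving $D_1^{n-1}D_2^{2n-3}\ll D^{n-1}$; the proof of that estimate is exactly your gcd-and-divisor-bound computation. It then multiplies by the count $N(P_{n-5},L,\bfv_0,m,\bfb)\ll T$ from \eqref{n10:progression-counts}.

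One correction to your parenthetical aside: the bound $S_p(\bfv)\ll p^{n}$ would \emph{not} suffice. It forces $c=2$, and then $\sum_{d\sim D}\gcd(d,M(\bfv))^2$ can be of size $D^2$ rather than $D^{1+\ve}$. So you genuinely need $S_p(\bfv)\ll p^{n-3/2}$ from Proposition~\ref{prop:prime-modulus-pointwise}(i) at the primes $p\mid\bfv$.
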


\begin{proof}
Let $j=k=n-5$ and $h=\ell=n-3$.
If $\bfv\in P_j\cap Q_k\cap \ZZ^n$,
then by \eqref{n10:radicals} we have $R_{h,\ell}(d,\bfv)=1$ for every $d$.
Hence, by combining \eqref{n10:residual-fixed-point}
with the bound \eqref{n10:progression-counts} for $P_{n-5}$,
the lemma follows. 
\end{proof}

Henceforth we introduce the convenient indicator notation
\[
\delta_S(j)=
\begin{cases}
1&\text{ if $j\in S$},\\
0&\text{ otherwise},
\end{cases}
\]
for any finite list of integers $S$.
We also let $\delta_0(j,k)=\bm1_{j=k=0}$.
In this notation we have 
\begin{equation}\label{eq:silver}
\dim Z_{h}\leq n-h-\delta_{n-5,n-4}(h),
\end{equation}
for any integer $0\leq h\leq n-4$.

By \eqref{EQ:PjQk-formula},
we know that if $P_j\cap Q_k$ contains a nonzero vector $\bfv$, then
$0\le j\le k\le n-5$ and $k\in \{j,j+1\}$.
Having studied the case $j=n-5$ in Lemma~\ref{LEM:n5divisorbound},
we turn our attention to the remaining cases $0\le j\le n-6$.
It will be convenient to make a further case division based on the size of
the integer $R_{j+2,k+2}(d,\bfv)$.
We first treat the situation where $R_{j+2,k+2}(d,\bfv)\le T$.

\begin{lemma}\label{n10:weighted-conductor}
Let notation be as in \eqref{n10:average-box}.
Suppose $0\le j\le n-6$ and $k\in \{j,j+1\}$.
For every $K_j\ge 1/2$ and $\ve>0$,
\begin{equation}\label{n10:weighted-mean}
\sum_{\substack{
d\sim D\\d\ \mathrm{cube\text{-}free}\\
\gcd(d,m)=1}}
\sum_{\substack{
\bfv\in(P_j\cap Q_k)\cap\mathcal V\\
R_{j+2,k+2}(d,\bfv)\le T\\
K(d,\bfv) \sim K_j}}
|S_d(\bfv)|
\ll_{F,\ve}
(DH)^\ve T^{\theta_n(j,k)}D^{\frac{n+j+3}{2}}
K_j^{-\alpha_n\delta_0(j,k)},
\end{equation}
where $\theta_n(j,k) = n-j-1+\delta_0(j,k)-\delta_{n-7,n-6}(j)$
and $\alpha_n = 1/(n-8)$.
\end{lemma}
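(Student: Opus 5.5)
Write $d=d_1d_2^2$ with $d_1\sim D_1$, $d_2\sim D_2$ square-free and coprime, so that $D\asymp D_1D_2^2$ up to dyadic ranges costing $D^\ve$. The plan is to split according to the radical $R:=R_{j+2,k+2}(d,\bfv)=\gcd(d_1,q_{\bfv,j+2})\gcd(d_2,s_{\bfv,k+2})$, and to insert the pointwise bound \eqref{EQ:K-relaxed-hybrid-pointwise}. Since $k\ge j$, that bound gives $|S_d(\bfv)|\ll D^{\ve}D_1^{\frac{n+j+3}{2}}D_2^{n+k+3}K_j\le D^\ve D^{\frac{n+j+3}{2}}K_j$, using $D_2^{n+k+3}\le (D_2^2)^{\frac{n+j+3}{2}}$ when $k\le j+1$. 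It therefore remains to count pairs $(d,\bfv)$, weighted by $K_j$, and to show that this count is $\ll T^{\theta_n(j,k)}K_j^{-1-\alpha_n\delta_0}$ times harmless factors.

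\emph{Counting for fixed radical.} Fix the pair $(r_1,r_2)=(\gcd(d_1,q_{\bfv,j+2}),\gcd(d_2,s_{\bfv,k+2}))$ with $r_1r_2\le T$. For $p\mid r_1$ we have $e(\bfv;p)\ge j+2$, so $\bar\bfv\in (Z_{j+1}\setminus U_{j+1})(\FF_p)$; for $p\mid r_2$ we have $\bar\bfv\in Z_{k+2}(\FF_p)$. These are closed conditions modulo $r_1r_2$ on a variety of dimension $\le n-j-1$ (respectively $\le n-k-2$). Since $r_1r_2\le T$, the sum over $\bfv\in\mathcal V$ can be handled by splitting into residue classes modulo $mr_1r_2$. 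The number of classes is $\ll (r_1r_2)^{\dim+\ve}$ by Lang--Weil, and each class contains $\ll (T/(r_1r_2))^{\alpha}$ points by Proposition~\ref{PROP:Dante-partition-Pj}(ii), with $\alpha=n-j-2+\ve$ for $j\ge3$ (and $\alpha=n-j-2+1/(j+1)$, respectively $n$, in the small cases). The number of $d\sim D$ with $r_1r_2^2\mid d$ is $\ll D/(r_1r_2^2)$; crucially, the remaining factor $d/(r_1r_2^2)$ carries no $K$-weight. Because $K(d,\bfv)$ only involves primes with $e\ge j+3$ or $f\ge k+3$, all of which divide $R$, we get $K_j\le R^{O(1)}$; more precisely $K_j\le r_1^{(e-j)/2-1}r_2^{f-k-2}$. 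I would then sum over $(r_1,r_2)$, noting that $p\mid r_1$ with $e(\bfv;p)=h$ places $\bar\bfv$ in $Z_h$, of dimension $\le n-h-\delta$ by \eqref{eq:silver}. Each unit of $K$ gained at $p$ is therefore offset by a loss of codimension that is at least as large, giving a bound of the shape $T^{n-j-1}$ times a factor of $D^{1}$ absorbed into the exponent. The exponent of $D$ needs care: since each $\bfv$ sees many $d$, I expect to write the total as (number of $\bfv$) $\times$ (pointwise sum over $d$ from \eqref{n10:weighted-fixed-point}). This gives $\#\{\bfv\}\cdot D^{\frac{n+j+3}{2}}K_j$, and the work is to show that $\#\{\bfv : \exists\, d \text{ with } K(d,\bfv)\sim K_j,\ R\le T\}\ll T^{\theta}K_j^{-1}$ or better. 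To do this I would apply the stratified sieve, Proposition~\ref{lem:stratified-composite-sieve}, with $Y_i=Z_{h_i}$ ranging over the levels attained and $R_i$ the corresponding partial radicals. Since $\prod R_i\le T$, the main term $(L/m)^\alpha\prod R_i^{-(\alpha-d_i-1)}$ dominates, yielding a saving of $R_i^{h_i-j-2}\ge K$ at each level.

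\emph{Boundary cases.} The term $\delta_0(j,k)$ reflects that for $j=0$ the hypothesis \eqref{eq:stratified-progression-hypothesis} only holds with $\alpha=n$. In that case the saving $\prod R_i^{-(n-d_i-1)}$ still beats $K$, but only by a fractional power. Interpolating between the trivial count $T^{n}$ and the saving $K^{-1}$ times $T^{n}$ (with the worst stratum $Z_{n-5}$, $\dim\le 4$) should produce $K_j^{-1/(n-8)}$, which explains $\alpha_n$. The term $-\delta_{n-7,n-6}(j)$ comes from the extra dimension drop $\dim Z_{n-5}\le4$ and $\dim R_{n-5}\le1$ recorded in \eqref{eq:silver} and in iv) above.

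\emph{Main obstacle.} The main obstacle is this exponent bookkeeping in the sieve step, in particular matching $K$ against codimension losses uniformly in $h$ and the $j=0$ interpolation that yields $\alpha_n$. I would first verify the case $j\ge3$, $k=j$, where every inequality is exact, and then treat the edge levels separately.
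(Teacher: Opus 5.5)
\textbf{Gap: the case $k=j+1$.} Your first reduction uses $D_2^{n+k+3}\le (D_2^2)^{\frac{n+j+3}{2}}$. This is equivalent to $k\le j$, so it is false for $k=j+1$, i.e.\ for $\bfv\in P_j\cap Q_{j+1}=U_{j+1}(\QQ)$. In that case \eqref{EQ:K-relaxed-hybrid-pointwise} and \eqref{n10:weighted-fixed-point} only give $D^{\frac{n+j+3}{2}}D_2K_j$. The surplus $D_2$ can be as large as $D^{1/2}$, and no power of $T$ absorbs it, because $D$ and $T$ are independent (take $L\le m$, so $T\le 2$).

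The loss is built into those two bounds. They allow every prime of $d_2$ to have $f(\bfv;p)=k+2$, and for $k=j+1$ each such prime costs an extra factor $p$ with no $K$-weight to track it. Summing over $d$ via \eqref{n10:weighted-fixed-point} recovers the count of $d$, but not this $D_2$, since there $s\le 2D_2$ is used crudely.

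\textbf{The missing idea.} The paper factorises $d$: with $r=R_{j+2,k+2}(d,\bfv)$, put $c=\gcd(d,r^2)$ and $b=d/c$. Then $R_{j+2,k+2}(b,\bfv)=1=K(b,\bfv)$. For fixed $(c,\bfv)$, the sum over cube-free $b\sim D/c$ is bounded by \eqref{n10:residual-fixed-point} with $(h,\ell)=(j+2,k+2)$. That bound carries $D_2^{n+k+2}$, and $n+k+2\le n+j+3$ exactly when $k\le j+1$; this is where the hypothesis on $k$ enters. It yields $(D/c)^{\frac{n+j+3}{2}}$.

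The pointwise bound is then applied only to $c$, whose radical is at most $T$. So the surplus $d_2^{k-j}$ involves only primes with $\bar\bfv\in Z_{k+2}(\FF_p)$. It is paid for by $\dim Z_{k+2}\le n-k-2$ against the class count $(T/r)^{n-j-2}$. The same factorisation also removes the loss of a full factor $D$ in your first version (pointwise bound times the number of $d$).

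\textbf{The case $k=j$: your route works, but the bookkeeping is off.} For $k=j$, multiplying \eqref{n10:weighted-fixed-point} by the number of $\bfv$ that admit a prime configuration with $K\sim K_j$ and radical $\le T$ does work. Three corrections are needed.
\begin{enumerate}
\item $e(\bfv;p)\ge j+2$ puts $\bar\bfv$ in $Z_{j+2}$, not merely in $Z_{j+1}$. With dimension $\le n-j-1$ you would lose a factor $p$ per prime.
\item Write $T^{n-j-2+B}$ for the progression count of $P_j\cap Q_j$, so $B=2$, $1/(j+1)$ or $0$ according as $j=0$, $j\in\{1,2\}$ or $j\ge 3$. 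A prime of level $h$ cuts the count by $p^{h-j-2+B+\delta_{n-5,n-4}(h)}$, and one power of this is spent summing over the moduli (the $-1$ in $\alpha-d_i-1$). So the net gain does \emph{not} dominate the $K$-weight $p^{h-j-2}$ of a $d_2$-prime, contrary to your claim. For $j\ge 1$ there is a deficit of up to $p^{1-B}$ per prime, hence up to $T^{1-B}$ in total since $r\le T$. This deficit is exactly what the spare power of $T$ in $\theta_n(j,k)=n-j-1$ covers when $1\le j\le n-8$. When $j\in\{n-7,n-6\}$ it disappears via $\delta_{n-5,n-4}$.
\item For $j=k=0$ you need the number of such $\bfv$ to be $\ll T^{n}K_j^{-1-\alpha_n}$. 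That is a saving beyond a full factor $K_j^{-1}$, not an interpolation between $K_j^{0}$ and $K_j^{-1}$. The surplus $p^{1+\delta_{n-5,n-4}(h)}$ per $d_2$-prime must dominate $p^{(h-2)\alpha_n}$. The binding level is $h=n-6$ (and also $h=n-4$ when $n=10$), not $Z_{n-5}$.
\end{enumerate}
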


\begin{proof}
Let
\[
 r=R_{j+2,k+2}(d,\bfv),\qquad c=\gcd(d,r^2),\qquad b=d/c.
\]
Then $\rad(c)=r\le T$, $\gcd(b,c)=1$, $R_{j+2,k+2}(b,\bfv)=1$,
and $K(b,\bfv)=1$, so $K(d,\bfv)=K(c,\bfv)$.
This follows from the definitions \eqref{n10:radicals} and \eqref{n10:K}.
For fixed $c,\bfv$,
we have
\begin{equation}\label{eq:fix-cv}
    \sum_{\substack{\mathrm{cube\text{-}free}\ b\sim D/c \\ R_{j+2,k+2}(b,\bfv)=1}}
|S_b(\bfv)| \ll_{F,\ve} (DH)^\ve (D/c)^{(n+j+3)/2},
\end{equation}
by \eqref{n10:residual-fixed-point}, since $k\le j+1$.
Consequently the left-hand side $\mathcal L$ of \eqref{n10:weighted-mean} satisfies
\begin{equation}\label{n10:small-radical-allocation}
\mathcal L\ll_{F,\ve} (DH)^\ve D^{(n+j+3)/2}
\sum_{\substack{r\le T\\r\ \mathrm{square\text{-}free}\\\gcd(r,m)=1}}
\sum_{\substack{\rad(c)=r\\c\ \mathrm{cube\text{-}free}\\c\le2D}}
\sum_{\substack{\bfv\in(P_j\cap Q_k)\cap\mathcal V\\
R_{j+2,k+2}(c,\bfv)=r\\K(c,\bfv)\sim K_j}}
\frac{|S_c(\bfv)|}{c^{(n+j+3)/2}}.
\end{equation}

Now write $c=d_1d_2^2$, so that $r=d_1d_2$.
By \eqref{EQ:K-relaxed-hybrid-pointwise}, we have
$$\frac{S_c(\bfv)}{c^{(n+j+3)/2}}
\ll (DH)^\ve d_2^{k-j} K(d_1d_2^2,\bfv)
= (DH)^\ve d_2^{k-j}
\prod_{p\mid d_1}
p^{\frac{e(\bfv;p)-j}{2}-1}
\prod_{p\mid d_2}
p^{f(\bfv;p)-k-2},$$
by \eqref{n10:K}, since $e(\bfv;p)\ge j+2$ and $f(\bfv;p)\ge k+2$.
Put
\[
B_{j,k}=\left(\frac{\delta_{1,2}(j)}{j+1}
+2\delta_0(j)\right) \bm1_{k=j}.
\]
By \eqref{n10:progression-counts} if $k=j$,
and by \eqref{n10:Q-progression-counts_square} if $k=j+1$,
a fixed residue class modulo $r$ contains
$$\ll_{F,\ve} (T/r)^{n-j-2+B_{j,k}+\ve}$$
vectors $\bfv\in(P_j\cap Q_k)\cap\mathcal V$.
Moreover, in the light of \eqref{eq:silver},  the Chinese remainder theorem
and the Lang--Weil bound on the varieties $Z_h$ for $j+2\le h\le n-4$,
the number of residues $\bfv\bmod{r}$ for which $e(\bfv;p) = h_p$ for all $p\mid d_1$
and $f(\bfv;p) = \ell_p$ for all $p\mid d_2$ is
\begin{equation*}
\ll_{F,\ve} r^\ve \prod_{p\mid d_1} p^{n-h_p-\delta_{n-5,n-4}(h_p)}
\prod_{p\mid d_2} p^{n-\ell_p-\delta_{n-5,n-4}(\ell_p)}.
\end{equation*}
When $j=k=0$, since $K(c,\bfv) \gg K_j$, it will also be convenient to note that
\begin{equation*}
1 \ll K_j^{-\alpha_n\delta_0(j,k)} K(c,\bfv)^{\alpha_n\delta_0(j,k)},
\end{equation*}
where $\alpha_n=1/(n-8)$, as in the statement of the lemma.
Plugging the bounds above into \eqref{n10:small-radical-allocation} gives
\begin{equation*}
\mathcal L \ll_{F,\ve} (DH)^\ve D^{(n+j+3)/2} T^{n-j-2+B_{j,k}} K_j^{-\alpha_n\delta_0(j,k)}
\sum_{\substack{d_1d_2\le T\\d_1d_2\ \mathrm{square\text{-}free}\\\gcd(d_1d_2,m)=1}}
\sup_{\substack{j+2\le h_p\le n-4 \\ k+2\le \ell_p\le n-4}}
\prod_{p\mid d_1} p^{a_p}
\prod_{p\mid d_2} p^{b_p},
\end{equation*}
where
\begin{equation*}
\begin{split}
a_p
&= (\tfrac{h_p-j}{2}-1) (1+\alpha_n\delta_0(j,k))
- (n-j-2+B_{j,k})
+ (n-h_p-\delta_{n-5,n-4}(h_p)) \\
&\le (h_p-j-2) (1+\alpha_n\delta_0(j,k))
- (n-j-2+B_{j,k})
+ (n-h_p-\delta_{n-5,n-4}(h_p)) \\
&= (h_p-j-2) \alpha_n\delta_0(j,k)
-B_{j,k}
-\delta_{n-5,n-4}(h_p),
\end{split}
\end{equation*}
since $h_p\ge j+2$,
and where
\begin{equation*}
\begin{split}
b_p &= (k-j)
+ (\ell_p-k-2) (1+\alpha_n\delta_0(j,k))
- (n-j-2+B_{j,k})
+ (n-\ell_p-\delta_{n-5,n-4}(\ell_p)) \\
&= (\ell_p-k-2) \alpha_n\delta_0(j,k)
-B_{j,k}
-\delta_{n-5,n-4}(\ell_p).
\end{split}
\end{equation*}

In order to analyse the exponents $a_p$ and $b_p$, let
$$f_{j,k}(h) := (h-j-2) \alpha_n\delta_0(j,k)
-B_{j,k}
-\delta_{n-5,n-4}(h)$$
for $j+2\le h\le n-4$.
Then $a_p\le f_{j,k}(h_p)$ and $b_p\le f_{j,k}(\ell_p)$, since $k\ge j$.
If $n-7\le j\le n-6$, then $j\ge 3$ and $n-5\le h\le n-4$, so
$B_{j,k}=0$ and $f_{j,k}(h) = -1$, whence $a_p,b_p\le -1$ and
\begin{equation*}
\mathcal L \ll_{F,\ve} (DH)^\ve D^{(n+j+3)/2} T^{n-j-2}
\sum_{d_1d_2\le T} (d_1d_2)^{-1}
\ll_{F,\ve} (DH)^{2\ve} D^{(n+j+3)/2} T^{n-j-2},
\end{equation*}
which is satisfactory since $\delta_0(j,k) = 0$ and $\delta_{n-7,n-6}(j) = 1$.
If $0\le j\le n-8$ and $k\ge 1$, then $f_{j,k}(h) \le -B_{j,k}$,
where $0\le B_{j,k}\le 1$, so $a_p,b_p\le -B_{j,k}$ and
\begin{equation*}
\mathcal L \ll_{F,\ve} (DH)^\ve D^{(n+j+3)/2} T^{n-j-2+B_{j,k}}
\sum_{d_1d_2\le T} (d_1d_2)^{-B_{j,k}}
\ll_{F,\ve} (DH)^{2\ve} D^{(n+j+3)/2} T^{n-j-1},
\end{equation*}
which is satisfactory since $\delta_0(j,k) = \delta_{n-7,n-6}(j) = 0$.
Finally, if $j=k=0$, then $B_{j,k}=2$ and
\[
f_{0,0}(h) = \frac{h-2}{n-8}-2-\delta_{n-5,n-4}(h)\le -1
\qquad(2\le h\le n-4).
\]
Indeed, $f_{0,0}(h)$ is maximised either at $h=n-6$, where $f_{0,0}(n-6) = -1$,
or at $h=n-4$, where $f_{0,0}(n-4) = (n-6)/(n-8) - 3 \le -1$
since $n\ge 10$.
Thus $a_p,b_p\le -1$ and
\begin{equation*}
\mathcal L \ll_{F,\ve} (DH)^\ve D^{(n+j+3)/2} T^{n-j} K_j^{-\alpha_n}
\sum_{d_1d_2\le T} (d_1d_2)^{-1}
\ll_{F,\ve} (DH)^{2\ve} D^{(n+j+3)/2} T^{n-j} K_j^{-\alpha_n},
\end{equation*}
which is satisfactory since $\delta_0(j,k) = 1$ and $\delta_{n-7,n-6}(j) = 0$.
\end{proof}

We will also need the following complementary estimate for $R_{j+2,k+2}(d,\bfv)\ge T$.
\begin{lemma}\label{lem:weight-comp}
Let notation be as in \eqref{n10:average-box}.
Suppose $0\le j\le n-6$ and $k\in \{j,j+1\}$.
Then
\begin{equation}\label{n10:weighted-mean-comp}
\sum_{\substack{
d\sim D\\d\ \mathrm{cube\text{-}free}\\
\gcd(d,m)=1}}
\sum_{\substack{
\bfv\in(P_j\cap Q_k)\cap\mathcal V\\
R_{j+2,k+2}(d,\bfv)\ge T}}
|S_d(\bfv)|\ll_{F,\ve}
(DH)^\ve
\sum_{\ell=j+2}^{n-4}
T^{n-\ell+1-\delta_{n-5,n-4}(\ell)}
D^{\frac{n+\ell+1}{2}}.
\end{equation}
\end{lemma}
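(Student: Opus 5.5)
The plan is to decompose according to the local stratum data carried by the large radical $r=R_{j+2,k+2}(d,\bfv)$, and to trade the sieve condition $r\ge T$ against the density of the strata via Proposition~\ref{lem:stratified-composite-sieve}. Write $d=d_1d_2^2$ with $\gcd(d_1,d_2)=1$ square-free. For each prime $p\mid r$ we have $e(\bfv;p)\ge j+2$ (if $p\mid d_1$) or $f(\bfv;p)\ge k+2\ge j+2$ (if $p\mid d_2$). Let $\ell_p$ denote this value. Dyadically decompose $r$ according to the maximal level. More precisely, for each $\ell$ with $j+2\le \ell\le n-4$, let $r_\ell$ be the product of those $p\mid r$ with $\ell_p=\ell$; this gives $O((\log D)^{n})$ dyadic ranges $r_\ell\sim R_\ell$. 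Since $\prod_\ell R_\ell\gg T$, some index $\ell^*$ has $R_{\ell^*}\gg T^{1/n}$. To get the stated exponents, however, I would instead choose $\ell^*$ as the \emph{smallest} level $\ell$ for which $\prod_{\ell'\ge \ell}R_{\ell'}\gg T$. Then $\prod_{\ell'>\ell^*} R_{\ell'}\ll T$, while $\bfv$ lies in $Z_{\ell'}\bmod p$ for all $p\mid r_{\ell'}$ with $\ell'\ge\ell^*$.

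For the pointwise input I use \eqref{EQ:general-S-expansion} from the proof of Lemma~\ref{n10:Lemma-weight-gen}. Primes $p\mid d$ with $p\nmid r$ contribute $p^{(n+j+3)/2}$ (for $p\mid d_1$) or $p^{2\cdot(n+k+2)/2}$ (for $p\mid d_2$), using $e\le j+1$ and $f\le k+1$ off $r$. The primes dividing $r$ at level $\ell_p$ contribute at most $p^{(n+\ell_p+1)/2}$ per unit of $d$-size, since $e(\bfv;p)\le \ell_p$ gives $p^{(n+e+1)/2}$ for $p\mid d_1$, and $f\le\ell_p$ gives $p^{n+f+1}=(p^2)^{(n+f+1)/2}$ for $p\mid d_2$. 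Hence $|S_d(\bfv)|\ll (DH)^\ve D^{(n+\ell^*+1)/2}\prod_{\ell>\ell^*}(\text{extra } R_\ell^{(\ell-\ell^*)/2})$ up to the divisor-bound losses, where $\Delta(\bfv),\Theta(\bfv)$ handle bad primes as in Lemma~\ref{n10:Lemma-weight-gen}. Monotonicity $\ell\le n-4$ keeps everything bounded by the worst level.

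The counting step is the main one. I apply Proposition~\ref{lem:stratified-composite-sieve} with $\mathcal U=\mathcal V\cap \ZZ^n$ (so $\alpha=n$ and $Y_0=\A^n$), with $Y_i$ the integral models of the $Z_{\ell}$ for $\ell\ge \ell^*$, and with $d_i=\dim Z_\ell\le n-\ell-\delta_{n-5,n-4}(\ell)$ by \eqref{eq:silver}. The moduli $q_i=r_\ell$ are coprime to each other and to $m$. Choosing the index $j$ in that proposition to match $\ell^*$, the sieve yields $\ll (LD+|\bfv_0|)^\ve\, T^{d_{\ell^*}+1}\prod_{\ell>\ell^*}R_\ell^{d_\ell-d_{\ell^*}}$. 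The remaining terms in \eqref{eq:stratified-composite-sieve} are dominated: the $\alpha$-term is small since $\prod R\gg T$, and the other $j$-terms are likewise controlled by the choice of $\ell^*$. The factors $R_\ell^{d_\ell-d_{\ell^*}}\le R_\ell^{-(\ell-\ell^*)}$ more than cancel the pointwise excess $R_\ell^{(\ell-\ell^*)/2}$ computed above. The sum over $d$ given $r$ is then absorbed by summing the pointwise bound over $d\sim D$ with $r\mid d$ fixed, after the sieve has counted the pairs $(r,\bfv)$. Careful bookkeeping is needed here: the sum over $d$ is $\ll D/r$ times the bound, with $D^{(n+\ell^*+1)/2}$ already normalised per $d$. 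This gives the term $T^{n-\ell^*+1-\delta_{n-5,n-4}(\ell^*)}D^{(n+\ell^*+1)/2}$, and summing over $\ell^*$ and the $O(D^\ve)$ dyadic ranges gives \eqref{n10:weighted-mean-comp}.

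The main obstacle is making the exponents match exactly. I need the pointwise excess at higher levels to be beaten by the codimension savings in the sieve, and I need the count of $d$ sharing a given $r$ to interact correctly with the normalisation by $D^{(n+\ell^*+1)/2}$; a factor $D/r\le D/T$ must be shown harmless or absorbed. I expect this to require using $r\ge T$ once more, and possibly replacing $r$ by its largest divisor of size $\asymp T$ before applying the sieve, so that the hypothesis $S\gg 1+L/m$ of Lemma~\ref{Le: StratGeoSieveStep1} holds.
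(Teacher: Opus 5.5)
The gap is the one you flag at the end, and it is not a matter of bookkeeping. If you bound $|S_d(\bfv)|$ pointwise and then multiply by the $\asymp D/r$ moduli sharing the same bad part, you lose a factor as large as $D/T$. This happens with the exponent $(n+j+3)/2$ per prime of $d_1$ off $r$ that you wrote, and nothing in \eqref{n10:weighted-mean-comp} absorbs it. Using $r\ge T$ again, or truncating $r$ to a divisor of size $\asymp T$, does not affect the count of cofactors.

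Lowering the pointwise exponent to $(n+j+1)/2$ is not available either. At primes dividing $\Delta(\bfv)$ or $\Theta(\bfv)$ you only know $e(\bfv;p)\le j+1$ and $f(\bfv;p)\le k+1$. So a single cofactor can have $|S_b(\bfv)|$ as large as $b_1^{(n+j+2)/2}b_2^{n+k+2}$.

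The missing idea is to split $d=bc$ with $c=\gcd(d,r^2)$, so that $\rad(c)=r$ and $R_{j+2,k+2}(b,\bfv)=1$. Then sum over $b$ \emph{on average} using \eqref{eq:fix-cv}, i.e.\ the case $(h,\ell)=(j+2,k+2)$ of \eqref{n10:residual-fixed-point}: $\sum_{b\sim D/c}|S_b(\bfv)|\ll (DH)^\ve (D/c)^{(n+j+3)/2}$. There the exceptional primes divide the fixed integers $\Delta(\bfv),\Theta(\bfv)$ and cost only $H^{\ve}$ by the divisor bound. So the extra $1$ in the exponent pays exactly for the number of cofactors.

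Now normalise the bad part by $c^{(n+j+3)/2}$. Then \eqref{EQ:K-relaxed-hybrid-pointwise} leaves the weight $\prod_h c_h^{(h-j-2)/2}$, where $c_h$ is the part of $c$ at primes of level $h$ (your $\ell_p=h$). The whole sum is then $D^{(n+j+3)/2}$ times this weight, summed over the sieve count. There is no $D/r$ and no configuration-dependent normalisation $D^{(n+\ell^*+1)/2}$.

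With this in hand, apply Proposition~\ref{lem:stratified-composite-sieve} with all levels $Y_i=Z_{j+1+i}$, $1\le i\le n-5-j$. This needs $P_j\cap Q_k$ to avoid every $Z_h(\QQ)$ with $h\ge j+2$, which holds because $P_j\cap Z_{j+2}=\emptyset$. You must bound \emph{every} term of \eqref{eq:stratified-composite-sieve}: the proposition does not let you select one, and the others are not dominated. Rather, each gives one summand of \eqref{n10:weighted-mean-comp}. Write $f(h):=n-h-\delta_{n-5,n-4}(h)$, with $c_h\sim C_h$ and $\rad(c_h)\sim R_h$.
\begin{enumerate}
\item The constant term contributes $D^{(n+j+3)/2}\prod_hC_h^{(h-j-2)/2}\ll D^{n-3/2}$ via $\prod_hC_h\ll D$. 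This is the $\ell=n-4$ summand, and it is missing from your plan.
\item The $\alpha$-term gives the $\ell=j+2$ summand via $C_h\ll R_h^2$ and $\prod_hR_h\gg T$. Your choice $\alpha=n$ works here just as well as $\alpha=n-j$.
\item The $\ell$-th term gives the $\ell$-th summand. Use $C_h^{(h-j-2)/2}\le C_h^{(\ell-j-2)/2}$ for $h\le\ell$, $R_h^{f(\ell)-f(h)}\gg R_h^{h-\ell}\gg C_h^{(h-\ell)/2}$ for $h>\ell$, and $\prod_hC_h\ll D$.
\end{enumerate}

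Two smaller corrections. The excess at a level $h>\ell$ is $c_h^{(h-\ell)/2}$, which is as large as $R_h^{h-\ell}$ when the primes enter squared. So the sieve saving cancels it exactly, not with room to spare. Also, $\ell^*$ should be the \emph{largest} level with $\prod_{\ell'\ge\ell}R_{\ell'}\gg T$, since the smallest such level is always $j+2$.
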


\begin{proof}
Let $\mathcal L$ be the left-hand side of \eqref{n10:weighted-mean-comp}.
As in the proof of Lemma~\ref{n10:weighted-conductor}, write
\[
r=R_{j+2,k+2}(d,\bfv),\qquad c=\gcd(d,r^2),\qquad b=d/c.
\]
Then $\rad(c)=r\ge T$, $\gcd(b,c)=1$ and
$R_{j+2,k+2}(b,\bfv)=1$.
For $j+2\le h\le n-4$, let $c_h$ be the product of all prime powers
$p^\nu\parallel c$ of depth $h$, where the depth of a prime power is defined to be
$e(\bfv;p)$ if $\nu=1$
and $f(\bfv;p)$ if $\nu=2$.
Put $r_h=\rad(c_h)$.
Then
\[
c=\prod_hc_h,\qquad r=\prod_hr_h\ge T,\qquad
r_h\le c_h\le r_h^2,\qquad c\le2D.
\]
If $c=d_1d_2^2$, then \eqref{EQ:K-relaxed-hybrid-pointwise} gives
$$\frac{S_c(\bfv)}{c^{(n+j+3)/2}}
\ll (DH)^\ve d_2^{k-j} K(d_1d_2^2,\bfv)
\le (DH)^\ve \prod_{h=j+2}^{n-4}c_h^{(h-j-2)/2},$$
by the definition of $K(d_1d_2^2,\bfv)$ in \eqref{n10:K},
since $k\ge j$.
Yet for fixed $c,\bfv$,
we have
$$\sum_{\substack{\mathrm{cube\text{-}free}\ b\sim D/c \\ R_{j+2,k+2}(b,\bfv)=1}}
|S_b(\bfv)| \ll (DH)^\ve (D/c)^{(n+j+3)/2},
$$
by \eqref{eq:fix-cv}.
Therefore,
\begin{equation*}
\mathcal L
\ll \sum_{c_h,r_h}
\sum_{\substack{
\bfv\in(P_j\cap Q_k)\cap\mathcal V\\
\bfv\bmod{r_h} \in Z_h(\ZZ/r_h\ZZ)}}
(DH)^\ve D^{(n+j+3)/2} \prod_{h=j+2}^{n-4}c_h^{(h-j-2)/2}.
\end{equation*}
Let $\mathcal L'$ be the contribution from $c_h\sim C_h$ and $r_h\sim R_h$.
Then $\mathcal L' = 0$ unless
\[
\prod_h C_h\ll D,\qquad
\prod_h R_h\gg T,\qquad R_h\ll C_h\ll R_h^2.
\]
For fixed $r_h$, there are at most
$2^{\omega(r_h)}\ll_\ve D^\ve$ possible choices for $c_h$.
Hence
\begin{equation*}
\mathcal L'
\ll \mathcal N(\bm R)
(DH)^\ve D^{(n+j+3)/2} \prod_{h=j+2}^{n-4} C_h^{(h-j-2)/2},
\end{equation*}
where $\mathcal N(\bm R)$ is the quantity in
Proposition~\ref{lem:stratified-composite-sieve}
with $Y_0=\A_\ZZ^n$,
$\mathcal U=(P_j\cap Q_k)\cap\ZZ^n$,
$Y_i = Z_{j+1+i}$ for $1\le i\le s=n-5-j$,
and with dyadic parameters $R_{j+1+i}$ in place of $R_i$.
Indeed, the set $\mathcal U$
lies outside all the varieties $Z_h$
with $h\ge j+2$.
In fact, $P_j \cap Z_{j+2} = \emptyset$ by \eqref{n10:rational-partition},
since $U_{j+1} \cap Z_{j+2} = \emptyset$ by \eqref{n10:promotion-opens}.

By \eqref{n10:progression-counts},
the hypothesis \eqref{eq:stratified-progression-hypothesis}
of Proposition~\ref{lem:stratified-composite-sieve}
certainly holds with $\alpha = n-j$.
Moreover, \eqref{eq:silver} yields $\dim Z_h \le f(h)$, where $f(h) := n-h-\delta_{n-5,n-4}(h)$.
Since $f$ is (strictly) decreasing on the interval $j+2\le h\le n-4$,
Proposition~\ref{lem:stratified-composite-sieve} implies that
\[
\mathcal N(\bm R) \ll_{F,\ve}(DH)^\ve\Biggl(
1+\frac{T^{n-j}}{\prod_{h=j+2}^{n-4}R_h^{n-j-f(h)-1}}
+\sum_{\ell=j+2}^{n-4}
\frac{T^{f(\ell)+1}}
 {\prod_{h>\ell}R_h^{f(\ell)-f(h)}}
\Biggr).
\]
Let $A_0$ be the middle term in the parentheses,
and let $A_\ell$ be the summand indexed by $\ell$.
Then
\begin{equation*}
\mathcal L'
\ll A (DH)^\ve D^{(n+j+3)/2} \prod_{h=j+2}^{n-4} C_h^{(h-j-2)/2},
\end{equation*}
for some $A\in \{1, A_0, A_\ell\}$.
If $A=1$, then using $\prod_h C_h\ll D$ we find that
\begin{equation*}
\mathcal L'
\ll (DH)^\ve D^{(n+j+3)/2} D^{(n-4-j-2)/2}
= (DH)^\ve D^{n-3/2}
\ll (DH)^\ve T^4 D^{n-3/2},
\end{equation*}
which is the $\ell=n-4$ term of \eqref{n10:weighted-mean-comp}.
If $A=A_0$, then using $C_h\ll R_h^2$ we find that
\begin{equation*}
\mathcal L'
\ll (DH)^\ve D^{(n+j+3)/2} T^{n-j} \prod_{h=j+2}^{n-4}
\frac{R_h^{h-j-2}}{R_h^{n-j-f(h)-1}}.
\end{equation*}
The product over $h$ equals
$\prod_{h=j+2}^{n-4} R_h^{-1-\delta_{n-5,n-4}(h)}
\ll T^{-1-\delta_{n-5,n-4}(j+2)}$,
since $\prod_h R_h\gg T$
and $\delta_{n-5,n-4}(h) \ge \delta_{n-5,n-4}(j+2)$ for all $j+2\le h\le n-4$.
Thus
\begin{equation*}
\mathcal L'
\ll (DH)^\ve D^{(n+j+3)/2} T^{n-j-1-\delta_{n-5,n-4}(j+2)},
\end{equation*}
which is the $\ell=j+2$ term of \eqref{n10:weighted-mean-comp}.
Finally, suppose $A=A_\ell$, where $j+2\le\ell\le n-4$.
Then
\begin{equation*}
\mathcal L'
\ll (DH)^\ve D^{(n+j+3)/2} T^{f(\ell)+1}
\prod_{h=j+2}^{\ell} C_h^{(h-j-2)/2}
\prod_{h=\ell+1}^{n-4} \frac{C_h^{(h-j-2)/2}}{R_h^{f(\ell)-f(h)}}.
\end{equation*}
Here $C_h^{(h-j-2)/2} \ll C_h^{(\ell-j-2)/2}$ for $h\le \ell$.
Also, $R_h^{f(\ell)-f(h)} \gg R_h^{h-\ell} \gg C_h^{(h-\ell)/2}$ for $\ell < h\le n-4$,
since $f$ is strictly decreasing on this interval.
It follows that
\begin{equation*}
\mathcal L'
\ll (DH)^\ve D^{(n+j+3)/2} T^{f(\ell)+1}
\prod_{h=j+2}^{n-4} C_h^{(\ell-j-2)/2}
\ll (DH)^\ve D^{(n+\ell+1)/2} T^{f(\ell)+1}
\end{equation*}
since $\prod_h C_h\ll D$.
This matches the $\ell$ term of \eqref{n10:weighted-mean-comp}, and so we are done.
\end{proof}

For the remainder of this section put $n=10$.
We conclude this section with the following result
which will be useful for combining the cube-free average with the cubefull one.

\begin{prop}\label{prop:n10-weighted-cube-free}
Let $D\ge 1/2$, $L\ge1$, $m\in \ZZ_{\ge 1}$, and $\bfv_0\in\RR^{10}$.
Let $\mathcal V$ be any subset of
$\{\bfv\in\ZZ^{10}\setminus\{\bm0\}:|\bfv-\bfv_0|\le L\}$.
Suppose $w:\mathcal V\to\RR_{\ge0}$ and for every $\bfa\bmod m$ define
\[
P(\bfa):=\sup_{\substack{\bfv\in \mathcal V \\ \bfv\equiv \bfa \bmod m}} |w(\bfv)|,
\]
with the convention that $\sup_{\emptyset} = 0$
if the set in the supremum is empty.
Put
\[
T=1+L/m,\qquad H=2+|\bfv_0|+L+m.
\]
Let
\[
\|P\|_1=\sum_{\bfa\bmod m}P(\bfa),
\qquad \|w\|_1=\sum_{\bfv\in\mathcal V}w(\bfv).
\]
Then
\begin{equation}\label{eq:n10-weighted-cube-free}
\sum_{\substack{d\sim D\ \mathrm{cube\text{-}free}\\\gcd(d,m)=1}}
\sum_{\bfv\in\mathcal V}w(\bfv)|S_d(\bfv)|
\ll_{F,\ve}(DH)^\ve
\left(\mathcal R
+D^{13/2}T^{20/3}\|P\|_1^{2/3}\|w\|_1^{1/3}\right),
\end{equation}
where $\mathcal R = \min \left\{(D^{19/2}T+D^{41/6}T^9)\|P\|_1,\ D^9\|w\|_1\right\}$.
\end{prop}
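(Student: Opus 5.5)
Decompose $\mathcal V$ along the partition $\{P_j\cap Q_k\}$ of \eqref{EQ:PjQk-formula}. For $n=10$ the nonempty nonzero pieces have $0\le j\le 5$, $k\in\{j,j+1\}$, plus $j=k=5$. Inside each piece, split the $d$-sum according to whether $R_{j+2,k+2}(d,\bfv)\le T$, and in that case dyadically according to $K(d,\bfv)\sim K_j$; for $(j,k)\ne(0,0)$ the $K$-splitting is not needed, since the exponent $\alpha_n\delta_0(j,k)$ vanishes. The weight $w$ is then handled in two complementary ways. The \emph{uniform} way uses $w(\bfv)\le P(\bfv\bmod m)$ and sums over residue classes $\bfa\bmod m$, applying Lemmas~\ref{LEM:n5divisorbound}, \ref{n10:weighted-conductor} and \ref{lem:weight-comp} to the box intersected with the class $\bfa\bmod m$; these lemmas are stated for a single class $\bfb\bmod m$. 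This gives $\|P\|_1$ times the sum of the right-hand sides. The \emph{pointwise} way uses $w(\bfv)$ directly together with the pointwise bound \eqref{EQ:K-relaxed-hybrid-pointwise}, or the trivial consequence $\sum_{d\sim D}|S_d(\bfv)|\ll (DH)^\ve D^{9}$ of Proposition~\ref{prop:prime-modulus-pointwise}(ii) and \eqref{eq:p2-cutoff}, since $\bfv\ne\bm0$ gives $S_p\ll p^{8}$ and $S_{p^2}\ll p^{17}$ away from $p\mid\bfv$. The divisor bound on $\bfv$ absorbs the primes dividing $\bfv$. This gives $D^9\|w\|_1$, which is one branch of $\mathcal R$.

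Next, evaluate the uniform bounds at $n=10$. Lemma~\ref{LEM:n5divisorbound} gives $D^{9}T$. Lemma~\ref{lem:weight-comp}, with $\ell$ ranging over $j+2\le \ell\le 6$, gives terms $T^{11-\ell-\delta_{5,6}(\ell)}D^{(11+\ell)/2}$. The largest of these are $D^{17/2}T^{4}$ at $\ell=6$ and $D^{13/2}T^{7}$ at $\ell=2$; all are dominated by $D^{19/2}T+D^{41/6}T^9$ after interpolation, because $D^aT^b$ with $(a,b)$ on the segment joining $(19/2,1)$ and $(41/6,9)$ dominates by convexity. Lemma~\ref{n10:weighted-conductor} with $(j,k)\ne(0,0)$ gives $T^{9-j-\delta_{3,4}(j)}D^{(13+j)/2}$, that is $D^{7}T^{8}$, $D^{15/2}T^{7}$, $D^{8}T^{5}$, $D^{17/2}T^{4}$ and $D^{9}T^{3}$. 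Each of these I would check lies under the line through $(41/6,9)$ and $(19/2,1)$, i.e.\ $b\le 9-3(a-41/6)$; for instance $(7,8)$ gives $8\le 8.5$. Hence all of these contribute to $\mathcal R$ through the $\|P\|_1$ branch.

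The remaining and critical piece is $j=k=0$ with $R\le T$: generic $\bfv$ and small conductor. Here Lemma~\ref{n10:weighted-conductor} gives $D^{13/2}T^{10}K_0^{-1/2}$ (using $\alpha_{10}=1/2$), with $T^{10}$ too large. For this piece I would interpolate. The $\|P\|_1$ route gives $D^{13/2}T^{10}K_0^{-1/2}\|P\|_1$. The pointwise route, via \eqref{EQ:K-relaxed-hybrid-pointwise} with $j=k=0$ and $K\sim K_0$ and the count of admissible $d$ through \eqref{n10:weighted-fixed-point}, gives $D^{13/2}K_0\|w\|_1$. Taking the minimum and using $\min\{X,Y\}\le X^{2/3}Y^{1/3}$, the powers of $K_0$ cancel ($-1/3+1/3$), leaving $D^{13/2}T^{20/3}\|P\|_1^{2/3}\|w\|_1^{1/3}$, exactly the second term. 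The dyadic sum over $K_0$ costs only $D^\ve$.

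The main obstacle is bookkeeping: making sure every piece of the partition is covered with the correct constraints, in particular that $\gcd(d,m)=1$ propagates and that the uniform lemmas apply with $\mathcal V$ replaced by a box intersected with a single class. The other point to verify is the convexity comparison showing that every intermediate $D^aT^b$ term is dominated by $D^{19/2}T+D^{41/6}T^9$ in all ranges. If some term failed this comparison, I would instead bound it by the pointwise $D^9\|w\|_1$ branch. That fallback is not automatically available under the minimum in $\mathcal R$, so that case would require a careful check against the stated exponents.
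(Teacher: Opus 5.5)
Your plan is the paper's proof. You isolate the piece $(j,k)=(0,0)$ with $R_{2,2}(d,\bfv)\le T$ and bound everything else in two ways: by $\|P\|_1$ times the bounds of Lemmas~\ref{LEM:n5divisorbound}, \ref{n10:weighted-conductor} and \ref{lem:weight-comp}, and by $D^9\|w\|_1$. The critical piece is then handled by the $2/3$--$1/3$ interpolation of \eqref{n10:weighted-mean} against \eqref{n10:weighted-fixed-point}, in which the powers of $K_0$ cancel.

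You derive the $D^9\|w\|_1$ branch directly from Proposition~\ref{prop:prime-modulus-pointwise} and \eqref{eq:p2-cutoff}, whereas the paper uses the case $h=\ell=n-3$ of \eqref{n10:residual-fixed-point}. Your route also works, with one clarification. At primes $p\mid\gcd(d,\bfv)$ you only have $S_p(\bfv)=S_p(\bm0)\ll p^{17/2}$, so there is an extra factor $p^{1/2}$. This factor is paid for by the sparsity of moduli $d$ divisible by $\gcd(d,\bfv)$; the divisor bound only counts the possible values of that gcd.

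What needs fixing is the exponent bookkeeping. It is off by one, and as written your verification contains a false claim.
\begin{itemize}
\item The nonzero pieces $P_j\cap Q_k$ with $k\in\{j,j+1\}$ have $0\le j\le n-6=4$, and Lemma~\ref{n10:weighted-conductor} applies only in this range.
\item Its non-critical contributions are $D^{13/2}T^9$ from $(j,k)=(0,1)$, which you omitted, and $D^7T^8$, $D^{15/2}T^7$, $D^8T^5$, $D^{17/2}T^4$ for $j=1,\dots,4$.
\item There is no term $D^9T^3$. This matters: $(a,b)=(9,3)$ violates your line $b\le 59/2-3a$, since $3>5/2$, so your claim that each listed term lies under the line is false. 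The piece $P_5\cap Q_5$ is covered by Lemma~\ref{LEM:n5divisorbound} alone and contributes only $D^9T$.
\item The $\ell=2$ term of Lemma~\ref{lem:weight-comp} is $D^{13/2}T^9$, not $D^{13/2}T^7$.
\end{itemize}
The correct list is $D^9T$, $D^{13/2}T^9$, $D^7T^8$, $D^{15/2}T^7$, $D^8T^5$, $D^{17/2}T^4$. Every exponent pair satisfies $a+b/3\le 59/6$ with $1\le b\le 9$, with equality for $(15/2,7)$ and $(17/2,4)$. This is precisely the paper's check that everything is $\ll D^{59/6}(x+x^9)$ with $x=T/D^{1/3}$. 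So the fallback you were worried about in your last paragraph is never needed.
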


\begin{proof}
Put $x=T/D^{1/3}$ and let 
$$\mathcal U_d := \{\bfv\in\mathcal V\cap P_0\cap Q_0:
R_{2,2}(d,\bfv)\le T\}.$$
Observe that
\begin{equation*}
\begin{split}
\sum_{\substack{
d\sim D\\
d\ \text{cube-free}\\
\gcd(d,m)=1}}
\sum_{\bfv\in\mathcal V}
w(\bfv)|S_d(\bfv)|
&=
\sum_{\substack{
d\sim D\\
d\ \text{cube-free}\\
\gcd(d,m)=1}}
\sum_{\bfv\in \mathcal U_d}
w(\bfv)|S_d(\bfv)|
+
\sum_{\substack{
d\sim D\\
d\ \text{cube-free}\\
\gcd(d,m)=1}}
\sum_{\bfv\in \mathcal V \setminus \mathcal U_d}
w(\bfv)|S_d(\bfv)|\\
&=M+E,
\end{split}
\end{equation*}
say.
We will prove that $E \ll (DH)^\ve \mathcal R$
and that $M \ll (DH)^\ve D^{13/2}T^{20/3}\|P\|_1^{2/3}\|w\|_1^{1/3}$.

We provide two different bounds for $E$.
First we sort the vector $\bfv$ according to its reduction modulo $m$.
If $\bfv\equiv \bfa\bmod{m}$, then $w(\bfv) \le P(\bfa)$.
Thus
\begin{equation*}
E=\sum_{\substack{\bfa\bmod{m}}}\sum_{\substack{
d\sim D\\
d\ \text{cube-free}\\
\gcd(d,m)=1}}
\sum_{\substack{
\bfv\in \mathcal V \setminus \mathcal U_d\\
\bfv\equiv\bfa\bmod{m}}} w(\bfv)|S_d(\bfv)|
\le \|P\|_{1}\max_{\substack{\bfa\bmod{m}}} f(\bfa),
\end{equation*}
where
\[
f(\bfa) = \sum_{\substack{
d\sim D\\
d\ \text{cube-free}\\
\gcd(d,m)=1}}
\sum_{\substack{
\bfv\in \mathcal V \setminus \mathcal U_d\\
\bfv\equiv\bfa\bmod{m}}} |S_{d}(\bfv)|.
\]
At this stage we recall from the 
start of \S\ref{sec:silver} that 
the set $\{P_j\cap Q_k\}_{j,k=0}^{n-4}$
gives a partition of $\QQ^n$.
After an application of
Lemma~\ref{LEM:n5divisorbound} for $(j,k)=(n-5,n-5)$,
Lemma~\ref{n10:weighted-conductor} for $(j,k)\ne (0,0),(n-5,n-5)$,
and Lemma~\ref{lem:weight-comp} for $(j,k)\ne (n-5,n-5)$,
one gets
\[
\begin{split}
f(\bfa) &\ll (DH)^\ve
\left(
TD^9+T^9D^{13/2}+T^8D^7
+T^7D^{15/2}+T^5D^8+T^4D^{17/2}
\right)\\&\ll(DH)^\ve\left(xD^{28/3}+x^{9}D^{19/2}+x^{8}D^{29/3}+x^{7}D^{59/6}+x^{5}D^{29/3}+x^{4}D^{59/6}\right).
\end{split}
\]
Since $D^{59/6}(x^{4}+x^{7})\leq 2D^{59/6}(x+x^{9})$,
it follows that
\[
\begin{split}
f(\bfa) &\ll  (DH)^\ve\left(xD^{28/3}+x^{9}D^{19/2}+x^{8}D^{29/3}+x^{5}D^{29/3}+(x+x^{9})D^{59/6}\right)\\&\ll (DH)^\ve D^{59/6}(x+x^{9})
= (DH)^\ve (D^{19/2}T+D^{41/6}T^9),
\end{split}
\]
since $D\gg 1$
and $ x=T/D^{1/3}$.
Second,
thanks to the $h=\ell=n-3$ case of \eqref{n10:residual-fixed-point}
we know that for every $\bfv\in P_{j}\cap Q_{k}$, for $j,k\in\{0,\dots,n-5\}$ we have 
\[
\sideset{}{^*}\sum_{d_1\sim D_1}
\sideset{}{^*}\sum_{\substack{
d_2\sim D_2\\\gcd(d_1,d_2)=1}}
|S_{d_1d_2^2}(\bfv)|\ll_{F,\ve} (DH)^\ve
D^{n-3/2} D_1^{\max\{j-n+6,0\}/2}
\ll (DH)^\ve D^{n-1},
\]
since $j\le n-5$.
Indeed, for $j=k=n-5$
this was already observed in the proof of Lemma~\ref{LEM:n5divisorbound}.
Hence
\[
E\le \sum_{\substack{
\bfv\in\mathcal V}} w(\bfv)\sum_{\substack{
d\sim D\\
d\ \text{cube-free}\\
\gcd(d,m)=1}}
|S_{d}(\bfv)|
\ll \|w\|_{1} (DH)^\ve D^9.
\]
Combining the two bounds, we conclude that $E\ll (DH)^\ve \mathcal R$.

It remains to bound
\[
M = \sum_{\substack{
d\sim D\\
d\ \text{cube-free}\\
\gcd(d,m)=1}}
\sum_{\substack{
\bfv\in\mathcal V\cap P_0\cap Q_0 \\ R_{2,2}(d,\bfv)\leq T}}w(\bfv)|S_{d}(\bfv)|.
\]
Let $M(K_0)$ be the contribution to $M$ from $K(d,\bfv) \sim K_0$.
We bound $M(K_0)$ using the same two methods as for $E$.
First, the $(j,k)=(0,0)$ case of Lemma~\ref{n10:weighted-conductor} implies that
\begin{equation*}
M(K_0) \ll \|P\|_{1}
(DH)^\ve T^n D^{\frac{n+3}{2}} K_0^{-1/2}.
\end{equation*}
Second, by \eqref{n10:weighted-fixed-point} for $(j,k)=(0,0)$ we have
\begin{equation*}
M(K_0) \ll
\|w\|_{1} (DH)^\ve D^{\frac{n+3}{2}} K_0.
\end{equation*}
Thus
\begin{equation*}
M(K_0)
= M(K_0)^{2/3} M(K_0)^{1/3}
\ll (DH)^\ve D^{\frac{n+3}{2}}
\left(\|P\|_{1} T^n K_0^{-1/2}\right)^{2/3}
\left(\|w\|_{1} K_0\right)^{1/3}.
\end{equation*}
The powers of $K_0$ cancel out,
leading to $M(K_0) \ll (DH)^\ve D^{(n+3)/2}T^{2n/3}\|P\|_1^{2/3}\|w\|_1^{1/3}$.
Dyadic summation over the interval $1/2 \le K_0 \ll D^{O(1)}$ completes the proof.
\end{proof}

\section{Mixed modulus averages}
\label{sec:n10-mixed-moduli}

Throughout this section $n=10$ and $X(\QQ)=\emptyset$.
Recall the definition \eqref{eq:inel-global-complete-sum} of 
$S_{q,W}(\bfv)$. 
We will combine Lemmas~\ref{Le: Cubefull.finalbound},
\ref{lem:q-smith-cubefull},
\ref{lem:q-fixed-shifted box}
and Proposition~\ref{prop:n10-weighted-cube-free} to prove the following result about the average of 
$|S_{q,W}(\bfv)|$ over $\bfv\in \ZZ^{10}$ and  arbitrary integer moduli $q\ge 1$.

\begin{prop}\label{prop:n10-clean-shifted}
Fix $0<\gamma<1/96$.
There exists a modulus $W$ as in \eqref{large-fixed-W} such that
for every $\varepsilon>0$, $\bfv_0\in \RR^n$ and $Q,L\ge 1/2$, we have
\begin{equation}\label{eq:n10-clean-shifted}
\sum_{q\sim Q}
\sum_{\substack{\bm0\ne\bfv\in\ZZ^{10}\\
|\bfv-\bfv_0|\le L}}
|S_{q,W}(\bfv)|
\ll_{F,\gamma,\ve}
(LQ+|\bfv_0|)^\ve (L+Q^{1/3})^{10}Q^{20/3-\gamma}.
\end{equation}
Here $W$ may depend on $F$ and $\gamma$, but not on $\bfv_0,L,Q,\ve$.
\end{prop}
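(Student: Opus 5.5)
We factor each modulus as $q=q_0q_1q_2$, where $q_0$ is supported on primes dividing $W$, $q_1$ is cubefull and coprime to $W$, and $q_2$ is cube-free and coprime to $Wq_1$. By Lemma~\ref{lem:restricted-multiplicative} we then have
\[
S_{q,W}(\bfv)=S_{q_0,W}(\bfv)S_{q_1}(\bfv)S_{q_2}(\bfv).
\]
We choose $\mathcal P_{\mathrm{small}}$ to be the set of primes that are too small for Lemma~\ref{lem:q-smith-cubefull} to apply, with its $\ve$ tied to $\gamma$, and let $W$ be as in \eqref{large-fixed-W}. We split $q_0\sim Q_0$, $q_1\sim Q_1$ and $q_2\sim D$ dyadically, with $Q_0Q_1D\asymp Q$. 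This costs $O(\log^3)$ ranges, which is absorbed into $(LQ+|\bfv_0|)^\ve$.

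For fixed $q_0,q_1$, put $A=A_0A_1$, where $A_1=bz_2$ comes from Lemma~\ref{lem:q-smith-cubefull} and $A_0=\prod_p p^{\lceil v_p(q_0)/3\rceil}$. The key observation is that the bounds of Lemmas~\ref{lem:q-smith-cubefull} and \ref{lem:q-fixed-shifted box} control $|S_{q_0,W}(\bfv)S_{q_1}(\bfv)|$ through its supremum over each residue class $\bfv\bmod{A}$. We therefore apply Proposition~\ref{prop:n10-weighted-cube-free} with $m:=A$ and $w(\bfv):=|S_{q_0,W}(\bfv)S_{q_1}(\bfv)|$ on the box $|\bfv-\bfv_0|\le L$ with $\bfv\ne\bm0$. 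Then $\|P\|_1$ is exactly the left-hand side of \eqref{eq:q-route-zero} times that of \eqref{eq:q-fixed-shifted box}, which gives
\[
\|P\|_1\ll Q_1^{10+\ve}z_0^{-2}z_1^{-4}z_2^{-1}\,Q_0^{10+(4-r(F))/6}.
\]
Here $r(F)\ge 8$ by Lemma~\ref{LEM:my-guy-slop}, so the $q_0$ factor saves $Q_0^{2/3}$. For $\|w\|_1$ we use Lemma~\ref{Le: Cubefull.finalbound}, which gives $\|w\|_1\ll Q_1^{32/5+\ve}(L+Q_1^{1/3})^{10}$ per $q_1$ on average over $q_1$, together with a similar Poisson-type bound for the $q_0$ factor (or simply a trivial bound, since $Q_0$ is at worst a power of a fixed set of primes). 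The number of cubefull $q_1\sim Q_1$ is $\ll Q_1^{1/3}$. We then sum over $q_0,q_1$ inside the factors, using Hölder in the form $\sum\|P\|_1^{2/3}\|w\|_1^{1/3}\le(\sum\|P\|_1)^{2/3}(\sum\|w\|_1)^{1/3}$.

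What remains is exponent bookkeeping. With $T=1+L/A$ and $A\asymp Q_0^{1/3}Q_1^{1/3}$ (up to the $z_i$ factors), the main term $D^{13/2}T^{20/3}\|P\|_1^{2/3}\|w\|_1^{1/3}$ becomes, in the regime $L\ge Q^{1/3}$, roughly
\[
D^{13/2}L^{20/3}A^{-20/3}\,Q_1^{20/3}\cdot Q_1^{(32/5+1/3)/3}L^{10/3}.
\]
This should be compared with the target $L^{10}Q^{20/3-\gamma}=L^{10}(DQ_1Q_0)^{20/3-\gamma}$. The $D$-exponent saves $1/6$. The $Q_1$-exponent must be checked carefully against $20/3$, and the gains $z_0^{-2}z_1^{-4}z_2^{-1}$ together with the $A^{-20/3}$ normalisation are exactly what is needed here. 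The $\mathcal R$ term is handled similarly, using $D^9\|w\|_1$ when $D$ is small and $(D^{19/2}T+D^{41/6}T^9)\|P\|_1$ otherwise. The case $L<A$ (so $T\asymp1$) and the case $D$ small should be treated separately, where Lemma~\ref{lem:onion} applied directly to $q_1q_2$ may be the cleaner route. The case $\bfv\in P_{n-5}$ with tiny $D$ is covered by Lemma~\ref{LEM:n5divisorbound}.

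The main obstacle is the cubefull range $Q_1\approx Q$ with $D$ bounded. There the saving must come entirely from Lemma~\ref{lem:q-smith-cubefull} against the cubefull count, and $\gamma<1/96$ presumably reflects the slack available there. I expect to have to interpolate between Lemma~\ref{Le: Cubefull.finalbound} (the $32/5$ exponent, good when $L\gg Q^{1/3}$) and the $\bfv\bmod A$ bound (good for short boxes), optimising the choice of the two over the ranges of $L/Q_1^{1/3}$ and over the shape $(z_0,z_1,z_2)$. Verifying that every vertex of the resulting piecewise-linear exponent region lies below $20/3-\gamma$ is the delicate step.
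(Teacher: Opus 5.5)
The skeleton of your proposal matches the paper: the factorisation $q=gdr$, the modulus $m=A_gA_r$ and weight $w=|S_{g,W}S_r|$ fed into Proposition~\ref{prop:n10-weighted-cube-free}, and H\"older for $\mathcal R_3:=D^{13/2}T^{20/3}\|P\|_1^{2/3}\|w\|_1^{1/3}$. There is nevertheless a genuine gap. The proof consists of the bookkeeping you defer, and two of the inputs you propose for it would make it fail.

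\textbf{First input.} Lemma~\ref{Le: Cubefull.finalbound} already bounds the \emph{sum} over all cubefull $r\asymp R$. Multiplying by the count $Q_1^{1/3}$ of cubefull moduli, as your exponent $(32/5+1/3)/3$ does, is fatal. Take a box of radius $L\le Q^{1/3}$ (target $Q^{10-\gamma}$) with $D,G\ll 1$ and $C\asymp Q$. There $\mathcal R_1:=(D^{19/2}T+D^{41/6}T^9)\|P\|_1$ is only bounded by $Q^{10+\ve}$. So everything rests on $\sum\mathcal R_2=\sum D^9\|w\|_1\ll C^{32/5}Q^{10/3}\asymp Q^{10-4/15}$ and on $\sum\mathcal R_3\ll Q^{10-1/30}$. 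With your extra $Q_1^{1/3}$ these become $Q^{10+1/15}$ and $Q^{10+7/90}$.

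\textbf{Second input.} A trivial bound for $S_{q_0,W}$ inside $\|w\|_1$ is not admissible, because $q_0$ ranges over unbounded powers of the small primes. With $|S_{g,W}|\ll g^{11}$, your bound for $\sum\mathcal R_3$ becomes about $Q^{11}$ when $G\asymp Q$. You need the stationary-phase bound $S_{g,W}(\bfv)\ll g^{7}$ from \eqref{fixed-p-adic-stationary-phase} with $r(F)\ge 8$, which gives $Q^{87/9}$. Separately, Lemma~\ref{lem:onion} requires a squarefull modulus, so it cannot be applied to $q_1q_2$.

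\textbf{The missing quantitative mechanism.} First reduce to $L\le Q^{1/3}$ by covering with boxes of radius $Q^{1/3}$. This is lossless for the target and removes your case split in $L$. Then $T_{g,r}\ll 1+D^{1/3}(z_1/z_2)^{1/3}$, since $A_r^3=rz_2/z_1$ and $Q/(gr)\asymp D$. The resulting factor $(z_1/z_2)^{\alpha/3}$ is absorbed by $\kappa(r)=z_0^{-2}z_1^{-4}z_2^{-1}$ through the Rankin estimate of Lemma~\ref{lem:n10-cubefull-weights}. This gives
\[
\sum_{g\sim G}\sum_{r\sim C}T_{g,r}^{\alpha}\|P_{g,r}\|_1\ll Q^{\ve}D^{\alpha/3}G^{28/3}C^{10+\max\{0,(\alpha-9)/12\}+\eta}\qquad(\alpha=1,9,10).
\]
Consequently, against $Q^{10}$:
\begin{itemize}
\item $\sum\mathcal R_1$ saves $D^{1/6}G^{2/3}$;
\item $\sum\mathcal R_2$ saves $Q^{1/6}D^{-5/2}G^{-1/2}$;
\item $\sum\mathcal R_3$ saves $Q^{1/30}G^{3/10}D^{2/15}$.
\end{itemize}

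This shows your diagnosis of the obstacle is off. When $D$ is bounded, the Smith bound of Lemma~\ref{lem:q-smith-cubefull} saves nothing, and it is Lemma~\ref{Le: Cubefull.finalbound} that does the work. The binding range is $D\asymp Q^{1/16}$, $G\asymp 1$. There the savings of $\mathcal R_1$ and $\mathcal R_2$ balance at $Q^{1/96}$, for instance via $\min\{\mathcal R_1,\mathcal R_2\}\le\mathcal R_1^{15/16}\mathcal R_2^{1/16}$, and this is where $\gamma<1/96$ comes from. Your split ``$D$ small / otherwise'' works once these moment bounds are in hand, with threshold $D=Q^{1/16}$.
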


We start by recalling some notation.
For a fixed $\eta>0$, let $\mathcal P_\eta$
be a finite, sufficiently large set of primes
depending only on $F$ and $\eta$.
We then define $W_\eta = \prod_{p\in \mathcal P_\eta} p^{M_p}$ as in \eqref{large-fixed-W}.

Every positive integer $q$ has a unique decomposition
$q=gdr$, where $g$ is supported on the primes in
$\mathcal P_\eta$, $d$ is cube-free, $r$ is cubefull,
both $d$ and $r$ are supported on primes outside $\mathcal P_\eta$,
and $g,d,r$ are pairwise coprime.
By 
\eqref{eq:q-exact-CRT} and 
Lemma~\ref{lem:restricted-multiplicative},
\[
S_{q,W_\eta}(\bfv)
=
S_{g,W_\eta}(\bfv)S_d(\bfv)S_r(\bfv).
\]

Let us first focus on $S_{g,W_\eta}(\bfv)$.
The estimate in Lemma~\ref{lem:q-fixed-shifted box} gives, for every
$p\in\mathcal P_\eta$ and every $k\geq0$,
\begin{equation*}
\sum_{\bfa\bmod{p^{\lceil k/3\rceil}}}
\max_{\substack{\bfv\bmod{p^{\max(k,M_p)}} \\ \bfv\equiv \bfa\bmod{p^{\lceil k/3\rceil}}}}
|S_{p^k,p^{M_p}}(\bfv)|
\ll_{F,\eta}
p^{k(10+(4-r(F))/6)}.
\end{equation*}
On the other hand,
Lemma~\ref{lem:inel-high-rank-point} implies that
$r(F)\geq8$ when $n=10$. Hence,
\begin{equation*}
\sum_{\bfa\bmod{p^{\lceil k/3\rceil}}}
\max_{\substack{\bfv\bmod{p^{\max(k,M_p)}} \\ \bfv\equiv \bfa\bmod{p^{\lceil k/3\rceil}}}}
|S_{p^k,p^{M_p}}(\bfv)|
\ll_{F,\eta}
p^{28k/3}.
\end{equation*}
By Lemma~\ref{lem:restricted-multiplicative} and the Chinese remainder theorem
over all primes $p\in \mathcal P_\eta$,
we obtain
\begin{equation}
\label{eq:q-fixed-g-rec}
\sum_{\bfa\bmod{A_g}}
\max_{\substack{\bfv\bmod{\lcm(g,W_\eta)} \\ \bfv\equiv \bfa\bmod{A_g}}}
|S_{g,W_\eta}(\bfv)|
\ll_{F,\eta}
g^{28/3},
\end{equation}
where
\[
A_g
=
\prod_{p\mid g}p^{\lceil v_p(g)/3\rceil}
\asymp_{F,\eta}g^{1/3}.
\]

Let us now focus on the cubefull part $r$ and its
interaction with $g$.
Following Lemma~\ref{lem:q-smith-cubefull}, put
\[
A=bz_2,\qquad T=bz_1
\]
where
\begin{equation}
\label{eq:q-buw-rec}
b = \prod_{v_p(r)>0}p^{\lfloor v_p(r)/3\rfloor},
\qquad
z_i =
\prod_{\substack{v_p(r)>0\\
v_p(r)\equiv i\bmod3}}p,
\end{equation}
for $0\leq i\leq2$. 
In particular, $r=b^3z_1z_2^2$ and $z_0z_1z_2\mid b$.
By \eqref{eq:q-fixed-g-rec},
Lemma~\ref{lem:q-smith-cubefull},
and the Chinese remainder theorem,
\begin{equation}
\label{eq:Pgr}
\sum_{\bfa\bmod{A_gA}}
\max_{\substack{\bfv\bmod{\lcm(g,W_\eta)r} \\ \bfv\equiv \bfa\bmod{A_gA}}}
|S_{g,W_\eta}(\bfv)S_r(\bfv)|
\ll_{F,\eta}
g^{28/3} r^{10+\eta}\kappa(r),
\end{equation}
where $\kappa(r)=z_0^{-2}z_1^{-4}z_2^{-1}$.
The following lemma will help us estimate sums over $r$.

\begin{lemma}\label{lem:n10-cubefull-weights}
For $0\le\alpha\le10$ and every $\varepsilon>0$,
\[
\sum_{\substack{r\sim C\\r\ \mathrm{cube\text{-}full}}}
\kappa(r)\, (z_1/z_2)^{\alpha/3}
\ll_{\alpha,\varepsilon}
C^{\max(0,(\alpha-9)/12)+\varepsilon}.
\]
\end{lemma}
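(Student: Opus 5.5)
The sum is over cubefull $r\sim C$, and the weight $\kappa(r)(z_1/z_2)^{\alpha/3}$ is multiplicative in $r$. So the natural route is Rankin's trick applied to an Euler product, exactly as in the proof of Lemma~\ref{Le: Cubefull.finalbound}.

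First compute the local weight. For a prime $p$ with $v_p(r)=\nu\ge3$, the weight $w_\alpha(p^\nu)$ equals:
\begin{itemize}
\item $p^{-2}$ if $\nu\equiv0\bmod 3$;
\item $p^{-4+\alpha/3}$ if $\nu\equiv1\bmod3$, so $\nu\ge4$;
\item $p^{-1-\alpha/3}$ if $\nu\equiv2\bmod3$, so $\nu\ge5$.
\end{itemize}
This is because $p$ then divides exactly one of $z_0,z_1,z_2$, according to $\nu\bmod 3$.

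For real $s>0$ the Dirichlet series is then
\[
\sum_{r\ \mathrm{cubefull}}\frac{\kappa(r)(z_1/z_2)^{\alpha/3}}{r^s}
=\prod_p\Bigl(1+\sum_{\nu\ge3}w_\alpha(p^\nu)p^{-\nu s}\Bigr).
\]
The first few terms in the local factor are
\[
p^{-2-3s},\qquad p^{-4+\alpha/3-4s},\qquad p^{-1-\alpha/3-5s},\qquad p^{-2-6s},\ \dots
\]
All higher terms decay geometrically once $s>0$.

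The product converges provided every exponent is $<-1$, that is, provided the following three conditions hold:
\begin{itemize}
\item $2+3s>1$, which is automatic for $s\ge0$;
\item $1+\alpha/3+5s>1$, which holds for $s>0$ since $\alpha\ge0$;
\item $4-\alpha/3+4s>1$, which is equivalent to $s>(\alpha-9)/12$.
\end{itemize}
One should also check that the exponents for $\nu\equiv1\bmod 3$ with $\nu\ge7$ cause no trouble. These are $-4+\alpha/3-\nu s$, which are bounded by the $\nu=4$ case. Likewise the terms with $\nu\equiv 2\bmod 3$, $\nu\ge 8$, are bounded by the $\nu=5$ case.

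Hence the product converges absolutely for every $s>s_0:=\max(0,(\alpha-9)/12)$.

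Finally apply Rankin's trick with $s=s_0+\varepsilon$:
\[
\sum_{r\sim C}\kappa(r)(z_1/z_2)^{\alpha/3}\le (2C)^{s}\sum_{r}\frac{\kappa(r)(z_1/z_2)^{\alpha/3}}{r^{s}}\ll_{\alpha,\varepsilon}C^{s_0+\varepsilon}.
\]
The implied constant is uniform for $0\le\alpha\le10$, since the convergence of the Euler product is uniform once $s\ge s_0+\varepsilon$.

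The only delicate point is the bookkeeping of which of $z_0,z_1,z_2$ carries $p$. The bound is dictated by $\nu=4$, the smallest exponent with $\nu\equiv1\bmod 3$, where the positive power $\alpha/3$ competes against $p^{-4}$. There is no further obstacle.
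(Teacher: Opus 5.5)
Your proposal is correct and is essentially the paper's own argument. Both apply Rankin's trick with $s=\max(0,(\alpha-9)/12)+\varepsilon$ and bound the resulting Euler product, whose local factor has leading terms $p^{-3s-2}$, $p^{-4s-4+\alpha/3}$ and $p^{-5s-1-\alpha/3}$, with the $\nu=4$ term forcing the threshold $s>(\alpha-9)/12$.
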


\begin{proof}
By Rankin's trick, the left-hand side divided by the right-hand side is
\begin{equation*}
\ll \sum_{r\ \mathrm{cube\text{-}full}}
\kappa(r)\, (z_1/z_2)^{\alpha/3} r^{-s}
\le \prod_p E_p,
\end{equation*}
where $s=\max(0,(\alpha-9)/12)+\varepsilon$ and
\[
E_p = 1+\sum_{e\ge1}p^{-3es-2}
+\sum_{e\ge1}p^{-(3e+1)s-4+\alpha/3}
+\sum_{e\ge1}p^{-(3e+2)s-1-\alpha/3}.
\]
The three sums are respectively
$\ll p^{-3s-2}$,
$\ll p^{-4s-4+\alpha/3}$,
and $\ll p^{-5s-1-\alpha/3}$,
whose exponents are all strictly less than $-1$.
Indeed, this is clear for the first and third exponents,
since $s>0$ and $\alpha\ge 0$.
For the second exponent, we have $-4s-4+\alpha/3 < -1$
because $s > (\alpha-9)/12$.
Thus the Euler product converges.
\end{proof}

\begin{proof}[Proof of Proposition~\ref{prop:n10-clean-shifted}]
Fix $0<\gamma<1/96$, and choose $\eta>0$ sufficiently
small in terms of $1/96-\gamma$, to be fixed at the end.
Let $\mathcal P_\eta$ and $W_\eta$
be chosen in terms of $\eta$ as above.

We first assume that $L\leq Q^{1/3}$ and put
\[
\mathcal V
=
\{\bfv\in\ZZ^{10}\setminus\{\bm0\}:
|\bfv-\bfv_0|\leq L\}.
\]
For $q\sim Q$, write $q=gdr$ as above and fix dyadic ranges
\[
g\sim G,\qquad d\sim D,\qquad r\sim C,
\]
so that $GDC\asymp Q$.
For each fixed $g,r$, define
\[
w_{g,r}(\bfv)
:=
|S_{g,W_\eta}(\bfv)S_r(\bfv)|,
\qquad
m_{g,r}:=A_gA_r,
\qquad
T_{g,r}:=1+\frac{L}{m_{g,r}},
\]
where $A_g$ is as above and $A_r=bz_2$, with $b,z_2$
as in \eqref{eq:q-buw-rec}. Put
\[
H_{g,r}:=2+|\bfv_0|+L+m_{g,r},
\]
and, for every $\bfa\bmod m_{g,r}$, define
\[
P_{g,r}(\bfa)
:=
\sup_{\substack{\bfv\in\mathcal V\\
\bfv\equiv\bfa\bmod m_{g,r}}}
|S_{g,W_\eta}(\bfv)S_r(\bfv)|,
\]
with the convention that the supremum is zero if the
set is empty.
An application of Proposition~\ref{prop:n10-weighted-cube-free}
gives
\begin{equation}
\label{eq:n10-cube-free-applied}
\sum_{\substack{d\sim D\ \mathrm{cube\text{-}free}\\
\gcd(d,m_{g,r})=1}}
\sum_{\bfv\in\mathcal V}
w_{g,r}(\bfv)|S_d(\bfv)|
\ll_{F,\ve} (DH_{g,r})^\ve \left(\min\{\mathcal R_1, \mathcal R_2\} + \mathcal R_3\right)
\end{equation}
where
\begin{equation*}
\begin{split}
\mathcal R_1
&= (D^{19/2}T_{g,r}+D^{41/6}T_{g,r}^{9}) \|P_{g,r}\|_1, \\
\mathcal R_2
&= D^9\|w_{g,r}\|_1, \\
\mathcal R_3
&= D^{13/2}T_{g,r}^{20/3}
\|P_{g,r}\|_1^{2/3}
\|w_{g,r}\|_1^{1/3}.
\end{split}
\end{equation*}
Here $\rad(m_{g,r})=\rad(gr)$, so that
$\gcd(d,m_{g,r})=1$ is equivalent to $\gcd(d,gr)=1$.

In order to further average the estimate \eqref{eq:n10-cube-free-applied}
over $g$ and $r$,
our strategy will be to bound
\begin{equation}
\label{EQ:mixed-endpoint-task-list}
\sum_{g\sim G}\sum_{r\sim C}\|w_{g,r}\|_1
\quad\text{and}\quad
\sum_{g\sim G}\sum_{r\sim C}
T_{g,r}^{\alpha}\|P_{g,r}\|_1
\qquad(\alpha=1,9,10)
\end{equation}
first, before attacking $\min\{\mathcal R_1,\mathcal R_2\}$
and $\mathcal R_3$ by interpolation.
We start with the average of $\|w_{g,r}\|_1$.
It will be convenient to handle the $g$-aspect using a pointwise bound.
By \eqref{fixed-p-adic-stationary-phase}, together with
Lemma~\ref{lem:inel-high-rank-point} and
Lemma~\ref{lem:restricted-multiplicative}, we have
$S_{g,W_\eta}(\bfv)\ll_{F,\eta}g^7$
uniformly in $\bfv$.
Moreover, since $\mathcal P_\eta$ is fixed and finite,
there are $O_{F,\eta,\ve}(G^\ve)$ possible integers
$g\sim G$. Hence, for every $\ve>0$,
\[
\begin{split}
\sum_{g\sim G}\sum_{r\sim C}\|w_{g,r}\|_1
&=
\sum_{g\sim G}\sum_{r\sim C}
\sum_{\bfv\in\mathcal V}
|S_{g,W_\eta}(\bfv)S_r(\bfv)|\\
&\ll_{F,\eta,\ve}
G^{7+\ve}
\sum_{r\sim C}\sum_{\bfv\in\mathcal V}
|S_r(\bfv)|\\
&\ll_{F,\eta,\ve}
G^{7+\ve}C^{32/5+\ve}(L+C^{1/3})^{10}\\
&\ll_{F,\eta,\ve}
G^{7+\ve}C^{32/5+\ve}Q^{10/3}.
\end{split}
\]
Here the penultimate step follows from 
 \eqref{eq:cubefull-average-sharp} in Lemma~\ref{Le: Cubefull.finalbound}, and the
last step uses $L\leq Q^{1/3}$ and $C\ll Q$.

Let us focus on the other term in \eqref{EQ:mixed-endpoint-task-list},
involving the moments of $\|P_{g,r}\|_1$.
All implied constants below may depend on
$F,\eta,\ve$.
Observe that \eqref{eq:Pgr} gives
\[
\|P_{g,r}\|_1
\ll
G^{28/3}r^{10+\eta}\kappa(r).
\]
Moreover, $r=b^3z_1z_2^2$ by \eqref{eq:q-buw-rec}, so we have
\[
A_g\asymp_{F,\eta}g^{1/3},
\qquad
A_r^3=b^3z_2^3 = r\frac{z_2}{z_1}=\frac{r}{u(r)^3},
\qquad
u(r):=\left(\frac{z_1}{z_2}\right)^{1/3}.
\]
Since $L\leq Q^{1/3}$ and $GDC\asymp Q$, we obtain
\[
T_{g,r}
\ll
1+\left(\frac{Q}{gr}\right)^{1/3}u(r)
\ll
1+D^{1/3}u(r).
\]
Lemma~\ref{lem:n10-cubefull-weights}, applied with
exponents $0$ and $\alpha$, gives, for
$\alpha\in\{1,9,10\}$,
\[
\begin{split}
\sum_{g\sim G}\sum_{r\sim C}
T_{g,r}^{\alpha}\|P_{g,r}\|_1
&\ll
G^{28/3+\ve}C^{10+\eta}
\left(
\sum_{r\sim C}\kappa(r)
+
D^{\alpha/3}
\sum_{r\sim C}\kappa(r)u(r)^\alpha
\right)\\
&\ll
D^{\alpha/3}G^{28/3+\ve}
C^{10+\max\{0,(\alpha-9)/12\}+\eta+\ve}\\
&\ll
Q^\ve D^{\alpha/3}G^{28/3}
C^{10+\max\{0,(\alpha-9)/12\}+\eta}.
\end{split}
\]
Here we used $D,C\ge 1/2$ to absorb the contribution
of the exponent $0$, and $GC\ll Q$ in the last step.
Likewise, the preceding estimate for the weight norms
gives
\[
\sum_{g\sim G}\sum_{r\sim C}\|w_{g,r}\|_1
\ll
Q^\ve G^7C^{32/5}Q^{10/3}.
\]

We are now ready to sum the bound in
\eqref{eq:n10-cube-free-applied}.
First, the moment estimates with $\alpha=1,9$ give
\[
\begin{split}
\sum_{g\sim G}\sum_{r\sim C} \mathcal R_1
&=\sum_{g\sim G}\sum_{r\sim C}
\bigl(D^{19/2}T_{g,r}
+D^{41/6}T_{g,r}^{9}\bigr)
\|P_{g,r}\|_1\\
&\ll
Q^\ve
\bigl(D^{19/2+1/3}+D^{41/6+3}\bigr)
G^{28/3}C^{10+\eta}\\
&\ll
Q^\ve D^{59/6}G^{28/3}C^{10+\eta}\\
&\asymp
Q^{10+\ve}C^\eta D^{-1/6}G^{-2/3},
\end{split}
\]
since $GDC\asymp Q$.
On the other hand, the estimate for $\|w_{g,r}\|_1$ gives
\[
\begin{split}
\sum_{g\sim G}\sum_{r\sim C} \mathcal R_2
&\ll
Q^\ve D^9G^7C^{32/5}Q^{10/3}\\
&\ll
Q^\ve D^9G^7C^{13/2}Q^{10/3}\\
&\asymp
Q^{10-1/6+\ve}G^{1/2}D^{5/2}.
\end{split}
\]
Since the sum of the minima $\min\{\mathcal R_1,\mathcal R_2\}$
is at most the minimum
of the sums, we obtain
\[
\begin{split}
\sum_{g\sim G}\sum_{r\sim C}
\min\{\mathcal R_1,\mathcal R_2\}
&\ll
\bigl(Q^{10+\ve}C^\eta D^{-1/6}G^{-2/3}\bigr)^{15/16}
\bigl(Q^{10-1/6+\ve}G^{1/2}D^{5/2}\bigr)^{1/16}\\
&=
Q^{10-1/96+\ve}G^{-19/32}C^{15\eta/16}.
\end{split}
\]

It remains to bound the last term in
\eqref{eq:n10-cube-free-applied}.
By H\"older's inequality,
\[
\begin{split}
\sum_{g\sim G}\sum_{r\sim C} \mathcal R_3
&= D^{13/2}
\sum_{g\sim G}\sum_{r\sim C}
T_{g,r}^{20/3}
\|P_{g,r}\|_1^{2/3}
\|w_{g,r}\|_1^{1/3}\\
&\leq
D^{13/2}
\left(
\sum_{g\sim G}\sum_{r\sim C}
T_{g,r}^{10}\|P_{g,r}\|_1
\right)^{2/3}
\left(
\sum_{g\sim G}\sum_{r\sim C}
\|w_{g,r}\|_1
\right)^{1/3}\\
&\ll
Q^\ve D^{13/2}
\bigl(D^{10/3}G^{28/3}C^{10+1/12+\eta}\bigr)^{2/3}
\bigl(G^7C^{32/5}Q^{10/3}\bigr)^{1/3}\\
&\asymp
Q^{10-1/30+\ve}
G^{-3/10}D^{-2/15}C^{2\eta/3}.
\end{split}
\]
Here the last step uses $GDC\asymp Q$.

Since $G,D\geq1/2$, $C\ll Q$, and $1/30>1/96$,
both contributions are
\[
\ll Q^{10-1/96+\eta+\ve}.
\]
Moreover, $m_{g,r}\mid gr$, so
$m_{g,r}\ll Q$ and $H_{g,r} \ll Q + |\bfv_0|$.
Restoring the factors $(DH_{g,r})^\ve$ from
\eqref{eq:n10-cube-free-applied}, summing over the
$O((1+\log Q)^3)$ dyadic blocks, and rescaling $\ve$,
we therefore obtain
\[
\sum_{q\sim Q}
\sum_{\substack{\bm0\ne\bfv\in\ZZ^{10}\\
|\bfv-\bfv_0|\leq L}}
|S_{q,W_\eta}(\bfv)|
\ll_{F,\eta,\ve}
(LQ+|\bfv_0|)^\ve Q^{10-1/96+\eta}
\qquad
(L\leq Q^{1/3}).
\]
We now fix $0<\eta<1/96-\gamma$, depending only on
$\gamma$, and set $W=W_\eta$.
We conclude that
\[
\sum_{q\sim Q}
\sum_{\substack{\bm0\ne\bfv\in\ZZ^{10}\\
|\bfv-\bfv_0|\leq L}}
|S_{q,W}(\bfv)|
\ll_{F,\gamma,\ve}
(LQ+|\bfv_0|)^\ve Q^{10-\gamma}
\qquad
(L\leq Q^{1/3}).
\]

If instead $L>Q^{1/3}$, cover the original box by
\[
O\left(\left(1+\frac{L}{Q^{1/3}}\right)^{10}\right)
\]
boxes of radius $Q^{1/3}$, with centres of size
$O(|\bfv_0|+L)$. Applying the preceding bound to each box,
and rescaling $\ve$ if necessary, gives
\[
\begin{split}
\sum_{q\sim Q}
\sum_{\substack{\bm0\ne\bfv\in\ZZ^{10}\\
|\bfv-\bfv_0|\leq L}}
|S_{q,W}(\bfv)|
&\ll_{F,\gamma,\ve}
(LQ+|\bfv_0|)^\ve
\left(1+\frac{L}{Q^{1/3}}\right)^{10}
Q^{10-\gamma}\\
&=
(LQ+|\bfv_0|)^\ve
(L+Q^{1/3})^{10}Q^{20/3-\gamma}.
\end{split}
\]
Together with the case $L\leq Q^{1/3}$, this proves
Proposition~\ref{prop:n10-clean-shifted}.
\end{proof}

\section{The circle method}
\label{sec:intro-circle}

Throughout this section we shall assume that $n\geq 10$.
Our basic approach is to assume for a contradiction that 
$X(\QQ)=\emptyset$.
Under this assumption our goal will be to show that 
$$
\#\{\x\in \ZZ^n: |\x|\leq P, ~F(\x)=0\}\gg P^{n-3},
$$
for all $P\gg 1$,  where the implied constants are allowed to depend  on $F$.
This is a contradiction. 
We shall attempt to establish this lower bound for the counting function for $n=10$.

Since $F$ is a square-free cubic form,
there exists a point $\bm{x}_0\in \RR^n$ such that $F(\bm{x}_0)=0$ and $\nabla F(\bm{x}_0)\neq \bm{0}$.
Furthermore, by Lemma~\ref{LEM:my-guy-slop} we can assume that
$\rank(H(\bm{x}_0)) = r(F) \ge 2n/3$.
After a fixed $\QQ$-linear change of variables,
we may assume that $\frac{\partial F}{\partial x_1}(\bm{x}_0)\neq 0$
and that the leading principal $r(F) \times r(F)$ minor of $H(\bm{x}_0)$ is nonzero.
Let
\begin{equation}\label{eq:Psi}
\Psi \colon \RR^n\to \RR^{r(F)},
\qquad \bfy\mapsto
\left(\frac{\partial F(\bfy)}{\partial y_1},\dots,\frac{\partial F(\bfy)}{\partial y_{r(F)}}\right).
\end{equation}
By the inverse function theorem, there exists an open ball $U$ centred at $\bfx_0$ for which
the map $\bfy\mapsto \left(\Psi(\bm{y}),y_{r(F)+1},\dots,y_n\right)$
defines a diffeomorphism from $U$ onto its image,
with Jacobian
\begin{equation}\label{eq:smoothsub''}
\left|\det\left(\frac{\partial^2F}
{\partial y_i\partial y_j}(\bfy)\right)_{1\leq i,j\leq r(F)}\right| \gg 1
\quad \text{for }\bfy\in U.
\end{equation}
Let $\omega\in C_c^\infty(\RR^n)$ be a fixed non-negative smooth weight function supported on  $ [-1,1]^n$, with $\omega(\bm0)>0$. 
Let 
\[
w(\bfy)=\omega((\bm{x}_0-\bfy)/\ell),
\]
where $\ell>0$ is a sufficiently small parameter.
In particular, $w(\bfy)\neq 0$ implies that $|\bfy-\bm{x}_0|\leq \ell$.
In view of Lemma~\ref{lem:Iq'},
the slightly larger box $|\bfy-\bfx_0| \le 2\ell$ will be relevant
when estimating certain oscillatory integrals.
By taking $\ell$ to be sufficiently small, we can assume that 
\begin{equation}\label{eq:smoothsub}
\left|\frac{\partial F}{\partial y_1}(\bfy)\right| \gg 1 \quad
\text{for }|\bfy-\bfx_0| \le 2\ell,
\end{equation}
and that
\begin{equation}
\label{eq:diffU}
\bfy\in U \quad
\text{for }|\bfy-\bfx_0| \le 2\ell.
\end{equation}
All of the estimates in our work are allowed to depend on the coefficients of $F$ and on the small parameter $\ell$ appearing in the definition of $w$. 
Fix $W$ and $\Omega[W]$ as in \eqref{large-fixed-W} and 
\eqref{eq:inel-Omega}, respectively.
For $n=10$ choose the modulus $W$ using Proposition~\ref{prop:n10-clean-shifted},
with a fixed $0<\gamma<1/96$, before choosing $\ve$.
All implicit constants below may also depend on $W$.
With these conventions in place,
our line of attack is to use the circle method to study the counting function
\[
N(P)
= \sum_{\substack{\bfx\in\Z^n\\F(\bfx)=0\\
\bfx\bmod W\in\Omega[W]}}
w(\bfx/P)
\]
that was introduced in \eqref{eq:inel-restricted-count},
as $P\to \infty$.

The precise form of the circle method we use is the following re-interpretation due to Marmon and Vishe~\cite[Proposition~1.2 and Lemma~2.2]{MV19} of Heath-Brown's smooth $\delta$-method \cite{HB}, which in turn is based on pioneering work of Duke, Friedlander and Iwaniec \cite{DFI}.

\begin{prop}\label{p:MV}
Let $Q\geq 1$ and $\eta>0$. Then for all $x\in\ZZ$,
\[
\delta_0(x)=\sum_{q\leq Q}
\sum_{\substack{1\leq a \leq q \\ \gcd(a,q)=1}}
\int_{|\theta|<(qQ)^{-1+\eta}}
p_q(\theta)e((a/q+\theta)x)\dd\theta + O_{N,\eta}\left(Q^{-N\eta}\right),
\]
where
$p_q$ is a smooth function satisfying 
$|p_q(\theta)|\ll 1$
and
\begin{equation}\label{eq:MV-kernel-estimate}
 p_q(\theta)=1+O_N\bigl((1+Q^2|\theta|)^{2N+2}(q/Q)^N\bigr)
 \qquad(\theta\in\RR).
\end{equation}
\end{prop}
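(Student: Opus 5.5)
This is the Marmon--Vishe smooth $\delta$-method, quoted from \cite{MV19} and \cite{HB}, so the plan is to recover it from Heath-Brown's smooth delta expansion rather than to prove something new.

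\textbf{Step 1: the starting identity.} I would start from Heath-Brown's identity \cite[Theorem~1]{HB}:
\[
\delta_0(x)=c_Q Q^{-2}\sum_{q\ge1}\ \sideset{}{^*}\sum_{a\bmod q} e(ax/q)\,h(q/Q,x/Q^2),
\]
valid for all $x\in\ZZ$. Here $c_Q=1+O_N(Q^{-N})$, and $h(x,y)$ is a smooth function supported on $x\ll 1$. The support of $h$ truncates the $q$-sum to $q\le Q$, after rescaling $Q$ by a constant.

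\textbf{Step 2: Fourier inversion in the second variable.} Writing $h(q/Q,x/Q^2)=\int \hat h_q(\theta)e(\theta x)\,\dd\theta$ introduces the continuous variable $\theta$. Then I would set
\[
p_q(\theta):=c_QQ^{-2}\,\widehat{h(q/Q,\cdot/Q^2)}(\theta),
\]
which gives the displayed shape with the integral taken over all of $\RR$.

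\textbf{Step 3: properties of $h$.} The needed facts are those of \cite[Lemmas~4--5 and~17]{HB}. First, $h(x,y)\ll x^{-1}$. Second, $h(x,y)$ is a non-negligible function of $y$ only for $|y|\ll x$, and its $y$-derivatives satisfy $\partial_y^j h\ll_j x^{-1-j}$. Third, when $x$ is small, $h(x,y)$ is close to the $y$-independent function $x^{-1}w_0(x)-w_0(y/x)$-type approximant. From these facts, repeated integration by parts in $y$ gives
\[
p_q(\theta)\ll_N (qQ|\theta|)^{-N}.
\]
This lets me truncate to $|\theta|<(qQ)^{-1+\eta}$. The $a$-sum contributes at most $q$ and the $q$-sum at most $Q$, so the total error is $O_N(Q^{2}\cdot Q^{-N\eta})$, which after renaming $N$ is $O_{N,\eta}(Q^{-N\eta})$.

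\textbf{Step 4: the bound $|p_q|\ll1$.} This comes from $\|h(q/Q,\cdot)\|_{L^1}\ll (Q/q)\cdot(q/Q)=1$, together with the Jacobian factor $Q^2$ from the scaling $y=x/Q^2$.

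\textbf{Main obstacle: the expansion \eqref{eq:MV-kernel-estimate}.} This is the step I expect to be hardest. My plan is to compare $p_q(\theta)$ with the corresponding transform of the main-term approximant. I would use Heath-Brown's estimate
\[
\partial_y^j\bigl(h(x,y)-\text{main}\bigr)\ll x^{N}\quad\text{for }x\ll1,
\]
from \cite[Lemma~8]{HB}, and then integrate by parts. Each integration by parts costs a factor $Q^2|\theta|$, which produces the $(1+Q^2|\theta|)^{2N+2}$ loss, while the saving $(q/Q)^N$ comes from the smallness of $x=q/Q$. The main term transforms to exactly $1$ after normalisation, because summing it against $e(ax/q)$ reproduces $\delta_0$ in the limit. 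Marmon--Vishe carry out precisely this computation in \cite[Lemma~2.2]{MV19}, and I would follow it line by line, checking the exponent $2N+2$ by counting the derivatives used.
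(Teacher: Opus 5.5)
The paper gives no proof of its own here; it quotes \cite[Proposition~1.2 and Lemma~2.2]{MV19}. Rebuilding it from Heath-Brown's identity is the right idea, but your sketch has a genuine gap: you never insert a cutoff in the second argument of $h$, and without one Steps~1, 2 and~4 fail.

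First, $h(x,y)$ is not supported on $x\ll 1$; it is only guaranteed to vanish for $x>\max\{1,2|y|\}$. So the $q$-sum collapses to $q\le Q$ only when $|x|\le Q^2/2$, and no fixed rescaling of $Q$ achieves this for all $x\in\ZZ$, as the statement demands.

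Second, recall $h(x,y)=\sum_{j\ge1}(xj)^{-1}\bigl(\varpi(xj)-\varpi(|y|/(xj))\bigr)$, with Heath-Brown's bump $\varpi$. Put $C_\varpi=\int_0^\infty\varpi(t)t^{-1}\,\dd t$ and $\varpi_0(u)=C_\varpi-\sum_{j\ge1}j^{-1}\varpi(|u|/j)$. Then exactly $h(x,y)=x^{-1}\varpi_0(y/x)+\kappa(x)$, where $\kappa(x)=x^{-1}\bigl(\sum_{j\ge1}j^{-1}\varpi(xj)-C_\varpi\bigr)$. Since $\varpi_0$ decays rapidly, $h(x,y)\to\kappa(x)$ as $|y|\to\infty$, and $\kappa(x)\ne0$ in general (for small $x$ it is merely $O_N(x^{N-1})$). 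Hence $y\mapsto h(q/Q,y/Q^2)$ is not in $L^1(\RR)$ and its Fourier transform has a point mass at $\theta=0$. Your $p_q$ is therefore not a smooth function, and the bound $\|h(q/Q,\cdot)\|_{L^1}\ll 1$ in Step~4 is false.

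The repair is to fix a smooth $\chi$ with $\chi(0)=1$ and $\mathrm{supp}\,\chi\subset[-\tfrac12,\tfrac12]$, and to use $\delta_0(x)=\delta_0(x)\chi(x/Q^2)$. Then for every $x\in\ZZ$ one has $\delta_0(x)=c_QQ^{-2}\sum_{q\le Q}\sum_{\gcd(a,q)=1}e(ax/q)\,h(q/Q,x/Q^2)\chi(x/Q^2)$: for $|x|\le Q^2/2$ the terms with $q>Q$ vanish, and otherwise both sides are $0$. Define $p_q(\theta):=c_Q\int_{\RR}h(q/Q,y)\chi(y)e(-Q^2\theta y)\,\dd y$, which is Schwartz. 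Now $|p_q|\ll 1$ follows by integrating $h(x,y)\ll_N x^{-1}\bigl(x^N+\min\{1,(x/|y|)^N\}\bigr)$ (valid for $x\le 1$) over $|y|\le\tfrac12$. Your integration by parts and $\theta$-truncation then go through.

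Your account of \eqref{eq:MV-kernel-estimate} also needs correcting: the main term does not transform to exactly $1$. With $x=q/Q$ and $f(y)=\chi(y)e(-Q^2\theta y)$, the decomposition above gives $p_q(\theta)=c_Q\int_{\RR}\varpi_0(u)f(xu)\,\dd u+O_N(x^{N-1})$. This is essentially $\widehat{\varpi_0}(qQ\theta)$, which is far from $1$ once $qQ|\theta|\gg 1$.

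The saving $(q/Q)^N$ comes from Taylor-expanding $f$ at $0$ and using $\int_{\RR}\varpi_0=1$ together with $\int_{\RR}u^k\varpi_0(u)\,\dd u=0$ for all $k\ge1$. Odd moments vanish by symmetry. For even $k\ge2$, a Mellin computation gives $-2\zeta(-k)\int_0^\infty t^k\varpi(t)\,\dd t=0$. Equivalently, $\int_{\RR}f(y)h(x,y)\,\dd y=f(0)+O_N(x^N)$ for smooth compactly supported $f$, with constants depending on finitely many derivatives of $f$.

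Since $f(0)=1$ and $\|f^{(j)}\|_\infty\ll_j(1+Q^2|\theta|)^j$, you get $p_q(\theta)=1+O_N\bigl((1+Q^2|\theta|)^N(q/Q)^{N-1}\bigr)$. This gives \eqref{eq:MV-kernel-estimate} after replacing $N$ by $N+1$. The loss in $Q^2|\theta|$ comes from differentiating the phase $e(-Q^2\theta y)$, not from integration by parts, and this step again relies on the compact support supplied by $\chi$.
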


It follows from Proposition \ref{p:MV} that 
\begin{equation}\label{Eq:MainCountingFunction}
N(P)=\sum_{q\leq Q}
\sum_{\substack{1\leq a \leq q \\ \gcd(a,q)=1}}
\int_{|\theta|<(qQ)^{-1+\eta}}p_q(\theta)S(a/q+\theta)\dd\theta +O_{N,\eta }(P^nQ^{-N\eta}),
\end{equation}
where 
\[
S(\alpha)=\sum_{\substack{\bm{x}\in \ZZ^n\\\bm{x}\bmod W\in\Omega[W]}}w(\bm{x}/P)e(\alpha F(\bm{x}))
\]
for any $\alpha \in \RR$.  
We shall henceforth take $Q=P^{3/2}$.
In particular, the error term in 
\eqref{Eq:MainCountingFunction} is $O(1)$ on taking $N$ sufficiently large in terms of $\eta$.
It will be convenient to examine the contribution from $q$ and $|\theta|$ restricted to 
dyadic intervals. 
Let $R\geq 1/2$ and let $\phi>0$. We let $N(P;R,\phi)$ be the overall contribution to 
$N(P)$
from $q$ and $\theta$ such that 
$$
R< q\leq 2R \quad \text{ and } \quad \phi<|\theta|\leq 2\phi.
$$
That is, $q\sim R$ and $|\theta|\sim \phi$.
Clearly we have $N(P;R,\phi)=0$ unless $R\ll Q$ and $\phi \ll (RQ)^{-1+\eta}$.

We shall adopt an approach based on Poisson summation to extract an asymptotic formula
for $N(P)$. 
The Poisson summation formula
in residue classes $\bfx \bmod \lcm(q,W)$
yields
\[
\sum_{\substack{1\leq a \leq q \\ \gcd(a,q)=1}}
S(a/q+\theta)
=\frac{1}{\lcm(q,W)^n}
\sum_{\bm{v}\in \ZZ^n}S_{q,W}(\bm{v})I(\theta,\lcm(q,W)^{-1}\bm{v}),
\]
in the integral notation of \eqref{eq:defIq}.
The complete exponential sum $S_{q,W}(\bfv)$
was defined in \eqref{eq:inel-global-complete-sum}.

As usual, the main term will come from the vector $\bm{v}=\bm{0}$, 
and the main task is to show that the vectors $\bm{v}\neq \bm{0}$ make a negligible overall contribution to
$N(P;R,\phi)$. 
Returning to the expression 
\eqref{Eq:MainCountingFunction}, 
and recalling that $|p_q(\theta)|\ll 1$, 
we clearly have 
$$
N(P;R,\phi)=
M(P;R,\phi)+O\bigl(E(P;R,\phi)\bigr),
$$
where
\begin{equation}\label{eq:defMAIN}
M(P;R,\phi)=
\sum_{q\sim R}\frac{S_{q,W}(\bm{0})}{\lcm(q,W)^n}\int_{|\theta|\sim \phi}p_q(\theta)
I(\theta,\bm{0})\dd\theta
\end{equation}
and 
\begin{equation}\label{eq:defERROR2}
E(P;R,\phi)= \sum_{q\sim R} \frac{1}{\lcm(q,W)^n}
\sum_{
\substack{\bm{0}\ne \bm{v}\in \ZZ^n}} |S_{q,W}(\bm{v})|
\int_{|\theta|\sim
\phi } 
|I(\theta,\lcm(q,W)^{-1}\bm{v})|\dd\theta.
\end{equation}
In \eqref{eq:defMAIN} and \eqref{eq:defERROR2},
the restrictions $q\le Q$ and $|\theta|<(qQ)^{-1+\eta}$ are understood;
this is a very minor point,
relevant only at values of $R$ and $\phi$ near the boundary.
Before proceeding, let us quickly dispose of the main term
\begin{equation*}
M(P)
= \sum_{R,\phi} M(P;R,\phi)
= \sum_{q\le Q}\frac{S_{q,W}(\bm{0})}{\lcm(q,W)^n}
\int_{|\theta|<(qQ)^{-1+\eta}} p_q(\theta)
I(\theta,\bm{0})\dd\theta.
\end{equation*}

\begin{lemma}
\label{LEM:pos-main-term}
We have $M(P) \gg P^{n-3}$ for all sufficiently large $P$.
\end{lemma}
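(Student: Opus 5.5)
The plan is to follow the classical treatment of the major arc contribution: replace $p_q(\theta)$ by $1$, extend the $\theta$-integral and the $q$-sum to infinity, and factor the result as a singular series times a singular integral. Concretely, we aim to show
\[
M(P)=\mathfrak S_\Omega\, \mathfrak J\, P^{n-3}+o(P^{n-3}),
\]
where $\mathfrak S_\Omega$ is the localised singular series \eqref{eq:localised-singular-series} and
\[
\mathfrak J=\int_{\RR}\int_{\RR^n} w(\bfx)e(\theta F(\bfx))\,\dd\bfx\,\dd\theta .
\]
By Lemma~\ref{LEM:localised-singular-series-convergence}, $\mathfrak S_\Omega>0$. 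We also expect $\mathfrak J>0$. Since $\bfx_0$ is a nonsingular real zero with $\partial F/\partial x_1\neq0$ on the support, \eqref{eq:smoothsub} lets us write $\mathfrak J$ as the integral of $w/|\partial_1F|$ over the real hypersurface $F=0$ near $\bfx_0$. This is positive because $\omega(\bm0)>0$. These two facts together give $M(P)\gg P^{n-3}$.

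\emph{Step 1: the singular integral.} Substituting $\bfx\mapsto P\bfx$ gives $I(\theta,\bm0)=P^nJ(P^3\theta)$, where $J(\lambda)=\int w(\bfx)e(\lambda F(\bfx))\,\dd\bfx$. We need a decay bound of the form $J(\lambda)\ll(1+|\lambda|)^{-\kappa}$ with some $\kappa>1$. The first-derivative test in $x_1$ alone (via \eqref{eq:smoothsub}) gives only $\kappa$ arbitrary for compactly supported smooth $w$ when integrating by parts repeatedly in $x_1$. Indeed, since $|\partial_1F|\gg1$ on the support, repeated integration by parts yields $J(\lambda)\ll_N|\lambda|^{-N}$. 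With this bound, the substitution $\theta=P^{-3}\lambda$ turns the $\theta$-integral into $P^{n-3}\int J(\lambda)\,\dd\lambda$. The truncation $|\theta|<(qQ)^{-1+\eta}$ corresponds to $|\lambda|\gg P^{3}(qQ)^{-1+\eta}\ge P^{3\eta}$ when $q\le Q=P^{3/2}$, so the tails cost $O(P^{n-3-A})$ for every $A$.

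\emph{Step 2: removing $p_q$.} By \eqref{eq:MV-kernel-estimate}, the error in replacing $p_q(\theta)$ by $1$ is $(1+Q^2|\theta|)^{2N+2}(q/Q)^N$. Using the rapid decay of $J$, it suffices to consider $|\theta|\le P^{-3+\eta'}$. On that range $Q^2|\theta|\le P^{\eta'}$, so the error is $\ll P^{O(N\eta')}(q/Q)^N$. This is small when $q\le Q^{1-\eta''}$. For the remaining moduli $q>Q^{1-\eta''}$, we use the trivial bound $|p_q|\ll1$ and put them into the singular series tail.

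\emph{Step 3: the singular series tail.} We must show that
\[
\sum_{q>Q^{1-\eta''}}\lcm(q,W)^{-n}|S_{q,W}(\bm0)|=o(1).
\]
This follows from the absolute convergence established in the proof of Lemma~\ref{LEM:localised-singular-series-convergence}. That proof combines Bernert's bound \eqref{Bernert-n10-absolute} with Lemma~\ref{lem:restricted-multiplicative} and \eqref{fixed-p-adic-stationary-phase} for the primes dividing $W$. The tail of an absolutely convergent series tends to $0$, which is all that is needed.

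I expect the main obstacle to be Step 2's bookkeeping near $q\approx Q$. There we need the $q$-tail of the absolutely convergent series to be small uniformly, while the factor $\int|J|$ stays bounded. Since only $o(1)$ is required rather than a power saving, absolute convergence should suffice.
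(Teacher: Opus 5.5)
Your proposal is correct and follows essentially the same route as the paper. Both write $I(\theta,\bm0)=P^nJ_w(P^3\theta)$, use \eqref{eq:smoothsub} for rapid decay of $J_w$ and Fourier inversion for $\int J_w>0$, remove $p_q$ via \eqref{eq:MV-kernel-estimate}, and absorb the remaining $q$-ranges into tails of the absolutely convergent series $\mathfrak S_\Omega$. The only difference is cosmetic: the paper takes $N=1$ directly, noting that $Q^2|\theta|=|t|$ because $Q=P^{3/2}$, so integrating $(1+|t|)^4|J_w(t)|$ gives an error $\ll q/Q$, and it then splits the $q$-sum at $Q^{1/2}$ instead of first truncating $t$ and splitting at $Q^{1-\eta''}$.
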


\begin{proof}
Let $F_1 = \partial F/\partial x_1$.
Writing $\bfx=P\bfy$ in \eqref{eq:defIq} gives
$I(\theta;\bm0)=P^n J_w(P^3\theta)$, where
\begin{equation*}
J_w(t) = \int_{\RR^n} w(\bfy)e(tF(\bfy))\,\dd\bfy
= \int_{\RR^n} w(\bfy)e(tf)\,\frac{\dd f}{|F_1(\bfy)|} \dd\bfy'
\end{equation*}
by the change of coordinates $\bfy\mapsto (f,\bfy')$
where $f = F(\bfy)$ and $\bfy' = (y_2,\dots,y_n)$,
which is valid by \eqref{eq:smoothsub}.
In particular, $J_w$ is Schwartz in $t$, by integration by parts over $f$.
By Fourier inversion,
\begin{equation*}
\int_{\RR} J_w(t) \,\dd t = \int_{F(\bfy)=0} w(\bfy)\,\frac{\dd\bfy'}{|F_1(\bfy)|} > 0,
\end{equation*}
since $w\ge 0$, $w(\bfx_0)>0$ and $F(\bfx_0) = 0$.

Now write $\theta = t/P^3$ in the definition of $M(P)$ to get
\begin{equation*}
\frac{M(P)}{P^{n-3}}
= \sum_{q\le Q}\frac{S_{q,W}(\bm{0})}{\lcm(q,W)^n}
\int_{|t/P^3|<(qQ)^{-1+\eta}} p_q(t/P^3)
J_w(t)\dd t.
\end{equation*}
Let $B_q = P^3 (qQ)^{-1+\eta} \ge P^3 (qQ)^{-1} = Q/q$.
By the triangle inequality,
\begin{align*}
\left|\int_{|t|<B_q}p_q(t/P^3)J_w(t)\,\dd t-\int_{\RR} J_w(t) \,\dd t\right|
&\le\int_{|t|<B_q}|p_q(t/P^3)-1|\,|J_w(t)|\,\dd t
+\int_{|t|\ge B_q}|J_w(t)|\,\dd t\\
&\ll_w \frac qQ\int_{\RR}(1+|t|)^4|J_w(t)|\,\dd t+B_q^{-1}
\ll_{F,w}\frac qQ,
\end{align*}
where we have applied the $N=1$ case of \eqref{eq:MV-kernel-estimate},
together with the rapid decay of $J_w$.
Consequently, by the triangle inequality
and the absolute convergence of $\mathfrak S_\Omega$
from Lemma~\ref{LEM:localised-singular-series-convergence},
\begin{align*}
\frac{M(P)}{P^{n-3}}-\mathfrak S_\Omega \int_{\RR} J_w(t) \,\dd t
&\ll_{F,w}\sum_{q\le Q}\frac{|S_{q,W}(\bm{0})|}{\lcm(q,W)^n} \frac qQ
+\sum_{q>Q} \frac{|S_{q,W}(\bm{0})|}{\lcm(q,W)^n} \int_{\RR} J_w(t) \,\dd t \\
& \ll Q^{-1/2} \sum_{q\ge 1} \frac{|S_{q,W}(\bm{0})|}{\lcm(q,W)^n}
+ \sum_{q>Q^{1/2}} \frac{|S_{q,W}(\bm{0})|}{\lcm(q,W)^n} = o(1)
\end{align*}
as $P\to \infty$.
Thus $M(P)/P^{n-3} \to \mathfrak S_\Omega \int_{\RR} J_w(t) \,\dd t > 0$,
as $P\to \infty$, where the positivity of $\mathfrak{S}_\Omega$ is a consequence of Lemma~\ref{LEM:localised-singular-series-convergence}.
\end{proof}

We now turn to the error terms $E(P;R,\phi)$.
Let
\begin{equation}  
  \label{eq:V}
  V=RP^{-1} \max\left\{1,\phi P^3\right\}\quad \text{ and }\quad 
  V_0=RP^{-1} \max\left\{1,\sqrt{\phi P^3}\right\}.
  \end{equation}
  Moreover, observe that
$$q\sim R \quad \Longrightarrow \quad \lcm(q,W) \asymp_W R.$$
Take $0<\eta<1/12$ and $P$ sufficiently large; then
for all $R\ll Q$ and $\phi \ll (RQ)^{-1+\eta}$,
we certainly have $\lcm(q,W)\le P^2$ and $|\theta|\le \lcm(q,W)^{-1}P^{-1}$.
It follows directly from
Lemma \ref{lem:Iq'}, with $\ve/3$ in place of $\ve$, that 
$I(\theta,\lcm(q,W)^{-1}\bm{v})\ll_N P^{-N}$ unless 
$
|\bm{v}|\leq P^\ve V.
$
We shall employ the following upper bound for the volume appearing in 
Lemma \ref{lem:Iq'},
with a general modulus $q\asymp_W R$ in place of $\lcm(q,W)$.

\begin{lemma}\label{lem:gradient-volume}
Let $q\asymp_W R$, $|\theta|\sim \phi$, and let $V_0$ be given by \eqref{eq:V}.
Assume that \eqref{eq:diffU} holds.
Then, for every $\varepsilon>0$ and every
$\bfv\in \RR^n$, we have
\[
\meas\left\{
\bfx\in\RR^n:
|\bfx/P-\bm{x}_0|\le 2\ell,\;
|q\theta\nabla F(\bm{x})-\bm{v}|\leq P^\varepsilon V_0
\right\}
\ll
P^{n+O_n(\varepsilon)}\left(\frac{R}{PV_0}\right)^{\rho},
\]
where $\rho=r(F)$.
\end{lemma}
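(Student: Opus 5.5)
We aim to bound the volume by a change of variables, using the diffeomorphism from \eqref{eq:diffU} and the Jacobian bound \eqref{eq:smoothsub''}. Write $\bfx=P\bfy$, so the volume equals $P^n$ times the volume of the set of $\bfy$ with $|\bfy-\bm{x}_0|\le 2\ell$ and $|q\theta P^2\nabla F(\bfy)-\bfv|\le P^\ve V_0$. Here we use homogeneity: $\nabla F(P\bfy)=P^2\nabla F(\bfy)$. We then discard all but the first $\rho=r(F)$ coordinates of the gradient condition. This leaves the condition $|\Psi(\bfy)-\bfv'/(q\theta P^2)|\le P^\ve V_0/(|q\theta| P^2)$, where $\bfv'$ denotes the first $\rho$ coordinates of $\bfv$ and $\Psi$ is as in \eqref{eq:Psi}.

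Next we apply the map $\bfy\mapsto(\Psi(\bfy),y_{\rho+1},\dots,y_n)$. By \eqref{eq:diffU} it is a diffeomorphism on the region, and by \eqref{eq:smoothsub''} its Jacobian is $\gg 1$. So the $\bfy$-volume is at most a constant times the volume of the image. In the image the last $n-\rho$ coordinates range over a bounded set, and $\Psi(\bfy)$ lies in a box of side $\ll P^\ve V_0/(|q\theta|P^2)$. This gives the bound
\[
\ll P^n\min\Bigl\{1,\Bigl(\frac{P^\ve V_0}{R\phi P^2}\Bigr)^{\rho}\Bigr\},
\]
using $q\asymp R$ and $|\theta|\asymp\phi$.

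It remains to compare this with $(R/(PV_0))^\rho$ by splitting on the size of $\phi P^3$.

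\emph{Case $\phi P^3\ge 1$.} Here $V_0=RP^{-1}\sqrt{\phi P^3}$, so $V_0/(R\phi P^2)=(\phi P^3)^{-1/2}$. On the other hand $R/(PV_0)=(\phi P^3)^{-1/2}$. The two expressions agree exactly, up to the factor $P^{\rho\ve}=P^{O_n(\ve)}$.

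\emph{Case $\phi P^3<1$.} Here $V_0=R/P$, so $R/(PV_0)=1$, and the trivial bound $P^n$ (the domain is a bounded box scaled by $P$) already suffices.

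The only delicate point is the use of \eqref{eq:diffU}: the whole region $|\bfy-\bm{x}_0|\le2\ell$ must lie in $U$ so that the change of variables is injective, and that is exactly what the hypothesis provides. I therefore expect no real obstacle; the proof is bookkeeping of the scaling.
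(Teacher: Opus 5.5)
Your proposal is correct and follows essentially the same route as the paper. The paper also rescales $\bfx=P\bfy$, uses the diffeomorphism $\bfy\mapsto(\Psi(\bfy),y_{\rho+1},\dots,y_n)$ on $U$ together with \eqref{eq:smoothsub''} to bound the volume by $P^{n+O_n(\ve)}\min\{1,V_0/(R\phi P^2)\}^\rho$, and then verifies $\min\{1,V_0/(R\phi P^2)\}\ll R/(PV_0)$ by the same case split on the size of $\phi P^3$.
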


\begin{proof}
We may assume that $\theta \neq 0$, since $\phi>0$.
Write $\x=P\y$.
As $F$ is cubic, we have
$\nabla F(P\y)=P^2\nabla F(\y)$ and the volume in question is
\[
P^n\meas\left\{
\bfy\in\RR^n:
|\bfy-\bm{x}_0|\le 2\ell,\;
|q\theta P^2\nabla F(\y)-\bm{v}|\leq P^\varepsilon V_0
\right\}.
\]
By \eqref{eq:diffU}, the box $|\bfy-\bm{x}_0|\le 2\ell$ is contained in $U$.
Using the diffeomorphism
$$\bfy\mapsto (\Psi(\bfy),y_{\rho+1},\dots,y_n)$$
on $U$ and the Jacobian bound \eqref{eq:smoothsub''},
where $\Psi$ was defined in \eqref{eq:Psi},
we find by Fubini's theorem that
\[
\meas\{\y\in U:|\Psi(\y)-\bm{a}|\leq \Delta\}
\ll \Delta^\rho ,
\]
uniformly for $\bm{a}\in\RR^\rho$ and $0<\Delta\leq 1$.
For $\Delta>1$ we take the trivial bound $O(1)$ for the volume. 
It follows that
\begin{equation}
\label{preslop}
\meas\left\{
\bfy\in\RR^n:
|\bfy-\bm{x}_0|\le 2\ell,\;
|\nabla F(\y)-\bm{b}|\leq \Delta
\right\}
\ll \min\{1,\Delta^\rho\},
\end{equation}
uniformly 
for $\bm{b}\in\RR^n$.

Applying this with $\bm{b}=(q\theta P^2)^{-1}\bm{v}$ and 
$\Delta= (q|\theta|P^2 )^{-1}P^\varepsilon V_0$, we deduce that 
\begin{equation*}
\meas\left\{
\bfx\in\RR^n:
|\bfx/P-\bm{x}_0|\le 2\ell,\;
|q\theta\nabla F(\bm{x})-\bm{v}|\leq P^\varepsilon V_0
\right\}
\ll 
P^{n+O_n(\ve)}
\min\left\{
1,
\frac{ V_0}{R\phi P^2}
\right\}^\rho,
\end{equation*}
since $q\asymp_W R$ and $|\theta|\sim\phi$. The lemma will follow if we are able to show that 
$$
\min\left\{
1,
\frac{ V_0}{R\phi P^2}\right\}
\ll \frac{R}{PV_0},
$$
where we recall that $V_0$ is given by \eqref{eq:V}. 
But this is easily checked on a case distinction between 
the case  $\phi P^3\leq 1$ (so that $V_0=RP^{-1}$) and the case
 $\phi P^3>1$ (so that $V_0=RP^{-1}\sqrt{\phi P^3}$).
\end{proof}

We  can also exploit averaging over $\bfv\in \ZZ^n$ to get alternative estimates. We begin with the following simple estimate.

\begin{lemma}\label{lem:gradient-volume2}
Let $q\asymp_W R$, $|\theta|\sim \phi$, and let $V, V_0$ be given by \eqref{eq:V}.
Then, for every $\varepsilon>0$,
we have
\[
\sum_{\substack{\bm{v}\in \ZZ^n\\ 
|\bm{v}|\leq P^\ve V}}
\meas\left\{
\bfx\in\RR^n:
|\bfx/P-\bm{x}_0|\le 2\ell,\;
|q\theta\nabla F(\bm{x})-\bm{v}|\leq P^\varepsilon V_0
\right\}
\ll
P^{n+O_n(\varepsilon)}(1+V_0)^n.
\]
\end{lemma}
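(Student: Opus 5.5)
The approach is to exchange the sum over $\bfv$ with the volume integral, so that the bound follows from counting lattice points in a small ball for each fixed $\bfx$. Write the left-hand side as
\[
\int_{|\bfx/P-\bm{x}_0|\le 2\ell}\#\left\{\bfv\in\ZZ^n:\ |\bfv|\le P^\ve V,\ |q\theta\nabla F(\bfx)-\bfv|\le P^\ve V_0\right\}\dd\bfx .
\]
This is Fubini/Tonelli applied to the indicator function of the set of pairs $(\bfx,\bfv)$. Every summand and the integrand are nonnegative, so no convergence issues arise.

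Next, fix $\bfx$ and drop the constraint $|\bfv|\le P^\ve V$. The remaining condition confines $\bfv$ to a cube of side $2P^\ve V_0$ centred at the real point $q\theta\nabla F(\bfx)$. Such a cube contains at most $(1+2P^\ve V_0)^n\ll P^{n\ve}(1+V_0)^n$ integer vectors, and this bound is uniform in the centre.

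Finally, the region of integration $|\bfx/P-\bm{x}_0|\le 2\ell$ is a box of volume $(4\ell P)^n\ll P^n$, since $\ell$ is fixed. Multiplying the two bounds gives $P^{n+O_n(\ve)}(1+V_0)^n$, as claimed.

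No step here is a genuine obstacle. The only point to watch is that the lattice-point count is uniform in the centre $q\theta\nabla F(\bfx)$, which holds because the count depends only on the side length of the cube. The hypotheses $q\asymp_W R$ and $|\theta|\sim\phi$ play no role beyond making the set well defined, and $V$ enters only through the range of summation, which we discard.
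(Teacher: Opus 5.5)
Your proposal is correct and follows essentially the same route as the paper: interchange the sum and the integral, bound the number of integer $\bfv$ in a cube of side $O(P^\varepsilon V_0)$ uniformly in its centre, and integrate over the box $|\bfx/P-\bm{x}_0|\le 2\ell$ of volume $\ll P^n$.
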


\begin{proof}
Interchanging the integral with the sum, we are left with estimating the number of  integer vectors $\bm{v}$, each of whose entries lie in an interval of length $O(P^\ve V_0)$. The statement of the lemma is now clear,
on integrating over $\bm{x}$ such that $|\bfx/P-\bm{x}_0|\le 2\ell$.
\end{proof}

The following result interpolates between aspects of both of the previous estimates.
Taking $L = 2 P^\ve V$ morally recovers Lemma~\ref{lem:gradient-volume},
whereas taking $L=0$ refines Lemma~\ref{lem:gradient-volume2}.
Lemma~\ref{aislop} will eventually let us use $\ell^\infty$-$\ell^1$ H\"{o}lder's inequality
on shifted boxes of an optimised length $L\ge 0$,
in order to separate the integral from other factors in \eqref{eq:defERROR2}.

\begin{lemma}
\label{aislop}
Fix $\varepsilon>0$.
Let $P,R\ge 1/2$ and $\phi>0$.
Let $V,V_0$ be given by \eqref{eq:V}.
Assume that \eqref{eq:diffU} holds.
Let $\rho=r(F)$.
Let $L\ge 0$ be a real number.
If $|\alpha| \asymp_W R\phi$, then
\begin{equation*}
\sum_{\bm{v}\in \ZZ^n}
M_\alpha(\bm{v},L) \ll
\begin{cases}       
0 &\text{ if $P^\ve V < 1$,}\\
P^{n+O_n(\ve)} (1+\tfrac{L}{V})^\rho (1+V_0+L)^{n-\rho} V_0^\rho & \text{ if $\phi P^3>1$},\\
P^{n+O_n(\ve)} (V+L)^n & \text{ if $\phi P^3\leq 1$},
\end{cases}
\end{equation*}
where
\begin{equation*}
M_\alpha(\bm{v},L) := \sup_{\substack{\bm{a}\in \ZZ^n\\ 
0<|\bm{a}|\leq P^\ve V\\
|\bm{a}-\bm{v}| \le L}}
\meas\left\{
\bfx\in\RR^n:
|\bfx/P-\bm{x}_0|\le 2\ell,\;
|\alpha\nabla F(\bm{x})-\bm{a}|\leq P^\varepsilon V_0
\right\},
\end{equation*}
with the convention that $\sup_{\emptyset} = 0$
if the set in the supremum is empty.

\end{lemma}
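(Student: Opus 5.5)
The plan is to swap the sum over $\bfv$ with the supremum and integrate over $\bfx$, as in Lemma~\ref{lem:gradient-volume2}, while keeping the rank-$\rho$ volume bound \eqref{preslop} from Lemma~\ref{lem:gradient-volume} available.

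First, the case $P^\ve V<1$ is immediate. There is then no nonzero integer vector $\bfa$ with $|\bfa|\le P^\ve V$, so every supremum is over the empty set and equals $0$.

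For the other cases, write $\mathcal E_{\bfa}$ for the set being measured and $\bfx=P\bfy$. For each $\bfv$, bound the supremum by the measure of the union $\bigcup_{\bfa}\mathcal E_{\bfa}$ over admissible $\bfa$ with $|\bfa-\bfv|\le L$. Any such $\bfy$ lies in the box $|\bfy-\bfx_0|\le 2\ell$ and satisfies
\[
|\alpha P^2\nabla F(\bfy)-\bfv|\le L+P^\ve V_0 .
\]
Summing over $\bfv$ and applying Fubini, we need to bound
\[
P^n\int_{|\bfy-\bfx_0|\le 2\ell}\#\{\bfv\in\ZZ^n:\ |\alpha P^2\nabla F(\bfy)-\bfv|\le L+P^\ve V_0,\ |\bfv|\le P^\ve V+L\}\,\dd\bfy .
\]
When $\phi P^3\le 1$ we have $V_0=V$, and the count is $\ll (1+L+P^\ve V)^n$. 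Since $P^\ve V\ge 1$, this gives the third bound $P^{n+O(\ve)}(V+L)^n$.

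The case $\phi P^3>1$ is the main obstacle. Here I would not integrate trivially. Instead, split $\bfv=(\bfv_1,\bfv_2)$ into its first $\rho$ coordinates and the remaining $n-\rho$, matching the diffeomorphism $\bfy\mapsto(\Psi(\bfy),y_{\rho+1},\dots,y_n)$.

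For fixed $\bfv$, the measure of the set of $\bfy$ on which $\alpha P^2\nabla F(\bfy)$ lies within $L+P^\ve V_0$ of $\bfv$ is, by \eqref{preslop},
\[
\ll \min\{1,\ ((L+P^\ve V_0)/(R\phi P^2))^\rho\}.
\]
Only the $\rho$ coordinates governed by $\Psi$ are used here, so \eqref{preslop} actually localises just $\Psi(\bfy)$.

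Next count the admissible $\bfv$:
\begin{itemize}
\item The $\bfv_2$ coordinates should be counted not over the whole range $|\bfv|\ll P^\ve V+L$, but only where they are compatible with some $\bfy$ in the box. Reparametrising by $\Psi$, for fixed $\bfv_1$ and fixed $(y_{\rho+1},\dots,y_n)$, the free coordinates $\bfv_2$ lie in a window of size $\ll 1+V_0+L$. This gives the factor $(1+V_0+L)^{n-\rho}$.
\item The $\bfv_1$ coordinates range over $\ll (P^\ve V+L)^\rho$ values.
\end{itemize}
The cleanest way to organise this is to integrate in the $(\Psi(\bfy),y')$ coordinates directly. For each fixed $y'$, the $\bfv_1$-count times the $\Psi$-measure is
\[
\ll \bigl(1+(L+P^\ve V_0)/(R\phi P^2)\bigr)^\rho\,(R\phi P^2)^\rho \cdot\frac{1}{(R\phi P^2)^\rho}\cdot(\text{box volume}).
\]

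To finish, use the identities $R\phi P^2=V/P$ and $V_0^2=RV/P$. Together these give
\[
\frac{P^\ve V_0}{R\phi P^2}\asymp\frac{P^{1+\ve}V_0}{V}=\frac{P^\ve R}{V_0},
\]
so the $\bfv_1$ contribution becomes $(V_0+LV_0/V)^\rho\ll V_0^\rho(1+L/V)^\rho$. Multiplying by the $\bfv_2$ factor $(1+V_0+L)^{n-\rho}$ and the prefactor $P^{n+O(\ve)}$ gives the second bound. Checking these exponent identities, and making the window claim for $\bfv_2$ rigorous via the diffeomorphism, is where I expect the care to be needed.
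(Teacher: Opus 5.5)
Your cases $P^\ve V<1$ and $\phi P^3\le 1$ are fine, and your window claim for $\bfv_2$ is the right idea. The main case $\phi P^3>1$, however, has a genuine gap, and it comes from your very first move: bounding the supremum over $\bfa$ by the measure of the union $\bigcup_{\bfa}\mathcal E_{\bfa}$.

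\textbf{Where the loss occurs.} After the union step, the only information left about $\bfy$ is $|\alpha P^2\nabla F(\bfy)-\bfv|\le L+P^\ve V_0$. So, as you write, the $\Psi$-localised measure is $\min\{1,((L+P^\ve V_0)/V)^\rho\}$, since $R\phi P^2=V$ here. Multiplying by the $\bfv_1$-count $(P^\ve V+L)^\rho$ gives $(V_0+L)^\rho$ up to $P^{O(\ve)}$. Your argument therefore only yields $P^{n+O(\ve)}(1+V_0+L)^n$, which is Lemma~\ref{lem:gradient-volume2} with $L$ added. For $V_0\le L\le V$ this exceeds the claimed bound by a factor $(L/V_0)^\rho$.

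\textbf{The loss is intrinsic.} Suppose $V_0,L\ge 1$. Rounding $\alpha P^2\nabla F(\bfy)$ to a lattice point shows that every $\bfy$ in the box lies in the union attached to each of the $\gg L^n$ integer vectors $\bfv$ within $L/2$ of $\alpha P^2\nabla F(\bfy)$. So the sum over $\bfv$ of the union measures is genuinely $\gg P^nL^n$, and no later refinement can recover the stated bound. This matters downstream: Proposition~\ref{prop:remove-integral} takes $L\asymp R^{1/3}$, which can be much larger than $V_0$.

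\textbf{Your final step does not repair this.} The identity $R\phi P^2=V/P$ is false; for $\phi P^3>1$ one has $R\phi P^2=V$ exactly. Moreover, the factor $(V_0+LV_0/V)^\rho=(V+L)^\rho(V_0/V)^\rho$ would require a $\Psi$-measure of $(V_0/V)^\rho$, and your union-based bound does not supply one.

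\textbf{The missing idea: keep both constraints at once.} The supremum is over a finite set, so fix for each $\bfv=(\bfv_1,\bfv_2)$ a maximiser $\bfa(\bfv)=(\bfa_1,\bfa_2)$, with $\bfv_1,\bfa_1\in\ZZ^\rho$. Use the triangle inequality only inside the integrand, writing $\bfy=(\bfy_1,\bfy_2)$ with $\bfy_1\in\RR^\rho$:
\[
M_\alpha(\bfv,L)\le P^n\int_{\bfy\in U}\bm 1\bigl[|\alpha P^2\Psi(\bfy)-\bfa_1|\le P^\ve V_0\bigr]\,\bm 1\bigl[|\alpha P^2\nabla F(\bfy)-\bfv|\le P^\ve V_0+L\bigr]\,\dd\bfy .
\]
Use the loose second indicator only for two purposes.
\begin{enumerate}
\item It forces $|\bfv_1|\le P^\ve V+L$.
\item Via the inverse-Lipschitz property of $\Psi$ in $\bfy_1$ and the Lipschitz property of $\nabla F$, it places $\bfv_2$ in a box of side $\ll P^\ve V_0+L$ depending only on $(\bfv_1,\bfy_2)$. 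This is your window.
\end{enumerate}
Then apply Fubini in the order $\bfv_1,\bfy_2,\bfv_2,\bfy_1$. Once $\bfv$ is fixed, $\bfa(\bfv)$ is fixed, and the tight first indicator gives $\bfy_1$-measure $\ll P^{O(\ve)}(V_0/V)^\rho$. The resulting product $(V+L)^\rho(1+V_0+L)^{n-\rho}(V_0/V)^\rho$ is exactly the stated bound.
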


\begin{proof}
For each $\bm{v}$
for which the set in the supremum is nonempty,
choose $\bm{a}=\bm{a}(\bm{v})$
with $0<|\bm{a}|\leq P^\ve V$
and $|\bm{a}-\bm{v}| \le L$
such that
\begin{equation}
\label{choose-shifted box-max-vector-a}
M_\alpha(\bm{v},L) =
\meas\left\{
\bfx\in\RR^n:
|\bfx/P-\bm{x}_0|\le 2\ell,\;
|\alpha\nabla F(\bm{x})-\bm{a}|\leq P^\varepsilon V_0
\right\}.
\end{equation}
In particular, the outer sum is zero unless $P^\ve V \ge 1$, which we now assume.
We begin as in the proof of Lemma \ref{lem:gradient-volume} by writing $\bm{x}=P\bm{y}$,
where $\bfy\in U$ by \eqref{eq:diffU}.
By the discussion after \eqref{eq:Psi},
we have a diffeomorphism
\begin{equation}
\label{key-diffeomorphism}
U\xrightarrow{\cong} S\subset \RR^n, \quad 
\bm{y} \mapsto \left(\Psi(\bm{y}),y_{\rho+1},\dots,y_n\right),
\end{equation}
for which the Jacobian and the  Jacobian of the inverse map are uniformly bounded on $U$.
Then, for all vectors $\bm{s},\bm{t}\in \RR^\rho$ and $\bm{y}''\in \RR^{n-\rho}$
with $(\bm{s},\bm{y}''),(\bm{t},\bm{y}'')\in U$, we have
\begin{align}
|\nabla F(\bm{s},\bm{y}'') - \nabla F(\bm{t},\bm{y}'')|
&\ll_U |\bm{s}-\bm{t}|,
\label{easy-grad-smooth} \\
|\Psi(\bm{s},\bm{y}'') - \Psi(\bm{t},\bm{y}'')|
&\gg_U |\bm{s}-\bm{t}|.
\label{inverse-lipschitz-Psi}
\end{align}
The inequality \eqref{easy-grad-smooth} is a general consequence of the smoothness of $\nabla F$;
the inequality \eqref{inverse-lipschitz-Psi} similarly follows from the smoothness of the inverse of the diffeomorphism \eqref{key-diffeomorphism}.

Let us write $\bfv=(\bfv',\bfv'')$
and $\bm{a}=(\bm{a}',\bm{a}'')$
for $\bfv',\bm{a}'\in \ZZ^\rho$ and $\bfv'',\bm{a}''\in \ZZ^{n-\rho}$.
Similarly write $\bm{y}=(\bm{y}',\bm{y}'')$,
where $\bm{y}'\in \RR^\rho$ and $\bm{y}''\in \RR^{n-\rho}$.
By the triangle inequality
$$|\alpha\nabla F(\bm{x})-\bm{v}|
\le |\alpha\nabla F(\bm{x})-\bm{a}| + |\bm{a}-\bm{v}|
\le P^\ve V_0 + L$$
and the change of coordinates $\bm{x}=P\bm{y}$,
\begin{equation}
\label{drop-L-box}
\meas\left\{
\bfx\in\RR^n:
\bfx/P\in U,\;
|\alpha\nabla F(\bm{x})-\bm{a}|\leq P^\varepsilon V_0
\right\}
\le P^n \int_{\substack{\bfy\in U\\
|\alpha P^2\Psi(\bm{y})-\bm{a}'|\leq P^\varepsilon V_0}}
\bm{1}_{A(\bm{y},\bm{v})}\,
\dd\bm{y},
\end{equation}
where $A(\bm{y},\bm{v})$ denotes the condition that
$|\alpha P^2 \frac{\partial F(\bfy)}{\partial y_j}  
-v_j|\leq P^\varepsilon V_0 + L$ for $1\leq j\leq n$.
The condition $A(\bm{y},\bm{v})$ constrains $\bm{v}$ directly,
rather than $\bm{a}$,
at the cost of introducing a summand $L$.
If $\bm{v}'$ and $\bm{y}''$ are fixed,
then any two possible choices $\bm{s},\bm{t}\in \RR^\rho$ for the vector
$\bm{y}'$ satisfy $$|\bm{s}-\bm{t}|\ll_U \frac{P^\varepsilon V_0 + L}{|\alpha P^2|},$$
by $A(\bm{y},\bm{v})$ applied with $1\le j\le \rho$,
together with the inequality \eqref{inverse-lipschitz-Psi}.
Then, \eqref{easy-grad-smooth} and
the case $\rho+1\le j\le n$ of $A(\bm{y},\bm{v})$
constrain the vector $\bm{v}''$ to lie in a box
$B(\bm{v}',\bm{y}'')$
of diameter
$$\ll_U \sup_{\bm{s},\bm{t}} |\alpha P^2| |\bm{s}-\bm{t}|
+ P^\varepsilon V_0 + L
\ll_U |\alpha P^2| \frac{P^\varepsilon V_0 + L}{|\alpha P^2|}
+ P^\varepsilon V_0 + L
\asymp P^\varepsilon V_0 + L.$$
The strategy is now to
choose $\bm{v}'$, $\bm{y}''$, $\bm{v}''$ and $\bm{y}'$ in that order,
where $$|\bm{v}'|\le |\bm{v}|\le |\bm{a}| + |\bm{v}-\bm{a}| \le P^\ve V + L.$$
Plugging \eqref{drop-L-box} into \eqref{choose-shifted box-max-vector-a}
and summing over $\bfv$,
it follows from our analysis of $\bm{v}'$ and $\bm{v}''$ above that
\begin{equation*}
\begin{split}
\sum_{\bm{v}\in \ZZ^n}
M_\alpha(\bm{v},L)
&\le P^n \sum_{\bm{v}\in \ZZ^n} \int_{\substack{\bm{y}\in U\\
|\alpha P^2\Psi(\bm{y})-\bm{a}'|\leq P^\varepsilon V_0}}
\bm{1}_{|\bm{v}'| \le P^\ve V + L}
\bm{1}_{\bm{v}''\in B(\bm{v}',\bm{y}'')}\,
\dd\bm{y} \\
&= P^n \sum_{\substack{\bm{v}'\in \ZZ^\rho\\ 
|\bm{v}'|\le P^\ve V + L}}
\int_{\substack{\bm{y}''\in \RR^{n-\rho}\\ |\bm{y}''|\ll_U 1}}
\sum_{\bm{v}''\in B(\bm{v}',\bm{y}'')}
\int_{\substack{\bm{y}'\in \RR^\rho\\ \bm{y}\in U\\
|\alpha P^2\Psi(\bm{y})-\bm{a}'|\leq P^\varepsilon V_0}}
\dd\bm{y}'\, \dd\bm{y}'',
\end{split}
\end{equation*}
by Fubini's theorem.
For any fixed $\bm{v}'$, $\bm{y}''$ and $\bm{v}''$, the inner volume is 
$$
\ll_U P^{O_n(\ve)} \min\left\{1, \frac{ V_0}{R\phi P^2}\right\}^\rho,
$$
arguing via the diffeomorphism \eqref{key-diffeomorphism}
as in the proof of \eqref{preslop}.
The number of available vectors $\bm{v}'$ and $\bm{v}''$
being $O((1+P^\ve V + L)^\rho)$ and $O((1+P^\ve V_0 + L)^{n-\rho})$, respectively, we get
\begin{equation*}
\sum_{\bm{v}\in \ZZ^n}
M_\alpha(\bm{v},L)
\ll P^{n+O_n(\ve)} (V + L)^\rho (1+V_0+L)^{n-\rho}
\min\left\{1, \frac{ V_0}{R\phi P^2}\right\}^\rho,
\end{equation*}
since $P^\ve V\ge 1$.

If $\phi P^3\leq 1$ then $V=V_0=RP^{-1}$ and the overall bound is clearly
$O(P^{n+O_n(\ve)} (V+L)^n)$,
since $P^\ve V\ge 1$.
On the other hand, if 
 $\phi P^3>1$ then $V=R\phi P^2$ and
 $V_0=R\phi^{1/2}P^{1/2}$. But then
 $V_0/(R\phi P^2)=(\phi P^3)^{-1/2}< 1$, leading to the overall  contribution
$$
\ll P^{n+O_n(\ve)} (V+L)^\rho (1+V_0+L)^{n-\rho} (V_0/V)^\rho
= P^{n+O_n(\ve)} (1+\tfrac{L}{V})^\rho (1+V_0+L)^{n-\rho} V_0^\rho,
$$ 
as required.
\end{proof}

Let $V,V_0$ be given by \eqref{eq:V}.
Plugging Lemma~\ref{lem:Iq'} into \eqref{eq:defERROR2}
and writing $\tau = \lcm(q,W)\theta/R$,
we get
\begin{equation}\label{eq:bislop}
E(P;R,\phi)\ll \phi + \frac{1}{L^n} \int_{|\tau|\asymp_W \phi}
\sum_{\bm{v}\in \ZZ^n} M_{R\tau}(\bm{v},L)
\sum_{q\sim R} \frac{1}{R^n}
\sum_{\substack{\bm{a}\in \ZZ^n\\ 
0<|\bm{a}|\leq P^\ve V\\
|\bm{a}-\bm{v}| \le L}} |S_{q,W}(\bm{a})|
\,\dd\tau
\end{equation}
for any integer
$1\le L=L(P,R,\phi)\ll P^{100}$,
since by averaging over shifted boxes of length $L$
and applying $\ell^\infty$-$\ell^1$ H\"{o}lder's inequality,
if $f,g\ge 0$ then
\begin{equation*}
(2L+1)^n \sum_{\bm{v}\in \ZZ^n} f(\bm{v}) g(\bm{v})
= \sum_{\bm{v}\in \ZZ^n}
\sum_{\substack{\bm{a}\in \ZZ^n\\ |\bm{a}-\bm{v}|\le L}} f(\bm{a}) g(\bm{a})
\le \sum_{\bm{v}\in \ZZ^n}
\sup_{\substack{\bm{a}\in \ZZ^n\\ |\bm{a}-\bm{v}|\le L}} g(\bm{a})
\sum_{\substack{\bm{a}\in \ZZ^n\\ |\bm{a}-\bm{v}|\le L}} f(\bm{a}).
\end{equation*}
It is clear from \eqref{eq:bislop} that we now need strong  bounds for the average of 
the exponential sum $|S_{q,W}(\bm{a})|$  over all integer vectors $\bm{a}$
in shifted boxes and over all moduli $q$ in a dyadic interval. 
We expect the following to be true. 

\begin{conjecture}
\label{conj:clean-shifted}
Fix $n\ge 10$.
There exists a constant $b < 2n/3$ such that
for all real $\ve>0$,
$B,L,R\ge 1/2$,
and vectors $\bm{v}\in \ZZ^n$ with $|\bm{v}|\le B$,
we have
\begin{equation*}
\sum_{q\sim R} 
\sum_{\substack{\bm{0}\ne \bm{a}\in \ZZ^n\\ 
|\bm{a}-\bm{v}| \le L}} |S_{q,W}(\bm{a})|
\ll_\ve (BLR)^\ve (L + R^{1/3})^n R^b.
\end{equation*}
\end{conjecture}
For $n=10$ this assertion is Proposition~\ref{prop:n10-clean-shifted},
with $b=20/3-\gamma$ and the fixed $W$ chosen above.

\begin{prop}
\label{prop:remove-integral}
Assume Conjecture~\ref{conj:clean-shifted} for some fixed $n\ge 10$.
Let $\rho=r(F)$.
Then
\begin{equation*}
E(P;R,\phi) \ll \min\{1,\phi P^3\} \left(1 + P^{n-3+O(\eta+\ve)-\Delta}\right),
\end{equation*}
where $\Delta = \min(\rho/4-3/2, \rho/2-1, 2n/3-b) > 0$ is admissible.
\end{prop}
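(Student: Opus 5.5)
We will start from \eqref{eq:bislop}, which separates the oscillatory integral from the exponential sums. Fix $\tau$ with $|\tau|\asymp_W\phi$ and a vector $\bfv$. The inner sum over $q\sim R$ and $\bm{a}$ with $0<|\bm{a}|\le P^\ve V$, $|\bm{a}-\bfv|\le L$ is exactly the quantity in Conjecture~\ref{conj:clean-shifted}, with $B\ll P^\ve V+L\ll P^{O(1)}$. It is therefore bounded by $P^{O(\ve)}(L+R^{1/3})^nR^b$, uniformly in $\bfv$. This bound factors out of the $\bfv$-sum. The remaining sum $\sum_{\bfv}M_{R\tau}(\bfv,L)$ is controlled by Lemma~\ref{aislop}, and the $\tau$-integral contributes a factor $\phi$. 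Hence
\[
E(P;R,\phi)\ll \phi+\phi\,P^{O(\ve)}R^{b-n}L^{-n}(L+R^{1/3})^n\,\Sigma(L),
\]
where $\Sigma(L)$ is the bound from Lemma~\ref{aislop}. If $P^\ve V<1$, then $\Sigma=0$ and only the term $\phi$ survives. Since $\phi\ll (RQ)^{-1+\eta}$, this term is $\ll\min\{1,\phi P^3\}$ after using $\phi P^3\ll P^{3/2+O(\eta)}/R$ and handling trivial ranges.

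\textbf{Choice of $L$.} Take $L\asymp R^{1/3}+V_0$, up to rounding to an integer $\ge1$. Then $(L+R^{1/3})^n/L^n\ll1$.
\begin{itemize}
\item Case $\phi P^3\le1$: here $V=V_0=R/P$. Lemma~\ref{aislop} gives $\Sigma\ll P^{n+O(\ve)}(R/P+R^{1/3})^n$. So $E\ll\phi P^{n+O(\ve)}R^{b-n}(R/P+R^{1/3})^n$. With $R\ll Q=P^{3/2}$ we have $R/P\le R^{1/3}$, so $E\ll \phi P^{n}R^{b-2n/3+O(\ve)}$. Writing $\phi= (\phi P^3)P^{-3}$ gives $\min\{1,\phi P^3\}P^{n-3-(2n/3-b)+\ldots}$, up to the positive-power question in $R$. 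Since $b<2n/3$, the factor $R^{b-2n/3}\le1$ gives saving at least $R^{-(2n/3-b)}$. This is useless for small $R$, so in that range we use $\phi\le (RQ)^{-1+\eta}$ and the trivial size instead.
\item Case $\phi P^3>1$: here $V=R\phi P^2$ and $V_0=R(\phi P)^{1/2}$. Lemma~\ref{aislop} gives $\Sigma\ll P^{n+O(\ve)}(1+L/V)^\rho(1+V_0+L)^{n-\rho}V_0^\rho$.
\end{itemize}

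In the second case, plug in $L\asymp R^{1/3}+V_0$ and compare $R^{1/3}$ with $V_0$. If $V_0\ge R^{1/3}$, the bound is $\phi P^nR^{b-n}V_0^n$. If $V_0<R^{1/3}$, we get $\phi P^nR^{b-n}R^{(n-\rho)/3}V_0^\rho(1+R^{1/3}/V)^\rho$. We then substitute $V_0$ in terms of $R,\phi,P$ and use $\phi\le (RQ)^{-1+\eta}$ with $Q=P^{3/2}$, and we optimise over $R\ll Q$. The target is a bound of the form $(\phi P^3)\cdot P^{n-3+O(\eta+\ve)-\Delta}$, or simply $\ll1$.

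The three terms in $\Delta$ should arise as follows:
\begin{itemize}
\item $2n/3-b$ comes from the endpoint $R\asymp Q$, $\phi\asymp (RQ)^{-1}$, where $V_0\asymp R^{1/3}$.
\item $\rho/2-1$ comes from the regime where $V_0$ is small. There the factor $V_0^\rho(R/P)^{-\rho}\approx(\phi P^3)^{\rho/2}$ must be beaten by $\phi$. Equivalently, small $R$ is handled via the trivial $\rho$-dimensional volume saving $(R/(PV_0))^\rho$ from Lemma~\ref{lem:gradient-volume}.
\item $\rho/4-3/2$ comes from the regime where the factor $(1+R^{1/3}/V)^\rho$ is active.
\end{itemize}

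\textbf{Main obstacle.} The hardest step is this exponent bookkeeping: checking that the product of powers of $R$ and $\phi$ is monotone on the admissible region, so that it suffices to check corners. We then verify each corner against $\Delta$. At corners where the power of $R$ would be positive, we may need to take $L=0$ instead, or use Lemma~\ref{lem:gradient-volume} pointwise with Lemma~\ref{lem:gradient-volume2}. We expect to split into these few regimes and pick the better choice in each.

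Positivity of $\Delta$ follows from $b<2n/3$ and $\rho\ge 2\lceil n/3\rceil\ge8$, which holds by Lemma~\ref{LEM:my-guy-slop} for $n\ge10$.
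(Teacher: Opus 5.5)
Your setup is the paper's: feed Lemma~\ref{aislop} and Conjecture~\ref{conj:clean-shifted} into \eqref{eq:bislop}, take $L\asymp R^{1/3}$, and split according to the size of $\phi P^3$. Your $R^{1/3}+V_0$ is the same choice up to $P^{O(\eta)}$, because $\phi\ll(RQ)^{-1+\eta}$ gives $V_0\ll(RP^{3/2})^{\eta/2}R^{1/3}$; this also makes your subcase $V_0\ge R^{1/3}$ harmless.

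The gap is that the optimisation producing $\Delta$ is not carried out, and the ingredient it hinges on is missing: the constraint $P^\ve V\ge1$. You note that the $\bfv$-sum vanishes when $P^\ve V<1$, but you never use this to cut down the range of $(R,\phi)$.

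For $\phi P^3\le1$ one has $V=R/P$, so the constraint forces $R\ge P^{1-\ve}$. Then $R^{b-2n/3}\le P^{(1-\ve)(b-2n/3)}$ gives the saving $2n/3-b$, and smaller $R$ contribute only the term $\phi$. Your proposed remedy for small $R$ would not work. In this case $\phi\le P^{-3}$ is already stronger than $\phi\le(RQ)^{-1+\eta}$, and at $R\asymp1$ the bound $\phi P^{n}$ gives no saving over $\phi P^3\cdot P^{n-3}$.

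For $\phi P^3>1$, writing $V=(\phi P^3)^{1/2}V_0$ turns your bound into
\[
E(P;R,\phi)\ll 1+\phi R^{b-2n/3-\rho/3}P^{n+O(\eta+\ve)}\bigl(V_0+R^{1/3}(\phi P^3)^{-1/2}\bigr)^\rho .
\]
The first piece grows like $\phi^{1+\rho/2}$ and is maximised at $\phi\asymp(RQ)^{-1+\eta}$. The second behaves like $\phi^{1-\rho/2}$ and is maximised at the \emph{smallest} admissible $\phi$. If you only use $\phi>P^{-3}$, the second piece at $\phi\asymp P^{-3}$ is $R^{b-2n/3}P^{n-3+O(\eta+\ve)}$, i.e.\ no saving at $R\asymp1$. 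It is again $P^\ve V\ge1$, now in the form $\phi\gg R^{-1}P^{-2-\ve}$, that yields the factor $\max(1,P/R)^{1-\rho/2}$. The resulting bound $R^{b-2n/3}P^{n-3}\bigl((R/P^{3/2})^{\rho/6-1}+\max(1,P/R)^{1-\rho/2}\bigr)$ is piecewise linear in $\log_PR$, so checking $R\asymp1,P,P^{3/2}$ gives exactly $\Delta$.

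Your attribution of the terms in $\Delta$ is also off:
\begin{itemize}
\item $\rho/4-3/2$ arises from the $V_0^\rho$ piece at $R\asymp1$, $\phi\asymp P^{-3/2}$. There $R^{1/3}/V\asymp P^{-1/2}$, so the factor $(1+R^{1/3}/V)^\rho$ is inactive, not active as you claim.
\item $\rho/2-1$ arises from the other piece at $R\asymp1$, $\phi\asymp P^{-2-\ve}$.
\end{itemize}
Once the constraint $P^\ve V\ge1$ is fed into the optimisation, none of your fallbacks is needed, and $L=0$ is not permitted in \eqref{eq:bislop} in any case.
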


\begin{proof}
Plugging Lemma~\ref{aislop} with $\alpha = R\tau$
and Conjecture~\ref{conj:clean-shifted} with $B \ll P^{100}$
into \eqref{eq:bislop},
the desired bound is trivial unless
$P^\ve V\ge 1$,
in which case
\begin{equation*}
E(P;R,\phi) \ll \phi + (1 + \tfrac{R^{1/3}}{L})^n \phi R^{b-n} P^{n+O_n(\ve)}
\begin{cases}       
(1+\tfrac{L}{V})^\rho (V_0+L)^{n-\rho} V_0^\rho & \text{ if $\phi P^3>1$},\\
(V+L)^n & \text{ if $\phi P^3\leq 1$},
\end{cases}
\end{equation*}
for any real $1\le L\ll P^{100}$,
since $L\asymp \lceil{L}\rceil\in \ZZ$.
Here $1/2\le R\ll P^{3/2}$ and $\phi \ll (RP^{3/2})^{-1+\eta}$.

If $\phi P^3\leq 1$ then $V=RP^{-1}$ and $P^\ve V\gg 1$,
so $R^{1/3} \gg R/P = V \gg P^{-\ve}$, since $R\ll P^{3/2}$.
Taking $L \asymp R^{1/3} \gg V$ gives
\begin{equation*}
E(P;R,\phi) \ll \phi + \phi R^{b - 2n/3} P^{n+O_n(\ve)}
\ll \phi + \phi P^{b - 2n/3} P^{n+O(\ve)},
\end{equation*}
since $R\gg P^{1-\ve}$.
Thus $\Delta = 2n/3 - b > 0$ is admissible in this case.

Now suppose $\phi P^3>1$.
Then $V=R\phi P^2$ and
$V_0=R\phi^{1/2}P^{1/2}$,
with $P^\ve V\gg 1$,
$V>V_0$
and $V_0 \ll (RP^{3/2})^{\eta/2} R^{1/2} P^{-1/4} \ll (RP^{3/2})^{\eta/2} R^{1/3}$.
Taking $L \asymp R^{1/3}$ gives
\begin{equation*}
E(P;R,\phi) \ll 1 + \phi R^{b-n} P^{n+O_n(\ve)}  
(1+\tfrac{R^{1/3}}{V})^\rho (V_0+R^{1/3})^{n-\rho} V_0^\rho.
\end{equation*}
Writing $V = (\phi P^3)^{1/2}V_0$ and $V_0+R^{1/3} \ll (RP^{3/2})^{\eta/2} R^{1/3}$, this is
\begin{equation*}
\ll 1 + \phi R^{b-2n/3-\rho/3} P^{n+O(\eta+\ve)}  
(V_0+\tfrac{R^{1/3}}{(\phi P^3)^{1/2}})^\rho.
\end{equation*}
Since $\rho\ge 2$, optimising over the interval $\max(P^{-3}, R^{-1} P^{-2-\ve})
\ll \phi \ll (RP^{3/2})^{-1+\eta}$ gives
\begin{equation*}
\begin{split}
E(P;R,\phi) &\ll 1 + R^{b-2n/3-\rho/3} P^{n+O(\eta+\ve)}  
(R^{-1}P^{-3/2} R^{\rho/2}P^{-\rho/4} + P^{-3} \max(1, P/R)^{1 - \rho/2} R^{\rho/3}) \\
&= 1 + R^{b-2n/3} P^{n-3+O(\eta+\ve)}  
((R/P^{3/2})^{\rho/6 - 1} + \max(1, P/R)^{1 - \rho/2}).
\end{split}
\end{equation*}
By writing each term on the right-hand side as
$P$ raised to a piecewise linear function of the quantity $\log_P R$,
we find that this is optimised at either $R\asymp 1$, $R\asymp P$, or $R\asymp P^{3/2}$.
Thus
\begin{equation*}
E(P;R,\phi) \ll 1 + P^{n-3+O(\eta+\ve)}  
((1/P^{3/2})^{\rho/6 - 1} + P^{1 - \rho/2} + P^{b-2n/3}).
\end{equation*}
Since $\rho = r(F)\ge \lceil 2n/3\rceil \ge 7$,
the saving $\Delta = \min(\rho/4-3/2, \rho/2-1, 2n/3-b) > 0$ is admissible.
\end{proof}

Dyadic summation of Proposition~\ref{prop:remove-integral}
over $R$ and $\phi$
gives $N(P) = M(P) + o(P^{n-3})$,
unconditionally for $n=10$.
Thus $N(P) \gg P^{n-3}$ for all sufficiently large $P$,
by Lemma~\ref{LEM:pos-main-term}.
Theorem~\ref{thm:main}
follows first for $n=10$,
and then for $n\ge 11$,
by setting the last $n-10$ variables of $F$
equal to zero.

\appendix
\section{Applying singular support}
\label{sec:n10-singular-support}

Let $n\geq 2$ and let $F\in\ZZ[x_1,\dots,x_n]$ be a nonzero homogeneous
polynomial of degree $d\geq1$.
For a finite field $\FF_q$ of characteristic $p$,
we define a nontrivial additive character $\psi_q\colon \FF_q\to \CC^\ast$
by $\psi_q(t)=e_p(\Tr_{\FF_q/\FF_p}(t))$.
For vectors $\bfv\in \FF_q^n$, we are interested in the exponential sum
\[
T_q(\bfv)
= \sum_{\substack{\bfx\in\FF_q^n\\F(\bfx)=0}}
\psi_q(\bfx\cdot\bfv)
-q^{n-1}\bm1_{\bfv=\bm0}
\]
that we defined in \eqref{n10:T}.
The general Fouvry--Katz--Laumon stratification \cite[Theorem~1.1]{FK01}
gives exceptional loci for such sums
that measure the failure of square-root cancellation.
In our homogeneous setting,
we give an alternative construction for such a stratification using
the singular support of Beilinson \cite{Bei}.
Let $\mathcal X\subset \PP^{n-1}_{\ZZ}$ be the hypersurface defined by $F=0$.

\begin{thm}\label{n10:ss}
There exist an integer $N\geq1$ and closed subschemes
\[
\A^n_\ZZ=\mathcal Z_0\supset \mathcal Z_1\supset\cdots\supset \mathcal Z_n=\{\bm0\}
\]
defined by homogeneous polynomials such that the following hold:
\begin{enumerate}
\item \label{item:strat-geom-dim-bound}
We have $\dim (\mathcal Z_i \otimes \QQ) \le n-i$ for every $0\le i\le n$.

\item \label{item:strat-fourier-bound}
For every finite field $\FF_q$
of characteristic $p\nmid N$, every $0\leq i\leq n-1$, and every
vector $\bfv\in \FF_q^n \setminus \mathcal Z_{i+1}(\FF_q)$, we have
\begin{equation}\label{eq:main-bound}
T_q(\bfv)\ll_{n,d} q^{(n-1+i)/2}.
\end{equation}

\item If $\mathcal X \otimes \QQ$ is geometrically integral
and $d\ge 2$, then for every $0\leq i\leq n-1$, we have
\begin{equation}
\label{EQ:nice-sandwich}
\cone W_i \subset \mathcal Z_{i+1} \otimes \QQ \subset \cone B_i,
\end{equation}
in the notation of \eqref{EQ:general-bad-hyperplanes}
and \eqref{EQ:general-large-gauss-fibres}.

\item Let $\bar{\mathcal Z_i(\QQ)}$ denote the Zariski closure of
$\mathcal Z_i(\QQ)$ in the variety $\mathcal Z_i \otimes \QQ$.
If $\mathcal X(\QQ)=\emptyset$, then for every $1\leq i\leq n-1$, we have
\begin{equation}\label{eq:rational-main}
\dim \bar{\mathcal Z_i(\QQ)}
\le n-i-1.
\end{equation}
\end{enumerate}
\end{thm}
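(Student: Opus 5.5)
We would realise $T_q$ geometrically through constant coefficients and then read the strata off a singular support. By homogeneity, summing $T_q(\lambda\bfv)$ over $\lambda\in\FF_q^\ast$ and using orthogonality, $T_q(\bfv)$ for $\bfv\ne\bm0$ becomes a linear combination of $q\,\#(\mathcal X\cap H_{\bfv})(\FF_q)$, $\#\mathcal X(\FF_q)$ and explicit powers of $q$; this is the analogue of \eqref{eq:Sp-projective-v-nonzero}. So it suffices to stratify the complex $\mathcal K := R\pi_{2*}\Qbar_\ell$, where $\pi_2$ is the projection of the universal hyperplane section $\{([\bfx],[\bfv]): \bfx\in\mathcal X,\ \bfv\cdot\bfx=0\}\to \check\PP^{n-1}$, or equivalently the Radon transform of $i_*\Qbar_\ell$. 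We then spread out to $\ZZ[1/N]$.

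For part \eqref{item:strat-fourier-bound} we would use the characteristic-zero-style theory of Beilinson, Hu--Yang and Raskin--Smith, valid uniformly for $p\nmid N$ after spreading out. The singular support $SS(i_*\Qbar_\ell)$ is contained in the union of the conormal varieties of a Whitney-type stratification of $\mathcal X$, with the conormal variety $\Gamma$ of \eqref{def:closed-gauss-correspondence} as the principal component. The Radon transform is microlocally controlled: over $[\bfv]$, the stalks of $\mathcal K$ are concentrated in the expected degree, with weight $\le n-3$ plus a defect, unless $H_{\bfv}$ is tangent to some stratum. We would define $\mathcal Z_{i+1}$ as the cone over the locus of $[\bfv]$ for which the fibre of $SS$ over $[\bfv]$, equivalently the fibre of the characteristic cycle correspondence, has dimension $\ge i$. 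These loci are closed and homogeneous. Deligne's weight bounds, applied stalkwise using the Raskin--Smith description of stalks via Gysin maps, give cohomology of excess weight at most $i$, hence \eqref{eq:main-bound}. The dimension bound \eqref{item:strat-geom-dim-bound} follows because $SS$ is Lagrangian of pure dimension $n-1$ in $T^\ast\PP^{n-1}$: its projection to the dual space has fibres of dimension $\ge i$ only over a set of dimension $\le n-1-i-1$, so the affine cone has dimension $\le n-i-1$, which is even one better than required. Uniform rank bounds follow from Lemma~\ref{n10:ss-adapted}-style constructibility.

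For \eqref{EQ:nice-sandwich}, the inclusion $\cone W_i\subset \mathcal Z_{i+1}\otimes\QQ$ holds because $\Gamma\subset SS$, since $\mathcal X$ is integral and $\Gamma$ is its conormal. The inclusion $\mathcal Z_{i+1}\otimes\QQ\subset\cone B_i$ holds because every conormal vector to a stratum at $\bfx$ lying over $[\bfv]$ makes $\bfx$ a singular point of $\mathcal X\cap H_{\bfv}$, by \eqref{EQ:gauss-section-containment} together with its analogue for singular strata. Here we may have to trim the definition, using those strata only. Part (4) should then follow as in Proposition~\ref{PROP:rational-exceptional-hyperplanes-general}. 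Rational points on a top-dimensional component of $\mathcal Z_{i+1}$ correspond to rational hyperplanes $H_{\bfv}$ that are conormal to a family of dimension $\ge i$. The conormal geometry, via the tangent space argument of \eqref{EQ:rational-contact-restriction}, forces the restriction $F|_{M_{\bfv}}$ to have a large singular locus on a $\QQ$-subspace, which contradicts \eqref{EQ:clean-slop-sing-bound} or Lemma~\ref{LEM:no-slop} when $X(\QQ)=\emptyset$. This gives one extra codimension for the Zariski closure of the rational points.

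The main obstacle will be the microlocal estimate in part \eqref{item:strat-fourier-bound}: showing that the stalk weight defect of the Radon transform is bounded by the fibre dimension of the singular support, uniformly in $p$. We would try the Raskin--Smith identification of the relevant cohomology with a local Gysin or vanishing-cycle computation along the fibre of $SS$, whose dimension bounds the cohomological degree range. Part (4) is the second difficulty, since the singular strata contributions must be handled like the case $U\subset\sing(X)\times Z'$ earlier.
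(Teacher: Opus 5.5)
Your plan for (1)--(3) is the paper's, but (2) is left open at its crucial step, and the argument you sketch for (4) does not work as written.

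\textbf{Parts (1)--(3).} You take $\mathcal Z_{i+1}$ to be the cone over $Y_{i+1}=\{h:\dim\Sigma_h\ge i\}$, with $\Sigma=\PP(\operatorname{SS}(\phi_*\FF_\ell))$, spread out via Hu--Yang, and you get \eqref{EQ:nice-sandwich} from $\Gamma\subset\Sigma_\QQ\subset S$. No trimming is needed here, since the base of $\operatorname{SS}$ is $X$ and over $X_{\rm sm}$ it is the conormal bundle. Two corrections.

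In (1), your bound $\dim\mathcal Z_{i+1}\le n-i-1$ is exactly what is required, not one better. For smooth $X$ of degree $d\ge 2$, $\mathcal Z_1$ is the cone over the dual hypersurface. So the whole extra codimension in (4) has to come from $X(\QQ)=\emptyset$.

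In (2), the step you leave open is precisely \cite[Theorem~1.2]{RS}. The cone of $i_h^*\mathcal F(-1)[-2]\to i_h^!\mathcal F$ is supported on the image of $\Sigma_h$ in $H_h$. So for $h\notin Y_{i+1}$ its support has dimension at most $i-1$, and the Gysin maps $H^{a-2}(X_{h,\bar k})(-1)\to H^a(X_{\bar k})$ are isomorphisms for all $a>n-1+i$. This cancels every term of degree $a>n-1+i$ in \eqref{app:ss:eq:trace}. The cancellation is thus between $q\#X_h(\FF_q)$ and $\#X(\FF_q)$, not a stalk-weight property of $R\pi_{2*}\Qbar_\ell$ by itself. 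Beilinson's theory is for torsion coefficients, so you also need the passage from $\FF_\ell$ to $\QQ_\ell$ (Lemma~\ref{app:ss:lem:coefficients}). You also need Katz's Betti bounds \cite{KatzBetti} for constants uniform in $h$ and $p$.

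\textbf{Part (4).} Your route differs from the paper's, and two of its steps fail.

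First, the theorem is for arbitrary degree $d$, so the cubic-specific inputs \eqref{EQ:clean-slop-sing-bound} and Lemma~\ref{LEM:no-slop} are not available.

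Second, and more seriously, Proposition~\ref{PROP:rational-exceptional-hyperplanes-general} handles the case $U\subset\sing(X)\times Z'$ by using $t\ge\dim\sing X$ to force $U$ to be a product $X'\times Z'$. For $i<\dim\sing X$ this breaks down: a component $\mathcal N_V$ of $\Sigma$ with $V\subset\sing X$ has fibres over $h$ that are in general proper subvarieties of $V$.

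The missing ingredient is that in characteristic zero $\operatorname{SS}$ is Lagrangian, so $\Sigma$ is Legendrian. Concretely, at general points of any subvariety of $\Sigma$, a tangent vector $(\bfy,\bfw)$ at $(\bfx,\bfv)$ satisfies $\bfy\cdot\bfv=0$, hence $\bfx\cdot\bfw=0$. With this, your tangent-space argument works uniformly for all components and needs no singular-locus bound:
\begin{enumerate}
\item Suppose a component $Z$ of $Y_{i+1}$ of maximal dimension $n-2-i$ had Zariski-dense rational points.
\item At a general rational smooth point $\bfv$, the fibre has dimension exactly $i$, since $\dim\Sigma\le n-2$.
\item By generic smoothness, the affine cone of this fibre lies in the rational space $M_\bfv=(T_\bfv\cone Z)^\perp$, which has dimension $i+1$.
\item Hence the cone fills $M_\bfv$, so $M_\bfv\subset\cone X$, contradicting $X(\QQ)=\emptyset$.
\end{enumerate}
This is Lemma~\ref{app:ss:lem:dual-rational} in disguise.

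The paper instead writes each component of $\Sigma$ as a conormal variety $\mathcal N_{V_\alpha}$ \cite{Tevelev}. It disposes of the cases $c_\alpha\ne i-1$ with the fibre-jump Lemma~\ref{app:ss:lem:fibre-jump}, and handles $c_\alpha=i-1$ by biduality. Your version avoids biduality and the case split. The cost is justifying isotropy on subvarieties of $\Sigma$ and choosing the rational point inside the generic-smoothness locus.
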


\begin{rem}
As in the main body of the paper,
we let $\cone Y$ denote the affine cone of a projective variety $Y$,
where $\cone \emptyset = \{\bm0\}$.
The reader may notice that \eqref{eq:main-bound} holds
trivially for all $\bfv\in \FF_q^n$ if $i=n-1$, even if $\bfv\in \mathcal Z_n(\FF_q)$.
Nonetheless, \eqref{EQ:nice-sandwich} for $i=n-1$ requires us to include the origin
$\bm0$ in $\mathcal Z_n$.
Including the origin
is also natural from the perspective of projective geometry,
which is pervasive throughout our paper.
\end{rem}

\noindent \textbf{Notation and conventions.} 
Over a field $k$, write $X=\mathcal X_k$. We write $P=\PP^{n-1}$ and let $P^\vee$ denote its dual projective space. It will always be clear from the context over which ring we are working. For $h=[\bfv]\in P^\vee$ we write $H_h$ for the
hyperplane $\bfv\cdot\bfx=0$ and $X_h=X\cap H_h$. Given a scheme $Y$ over $S$ and a morphism $\Spec(R) \to S$, we write $Y_R$ for the base change of $Y$ to $\Spec(R)$.
We put $\dim\emptyset=-1$.
We use the notation $\subset$ for subsets,
not necessarily proper;
the notation $\subsetneq$ is reserved for proper subsets.

\subsection{The finite-field argument}

\subsubsection{Reduction to a Gysin isomorphism}
Throughout this section, we fix an auxiliary prime $\ell$ and work with a finite field $k=\FF_q$ of characteristic $p\neq \ell$.
We assume $F \in k[x_1,\dots, x_n]$ is nonzero and homogeneous of degree $d$,
and continue to write $X\subset P$ for the projective hypersurface it defines.
\begin{lemma}\label{app:ss:lem:homogeneity}
For every $\bfv\neq\bm0$, with $h=[\bfv]$, one has
\begin{equation*}
 T_q(\bfv)=1+q\#X_h(k)-\#X(k).
\end{equation*}
\end{lemma}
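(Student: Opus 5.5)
We prove the identity directly by orthogonality of additive characters, using homogeneity of $F$ to pass from the affine cone to the projective hypersurface.

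Fix $\bfv\neq\bm0$ and write $\cone X(k)=\{\bfx\in k^n:F(\bfx)=0\}$, so that $T_q(\bfv)=\sum_{\bfx\in\cone X(k)}\psi_q(\bfx\cdot\bfv)$ (the correction term vanishes since $\bfv\ne\bm0$). We split off the origin, which contributes $1$. Homogeneity of $F$ makes $\cone X(k)\setminus\{\bm0\}$ stable under $k^\ast$-scaling, so we group its points into $k^\ast$-orbits, each orbit corresponding to a point $[\bfx]\in X(k)$. This gives
\[
T_q(\bfv)=1+\sum_{[\bfx]\in X(k)}\sum_{\lambda\in k^\ast}\psi_q(\lambda\,\bfx\cdot\bfv),
\]
where $\bfx$ is any fixed representative of $[\bfx]$.

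We then evaluate the inner sum by orthogonality of $\psi_q$, which is nontrivial on $k$. If $\bfx\cdot\bfv=0$, that is, if $[\bfx]\in X_h(k)$, the inner sum equals $q-1$. Otherwise $\lambda\mapsto\lambda\,\bfx\cdot\bfv$ is a bijection of $k^\ast$, so the inner sum equals $\sum_{t\in k}\psi_q(t)-1=-1$. Summing over $X(k)$ yields
\[
T_q(\bfv)=1+(q-1)\#X_h(k)-\bigl(\#X(k)-\#X_h(k)\bigr)=1+q\#X_h(k)-\#X(k),
\]
which is the claimed identity.

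The argument is elementary and contains no real obstacle. The only points requiring care are that the condition $\bfx\cdot\bfv=0$ is independent of the chosen representative of $[\bfx]$, which is clear, and that the correction term $-q^{n-1}\bm1_{\bfv=\bm0}$ plays no role because $\bfv\ne\bm0$.
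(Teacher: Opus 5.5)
Your proof is correct: grouping the nonzero zeros of $F$ into $k^\ast$-orbits and applying orthogonality of the nontrivial character $\psi_q$ gives exactly the stated identity. The paper instead deduces the case $q=p$ from Hooley's identity \eqref{eq:Sp-projective-v-nonzero} together with $S_p(\bfv)=pT_p(\bfv)$, and only asserts that ``the same manipulations'' give general $q$. Your direct computation is essentially that manipulation, carried out uniformly for every prime power $q$ without passing through $S_p$.
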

\begin{proof}
When $q=p$, this follows from
\eqref{eq:Sp-projective-v-nonzero}
together with the identity $S_p(\bfv) = pT_p(\bfv)$
observed in \eqref{eq:pST}.
For general prime powers $q$, the same underlying algebraic manipulations apply.
\end{proof}

Applying the Grothendieck--Lefschetz trace formula in \'etale cohomology
to the $\bar k$-varieties $X_{h,\bar k} = (X_h)_{\bar k}$ and $X_{\bar k}$,
the previous lemma gives for every $\bfv\neq\bm0$,
\begin{equation}\label{app:ss:eq:trace}
 T_q(\bfv)=1+\sum_{i\geq0}(-1)^i\left(
 \Tr(\Fr_q\mid H^{i-2}(X_{h,\bar k},\QQ_\ell)(-1))
 -\Tr(\Fr_q\mid H^i(X_{\bar k},\QQ_\ell))\right),
\end{equation}
where $\Fr_q$ denotes the geometric Frobenius.
By Deligne's general form of the Riemann hypothesis \cite{Del},
an eigenvalue $\alpha$ of the action of $\Fr_q$
on $H^{i-2}(X_{h,\bar k},\QQ_\ell)(-1)$ or $H^i(X_{\bar k}, \QQ_\ell)$
satisfies $$|\alpha|\leq q^{i/2}.$$
The main idea of the proof of Theorem~\ref{n10:ss} is now
to construct strata on $P^\vee$ measuring in which degrees
the groups $H^{i-2}(X_{h,\bar k},\QQ_\ell)(-1)$ and $H^i(X_{\bar k}, \QQ_\ell)$ are isomorphic. 

For this we will use the \emph{Gysin map} in a form that applies even when $X$ or $X_h$
is singular, following the construction in \cite[\S2.1]{RS}.
Let $R = \ZZ/\ell^r\ZZ$ for some integer $r\geq1$.
Let
$$\phi:X\hookrightarrow P,
\qquad i_h:H_h\hookrightarrow P,$$
be the obvious inclusion maps into $P$,
and let $\mathcal F=\phi_*R_X$,
where $R_X$ denotes the constant sheaf on $X$ valued in $R$.
Although \cite[\S2.1]{RS} only explicitly discusses the case $r=1$,
its content directly generalises to arbitrary $r\ge 1$.
Purity for the smooth divisor $H_h\subset P$ identifies
$i_h^!R_P$ with the complex $R_{H_h}(-1)[-2]$
in the derived category $D^b_c(H_h,R)$.
Since $R_{H_h}(-1)$ is locally free of rank one,
the natural morphism
$i_h^*\mathcal F\otimes_R^L i_h^!R_P\to i_h^!\mathcal F$,
where $\otimes_R^L$ denotes the derived tensor product in $D_c(H_h,R)$,
therefore gives a morphism
\begin{equation}
\label{EQ:derived-gysin-map}
i_h^*\mathcal F(-1)[-2]\longrightarrow i_h^!\mathcal F.
\end{equation}
We can compose its pushforward by $i_h$
with the counit $(i_h)_*i_h^!\mathcal F\to \mathcal F$
and take geometric global sections, so that proper base change
identifies the resulting map in degree $a\in \ZZ$ with
\begin{equation*}
G_h^a(R):H^{a-2}(X_{h,\bar k},R)(-1)
\longrightarrow H^a(X_{\bar k},R).
\end{equation*}
The construction is compatible
with the Galois action
and with the coefficient reduction maps $\ZZ/\ell^{r+1}\ZZ\to\ZZ/\ell^r\ZZ$.
Thus the maps $G_h^a(\ZZ/\ell^r\ZZ)$
lift to the $\ell$-adic coefficients $\ZZ_\ell$ and $\QQ_\ell$.
The key connection between the $\ell$-adic Gysin maps $G_h^a(\QQ_\ell)$
and the size of $T_q(\bfv)$ is recorded in the following lemma.
\begin{lemma}\label{app:ss:lem:reduction}
If $t\geq0$
and $G_h^i(\QQ_\ell)$ is an isomorphism for all $i>t$, then,
for $\bfv\neq\bm0$ with $[\bfv]=h$,
\[
 T_q(\bfv)\ll_{n,d} q^{t/2}.
\]
\end{lemma}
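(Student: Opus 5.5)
The plan is to feed the isomorphism hypothesis into the trace formula \eqref{app:ss:eq:trace} and let Deligne's bound control only the low-degree terms. Write the alternating sum as $\sum_{i\ge0}(-1)^i D_i$, where
\[
D_i := \Tr(\Fr_q\mid H^{i-2}(X_{h,\bar k},\QQ_\ell)(-1)) - \Tr(\Fr_q\mid H^i(X_{\bar k},\QQ_\ell)).
\]
For $i>t$, the hypothesis gives that $G_h^i(\QQ_\ell)$ is an isomorphism, and the map commutes with the Galois action, since it is built from Galois-equivariant base-change and purity maps. So it intertwines the two actions of $\Fr_q$, the two traces coincide, and $D_i=0$.

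That leaves only $0\le i\le t$. By Deligne's theorem \cite{Del}, every Frobenius eigenvalue on either group in degree $i$ has absolute value at most $q^{i/2}\le q^{t/2}$. Hence
\[
|D_i| \le q^{t/2}\left(\dim H^{i-2}(X_{h,\bar k},\QQ_\ell)+\dim H^i(X_{\bar k},\QQ_\ell)\right).
\]
The constant term $1$ in \eqref{app:ss:eq:trace} is also $\le q^{t/2}$, since $t\ge0$.

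It remains to bound the Betti numbers uniformly. Both $X$ and $X_h$ are projective schemes cut out in $\PP^{n-1}$ or $\PP^{n-2}$ by boundedly many equations of degree at most $d$; for $X_h$ these are $F$ together with a linear form, which we may eliminate. Katz's bound on sums of Betti numbers (Bombieri--Katz) gives $\sum_i\dim H^i_c\ll_{n,d}1$, uniformly in $q$, in $h$ and in the characteristic. Since these varieties are proper, $H^i_c=H^i$. Summing over the at most $2n$ nonzero degrees then yields $T_q(\bfv)\ll_{n,d}q^{t/2}$.

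The expected sticking point is the equivariance claim: one must check that the $\QQ_\ell$-linear map obtained by passing to the limit over $\ZZ/\ell^r\ZZ$ and tensoring with $\QQ_\ell$ genuinely commutes with $\Fr_q$, including the Tate twist. This follows because each ingredient — the purity isomorphism $i_h^!R_P\cong R_{H_h}(-1)[-2]$, the counit, and proper base change — is defined over $k$. The remaining steps are routine.
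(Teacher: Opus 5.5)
Your proposal is correct and follows the paper's proof essentially step for step. The Galois-equivariant isomorphisms $G_h^i(\QQ_\ell)$ kill the terms with $i>t$ in \eqref{app:ss:eq:trace}, Deligne bounds the surviving eigenvalues by $q^{i/2}\le q^{t/2}$, and Katz's uniform Betti-number bound for $X$ and $X_h$ (each cut out by at most two equations of degree at most $d$) finishes the argument via the triangle inequality; your remarks on the constant term $1$ and on the equivariance of the $\ell$-adic Gysin map just make explicit what the paper takes for granted.
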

\begin{proof}
In \eqref{app:ss:eq:trace}, the terms with $i>t$ vanish, since by assumption $G^i_h(\QQ_\ell)$ is a Galois-equivariant isomorphism for $i>t$.
Deligne's general form of the Riemann hypothesis \cite[Th\'eor\`eme 3.3.1]{Del} bounds the
absolute values of the eigenvalues of the nonzero terms by
$q^{i/2}\leq q^{t/2}$.
Katz's bounds~\cite[Theorem~3 and its corollary]{KatzBetti}
bound the total Betti numbers of $X$ and $X_h$ in terms of $n,d$:
they are cut out by at most two equations of degree at most $d$.
The triangle inequality in \eqref{app:ss:eq:trace}
therefore gives the desired bound for $T_q(\bfv)$.
\end{proof}

Consequently, we would like to have an algebraic criterion
for a given Gysin map $G_h^i(\QQ_\ell)$ to be an isomorphism.
We will first give such a criterion with $\FF_\ell$-coefficients
in Proposition~\ref{app:ss:prop:RS} below,
before lifting it to $\ell$-adic coefficients $\ZZ_\ell$ and $\QQ_\ell$
in Lemma~\ref{app:ss:lem:coefficients}.

\subsubsection{Singular support and the exceptional loci}
We first work over a perfect field $k$ with coefficients
$\Lambda=\FF_\ell$, where $\ell\neq\operatorname{char}k$.
For a smooth equidimensional variety $M$
and a constructible \'etale sheaf $\mathcal F\in D^b_c(M,\Lambda)$,
Beilinson defines
the singular support $\operatorname{SS}(\mathcal F)\subset T^*M$ as the smallest closed
conical set on which $\mathcal F$ is micro-supported
\cite[\S\S1.2--1.3]{Bei}.
The precise definition of $\operatorname{SS}(\mathcal F)$ will not be important for us;
all that matters is that the same definition is consistently used in
the key references \cite{Bei,HY,RS}.
In any case, the rough idea is that the singular support records the cotangent directions in which local acyclicity can fail.

The properties needed here are the following. Every irreducible
component of $\operatorname{SS}(\mathcal F)$ has dimension $\dim M$;
the image of $\operatorname{SS}(\mathcal F)$ in $M$ is
the closed support $\operatorname{Supp} \mathcal F$;
and formation of singular support commutes with extensions
of the ground field. All these properties are recorded in \cite[Theorems 1.3, 1.4(iii), and
Lemma 2.3(iii)]{Bei}.

We apply this construction with $M=P=\PP^{n-1}_k$ and $\mathcal F=\phi_*\Lambda_X$,
so that $\operatorname{Supp} \mathcal F=X$.
Borrowing notation from \cite[Theorem~2.2 and its proof]{RS},
consider the universal hyperplane
\[
 \Phi=\{(x,h)\in P\times P^\vee:x\in H_h\}
       \cong \PP(T^*P),
\]
where the identification $\cong$ assigns to $(x,h)$ the
conormal line to $H_h$ at $x$. Put
\begin{equation}\label{app:ss:eq:Sigma}
 \Sigma=\PP(\operatorname{SS}(\mathcal F))\subset \Phi,
 \qquad \pi:\Sigma\longrightarrow P^\vee.
\end{equation}
The map $\pi$ is proper and $\dim\Sigma\leq n-2$. In fact, every
irreducible component of $\Sigma$ has dimension exactly equal to
$\dim M - 1 = n-2$.
Indeed, a component of $\operatorname{SS}(\mathcal F)$
contained in the zero section of $T^*M$
(and thus disappearing after projectivisation)
would have base of dimension $\dim M = n-1$,
whereas $\operatorname{Supp} \mathcal F=X$ has dimension $n-2$.

\begin{defin}\label{app:ss:def:Yi}
For $1\leq i\leq n$, define
\begin{equation}\label{app:ss:eq:Yi}
 Y_i=\{h\in P^\vee:\dim\Sigma_{\bar h}\geq i-1\},
\end{equation}
where $\bar h$ is a geometric point over $h$.
\end{defin}

\begin{lemma}\label{app:ss:lem:dimension}
The loci $Y_i$ satisfy $\codim_{P^\vee}Y_i\geq i$.
In particular $Y_n=\emptyset$.
\end{lemma}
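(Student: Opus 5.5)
The plan is a dimension count on $\Sigma$. The projection $\pi\colon\Sigma\to P^\vee$ is proper, and every irreducible component of $\Sigma$ has dimension exactly $n-2$, as observed just after \eqref{app:ss:eq:Sigma}. By upper semicontinuity of fibre dimension for proper morphisms, each $Y_i$ is closed. It is also clear from \eqref{app:ss:eq:Yi} that $Y_1\supset Y_2\supset\cdots$, and that $Y_1=\pi(\Sigma)$ whenever $\Sigma\ne\emptyset$. I would handle $i=1$ separately: there we only need $Y_1\ne P^\vee$. This holds because $\dim\pi(\Sigma)\le\dim\Sigma=n-2<n-1=\dim P^\vee$.

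For $i\ge 2$, let $C$ be an irreducible component of $Y_i$, and apply the fibre-dimension theorem to $\pi^{-1}(C)\to C$. Every fibre over $C$ has dimension at least $i-1$, so
\[
\dim C+(i-1)\le\dim\pi^{-1}(C)\le\dim\Sigma=n-2.
\]
This gives $\dim C\le n-1-i$, i.e. $\codim_{P^\vee}Y_i\ge i$. (Strictly, one applies the theorem to an irreducible component of $\pi^{-1}(C)$ dominating $C$; one exists because every point of $C$ has a nonempty fibre.) The same argument covers $i=1$ as well. Taking $i=n$ gives $\dim Y_n\le -1$, so $Y_n=\emptyset$.

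The only point needing care is the claim $\dim\Sigma\le n-2$, which rests on the fact that $\operatorname{SS}(\mathcal F)$ is equidimensional of dimension $n-1$ \cite[Theorem~1.3]{Bei}. After projectivising, any component lying in the zero section disappears, and every remaining conical component of dimension $n-1$ drops to dimension $n-2$. We may also pass to $\bar k$ without loss: singular support commutes with field extension, and fibre dimensions are unchanged by that base change. I expect no genuine obstacle; the main thing to check is that the fibre-dimension inequality is applied to reduced irreducible components.
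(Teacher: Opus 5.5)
Your proposal is correct and follows essentially the same route as the paper: take an irreducible component of $Y_i$, use a component of its preimage in $\Sigma$ dominating it to get $\dim C+i-1\le\dim\Sigma\le n-2$, and conclude $\dim C\le n-1-i$, which also gives $Y_n=\emptyset$. The paper likewise takes $\dim\Sigma\le n-2$ from the remark following \eqref{app:ss:eq:Sigma}, so your separate treatment of $i=1$ and your justification of that bound are harmless extras.
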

\begin{proof}
For an irreducible component
$E$ of $Y_i$ over an algebraic closure of $k$, a component of $\pi^{-1}(E)$
dominating $E$ has dimension at least $\dim E+i-1$. Hence
$\dim E+i-1\leq\dim\Sigma\leq n-2$, giving
$\dim E\leq n-1-i$. Thus $\codim_{P^\vee}Y_i\geq i$ and $Y_n=\emptyset$.
\end{proof}

The key property of the loci $Y_i$ is that they measure in which degree the Gysin map is an isomorphism.
The following result is assembled from the work of Raskin--Smith \cite{RS}.
\begin{prop}\label{app:ss:prop:RS}
Let $1\le i\le n$.
For $h\notin Y_i$, the map $G_h^a(\Lambda)$
is an isomorphism if $a>n-2+i$, and is surjective if $a=n-2+i$.
\end{prop}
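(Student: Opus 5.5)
The plan is to follow the Raskin--Smith argument for \cite[Theorem~2.2]{RS}, which relates the Gysin map to local acyclicity of $\mathcal F=\phi_*\Lambda_X$ relative to the projection from the universal hyperplane $\Phi$ to $P^\vee$, and to track the dimension of the non-acyclicity locus in the fibre over $h$. Set $K:=\mathrm{cone}\bigl(i_h^*\mathcal F(-1)[-2]\to i_h^!\mathcal F\bigr)$, where the map is \eqref{EQ:derived-gysin-map}. By the distinguished triangle $(i_h)_*i_h^!\mathcal F\to\mathcal F\to j_*j^*\mathcal F$, with $j\colon P\setminus H_h\hookrightarrow P$, the cone of $G_h^\bullet(\Lambda)$ is identified, up to shift, with an extension of $R\Gamma((P\setminus H_h)_{\bar k},\mathcal F)$ and $R\Gamma(H_{h,\bar k},K)$. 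The affine piece $X\setminus X_h$ has dimension $n-2$, so Artin vanishing kills its cohomology in degrees above $n-2$. Since the cone of $G_h^a$ sits in the long exact sequence, it is therefore enough to show that $H^b(H_{h,\bar k},K)=0$ for all $b$ above roughly $n-2+i$, with the usual index shift giving surjectivity at $a=n-2+i$.

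The main step is to control the support of $K$. By Beilinson's theory (\cite[Lemma~2.5 and Theorem~1.4]{Bei}, as used in \cite[\S2.1]{RS}), $H_h$ is $\operatorname{SS}(\mathcal F)$-transversal at every point $x\in H_h$ such that the conormal line of $H_h$ at $x$ does not lie in $\operatorname{SS}(\mathcal F)$. At such points the morphism $i_h^*\mathcal F\otimes i_h^!\Lambda_P\to i_h^!\mathcal F$ is an isomorphism. Under $\Phi\cong\PP(T^*P)$, the bad points are exactly the fibre $\Sigma_{\bar h}$ of \eqref{app:ss:eq:Sigma}. Hence $\operatorname{Supp}K\subset\Sigma_{\bar h}$, and for $h\notin Y_i$ this support has dimension at most $i-2$.

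Next I bound the amplitude of $K$. The sheaf $\mathcal F$ is concentrated in degree $0$, so $i_h^*\mathcal F(-1)[-2]$ lives in degree $2$, and $i_h^!\mathcal F$ lives in degrees $[0,2]$ by the bound on cohomological amplitude for $i^!$ of a divisor. So $K$ has cohomology sheaves in degrees at most $2$, up to a uniform shift. Cohomology of a constructible complex supported on a variety of dimension $\le i-2$ vanishes above $2(i-2)+2$. This is not yet the bound we want: the gain should come from perversity. Since $\Lambda_X[n-2]$ is perverse-semicontinuous for the hypersurface $X$ (a local complete intersection), I use the affineness of $H_h\setminus\Sigma_{\bar h}$ together with Artin vanishing to replace the factor $2(i-2)$ by $i-2$. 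This yields vanishing of $H^b(K)$ for $b>n-2+i$ in the normalisation of the statement.

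The hard step is this degree bookkeeping. Getting exactly the threshold $n-2+i$, rather than something off by the dimension of $\Sigma_{\bar h}$, needs the perverse t-structure argument of \cite[proof of Theorem~2.2]{RS}, namely that the cone of the Gysin map on $\mathcal F[n-2]$ is perverse-concentrated. I would quote that directly and adapt it with $i-1$ in place of their fibre-dimension bound.
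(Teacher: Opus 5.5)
Your outer framework is sound, and it is in substance the Raskin--Smith argument that the paper simply cites. The paper's proof applies \cite[Theorem~1.2]{RS} with $r=n-2$, $c=i$, $q=a$, after noting that the cone of \eqref{EQ:derived-gysin-map} is supported on the image of $\Sigma_h$ in $H_h$. The parts you have right are these: the octahedral splitting of the cone of $G_h$ into $R\Gamma(H_{h,\bar k},K)$ and $R\Gamma((X\setminus X_h)_{\bar k},\Lambda)$; Artin vanishing on the affine variety $X\setminus X_h$; and $\operatorname{Supp}K$ lying in the non-transversality locus, of dimension at most $i-2$ when $h\notin Y_i$.

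The gap is the degree estimate for $R\Gamma(H_{h,\bar k},K)$, which is the entire content of the proposition. Your claim that $i_h^!\mathcal F$ has cohomology sheaves only in degrees $[0,2]$ is false once $X$ is singular. For instance, take $n=4$, let $X$ be the cone over a smooth plane cubic $E$, and let $H_h$ be a plane through the vertex $v$ cutting $X$ in three lines. Then $\mathcal H^3(i_h^!\mathcal F)_v\cong H^2(U)$, where $U$ is the punctured local cone with these lines removed. Over $\Qbar$, $U$ is homotopy equivalent to $(E\setminus\{\text{3 points}\})\times S^1$, so $H^2(U)\neq 0$.

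Had the $[0,2]$ bound been true, your naive estimate ($H^b=0$ for $b>2i-2$) would already have sufficed. Indeed $2i-2<n-2+i$ for $i\le n-1$, and the case $i=n$ is trivial. So the obstruction is not where you located it. The proposed repair via affineness of $H_h\setminus\Sigma_{\bar h}$ also fails: that set is in general not affine, and in any case $R\Gamma(H_{h,\bar k},K)$ only sees $\operatorname{Supp}K$. Note also that surjectivity at $a=n-2+i$ requires $H^b(H_{h,\bar k},K)=0$ for all $b\ge n-2+i$, not merely for $b>n-2+i$.

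The missing idea is to bound the \emph{perverse} amplitude of $K$; the affineness that matters is that of $P\setminus H_h$.
\begin{enumerate}
\item Since $\dim X=n-2$, we have $\mathcal F[n-2]\in{}^pD^{\le 0}$ for any such $X$; no lci or perversity input is needed.
\item Because $i_h^*$ is right t-exact, $i_h^*\mathcal F[n-2](-1)[-2]\in{}^pD^{\le 2}$.
\item Because $j\colon P\setminus H_h\hookrightarrow P$ is affine, $j_*$ is right t-exact. The triangle $(i_h)_*i_h^!\mathcal F\to\mathcal F\to j_*j^*\mathcal F$ then gives $i_h^!\mathcal F[n-2]\in{}^pD^{\le 1}$.
\item Hence $K[n-2]\in{}^pD^{\le 1}$.
\item A complex in ${}^pD^{\le 0}$ supported in dimension at most $e$ has hypercohomology only in degrees $\le e$. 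Applying this to $K[n-1]$ with $e=i-2$ gives $H^b(H_{h,\bar k},K)=0$ for all $b\ge n-2+i$.
\item Together with $H^b((X\setminus X_h)_{\bar k},\Lambda)=0$ for $b>n-2$, the cone of $G_h$ has no cohomology in degrees $\ge n-2+i$, which is exactly the assertion.
\end{enumerate}
This is the Raskin--Smith bookkeeping with $r=\dim(X\times_PX)=n-2$ and $c=i$. With this step supplied, your argument becomes a complete unpacking of the citation on which the paper's proof rests.
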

\begin{proof}
We apply \cite[Theorem~1.2 and its proof]{RS} to
the inclusion morphism $\phi:X\hookrightarrow P$,
with parameters $r=\dim(X\times_P X)=\dim X=n-2$, $c=i$ and $q=a$.
The construction in \cite[Theorem~2.2, Corollary~2.3 and their proofs]{RS} chooses
precisely the loci $Y_i$ defined in \eqref{app:ss:eq:Yi}:
the cone of the morphism $i_h^*\mathcal F(-1)[-2]\to i_h^!\mathcal F$
defined in \eqref{EQ:derived-gysin-map}
is supported on
the image of $\Sigma_h$ in $H_h$.
Outside $Y_i$ this support has dimension at most $i-2$,
by the definition of $Y_i$.
The desired conclusion follows from the statement of \cite[Theorem~1.2]{RS},
with $Y_i$ in place of $Z_c$.
\end{proof}

\subsubsection{Passage from finite to rational coefficients}
Next, we pass from finite to $\ell$-adic coefficients using an elementary argument involving coefficient
long exact sequences and inverse limits.

\begin{lemma}
\label{app:ss:lem:coefficients}
Fix a hyperplane $h$ and an integer $t$. Suppose $G_h^a(\FF_\ell)$ is
an isomorphism for $a>t$ and is surjective for $a=t$. Then the same
statements hold with coefficients $\ZZ/\ell^r\ZZ$ for every $r\geq1$,
and with coefficients $\ZZ_\ell$ and $\QQ_\ell$.
\end{lemma}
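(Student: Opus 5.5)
We argue by induction on $r$ using the short exact sequences of coefficient sheaves
\[
0\to \ZZ/\ell^{r}\ZZ \xrightarrow{\ \cdot \ell\ } \ZZ/\ell^{r+1}\ZZ \to \FF_\ell\to 0,
\]
together with the fact recorded above that the Gysin maps $G_h^a(\ZZ/\ell^r\ZZ)$ are compatible with change of coefficients. Since the Gysin map comes from a morphism in the derived category, these compatibilities give a morphism of long exact sequences. Its top row is the cohomology sequence of $X_{h,\bar k}$, shifted by $2$ and twisted by $(-1)$. Its bottom row is the cohomology sequence of $X_{\bar k}$. The vertical arrows are $G_h^a(\ZZ/\ell^r\ZZ)$, $G_h^a(\ZZ/\ell^{r+1}\ZZ)$ and $G_h^a(\FF_\ell)$.

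For the inductive step, suppose the claim holds for $\ZZ/\ell^r\ZZ$, and consider a degree $a>t$. In the five-term window around $G_h^a(\ZZ/\ell^{r+1}\ZZ)$ the outer maps are:
\begin{enumerate}
\item $G_h^{a-1}(\FF_\ell)$, which is surjective since $a-1\ge t$;
\item $G_h^{a}(\ZZ/\ell^r\ZZ)$ and $G_h^{a}(\FF_\ell)$, which are isomorphisms;
\item $G_h^{a+1}(\ZZ/\ell^r\ZZ)$, which is injective since $a+1>t$.
\end{enumerate}
The five lemma then gives that $G_h^a(\ZZ/\ell^{r+1}\ZZ)$ is an isomorphism.

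For surjectivity in degree $a=t$, use the four-lemma variant. The maps $G_h^t$ with coefficients $\ZZ/\ell^r\ZZ$ and $\FF_\ell$ are surjective, and $G_h^{t+1}(\ZZ/\ell^r\ZZ)$ is injective. Together these force $G_h^t(\ZZ/\ell^{r+1}\ZZ)$ to be surjective. The degree bookkeeping in the windows is the only point that needs care, and it is routine.

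To pass to $\ZZ_\ell$, use that all the cohomology groups involved are finite, since $X$ and $X_h$ are of finite type over a separably closed field. Hence $H^\ast(-,\ZZ_\ell)=\varprojlim_r H^\ast(-,\ZZ/\ell^r\ZZ)$, with no $\varprojlim^1$ obstruction because the Mittag-Leffler condition holds for finite groups. The maps $G_h^a(\ZZ_\ell)$ are the inverse limits of the compatible maps $G_h^a(\ZZ/\ell^r\ZZ)$. An inverse limit of isomorphisms is an isomorphism, which settles $a>t$.

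For $a=t$, each fibre of $G_h^t(\ZZ/\ell^r\ZZ)$ over a compatible system of targets is a nonempty finite set. An inverse limit of nonempty finite sets is nonempty, so $G_h^t(\ZZ_\ell)$ is surjective.

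Finally, $\QQ_\ell$-cohomology is obtained by applying $-\otimes_{\ZZ_\ell}\QQ_\ell$. This functor is exact, so it preserves both isomorphisms and surjections.

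The main obstacle is making sure the Gysin maps really commute with the connecting homomorphisms. This holds because the construction in \eqref{EQ:derived-gysin-map} is a natural transformation of triangulated functors applied to the distinguished triangle of coefficient sheaves.
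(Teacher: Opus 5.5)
Your proposal is correct and follows essentially the same route as the paper: induction on $r$ via the coefficient sequence, the five lemma for $a>t$ and the four lemma for $a=t$ (with the same vertical maps), then passage to $\ZZ_\ell$ by Mittag--Leffler, and finally flatness of $\QQ_\ell$ over $\ZZ_\ell$. Your argument via nonempty finite fibres is just a rephrasing of the paper's Mittag--Leffler condition on the kernels.
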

\begin{proof}
Write $\Lambda_r=\ZZ/\ell^r\ZZ$ and abbreviate
\[
 A_r^a=H^{a-2}(X_{h,\bar k},\Lambda_r)(-1),\qquad
 B_r^a=H^a(X_{\bar k},\Lambda_r).
\]
For $r\geq2$, the coefficient sequence
\[
 0\longrightarrow\Lambda_{r-1}\xrightarrow{\ell}\Lambda_r
       \longrightarrow\Lambda_1\longrightarrow0
\]
gives long exact sequences compatible with the Gysin maps,
by standard properties of derived tensor products with respect to distinguished triangles
in the derived category of $\Lambda_r$-modules.
To prove the statement of Lemma~\ref{app:ss:lem:coefficients} for $\ZZ/\ell^r\ZZ$,
we induct on $r\ge 1$.
The case $r=1$ is precisely our assumption,
so assume that $r\ge 2$ and the statement already holds for $r-1$.
For $a>t$, we obtain a commutative diagram that is exact on its horizontal arrows:
\[
\begin{tikzcd}[column sep=small]
 A_1^{a-1}\arrow[r]\arrow[d]
 & A_{r-1}^{a}\arrow[r]\arrow[d]
 & A_r^a\arrow[r]\arrow[d]
 & A_1^a\arrow[r]\arrow[d]
 & A_{r-1}^{a+1}\arrow[d]\\
 B_1^{a-1}\arrow[r]
 & B_{r-1}^{a}\arrow[r]
 & B_r^a\arrow[r]
 & B_1^a\arrow[r]
 & B_{r-1}^{a+1}.
\end{tikzcd}
\]
The first vertical map is surjective, and the second, fourth, and
fifth are isomorphisms. By the five lemma the middle map is thus an
isomorphism.
For $a=t$, the last four terms instead give surjectivity
of the middle map:
the maps on $A_{r-1}^t$ and $A_1^t$ are surjective,
and the map on $A_{r-1}^{t+1}$ is injective,
so the map on $A_r^t$ is surjective by the four lemma.

Since the finite-coefficient cohomology groups are finite, the inverse
systems of kernels of the surjective maps $G_h^t(\Lambda_r)$ satisfy
the Mittag--Leffler condition. Taking inverse limits therefore preserves
surjectivity at $a=t$, as well as the isomorphisms for $a>t$.
Now $H^b(Z,\ZZ_\ell)=\varprojlim_r H^b(Z,\Lambda_r)$,
giving the assertions for $\ZZ_\ell$.
Finally, the identity
$H^b(Z,\QQ_\ell)=H^b(Z,\ZZ_\ell)\otimes_{\ZZ_\ell}\QQ_\ell$
and the flatness of $\QQ_\ell$ over $\ZZ_\ell$
lead to the result for $\QQ_\ell$.
\end{proof}

\begin{cor}\label{app:ss:cor:Ql}
Let $1\le i\le n$.
For $h\notin Y_i$, the map $G_h^a(\QQ_\ell)$
is an isomorphism for $a>n-2+i$
and is surjective for $a=n-2+i$.
If $k=\FF_q$ and
$\bfv\neq\bm0$ satisfies $[\bfv]\notin Y_i(k)$, then
$$T_q(\bfv)\ll_{n,d} q^{(n-2+i)/2}.$$
\end{cor}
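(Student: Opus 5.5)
The corollary should follow by chaining three results stated just above: Proposition~\ref{app:ss:prop:RS} for $\FF_\ell$-coefficients, then Lemma~\ref{app:ss:lem:coefficients} to pass to $\QQ_\ell$, and finally Lemma~\ref{app:ss:lem:reduction} for the exponential sum bound.

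\textbf{Step 1: finite coefficients.} Fix $1\le i\le n$ and $h\notin Y_i$. Working over the perfect field $k$ with $\Lambda=\FF_\ell$, Proposition~\ref{app:ss:prop:RS} gives:
\begin{itemize}
\item $G_h^a(\FF_\ell)$ is an isomorphism for $a>n-2+i$;
\item $G_h^a(\FF_\ell)$ is surjective for $a=n-2+i$.
\end{itemize}

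\textbf{Step 2: rational coefficients.} Apply Lemma~\ref{app:ss:lem:coefficients} with $t=n-2+i$. This transfers both the isomorphism statements for $a>t$ and the surjectivity at $a=t$ to $\ZZ/\ell^r\ZZ$, then to $\ZZ_\ell$, and then to $\QQ_\ell$. That proves the first assertion.

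One point needs checking here. The loci $Y_i$ are defined via geometric fibres $\Sigma_{\bar h}$, and singular support commutes with extension of the ground field. So the hypothesis $h\notin Y_i$ is a geometric condition, compatible with the base change to $\bar k$ that is implicit in the cohomology groups $H^\bullet(X_{\bar k})$ and $H^\bullet(X_{h,\bar k})$. No further argument should be needed.

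\textbf{Step 3: the bound for $T_q(\bfv)$.} Take $k=\FF_q$ and $\bfv\ne\bm0$ with $h=[\bfv]\notin Y_i(k)$. A $k$-rational point lies outside the closed set $Y_i$ exactly when it is not a $k$-point of $Y_i$, so $h\notin Y_i$ in the sense of Step 2. Hence $G_h^a(\QQ_\ell)$ is an isomorphism for all $a>n-2+i$.

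Now apply Lemma~\ref{app:ss:lem:reduction} with $t=n-2+i\ge 0$. It gives
\[
T_q(\bfv)\ll_{n,d} q^{(n-2+i)/2}.
\]
Any ambiguity in the implied constant is already handled inside that lemma by Katz's Betti number bounds, so the constant depends only on $n$ and $d$.

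I do not expect a genuine obstacle in any of these steps. The only place requiring care is matching the index: the surjectivity in degree $n-2+i$ is not used for the bound, only the isomorphisms above it, and $t$ must be taken equal to $n-2+i$ rather than anything smaller.
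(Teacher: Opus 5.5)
Your proposal is correct and follows essentially the same route as the paper's proof. Proposition~\ref{app:ss:prop:RS} gives the $\FF_\ell$-statement, and Lemma~\ref{app:ss:lem:coefficients} with $t=n-2+i$ lifts it to $\QQ_\ell$. Lemma~\ref{app:ss:lem:reduction} with the same $t$, reading its dummy index as $a$, then gives the bound for $T_q(\bfv)$.
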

\begin{proof}
The first sentence follows from Lemma~\ref{app:ss:lem:coefficients} with $t=n-2+i$,
in view of Proposition~\ref{app:ss:prop:RS}.
We can use Lemma~\ref{app:ss:lem:reduction},
with $G_h^a(\QQ_\ell)$ in place of $G_h^i(\QQ_\ell)$,
to deduce the second sentence.
\end{proof}

\subsection{A construction over the integers}
We use the relative singular support of Hu and Yang \cite{HY} to
construct the exceptional loci over an open subscheme of $\Spec\ZZ$.
\subsubsection{Relative singular support}
Fix a prime $\ell$.
Let $N\ge 1$ be an integer multiple of $\ell$
and all primes dividing the content of $F$.
We will work over the base $B=\Spec\ZZ[1/N]$,
enlarging $N$ finitely many times without changing the notation.
Let
\[
 \mathcal P=\PP^{n-1}_B,\qquad
 \mathcal X_B=V(F)\subset\mathcal P,\qquad
 \phi_B:\mathcal X_B \hookrightarrow \mathcal P,\qquad
 \phi_{\bar b}:(\mathcal X_B)_{\bar b} \hookrightarrow \mathcal P_{\bar b},
\]
for every geometric point $\bar b\to B$.
Every fibre $(\mathcal X_B)_{\bar b}$ is a hypersurface of dimension $n-2$.

We use the following consequence of Hu--Yang's relative theory.
Part (2) of Theorem~5.2 and  and Theorem~5.3 of \cite{HY} imply, after shrinking $B$, the
existence of a closed conical subset
\begin{equation*}
 \mathcal C=\operatorname{SS}((\phi_B)_\ast\FF_\ell,\mathcal P/B)
                         \subset T^*(\mathcal P/B),
\end{equation*}
the singular support relative to $B$, such that, for every geometric point $\bar b\to B$,
\begin{equation}\label{app:ss:eq:relative-fibre}
 (\mathcal C_{\bar b})_{\mathrm{red}}
       =\operatorname{SS}((\phi_{\bar b})_\ast\FF_\ell).
\end{equation}
In other words, the reduction of the singular support is the singular support of the reduction away from finitely many primes.

Relativising the definitions of $\Sigma$ and $\pi$ in \eqref{app:ss:eq:Sigma},
we obtain
\[
 \Sigma_B=\PP(\mathcal C)
       \subset\mathcal P\times_B\mathcal P^\vee,
 \qquad \pi_B:\Sigma_B\longrightarrow\mathcal P^\vee.
\]
For $1\leq i\leq n$, define
\begin{equation*}
 \mathcal Y_i=\{h\in\mathcal P^\vee:
                    \dim(\pi_B^{-1}(\bar h))\geq i-1\}.
\end{equation*}
These sets are closed, by upper semicontinuity of fibre dimension for
proper morphisms \cite[\href{https://stacks.math.columbia.edu/tag/0D4I}{Tag 0D4I}]{TSta}.
By \eqref{app:ss:eq:relative-fibre}, their
geometric fibres are exactly the loci of Definition~\ref{app:ss:def:Yi}. Thus
\begin{equation}\label{app:ss:eq:fibre-codim}
 \codim_{\mathcal P^\vee_{\bar b}}\,(\mathcal Y_i)_{\bar b}\geq i
 \quad\text{for all }\bar b\to B,
 \qquad\mathcal Y_n=\emptyset,
\end{equation}
by Lemma~\ref{app:ss:lem:dimension}.
We give $\mathcal Y_i$ the reduced scheme structure.
Clearly $\mathcal Y_{i+1} \subset \mathcal Y_i$.

\subsubsection{Affine cones and specialization}
For $1\leq i\leq n$, let $\mathcal Z_{i,B}\subset\A^n_B$ be
the affine cone over $\mathcal Y_i$,
with the origin included even when $\mathcal Y_i=\emptyset$.
Set $\mathcal Z_{0,B}=\A^n_B$.
Formation of these cones on underlying sets commutes with field
extension, and \eqref{app:ss:eq:fibre-codim} gives
$\dim (\mathcal Z_{i,B})_{\bar b}\leq n-i$.

For $1\leq i\leq n$, define $\mathcal Z_i\subset\A^n_\ZZ$ to be the
closure of $\mathcal Z_{i,B}$, and set $\mathcal Z_0=\A^n_\ZZ$.
These are nested closed conical subschemes defined over $\ZZ$.
Each irreducible component is the closure of a component over $B$.
A component dominating $\Spec\ZZ$ has dimension at most $n+1-i$ by
the dimension formula \cite[\href{https://stacks.math.columbia.edu/tag/02JU}{Tag 02JU}]{TSta}; a vertical component
lies over a prime in $B$ and has dimension at most $n-i$.
Hence $\codim_{\A^n_\ZZ}\mathcal Z_i\geq i$.
Also, $(\mathcal Z_i)_B = \mathcal Z_{i,B}$
by \cite[\href{https://stacks.math.columbia.edu/tag/081I}{Tag 081I}]{TSta},
since $\mathcal Z_{i,B}$ is closed
and the open inclusion morphism $B\hookrightarrow \Spec\ZZ$ is flat.

\begin{prop}\label{app:ss:prop:global}
The schemes $\mathcal Z_i$ satisfy (\ref{item:strat-geom-dim-bound})
and (\ref{item:strat-fourier-bound}) of Theorem~\ref{n10:ss}.
\end{prop}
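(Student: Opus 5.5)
The plan is to verify the two properties separately, reading both off the fibrewise description of $\mathcal Y_i$ that was just set up.

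\emph{Property (\ref{item:strat-geom-dim-bound}).} This should follow almost formally. Since $(\mathcal Z_i)_B=\mathcal Z_{i,B}$ and $\QQ$ lies over the generic point of $B$, we have $\mathcal Z_i\otimes\QQ=(\mathcal Z_{i,B})_{\QQ}$. Its geometric fibre over $\Qbar$ is the affine cone over $(\mathcal Y_i)_{\Qbar}$, with the origin included. By \eqref{app:ss:eq:fibre-codim}, this projective set has dimension at most $n-1-i$, so its cone has dimension at most $n-i$. When $\mathcal Y_i=\emptyset$ the cone is just $\{\bm0\}$, which has dimension $0\le n-i$. Finally, $\mathcal Z_n=\{\bm0\}$ because $\mathcal Y_n=\emptyset$. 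This needs $\mathcal Z_n$ to equal exactly the closure of the origin section over $B$, which is the zero section over $\ZZ$.

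\emph{Property (\ref{item:strat-fourier-bound}).} Take $N$ as in the construction, so that $p\nmid N$ puts $\FF_q$ over a point $b\in B$. Let $0\le i\le n-1$ and $\bfv\in\FF_q^n\setminus\mathcal Z_{i+1}(\FF_q)$. Then $\bfv\ne\bm0$, since the origin lies in every $\mathcal Z_{i+1}$. For $i+1\ge1$, the fibre of $\mathcal Z_{i+1}$ over $b$ agrees with that of $\mathcal Z_{i+1,B}$, which is the cone over $(\mathcal Y_{i+1})_b$. Hence $[\bfv]\notin(\mathcal Y_{i+1})_b(\FF_q)$. By \eqref{app:ss:eq:relative-fibre}, the geometric fibre $(\mathcal Y_{i+1})_{\bar b}$ is the locus $Y_{i+1}$ of Definition~\ref{app:ss:def:Yi} for $X=\mathcal X_{\FF_q}$, because singular support commutes with extension of the ground field. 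We then apply Corollary~\ref{app:ss:cor:Ql} with $i+1$ in place of $i$, obtaining
\[
T_q(\bfv)\ll_{n,d} q^{(n-2+i+1)/2}=q^{(n-1+i)/2},
\]
which is \eqref{eq:main-bound}. The implied constant depends only on $n$ and $d$, through Katz's Betti number bounds, so it is uniform in $p$.

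The main obstacle is the bookkeeping that identifies $\mathcal Y_{i+1}(\FF_q)$, for the reduced closed subscheme of $\mathcal P^\vee_B$, with the finite-field loci $Y_{i+1}$. Two points need care. First, $\FF_q$-points of $\mathcal P^\vee$ lying outside $\mathcal Y_{i+1}$ must also lie outside the fibre-dimension locus computed over $\bar{\FF}_q$. This holds because $\mathcal Y_{i+1}$ is defined pointwise by the dimension of geometric fibres of $\pi_B$, and those fibres are the $\Sigma_{\bar h}$ by \eqref{app:ss:eq:relative-fibre}, with reduction harmless for dimensions. Second, the closure over $\ZZ$ must not change the fibres over $B$; this is exactly the flat base change identity $(\mathcal Z_i)_B=\mathcal Z_{i,B}$ already recorded. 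The fibres over the finitely many primes dividing $N$ play no role, since they are excluded from (\ref{item:strat-fourier-bound}).
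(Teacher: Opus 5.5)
Your proposal is correct and follows the paper's proof. Property (\ref{item:strat-geom-dim-bound}) comes from the fibrewise bound \eqref{app:ss:eq:fibre-codim}, transported to $\mathcal Z_i\otimes\QQ$ through $(\mathcal Z_i)_B=\mathcal Z_{i,B}$; property (\ref{item:strat-fourier-bound}) follows because $\bfv\notin\mathcal Z_{i+1}(\FF_q)$ forces $\bfv\ne\bm0$ and $[\bfv]\notin\mathcal Y_{i+1}(\FF_q)=Y_{i+1}(\FF_q)$ by \eqref{app:ss:eq:relative-fibre}, after which Corollary~\ref{app:ss:cor:Ql} applies with $i+1$ in place of $i$.
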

\begin{proof}
The dimension assertions in (\ref{item:strat-geom-dim-bound})
have just been proved.
Suppose $p\nmid N$,
$0\leq i\leq n-1$, and $\bfv\notin \mathcal Z_{i+1}(\FF_q)$.
Then $\bfv\neq\bm0$ and $[\bfv]\notin\mathcal Y_{i+1}(\FF_q)$, so
Corollary~\ref{app:ss:cor:Ql} with $i+1$ in place of $i$
gives the required estimate for (\ref{item:strat-fourier-bound}),
since $\mathcal Y_{i+1}(\FF_q) = Y_{i+1}(\FF_q)$ by \eqref{app:ss:eq:relative-fibre}.
\end{proof}

\subsubsection{Comparison with the geometric exceptional loci}
Suppose that $X = \mathcal X_\QQ$ is geometrically integral and $d\ge 2$.
Working over $\QQ$ rather than $\Qbar$,
recall the correspondences
$\Gamma$ and $S$ from \eqref{def:closed-gauss-correspondence} and
\eqref{EQ:universal-section-singular}. We claim that
\begin{equation}\label{app:ss:geometric-incidences}
 \Gamma\subset\Sigma_{\QQ}\subset S.
\end{equation}
At a smooth point of $X$, the singular support of
$\phi_*\FF_\ell$ is the ordinary conormal bundle of $X$ in $P$;
see \cite[Lemmas~2.3(i) and~2.5(ii)]{Bei}.
Taking closures gives the first inclusion. For the second inclusion,
let $(x,h)\in\Sigma_{\QQ}$ be a geometric point. The incidence
containment in \eqref{app:ss:eq:Sigma} gives $x\in H_h$, and the support property
of the image of $\Sigma_{\QQ}$ in $P$ gives
$x\in X$. If $x$ is smooth on $X$, then $(x,h)\in \Gamma$, so $(x,h)\in S$ by \eqref{EQ:gauss-section-containment}.
If $x$ is singular on $X$, all the first partial derivatives of $F$ vanish at $x$,
so $(x,h)\in S$ holds trivially by \eqref{EQ:universal-section-singular}.
This concludes the proof of \eqref{app:ss:geometric-incidences}.

Since $X_{\Qbar}$ is integral of degree $d\ge 2$,
the restriction of $F$ to every hyperplane is nonzero.
Thus $X\cap H_h$ is a hypersurface in $H_h$,
and the Jacobian criterion identifies the fibre $S_h$
with the scheme-theoretic singular locus of $X\cap H_h$.
Taking geometric fibre dimensions in \eqref{app:ss:geometric-incidences} gives
\[
 W_i\subset (\mathcal Y_{i+1})_\QQ \subset B_i
 \qquad(0\leq i\leq n-1).
\]
Taking affine cones, with the convention
$\cone\emptyset=\{\bm0\}$, proves assertion \eqref{EQ:nice-sandwich} of
Theorem~\ref{n10:ss}.

\subsection{Rational points on the exceptional loci}
To complete the proof of Theorem~\ref{n10:ss}, it remains to prove the
assertion \eqref{eq:rational-main}
about the Zariski closures $\bar{\mathcal Z_i(\QQ)}$,
assuming $X(\QQ)=\emptyset$.

\subsubsection{Fibre jumps and dual varieties}
We begin with the following elementary observation. 
\begin{lemma}\label{app:ss:lem:fibre-jump} 

Let $f:A\to B$ be a proper dominant morphism of integral varieties, and
put $c=\dim A-\dim B$. For $1\le j\le \dim B$, the closed locus
\[
 E_j=\{b\in B:\dim A_{\bar b}\geq c+j\}
\]
has codimension at least $j+1$ in $B$.
\end{lemma}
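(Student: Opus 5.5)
The approach is the standard dimension count, applied to the preimage of each irreducible component of $E_j$. Closedness of $E_j$ is upper semicontinuity of fibre dimension for the proper morphism $f$, as already invoked for $B_t$, $W_t$ and the loci $\mathcal Y_i$. For the codimension bound, fix an irreducible component $E$ of $E_j$ and suppose, for contradiction, that $\dim E\ge \dim B-j$. Over every point of $E$ the fibre has dimension at least $c+j$.

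Since $f$ is proper, $f(f^{-1}(E))$ is closed, and it contains all of $E$ because every fibre over $E$ is nonempty (each such fibre has dimension $c+j\ge 0$). So some irreducible component $A'$ of $f^{-1}(E)$ dominates $E$. By the fibre dimension theorem applied to $A'\to E$, a general fibre of this map has dimension $\dim A'-\dim E$. The argument should use that the fibre of $f^{-1}(E)$ over a point $b\in E$ is $A_{\bar b}$, and take $A'$ to be the component through a top-dimensional component of the generic fibre. This gives
\[
\dim A'\ge \dim E+c+j\ge \dim B-j+c+j=\dim A.
\]

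The contradiction comes from comparing $A'$ with $A$. Because $A$ is irreducible and $A'\subset A$ is closed, the inequality above forces $A'=A$. Then $f(A)=f(A')\subset E$. But $f$ is dominant and proper, so $f(A)=B$, giving $E=B$. That is impossible: the generic fibre of $f$ has dimension exactly $c$, so the general point of $B$ lies outside $E_j$ (here $j\ge1$). Hence $\dim E\le \dim B-j-1$, i.e.\ $\codim_B E\ge j+1$.

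The main obstacle is the middle step: producing a component of $f^{-1}(E)$ that dominates $E$ with relative dimension at least $c+j$. I will handle it by working over the generic point of $E$. There the fibre has dimension at least $c+j$ by upper semicontinuity, since fibre dimension over the generic point of $E$ equals the minimum over a dense open subset of $E$. Then I take the closure in $A$ of a top-dimensional component of that generic fibre; this closure is an irreducible component of $f^{-1}(E)$ dominating $E$ with the required relative dimension.
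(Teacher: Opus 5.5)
Your proof is correct and is essentially the paper's argument: both take a component of $f^{-1}(E)$ dominating $E$, of dimension at least $\dim E+c+j$, and compare it with $\dim A$, using dominance together with $E_j\ne B$ (generic fibre dimension $c$, $j\ge 1$) to exclude $\dim E\ge \dim B-j$; the paper argues directly via $\dim f^{-1}(E_j)\le \dim A-1$, where you argue by contradiction. Your generic-fibre construction of the dominating component fills in a step the paper only asserts. Two small corrections: that step rests on fibre dimension being constant, equal to its generic value, on a dense open subset of $E$, not on upper semicontinuity. And properness is not needed to get $E=B$ from $f(A)\subset E$, since dominance already gives this.
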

\begin{proof}
This is trivial if $E_j=\emptyset$, so assume $E_j\ne \emptyset$.
The generic fibre dimension is $c$, so $E_j$ is a proper closed subset of $B$,
since $j\ge 1$.
Since $f$ is dominant, $f^{-1}(E_j)$ is a proper closed subset of $A$.
Since $A$ is integral,
$$\dim f^{-1}(E_j) \le \dim A - 1 = \dim B + c-1.$$
However, for any irreducible component $E$ of $E_j$, some component of
$f^{-1}(E)$ dominating $E$ has dimension at least $\dim E+c+j$.
Thus $\dim f^{-1}(E_j) \ge \dim E_j+c+j$.
It follows that $\dim E_j \le \dim B - j-1$, as desired.
\end{proof}

For an integral proper subvariety $V\subsetneq\PP^m$ over a field of characteristic
zero, its projective conormal variety is
\[
 \mathcal N_V=\bar{\{(x,h):x\in V_{\mathrm{sm}},\
                       \mathbb T_xV\subset H_h\}}
             \subset\PP^m\times(\PP^m)^\vee,
\]
where $\mathbb T_xV$ denotes the embedded projective tangent space.
Its second projection is the dual variety $V^\vee$.
If $V$ is not geometrically integral, its geometric components and
their distinct duals are Galois conjugate.
Biduality identifies $\mathcal N_V$ with $\mathcal N_{V^\vee}$ after
interchanging the two factors, and gives $(V^\vee)^\vee=V$;
see \cite[Theorem~1.7]{Tevelev}.

\begin{lemma}\label{app:ss:lem:dual-rational}
Let $V\subsetneq\PP^m_\QQ$ be integral, but not necessarily geometrically
integral. If $V(\QQ)=\emptyset$, then
\[
 V^\vee(\QQ)\subset\Sing(V^\vee)(\QQ).
\]
\end{lemma}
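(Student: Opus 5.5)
We argue by contraposition: a smooth $\QQ$-point $h\in V^\vee_{\mathrm{sm}}(\QQ)$ will be shown to force $V(\QQ)\neq\emptyset$. The tool is biduality applied to the conormal variety.

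First we reduce to a statement about a single geometric component. Write $V_{\Qbar}=V_1\cup\dots\cup V_s$ for the Galois-conjugate geometric components. Then $V^\vee$ is the union of the distinct duals $V_j^\vee$, and these are again Galois conjugate. Let $h\in V^\vee(\QQ)$ be a smooth point of $V^\vee$. Then $h$ lies on exactly one irreducible component of $V^\vee_{\Qbar}$, since a smooth point of a variety lies on a unique component. Because $h$ is $\QQ$-rational, the Galois group permutes the components containing $h$. That component is therefore Galois-stable, so it is one of the $V_j^\vee$, say $W=V_1^\vee$, and all conjugates $\sigma(V_1)$ have the same dual $W$. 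Biduality $(V_1^\vee)^\vee=V_1$ then gives $\sigma(V_1)=V_1$ for every $\sigma$. Hence $V$ is geometrically integral, $V_{\Qbar}=V_1$, and $V^\vee$ is geometrically irreducible near $h$.

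Next we use the conormal correspondence at the smooth point $h$ of $V^\vee$. By biduality, $\mathcal N_V=\mathcal N_{V^\vee}$ after swapping factors. Consider the fibre of $\mathcal N_V\to V^\vee$ over $h$. Since $h$ lies in $(V^\vee)_{\mathrm{sm}}$, the open description of $\mathcal N_{V^\vee}$ applies: this fibre is exactly the set of $x$ with $\mathbb T_h V^\vee\subset H_x$. In other words it is the linear space $(\mathbb T_hV^\vee)^\perp\subset\PP^m$, the projective dual of the embedded tangent space. This linear space is defined over $\QQ$, because $h$ is a rational smooth point and the tangent space is cut out by the rational Jacobian at $h$. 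It is nonempty, since $V^\vee\neq(\PP^m)^\vee$; the latter holds because $V\subsetneq\PP^m$ is a proper subvariety, so its dual is proper (a standard property, e.g.\ via reflexivity in characteristic zero). Finally, this fibre lies in $V$, because $\mathcal N_V\subset V\times(\PP^m)^\vee$ by definition of the closure.

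The conclusion is then immediate: a nonempty linear space defined over $\QQ$ has a $\QQ$-point, and that point lies in $V(\QQ)$, contradicting $V(\QQ)=\emptyset$. So every rational point of $V^\vee$ is singular.

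The main obstacle is the step identifying the fibre of the closure $\mathcal N_{V^\vee}$ over a smooth point $h$ with the full linear space $(\mathbb T_hV^\vee)^\perp$, rather than just a subset of it. The way to handle this is to use that over $(V^\vee)_{\mathrm{sm}}$ the incidence variety $\{(h,x):\mathbb T_hV^\vee\subset H_x\}$ is a projective bundle, hence irreducible and closed over that open set. It has the same dimension as $\mathcal N_{V^\vee}$, so it coincides with $\mathcal N_{V^\vee}$ there. One also needs to check that biduality holds for the possibly reducible $V$ componentwise, which is what the first step uses.
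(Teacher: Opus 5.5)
Your proof is correct and is essentially the paper's argument: at a smooth $\QQ$-point $h$ of $V^\vee$, biduality places the rational linear space $(\mathbb T_hV^\vee)^\perp$, of dimension $m-\dim V^\vee-1\ge 0$, inside the fibre of $\mathcal N_V\to V^\vee$, and hence inside $V$. Your preliminary reduction to geometric integrality is valid but not needed, since the same argument works with the unique component through $h$; and the step you flag as the main obstacle is automatic, because only the containment of $(\mathbb T_hV^\vee)^\perp$ in the fibre is used, and that holds by the very definition of $\mathcal N_{V^\vee}$ as a closure.
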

\begin{proof}
Suppose that $h\in(V^\vee)_{\mathrm{sm}}(\QQ)$.
The annihilator
\[
 (\mathbb T_hV^\vee)^\perp\subset\PP^m_\QQ
\]
of the embedded tangent space is a projective linear space over $\QQ$
of dimension $m-\dim V^\vee-1\geq0$. By biduality it is the fibre
over $h$ of $\mathcal N_V\to V^\vee$, viewed in the first factor,
and is therefore contained in $V$. It has a rational point,
contrary to the hypothesis $V(\QQ)=\emptyset$.
\end{proof}

\subsubsection{Conormal components and the codimension gain}
We continue to use the notation $\mathcal F=\phi_*\FF_\ell$ and $\Sigma=\PP(\operatorname{SS}(\mathcal F))$ on $P_\QQ$ from the previous sections.
In characteristic zero, the components of singular support are conical
Lagrangian subvarieties \cite[Remark after Theorem~1.3]{Bei},
hence closures of conormal bundles to their images
\cite[Theorem~1.9]{Tevelev}. Those images lie in $X$, since the base
of singular support is $\operatorname{Supp} \mathcal F=X$. Therefore, for each irreducible component
$\Sigma_\alpha$ of $\Sigma$ over $\QQ$, there is an integral subvariety
$V_\alpha\subset X$ with $\Sigma_\alpha=\mathcal N_{V_\alpha}$.

\begin{prop}\label{app:ss:prop:rational-projective}
Assume $X(\QQ)=\emptyset$. For the generic-fibre loci
$Y_i=(\mathcal Y_i)_\QQ$ and $1\leq i\leq n-1$,
\begin{equation*}
 \codim_{P^\vee_\QQ}\bar{Y_i(\QQ)}\geq i+1.
\end{equation*}
\end{prop}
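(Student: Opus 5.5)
Fix $1\le i\le n-1$ and an irreducible component $E$ of $\bar{Y_i(\QQ)}$ over $\QQ$; we must show $\dim E\le n-2-i$. By definition of $Y_i$, every geometric point $h\in E$ has $\dim\Sigma_{\bar h}\ge i-1$. Since $\Sigma=\bigcup_\alpha \mathcal N_{V_\alpha}$ has finitely many components, and these are permuted by Galois, the fibre jump over points of $E$ is governed by one Galois orbit of components. Taking the union $\mathcal N_{V}$ over such an orbit, with $V$ integral over $\QQ$ (but possibly not geometrically integral), we may arrange that the $\QQ$-points of $E$ are Zariski dense in $E$ and that $\dim(\mathcal N_V)_{\bar h}\ge i-1$ for all $h\in E$. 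Necessarily $E\subset V^\vee$, and $V\subset X$ gives $V(\QQ)=\emptyset$.

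Now apply Lemma~\ref{app:ss:lem:fibre-jump} to the proper dominant map $\mathcal N_V\to V^\vee$, geometric component by component. We have $\dim\mathcal N_V=n-2$, so the generic fibre dimension is $c=n-2-\dim V^\vee$. We split into two cases. If $i-1\ge c+1$, write $i-1=c+j$ with $j\ge1$. Lemma~\ref{app:ss:lem:fibre-jump} then gives $\codim_{V^\vee}E\ge j+1$, hence
\[
\dim E\le \dim V^\vee-j-1=(n-2-c)-(i-1-c)-1=n-2-i.
\]
Otherwise $i-1\le c$, so $\dim V^\vee\le n-1-i$. Here Lemma~\ref{app:ss:lem:dual-rational} gives $V^\vee(\QQ)\subset\Sing(V^\vee)(\QQ)$. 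Since $E(\QQ)$ is Zariski dense in $E$ and $E\subset V^\vee$, we get $E\subset\Sing(V^\vee)$. Hence $\dim E\le\dim V^\vee-1\le n-2-i$, using that $\Sing$ of a reduced variety is a proper closed subset. In both cases $\codim_{P^\vee}E\ge i+1$.

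The main obstacle is handling non-geometric integrality. Lemma~\ref{app:ss:lem:fibre-jump} is stated for integral varieties, so it must be applied after base change to each geometric component $\mathcal N_{V'}\to V'^\vee$. The statements must then be recombined over the Galois orbit, so that $E(\QQ)$ density and the singular-locus inclusion are used for the $\QQ$-variety $V^\vee$ rather than for its geometric components. Alternatively, we note that a rational point lying on two distinct conjugate duals is in a proper closed subset, which already gives the needed drop.

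A second subtlety is that the fibre of $\Sigma$ over $h$ could have dimension $\ge i-1$ only through a component whose $V_\alpha$ changes with $h$. We avoid this by decomposing $E$ according to which orbits achieve the jump on a dense open set, which is possible since there are finitely many orbits and $E$ is irreducible.
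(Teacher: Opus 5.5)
Your argument is correct and is essentially the paper's proof: both decompose $Y_i$ according to the $\QQ$-irreducible components $\Sigma_\alpha=\mathcal N_{V_\alpha}$ (your Galois orbits) and compare the generic relative dimension $c_\alpha=n-2-\dim V_\alpha^\vee$ with $i-1$, using Lemma~\ref{app:ss:lem:fibre-jump} when $c_\alpha<i-1$ and Lemma~\ref{app:ss:lem:dual-rational} otherwise. The only differences are cosmetic. The paper handles $c_\alpha>i-1$ separately via the trivial bound $\dim V_\alpha^\vee\le n-i-2$ rather than folding it into the dual-variety case. It also applies Lemma~\ref{app:ss:lem:fibre-jump} directly to the $\QQ$-integral map $\Sigma_\alpha\to V_\alpha^\vee$, so your geometric-component-by-component detour is unnecessary.
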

\begin{proof}
The projection $\Sigma_\alpha\to V_\alpha^\vee$ has generic relative
dimension $c_\alpha=n-2-\dim V_\alpha^\vee$. Put
\[
 E_{\alpha,i}=\{h\in V_\alpha^\vee:
                       \dim(\Sigma_\alpha)_{\bar h}\geq i-1\}.
\]
Then $Y_i=\bigcup_\alpha E_{\alpha,i}$. We distinguish three cases.

If $c_\alpha>i-1$, then
$\dim V_\alpha^\vee=n-2-c_\alpha\leq n-i-2$, and the same bound
holds for $\bar{E_{\alpha,i}(\QQ)}$.

If $c_\alpha<i-1$, Lemma~\ref{app:ss:lem:fibre-jump} with
$1\le j=i-1-c_\alpha\le n-2-c_\alpha=\dim V_\alpha^\vee$ gives
\[
 \dim E_{\alpha,i}
 \leq\dim V_\alpha^\vee-(i-1-c_\alpha)-1=n-i-2.
\]

If $c_\alpha=i-1$, then $\dim V_\alpha^\vee=n-i-1$.
Since $V_\alpha\subset X$ has no rational point,
Lemma~\ref{app:ss:lem:dual-rational} implies that $V_\alpha^\vee(\QQ)$ lies in the proper
closed subset $\Sing(V_\alpha^\vee)$, of dimension at most $n-i-2$.
The closure of $E_{\alpha,i}(\QQ)$ satisfies the same bound.
\end{proof}

\begin{proof}[Completion of the proof of Theorem~\ref{n10:ss}]
Let $1\le i\le n-1$ and assume $X(\QQ)=\emptyset$.
Every nonzero $\QQ$-point of $\mathcal Z_i$ projects to a $\QQ$-point of $Y_i$.
Thus $\bar{\mathcal Z_i(\QQ)}$ lies in the affine cone over $\bar{Y_i(\QQ)}$,
so $\dim \bar{\mathcal Z_i(\QQ)} \le 1 + \dim \bar{Y_i(\QQ)}
\le n-i-1$ by Proposition~\ref{app:ss:prop:rational-projective}.
This proves \eqref{eq:rational-main}.
\end{proof}

\end{document}